\numberwithin{equation}{section}
\newcommand{\dd}{\mathop{}\!\mathrm{d}}%
\let\Re\relax\DeclareMathOperator*{\Re}{Re}
\newcommand{\fd}{\mathfrak D} %Finite Difference / Forward Difference
\newcommand{\myC}{\mathcal C}
\newcommand{\myE}{\mathcal E}
\newcommand{\myF}{\mathcal F}
\newcommand{\myG}{\mathcal G}
\newcommand{\myT}{\mathcal T}
\newcommand{\sh}{\mathscr H} %spherical harmonics
\newcommand{\myH}{\mathbb H} %restriction of H to spherical harmonics of fixed order
\newcommand{\bessel}{\mathscr B} %bessel transform
\newcommand{\hankel}{\mathscr H} %hankel transform
\newcommand{\proj}{\mathbb P} %projection
\newcommand{\coloneq}{\coloneqq }
\newcommand{\R}{\mathbb{R}}
\newcommand{\C}{\mathbb{C}}
\newcommand{\la}{\mathcal{L}_a}
\DeclareMathOperator*{\slim}{s-lim}
\newcommand{\CZ}{Calder\'on--Zygmund}
\newcommand{\FDB}{Fa\'a di Bruno formula}
\newcommand{\acoloneq }[0]{\mathrlap\vcentcolon=}
\newcommand{\myone}{\textup{I}}
\newcommand{\mytwo}{\textup{II}}
\newcommand{\mythree}{\textup{III}}
\newtheorem{theorem}{Theorem}[section]
\newtheorem{prop}[theorem]{Proposition}
\newtheorem{lemma}[theorem]{Lemma}
\newtheorem{corollary}[theorem]{Corollary}
\newtheorem{conjecture}[theorem]{Conjecture}
\theoremstyle{definition}
\newtheorem{definition}[theorem]{Definition}
\newtheorem{remark}[theorem]{Remark}
\newcommand{\Extend}[5]{\ext@arrow0099{\arrowfill@#1#2#3}{#4}{#5}}
\begin{document}
\title[$W^{s,p}$-boundedness of Wave operators]{The $W^{s,p}$-boundedness of stationary wave operators for the Schr\"odinger operator with inverse-square potential }

\author[C. Miao]{Changxing Miao}
\address{Institute of Applied Physics and Computational Mathematics\\ Beijing 100088\\ China}
\email{miao\_changxing@iapcm.ac.com}

\author[X. Su]{Xiaoyan Su}
\address{Laboratory of Mathematics and Complex Systems (Ministry of Education) \\ School of Mathematical Sciences\\
 Beijing Normal University, Beijing 100875,  China}
\email{suxiaoyan0427@qq.com}
\author[J. Zheng]{Jiqiang Zheng}
\address{Institute of Applied Physics and Computational Mathematics\\ Beijing 100088\\ China}
\email{zheng\_jiqiang@iapcm.ac.cn, zhengjiqiang@gmail.com}

\begin{abstract}
In this paper, we investigate the $W^{s,p}$-boundedness for stationary wave operators of the Schr\"odinger operator with inverse-square potential $$\mathcal L_a=-\Delta+\tfrac{a}{|x|^2}, \quad a\geq  -\tfrac{(d-2)^2}{4},$$
in dimension $d\geq 2$. We construct the stationary wave operators in terms of integrals of Bessel functions and  spherical harmonics, and  prove that they are $W^{s,p}$-bounded for certain $p$ and $s$ which depend on $a$.
As corollaries,  we solve some open problems associated with the operator $\mathcal L_a$, which include the dispersive estimates and the local smoothing estimates in dimension $d\geq 2$. We also generalize some known results such as the  uniform Sobolev inequalities, the equivalence of Sobolev norms and the Mikhlin multiplier theorem, to a larger range of indices. These results are important in the description of linear and nonlinear dynamics for dispersive equations with inverse-square potential.
\end{abstract}

 \maketitle

\begin{center}
 \begin{minipage}{100mm}
   { \small {{\bf Key Words:}   Schr\"odinger operator; stationary wave operator; inverse-square potential; dispersive estimate; local smoothing estimate; uniform Sobolev inequality; multiplier theorem; equivalence of Sobolev norms}
      {}
   }\\
    { \small {\bf AMS Classification:}
      {58J50, 42B20, 35J10, 33C05}
      }
 \end{minipage}
 \end{center}

\setcounter{tocdepth}{1}
\tableofcontents
\section{Main results and applications}
\label{s:intro}
\subsection{Background}
For a real-valued potential $V$, the time-dependent wave operators for the pair of self-adjoint operators $H_0=-\Delta$ and $H\coloneq  H_0+V$ are defined as
\begin{align}
    W_{\pm} &= \slim_{t\to {\pm \infty}}e^{it H}e^{-itH_0}, \label{wpm-time-dep}\\
      W_{\pm}^* &= \slim_{t\to {\pm \infty}}e^{it H_0}e^{-itH} \label{wpm*-time-dep},
\end{align}
where  $\slim$ indicates the strong limit in $L^2(\mathbb R^d)$. In \cite{KatoKuroda}, Kato and Kuroda derived the following stationary formula for the wave operators,
\begin{align}\label{stationary wave operators}
  W_{\pm}=\int_{-\infty}^\infty \frac{\dd E(\lambda)}{\dd \lambda}(H-\lambda) R_0(\lambda\pm i0) \dd \lambda,
\shortintertext{
and
}\label{stationary adjoint wave operators}
  W_{\pm}^*=\int_{-\infty}^\infty \frac{\dd E_0(\lambda)}{\dd \lambda}(H_0-\lambda) R(\lambda\pm i0) \dd \lambda,
\end{align}
where $\dd E(\lambda), \dd E_0(\lambda),  R(\lambda\pm i0), R_0(\lambda\pm i0) $ are the spectral measures and resolvents associated to $H$ and $H_0$, respectively. The authors in \cite{KatoKuroda} proved that under some restrictive assumptions, the wave operators defined in time-dependent form \eqref{wpm-time-dep},\eqref{wpm*-time-dep} and stationary form \eqref{stationary wave operators}, \eqref{stationary adjoint wave operators} are identical. 

For suitable potentials $V$ e.g. short range potentials, one can prove that the time-dependent $W_{\pm}$ and $W_{\pm}^*$ exist and are complete, see  Agmon \cite{Ag}. This implies that they are isometries from $L^2(\mathbb R^d)$ to $L^2(\mathbb R^d)$, and there is no singular continuous spectrum. It is natural to ask: for what $p\neq 2$ are the wave operators $W_{\pm}, W_{\pm}^* $  bounded in  $L^{p}(\mathbb R^d)$, or  bounded in $W^{s,p}(\mathbb R^d)$? This is precisely the question that this paper tries to answer.

The $W^{s,p}$-boundedness of the wave operators was first investigated by Yajima \cite{Yajima,Yajima 1, Yajima 2, Yajima 3, Yajima 4, Yajima 5} under the assumption that zero energy is neither a eigenvalue nor a resonance. These results are established by using the perturbation method and the resolvent identity. However, the above results only apply to a restricted class of potentials. To relax the conditions, Beceanu and Schlag recently \cite{BS} established a structure formula for wave operators using an abstract version of Wiener's theorem, which implies the $L^p$-boundedness for more general potentials.

%The  boundedness of wave operators in $L^p$ spaces are very useful for nonlinear dispersive equations, since by the intertwining property $ %W_{\pm} f(H) P_c=f(H_0)W_{\pm}$, we may transfer
%dispersive/Strichartz estimates from the $H_0=-\Delta$ to the perturbed operator $H=-\Delta+V(x)$. The validity of such properties has been the %subject of deep investigation in the last few decades, due to their relevance in the description of linear and nonlinear dynamics for dispersive %equations.

In this paper, we study the $W^{s,p}$-boundedness of the stationary wave operators associated to the operator $\la=-\Delta+\tfrac{a}{|x|^2}$ for $a\geq -\smash{\tfrac{(d-2)^2}4}$ in dimensions $d\geq 2$, for suitable $p\in[1,\infty]$ and $s\in\mathbb R$. We define $\la$ as the Friedrichs extension of the operator $-\Delta+\tfrac{a}{|x|^2}$ defined on  $C_c^\infty(\R^d\backslash\{0\})$. We refer the readers to \cite[Section X.3]{RS} for the general theory of such extensions. The restriction $a\geq -{\frac{(d-2)^2}4}$ guarantees positivity of $\la$; indeed, the associated form is
\begin{align}
  Q(f) = \int_{\R^d} \Big|\nabla f + \frac{\sigma x}{|x|^2}f\Big|^2 \,dx, \quad\text{where}\quad \sigma \coloneq  \frac{d-2}2-\Big[\Big(\frac{d-2}2\Big)^2+a\Big]^{\frac12}. \label{equ:qfdefsigma}
\end{align}
%
%{\color{gray}
%quadratic form $Q$\todo[inline]{an operator cannot be an extension of a quadratic form; the quadratic form defines a bilinear map that gives the weak formulation of the operator, or something like this? consider revising.}, defined on $C_c^\infty(\R^d\backslash\{0\})$ by
%$$Q(f) \coloneq  \int_{\R^d} |\nabla f(x)|^2 + \frac{a}{|x|^2}|f(x)|^2\,dx.$$
%We refer the reader to \cite[Section X.3]{RS}
%for the general theory of such extensions.}
%

% Some problems associated to $\la$ have also been investigated by a lot of research.
The %scale-covariant 
elliptic operator $\la$ has appeared frequently in the literature. For example, the heat and Schr\"odinger flows associated with the elliptic operator $\la$ have been studied in the theory of combustion (see \cite{LZ}), and in quantum mechanics (see \cite{KSWW}). It is known \cite{BKM,Mizutani} that the time-dependent wave operators of $-\Delta$ and $\la$ exist and are complete for $a>-\tfrac{(d-2)^2}4$ in dimensions $d\geq3$.  For the case $d=2$ and $d\geq 3$ with $a=-\tfrac{(d-2)^2}4$, there is no scattering theory for $\la$. However, we can still define the stationary wave operators using the formula \eqref{stationary wave operators} and \eqref{stationary adjoint wave operators}, provided we have the spectral measures. 
%We aim to establish the $L^p(\mathbb R^d)$ or even $W^{s,p}(\mathbb R^d)$-boundedness for the stationary wave operators for some suitable $p\neq 2$ and $s\in \mathbb R$. 

%This problem is fundamental in the study the mathematical physics with potentials. Once we have {the desired results}, by the intertwining property $ W_{\pm} F(H)=F(H_0)W_{\pm}$ for a bounded Borel function $F$, we may transfer a number of known results from the operator  $-\Delta$ to the operator $\la$.

One of the main motivations for investigating the $W^{s,p}(\mathbb R^d)$-boundedness for the stationary wave operators is to derive the Strichartz and dispersive estimates.
 Burq, Planchon, Stalker, and Tahvildar-Zadeh \cite{BPSS, BPST} proved  the  Strichartz estimates for the Schr\"odinger and wave equations with  inverse-square potentials, in space dimensions $d\geq2$ and $a>-\tfrac{(d-2)^2}4$. The endpoint case $a=-\tfrac{(d-2)^2}4$ was proven by Mizutani \cite{Miz-17}. Later, Fanelli, Felli, Fontelos, and Primo  proved  \cite{FFFP1, FFFP} the dispersive estimates for certain Schr\"odinger evolution, including the inverse-square potential in dimensions $d=2$ and $3$. The dispersive estimates and Strichartz estimates play an important role in the description of well-posedness and long-time behavior for the nonlinear dispersive equation with potentials, see \cite{KMVZZ-Sobolev,KMVZZ-NLS,KMVZ,LMM,MMZ,ZhaZhe,Z18}.
However, to the best of our knowledge, the dispersive estimates for the Schr\"odinger evolution $e^{it\la}$ in higher dimensions $d\geq4$, and wave propagator $\frac{\sin(t\sqrt{\la})}{\sqrt{\la}}$ in all dimensions  are still open.  Once we have the $W^{s,p}$-boundedness, by the intertwining property% 
%$ W_{\pm} F(H)=F(H_0)W_{\pm}$ for a bounded Borel function $F$
, we may transfer the dispersive estimates and Strichartz estimates of the operator  $-\Delta$ to the operator $\la$.

Another motivation for our work is to obtain Sogge's local smoothing estimates for  $e^{it\sqrt{\la}}$. 
This states that  the space regularity of half-wave propagator will be improved if we  average in time.
 Using the $W^{s,p}$-boundedness, we will  establish the corresponding estimates for $e^{it\sqrt{\la}}$, under the assumption that local smoothing estimates have been proven for $e^{it\sqrt{-\Delta}}$.

Our results also imply the uniform Sobolev inequality for $\la$, the equivalence of Sobolev norms adapted to $\la$, and a Mikhlin-type multiplier theorem  for $\la$. These results improve the corresponding results in \cite{MZZ, KMVZZ-Sobolev} to a more general range of indices. See Subsection \ref{ss:application} for more details.

One thing we want to mention is  that a number of other estimates related to $\la$ also can be directly deduced from the $W^{s,p}$-boundedness of the stationary wave operators, but to keep our paper at a reasonable length, we  omit the details.

%{\color{blue}
%
%It is known that $V(x)$ is $-\Delta$-smooth, and $V(x)$ is $-\Delta+\frac{a}{|x|^2}$-(global)smooth (see Page 10 \cite{BPSS}), by global $H-$smooth theory, it implies that there is no singular spectrum on $(0, \infty)$ for $\la$ and no positive eigenvalues. But {\color{red}{$0$ is neither eigenvalue nor resonance (in which spaces $X$) in which sense: in the sense of distribution, $u$ is a solution, add extra condition,  to guarantee is $0$ in $X$} Reference check the unique continuation.}}

\subsection{Main results}

\subsection*{Main result 1.}
First, we give the $L^p$-boundedness of the stationary wave operators, which is also the first step  to establish the $W^{s,p}$-boundedness.
\begin{theorem}[$L^p$-boundedness]\label{thm:main} Let $d\geq 2$ and $a\geq -\tfrac{(d-2)^2}4$. Then, the stationary wave operators $W_{\pm}$  and $W_{\pm}^*$ are bounded in $L^p(\R^d)$ with
\begin{equation}\label{equ:pcondlp}
  \max\big\{0,\tfrac{\sigma}{d}\big\}<\tfrac1p<\min\big\{1,\tfrac{d-\sigma}d\big\},
\end{equation}
 where $\sigma$ is as in \eqref{equ:qfdefsigma}.
% If $a>0$, the wave operators $W_{\pm}$  and $W_{\pm}^*$ are bounded in  $L^p(\mathbb R^d)$ for $1<p<\infty$.
% If $a=0$, the wave operator is just the identity operator, thus it is bounded in  $L^p(\mathbb R^d)$ for $1\leq p \leq \infty$.
%If $-\left(\frac{d-2}{2}\right)^2<a<0$, then they are bounded in  $L^p(\mathbb R^d)$ for $p_0'<p<p_0$, where $p_0=\frac{d}{\sigma}$
\end{theorem}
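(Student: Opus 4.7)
My plan is to exploit the rotational symmetry of both $-\Delta$ and $\la$. Decomposing $L^p(\R^d)$ along spherical harmonics, on the $\ell$-th angular sector $-\Delta$ reduces to the Bessel-type radial operator of order $\nu_\ell\coloneq \ell+\tfrac{d-2}{2}$, while $\la$ reduces to the analogous operator of order $\nu_\ell(a)\coloneq\sqrt{\nu_\ell^2+a}$. Each of these is diagonalized by a Hankel transform of the corresponding order, and the spectral measures $\dd E(\lambda)/\dd\lambda$ and $\dd E_0(\lambda)/\dd\lambda$ appearing in \eqref{stationary wave operators}--\eqref{stationary adjoint wave operators} become explicit densities in $\rho=\sqrt{\lambda}$ built from products of Bessel functions.

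Substituting into the stationary formulas, I would obtain, on each $\ell$-sector, an integral kernel of the schematic form
\begin{equation*}
  K_\ell(r,r')=(rr')^{-(d-2)/2}\int_0^\infty J_{\nu_\ell(a)}(r\rho)\,J_{\nu_\ell}(r'\rho)\,\rho\,\dd\rho,
\end{equation*}
computable via the Weber--Schafheitlin integral in terms of a hypergeometric function of $r'/r$. The key structural features are that $K_\ell$ is homogeneous of degree $-d$ in $(r,r')$, has a Calder\'on--Zygmund-type singularity along the diagonal $r=r'$, and scales as $(r/r')^{\mp\sigma_\ell}$ away from the diagonal, where the discrepancy $\nu_\ell(a)-\nu_\ell$ is governed by the parameter $\sigma$ in \eqref{equ:qfdefsigma} (with the $\ell=0$ sector producing exactly $\sigma$).

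Next I would show $L^p$-boundedness of each sectorial operator uniformly in $\ell$. The diagonal part is handled by standard \CZ{} theory for $1<p<\infty$. The off-diagonal contribution, being homogeneous with homogeneity weight $\sigma$, is bounded on $L^p$ by a Schur-type test against the family $\{|x|^{-\alpha}\}$ (equivalently, a Hardy-type inequality): integrability against $r^{-d/p}$ forces precisely $\sigma/d<1/p<(d-\sigma)/d$, giving the stated range \eqref{equ:pcondlp}. Reassembling the angular pieces via spherical orthogonality and using the intertwining with the Hankel transform reduces the full bound on $\R^d$ to a uniform-in-$\ell$ estimate plus a summable angular factor, yielding the theorem for $W_\pm$; the argument for $W_\pm^\ast$ is dual after replacing the roles of the two Bessel orders.

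The main obstacle I anticipate is obtaining the sectorial bounds \emph{uniformly in} $\ell$: the transition zone $r\rho\sim\nu_\ell$ between the power-like and oscillatory asymptotics of $J_{\nu_\ell}$ introduces $\ell$-dependent constants that must be tracked carefully so that the angular sum remains summable. A secondary subtlety is the endpoint $a=-\tfrac{(d-2)^2}4$, where $\nu_0(a)=0$ and several standard Bessel estimates degenerate; there one must invoke the limiting form of the Hankel transform and the fact that $\sigma=\tfrac{d-2}{2}$ shrinks the range \eqref{equ:pcondlp} to exactly the interval on which the degenerate contributions remain controllable.
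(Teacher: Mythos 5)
Your opening steps match the paper closely: decompose $L^2(\R^d)$ along spherical harmonics, diagonalize each sector by a Bessel/Hankel transform of the appropriate order, write the sectorial kernel as a Weber--Schafheitlin integral expressible through hypergeometric functions, recognize the \CZ\ singularity on the diagonal and the $(r/r')^{\sigma}$-type scaling off the diagonal. All of that is also how the paper constructs the kernels $K_k(r,s)$ in Section~\ref{s:proof-of-lp-bddness}. The genuine gap is in your reassembly step. You propose to get a ``uniform-in-$\ell$ estimate plus a summable angular factor'' and then reassemble ``via spherical orthogonality.'' Spherical-harmonic projections are orthogonal only in $L^2$; for $p\neq 2$ the $\proj_k$ do not furnish an $\ell^1$-summable or unconditional decomposition of $L^p(\mathbb S^{d-1})$, and the sectorial operator norms are uniformly bounded in $k$ but do \emph{not} decay. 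So a uniform-in-$k$ radial bound plus Minkowski yields only
\begin{align*}
\|Wf\|_{L^p}\lesssim \sum_{k=0}^\infty \|\proj_k f\|_{L^p},
\end{align*}
which cannot be closed back to $\|f\|_{L^p}$. The paper raises exactly this objection in its proof sketch and instead invokes a discrete multiplier theorem for spherical-harmonic expansions (Bonami--Clerc, Theorem~\ref{multiplier theorem for spherical harmonic decomposition}): what is needed is not just that $|\widetilde K_k(r,s)|$ is uniformly bounded, but that the sequence $k\mapsto \widetilde K_k(r,s)$ satisfies finite-difference estimates of order $\lfloor(d-1)/2\rfloor+1$ in $k$, uniformly in $(r,s)$.

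This forces a further idea that your proposal omits. Near the diagonal one wants to apply \CZ\ theory to the $r$-variable before summing in $k$; but applying the radial bound first pushes the $L^p$ norm inside the $k$-sum (loss as above), while applying the angular multiplier theorem first replaces $1/(r-s)$ by $1/|r-s|$ and destroys the cancellation needed to integrate. The paper resolves this by approximating $\widetilde K_k(r,s)$ near $r=s$ with a \emph{variable-separated} kernel $F(r,s)\,M(k)$ whose radial factor is a bona fide \CZ\ kernel and whose $k$-factor satisfies the Bonami--Clerc conditions; the remainder then has an $L^1(\dd r/r)$ radial profile (a logarithmic kernel), for which the order of estimation can be reversed. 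Verifying the finite-difference hypotheses for the hypergeometric coefficients, via power-series expansions and asymptotics of $\Gamma$ and polygamma functions, is where most of the technical work actually lives. Without some substitute for the multiplier theorem and the separated-kernel decomposition, the argument you outline would prove the bound on each sector but would not assemble into an $L^p(\R^d)$ bound for $p\neq 2$.
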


\begin{remark}
The condition \eqref{equ:pcondlp} is equivalent to
\begin{equation}\label{equ:pthmconde}
  \begin{cases}
    \phantom{_0}1<p<+\infty&\text{if}\quad a>0,\\
    p_0'<p<p_0& \text{if}\quad a<0,
  \end{cases}
\end{equation}
where  $p_0=\frac{d}{\sigma}$.	
\end{remark}

\subsection*{Proof sketch for Theorem \ref{thm:main}} To prove the above results, we cannot use the perturbative method as Yajima and Beceanu did, due to the strong singularity at zero. However, notice that the potential term $V=\smash{\frac{a}{|x|^2}}$ is homogeneous of degree $-2$, which has the same scaling as $-\Delta$.  This key property has been employed in \cite{BPSS,BPST,PSS} to prove the dispersive and Strichartz estimates for the solutions of Schr\"odinger equation and wave equation with inverse square potentials. 
%
%One important method to derive the Strichartz estimates for the wave equation is given in \cite{PSS}, where they constructed some conjugate operators which enable one to transfer the results for $-\Delta$ to $\la$  under the radial assumption.
%
In particular, \cite{PSS} proved Strichartz estimates for the wave equation by constructing some intertwining operators which enabled them to transfer the results for $-\Delta$ to $\la$, but under the radial assumption. Our contribution is to remove this assumption, as we now explain.

Using the special structure of our operator, on each subspace of spherical harmonics, the operator $\la$ is equivalent to a Bessel operator which can be diagonalized  using the eigenfunction expansion in terms of spherical waves.  This eigenfunction expansion serves as the ``distorted Fourier transform'' for $\la$.  It is known \cite{Kuroda} that once the distorted Fourier transforms $ \mathscr F_{\pm}$ are well-defined, the wave operators can be given by $W_{\pm}=\mathscr F_{\pm}^* \mathscr F$. This formula inspires us to construct the stationary wave operators directly.

Hence, we construct two intertwining operators  $W$ and $W^*$ in $L^2(\mathbb R^d)$ for $-\Delta$ and $\la$ using the decomposition into spherical harmonics,
\begin{align}
      Wf(r, \omega)=\int_0^\infty \sum_{k=0}^\infty K_k(r,s ) \proj_k f(r, \omega)\frac{\dd r}{r},\label{introduction $W$} \\
       W^*f(r, \omega)=\int_0^\infty \sum_{k=0}^\infty K_k^*(r,s ) \proj_k f(r, \omega)\frac{\dd r}{r},
  \end{align}
where $K_k(r,s ),$ and  $K_k^*(r,s )$ are given by  integrals involving two Bessel functions, and the projection operator $\proj_k$ is defined as in \eqref{equ:projkfdef} below.
% It is easy to check that these two operators are bounded in $L^2(\mathbb R^d)$ using the property of Bessel integrals. In addition, we are able to prove that they are just the wave operators of  $-\Delta$ and $\la$. To prove the $L^p$-boundedness in $\mathbb R^d$, we only need to the show boundedness in $L^p(\frac{\dd r}{r}\cdot \dd \omega)$. For the angular boundedness, we use an important multiplier theorem for spherical harmonics established in \cite{BC} (see \ref{multiplier theorem for spherical harmonic decomposition}). For the radial part, we need to use the Calder\'on--Zgymund singular integral operators.
%
%  Inspired by the work in \cite{PSS} for radial functions, for general data, we decompose $L^2(\mathbb R^d)$ into spherical harmonic subspaces. On each subspace, we define  two conjugate operators that intertwine  $-\Delta$ and $\la$. This gives two operators intertwining operators $W$ and $W^*$ with the following form under the logarithmic coordinates:
  It is easy to check that $W$  and $W^*$ are unitary in $L^2(\mathbb R^d)$. In addition, we are able to prove that $W=W_{-}$ and $W^*=W_{-}^*$ defined by \eqref{stationary wave operators} and \eqref{stationary adjoint wave operators}.

To verify that  $W$  and $W^*$ are bounded in $L^p(\mathbb R^d)$, 
it suffices to prove two modified operators $\widetilde W$ and $\widetilde W^*$ are bounded in $L^p(\frac{\dd r}{r}\cdot \dd \omega)$.
One of the key ingredients in proving the $L^p$-boundedness is a discrete multiplier theorem (Theorem \ref{multiplier theorem for spherical harmonic decomposition} below) for  spherical harmonics.
% This is where a discrete multiplier theorem (Theorem \ref{multiplier theorem for spherical harmonic decomposition} below) for  spherical harmonics comes into play.
If for any fixed $r$ and $s$, the coefficients $\widetilde K_k(r,s)$ 
satisfy the assumptions of this theorem, then it follows that
 \begin{align*}
    \|\widetilde Wf(r, \omega)\|_{L^p(\dd \omega)} \leq M(r,s)\Big\|\sum_{k=0}^\infty \proj_k f(r, \omega)\Big\|_{L^p(\dd \omega)},
\end{align*}
where $M(r, s)$ is the upper bound for $ \widetilde K_k(r,s)$ uniformly in $k$. If $M(r,s) $ is bounded in $L^1(\frac{\dd r}{r})$ on the diagonal $r=s$, then by Young's inequality, we would have that $\widetilde W$ is bounded in $L^p\big(\frac{\dd r}{r}\cdot \dd \omega\big)$.
 However, this is \emph{not} true. In fact, as we will see from \eqref{singularity of kernels} below, for each fixed $k$, $\widetilde K_k(r,s)$ has a singularity of the type
\begin{align*}
    c(r-s)^{-1}
\end{align*}
as $r$ approaches $s$, which cannot be controlled by a $L^1(\frac{\dd r}{r})$ function on the diagonal. This fact also suggests that the intertwining operators $\widetilde W$ and $\widetilde W^*$ are \CZ{} singular integral operators on the radial part in each subspace. Hence, our second ingredient in proving the $L^p$-boundedness is  the \CZ{} theory.

If we directly estimate each $k$th part in $r$ using  \CZ{} theory and Minkowski's inequality, we would have
\begin{align*}
   \| \widetilde Wf(r, \omega)\|_{L^p(\frac{\dd r}{r})} \leq \sum_{k=0}^\infty \big\|\proj_k f(r, \omega)\big \|_{L^p(\frac{\dd r}{r})}.
\end{align*}
That is, the $L^p$ norm would go inside of the sum, making it impossible to recover the $L^p$ norm of $f$.   However, if we estimate the angular part first, the resulting bound is expressed in terms of $1/|r-s|$ instead of $1/(r-s)$ and we will not be able to take advantage of the cancellation in $\widetilde K_k(r,s)$.

One special case which can be easily proved is when the kernels are of the variable-separated form $\widetilde{K}_k(r, s)=F(r, s) M(k)$, where $F(r, s)$ defines a \CZ{} operator and $\{M(k)\}_{k\ge 0}$ satisfies the condition of Theorem \ref{multiplier theorem for spherical harmonic decomposition}. In this situation, the operator $\widetilde{W}$   can be written as
\begin{align*}
     \widetilde Wf(r, \omega)=\int_0^\infty F(r, s) \sum_{k=0}^\infty M(k) \proj_k f(r, \omega)\frac{\dd r}{r}.
\end{align*}
Using  \CZ{}  theory, we have
\begin{align*}
   \| \widetilde Wf(r, \omega)\|_{L^p(\frac{\dd r}{r})}  \leq \Big\|\sum_{k=0}^\infty M(k) \proj_k f(r, \omega)\Big\|_{L^p(\frac{\dd r}{r})}.
\end{align*}
Furthermore, by Theorem \ref{multiplier theorem for spherical harmonic decomposition}, we would get the boundedness in $L^p(\frac{\dd r}{r}\cdot \dd \omega)$. This easy case motivates us to approximate the $\widetilde{K}_k(r,s)$ with variable-separated kernels to capture the singularity in the radial part. The remainder term has an $L^1(\frac{\dd r}{r})$ kernel which guarantees that we can estimate the angular part first.

 Having the previous ideas in mind, the main difficulty in proving Theorem \ref{thm:main} is in checking that the conditions for the discrete multiplier theorem  are valid. To obtain this, we need to use the power series expansion for hypergeometric functions, and the asymptotic expansions for Gamma functions and its derivatives. This step forms the main part of the proof.

\subsection*{Main result 2.} Now we give our second theorem, which improves the previous theorem to the scale of Sobolev spaces. Define
\begin{align} \label{df of nu zero}
    \nu_0=\sqrt{\big(\tfrac{d-2}2\big)^2+a}.
\end{align}

\begin{theorem}[$W^{s,p}$-boundedness]
\label{thm:main 2} Assume that  $d\geq 2$ and $a\geq -\tfrac{(d-2)^2}{4}$.
\begin{enumerate}
\item Let $-d<\alpha <2+2\nu_0$.  The stationary wave operators $W_{\pm}$ are bounded in ${W}^{\alpha,p}(\mathbb R^d)$, if $p$ satisfies
    \begin{equation}\label{equ:psig}
     \max\Big\{0, \frac{\sigma}{d} , \frac{\sigma+\alpha}{d}\Big\}<\frac{1}{p}<\min\Big\{1, \frac{d-\sigma}{d}, \frac{d+\alpha}{d}\Big\}.
    \end{equation}
    \item
Let $-2\nu_0-2<\beta <d$.  The stationary wave operators $W_{\pm}^*$ are bounded in  ${W}^{\beta,p}(\mathbb R^d)$, if $p$ satisfies
 \begin{equation}\label{equ:rangpbeta}
   \max\Big\{0, \frac{\sigma}{d}, \frac{\beta}{d} \Big\}<\frac{1}{p}<\min\Big\{1, \frac{d-\sigma}{d}, \frac{d-\sigma+\beta}{d}\Big\}.
 \end{equation}
\end{enumerate}
\end{theorem}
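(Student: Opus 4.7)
The plan is to reduce the $W^{\alpha,p}$-boundedness of $W_\pm$ in part (1) to an $L^p$-boundedness problem via the intertwining property. Since $W_\pm (-\Delta) = \la W_\pm$, functional calculus gives $W_\pm (-\Delta)^{\alpha/2} = \la^{\alpha/2} W_\pm$, hence
$$
\|W_\pm f\|_{W^{\alpha,p}} = \|(-\Delta)^{\alpha/2} W_\pm f\|_{L^p} = \bigl\|[(-\Delta)^{\alpha/2}\la^{-\alpha/2}]\, W_\pm (-\Delta)^{\alpha/2} f\bigr\|_{L^p}.
$$
Combined with the $L^p$-boundedness of $W_\pm$ already provided by Theorem \ref{thm:main}, it suffices to show that the Riesz-type operator $T_\alpha \coloneq (-\Delta)^{\alpha/2}\la^{-\alpha/2}$ is bounded on $L^p(\R^d)$ in the range specified by \eqref{equ:psig}. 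Part (2) I would then handle by duality: $W_\pm^*$ is $W^{\beta,p}$-bounded if and only if $W_\pm$ is $W^{-\beta,p'}$-bounded, and setting $\alpha=-\beta$ one checks that the range \eqref{equ:psig} for $(\alpha,1/p)$ translates exactly into the range \eqref{equ:rangpbeta} for $(\beta,1/p')$.

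To establish the $L^p$ bound for $T_\alpha$ I would repeat the spherical-harmonic plus \CZ{} strategy used for Theorem \ref{thm:main}. Both $-\Delta$ and $\la$ commute with the projections $\proj_k$ onto $\sh_k$, and on $\sh_k$ reduce to Bessel operators of orders $\mu_k=k+\tfrac{d-2}{2}$ and $\nu_k=\sqrt{\mu_k^2+a}$ respectively. Fractional powers of each are diagonalized by the corresponding Hankel transform, so the $k$th component of $T_\alpha$ has an explicit integral kernel $\widetilde K_k^\alpha(r,s)$; after using the Hankel closure relation to collapse the inner integral, this kernel reduces to a Mellin/hypergeometric expression of exactly the same shape as the kernels $\widetilde K_k(r,s)$ treated for Theorem \ref{thm:main}.

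The subsequent analysis then follows the same two-step blueprint: decompose $\widetilde K_k^\alpha(r,s) = F^\alpha(r,s)\,M^\alpha(k) + R_k^\alpha(r,s)$, where the main term is variable-separated (with $F^\alpha$ a \CZ{} kernel in the radial variable and $\{M^\alpha(k)\}$ satisfying the discrete multiplier hypothesis of Theorem \ref{multiplier theorem for spherical harmonic decomposition}), and $R_k^\alpha$ admits a uniform $L^1(\tfrac{ds}{s})$ radial bound. The \CZ{} estimate together with the discrete multiplier theorem then yields the desired $L^p$ bound for $T_\alpha$.

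The main obstacle will be the bookkeeping that pins down the sharp $\alpha$-range. The upper threshold $\alpha<2+2\nu_0$ is forced by the smallest spherical harmonic block $k=0$: the corresponding eigenfunctions of $\la$ vanish like $r^{\nu_0}$ at the origin, so the radial kernel of $T_\alpha$ on $\sh_0$ develops a nonintegrable local singularity once $\alpha\geq 2+2\nu_0$. The lower threshold $-d<\alpha$ and the extra constraints on $1/p$ in \eqref{equ:psig} are the corresponding Hardy--Sobolev-type integrability conditions at infinity, and intersecting them with the $L^p$-range of $W_\pm$ from Theorem \ref{thm:main} reproduces \eqref{equ:psig} exactly. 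Part (2) is then automatic from the duality argument above, recovering \eqref{equ:rangpbeta} including the symmetric threshold $\beta>-2-2\nu_0$.
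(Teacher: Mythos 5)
Your reduction of part (1) to the $L^p$-boundedness of $T_\alpha=(-\Delta)^{\alpha/2}\la^{-\alpha/2}$ via the intertwining identity is exactly what the paper does (this $T_\alpha$ is the paper's $\mathcal R^\alpha$, and your display is equation \eqref{reduction}), and your sketch of the subsequent spherical-harmonics-plus-\CZ{} argument, with a variable-separated main term plus an $L^1$-remainder, matches the paper's treatment of the Fox $H_{4,4}^{2,2}$-kernels in Section \ref{s:proof-of-wsp-bddness}. The genuine departure is your handling of part (2): you invoke Sobolev duality, noting that $W_\pm^*$ is the $L^2$-adjoint of $W_\pm$, so $\dot W^{\beta,p}$-boundedness of $W_\pm^*$ follows from $\dot W^{-\beta,p'}$-boundedness of $W_\pm$, and then checking (correctly) that substituting $\alpha=-\beta$ and $1/p\mapsto 1-1/p$ in \eqref{equ:psig} reproduces \eqref{equ:rangpbeta} exactly, with the $\alpha$-window $-d<\alpha<2+2\nu_0$ mapping to $-2\nu_0-2<\beta<d$. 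The paper instead proves Proposition \ref{bdd for Riesz transform}(2) for $\mathcal R^{-\beta}=\la^{\beta/2}(-\Delta)^{-\beta/2}$ directly, by the same Fox $H$-function machinery with the roles of $\mu_k,\nu_k$ exchanged. Your duality shortcut is cleaner for Theorem \ref{thm:main 2}(2) itself, but note it does not substitute for Proposition \ref{bdd for Riesz transform}(2) — which the paper still needs independently for Corollary \ref{c:equiv-of-sob-norm}(2), the reverse Sobolev-norm equivalence — and be careful to phrase the first line in the homogeneous norm $\dot W^{\alpha,p}$ (then combine with the $L^p$ bound from Theorem \ref{thm:main} to pass to $W^{\alpha,p}$), since $\|f\|_{W^{\alpha,p}}\neq\|(-\Delta)^{\alpha/2}f\|_{L^p}$.
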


\subsection*{Proof sketch for Theorem \ref{thm:main 2}}
To prove the $W^{s,p}$-boundedness of the wave operators, we introduce the following Riesz-type operators $ \mathcal R^{\alpha}$  formally given by $$\mathcal R^\alpha =  (-\Delta)^{\alpha/2} \la ^{-\alpha/2}, \quad -d<\alpha<2\nu_0+2, \ \alpha \neq 0,$$
and their inverse operators
$$\mathcal R^{-\beta}=  \la ^{\beta/2}(-\Delta)^{-\beta/2}, \quad -2-2\nu_0<\beta<d, \ \beta \neq 0.$$
The precise definitions are given in Section \ref{s:proof-of-wsp-bddness}.

 For these operators, we have the following $L^p$-boundedness results.
\begin{prop}\label{bdd for Riesz transform}Let $d\geq 2$ and $a\geq -\tfrac{(d-2)^2}{4}$.
\begin{enumerate}
    \item Let $-d<\alpha <2+2\nu_0$ and $\alpha \neq 0$. The operator $\mathcal R^\alpha$ given by \eqref{df of RT} is bounded  in $L^p(\mathbb R^d)$ with
\begin{equation}\label{equ:lprals}
 \max \Big\{0, \frac{\sigma+\alpha}{d}\Big\}<\frac{1}{p}<\min \Big\{1, \frac{d-\sigma}{d},  \frac{d+\alpha}{d}\Big\}.
\end{equation}
\item\label{Main theorem 2}Let  $-2\nu_0-2<\beta <d $ and $\beta\neq 0$. The operator $\mathcal R^{-\beta}$ given by \eqref{df of IRT} is bounded  in $L^p(\mathbb R^d)$ with
    \begin{equation}\label{equ:pbedbe}
      \max\Big\{0, \frac{\beta}{d}, \frac{\sigma}{d}\Big \}<\frac{1}{p}<\min \Big\{1, \frac{d-\sigma+\beta}{d} \Big\}.
    \end{equation}
\end{enumerate}
\end{prop}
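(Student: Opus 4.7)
My plan is to establish the proposition by direct analysis of the integral kernel of $\mathcal R^{\alpha}$ and $\mathcal R^{-\beta}$ in the spherical-harmonic decomposition, paralleling the strategy used for Theorem~\ref{thm:main}. The key observation is that both $-\Delta$ and $\la$ preserve the spherical-harmonic subspaces, and on the $k$-th such subspace both reduce to radial Bessel-type operators $L_{\nu_k}$ and $L_{\tilde\nu_k}$ with angular parameters $\nu_k = k+\tfrac{d-2}{2}$ and $\tilde\nu_k = \sqrt{\nu_k^2+a}$, respectively.

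For part~(1), I first use the Hankel transforms of orders $\nu_k$ and $\tilde\nu_k$, which diagonalise $L_{\nu_k}$ and $L_{\tilde\nu_k}$ to multiplication by $\xi^2$. Composing the resulting representations of $(-\Delta)^{\alpha/2}$ and $\la^{-\alpha/2}$ on each subspace yields an explicit integral representation
\begin{equation*}
\mathcal R^\alpha f(r,\omega) = \int_0^\infty\sum_{k=0}^\infty \widetilde K_k^\alpha(r,s)\,\proj_k f(s,\omega)\,\frac{\dd s}{s},
\end{equation*}
where $\widetilde K_k^\alpha(r,s)$ is an integral against the product $J_{\nu_k}(r\xi)J_{\tilde\nu_k}(s\xi)$ weighted by $\xi^{\pm\alpha}$ factors. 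By the Weber--Schafheitlin formula this evaluates in closed form to a Gauss hypergeometric function of $r$ and $s$, which opens the door to power-series and asymptotic analysis.

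Next, I apply the same two-tier strategy as in Theorem~\ref{thm:main}. I split $\widetilde K_k^\alpha$ into a variable-separated principal part $F^\alpha(r,s)\,M^\alpha(k)$ plus a remainder. The function $F^\alpha(r,s)$ should carry a Calderón--Zygmund singularity of type $(r-s)^{-1}$ along the diagonal and satisfy the usual cancellation, while $\{M^\alpha(k)\}_{k\ge 0}$ should satisfy the hypotheses of the discrete multiplier theorem (Theorem~\ref{multiplier theorem for spherical harmonic decomposition}). These two ingredients give the $L^p$ bound for the principal part via Calderón--Zygmund theory on $\tfrac{\dd s}{s}$ and the multiplier theorem on $\dd\omega$. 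The remainder is arranged to have a radial kernel in $L^1(\tfrac{\dd s}{s})$ uniformly in $k$, so that Minkowski's inequality combined with the multiplier theorem handles it directly. Part~(2) proceeds symmetrically: writing $\mathcal R^{-\beta} = \la^{\beta/2}(-\Delta)^{-\beta/2}$ and running the same kernel analysis with the roles of $\nu_k$ and $\tilde\nu_k$ interchanged yields the bound on the range \eqref{equ:pbedbe}.

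The main obstacle lies in the kernel decomposition itself: extracting a variable-separated principal part whose radial factor $F^\alpha$ satisfies the Calderón--Zygmund cancellation while the sequential factor $M^\alpha(k)$ satisfies the multiplier condition, and showing that the residual kernel belongs to $L^1(\tfrac{\dd s}{s})$ uniformly in $k$. This requires delicate asymptotic expansions of hypergeometric functions, Bessel functions, and Gamma-function ratios, with the new twist that the weights $\xi^{\pm\alpha}$ alter the near-zero and near-infinity behaviour of the integrand compared to Theorem~\ref{thm:main}. The ranges \eqref{equ:lprals} and \eqref{equ:pbedbe} are precisely the windows where the required kernel estimates close; the constraints involving $\sigma$ and $\nu_0$ come from the $k=0$ contribution, where the angular parameters of $-\Delta$ and $\la$, namely $\tfrac{d-2}{2}$ and $\nu_0 = \tfrac{d-2}{2}-\sigma$, differ the most.
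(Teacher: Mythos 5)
Your global strategy---compute the kernel on each spherical-harmonic subspace, isolate a variable-separated Calder\'on--Zygmund principal part and a remainder that is $L^1(\dd s/s)$, then apply the Bonami--Clerc discrete multiplier theorem to the angular sums---is exactly the scaffolding the paper uses for this proposition. However, there is a genuine gap at the first step: the identification of the kernel.

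You claim that $\widetilde K_k^\alpha(r,s)$ is an integral against a single product $J_{\nu_k}(r\xi)J_{\tilde\nu_k}(s\xi)$ weighted by $\xi^{\pm\alpha}$, and that by Weber--Schafheitlin this evaluates to a single Gauss hypergeometric function ${}_2F_1$. This is not the correct form. On the $k$-th subspace,
\begin{equation*}
\mathcal R^\alpha\big|_{\sh_{k,l}} = \mathscr B_{\mu_k}\lambda^{\alpha}\mathscr B_{\mu_k}\,\mathscr H_{\nu_k}\lambda^{-\alpha}\mathscr H_{\nu_k},
\end{equation*}
a composition of two \emph{different} fractional-power operators, each built out of two Bessel-type transforms. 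The middle product $\mathscr B_{\mu_k}\mathscr H_{\nu_k}$ is the adjoint wave operator $W_k^*$ and is not the identity, so the four transforms do not collapse to one Bessel integral of the shape you describe. Put differently, the Mellin multiplier of $\mathcal R^\alpha$ on this subspace is a ratio of \emph{eight} Gamma factors (four in the numerator, four in the denominator); a ${}_2F_1$ corresponds to only four. The resulting kernel is a Fox $H$-function $H_{4,4}^{2,2}$, which is precisely the Mellin convolution of the two Weber--Schafheitlin kernels, not a further ${}_2F_1$. This is where the paper departs from your sketch: it first computes the Mellin multiplier $\mathcal H_{4,4}^{2,2}(k,\alpha;z)$ of the composition directly, then uses the residue-series expansions of $H_{4,4}^{2,2}$ separately on $r<s$ and $r>s$ (Lemmas~\ref{series expansion 1} and~\ref{series expansion 2}) together with the diagonal singularity analysis (Lemma~\ref{sin at $r=s$.}) in place of the ${}_2F_1$ power series that were available for the wave operators themselves. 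Your downstream plan (split into $s/r<1/2$, $s/r>2$, $1/2<s/r<2$; extract a CZ principal part with multiplier sequence $C_k^\alpha$; show the residual kernel is a logarithmic $L^1(\dd s/s)$ function) does carry over once the kernel is correctly identified, but the claimed reduction to a single Weber--Schafheitlin integral and ${}_2F_1$ is the step that would fail, and without it the coefficient bounds needed for the discrete multiplier theorem (the analogue of Lemma~\ref{derivative estimated for RT}) are unavailable.
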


Once we prove Proposition \ref{bdd for Riesz transform}, combining it with the $L^p$-boundedness of wave operators immediately gives the $ W^{s, p}$-boundedness for $W_{\pm}$ and $W^*_{\pm}$: for $s\neq 0$  satisfying $-d<s<2\nu_0+2$,  and
$p$ as in \eqref{equ:psig},  we have
\begin{gather}
    \begin{alignedat}{100}\|W_{\pm}f\|_{\dot W^{s, p}(\mathbb R^d)} &=\big\||\nabla |^s W_{\pm}|\nabla |^{-s} |\nabla |^s f \big\|_{L^p(\mathbb R^d)}\\
    &=\big\||\nabla |^s \la ^{-s/2}W_{\pm} |\nabla |^s f \big\|_{L^p(\mathbb R^d)}\\
    &\lesssim \|W_{\pm}|\nabla |^s f\|_{L^p(\mathbb R^d)}\\
    &\lesssim \|f\|_{\dot W^{s, p}(\mathbb R^d)}.
    \end{alignedat} \label{reduction}
\end{gather}
This proves the result, because boundedness in $L^p
 (\mathbb R^d)$   and $\dot W^{s, p}(\mathbb R^d)$ implies  boundedness in $W^{s,p}(\mathbb R^d)$. Hence, to prove Theorem \ref{thm:main 2}, we only need to prove Proposition \ref{bdd for Riesz transform}.

The operators $\mathcal R^\alpha $ and $\mathcal R^{-\beta}$ are essentially Mellin multipliers on each subspace of spherical harmonics. The kernel functions of $\mathcal R^{\alpha}$ and $\mathcal R^{-\beta}$ are given by Fox $H$-functions $H_{4,4}^{2,2}(k;r,s)$, which plays the same role as the hypergeometric functions in the study of wave operators. Using the residue theorem, $H_{4,4}^{2,2}$ can be written as power series when $0<\frac rs<1$ and when $\frac rs>1$. At the point $r=s$, we can prove that they have a singularity of type
\begin{align*}
    c(r-s)^{-1},
\end{align*}
see Lemma \ref{sin at $r=s$.} below. These facts allow us to essentially re-use the proof of the $L^p$-boundedness of $W_{\pm}$ and $W^*_{\pm}$ to show the boundedness of $\mathcal R^\alpha$ and $\mathcal R^{-\beta}$. The details are given in Section \ref{s:proof-of-wsp-bddness}.

The $L^p$-boundedness of $\mathcal R^{\alpha}$  and $\mathcal R^{-\beta}$ is also closely related to the equivalence of Sobolev norms as studied in \cite{KMVZZ-Sobolev}, where the authors utilized generalized Gaussian kernel estimates  for the operator $e^{-t \la}$ and a perturbative method to obtain the equivalence in a restricted range $\alpha\in(0, 2)$. In this paper, we directly handle these operators using spherical harmonics and Fox $H$-functions, which enable us to extend the range of the fractional powers to  $-2-2\nu_0<\beta<d$ and $-d<\alpha<2+2\nu_0$. These bounds on $\alpha$ and $\beta$ arise naturally  in order to guarantee that the Fox $H$-functions are well-defined. In particular,  the equivalence of Sobolev norms hold for a larger range of $\alpha$. To be precise, we have the following:

\begin{corollary}[Equivalence of Sobolev norms]
\label{c:equiv-of-sob-norm} Let $d\geq 2$ and $a\ge -(\frac{d-2}2)^2$.
     \begin{enumerate}
    \item
 Let $-d<\alpha <2+2\nu_0$ and $\alpha \neq 0$. Further assume that $p$ satisfies
\begin{equation*}
 \max \Big\{0, \frac{\sigma+\alpha}{d}\Big\}<\frac{1}{p}<\min \Big\{1, \frac{d-\sigma}{d},  \frac{d+\alpha}{d}\Big\}.
\end{equation*}
Then for all $f \in C_c^\infty(\mathbb R^d \setminus \{0\})$,
\begin{align}
    \|(-\Delta)^{\alpha/2}f\|_{L^p(\mathbb R^d)} \lesssim  \|\la ^{\alpha/2}f\|_{L^p(\mathbb R^d)}. 
\end{align}
\item Let $-2\nu_0-2<\beta <d $ and $\beta \neq 0$. Further assume that p satisfies
  \begin{equation*}
      \max\Big\{0, \frac{\beta}{d}, \frac{\sigma}{d}\Big \}<\frac{1}{p}<\min \Big\{1, \frac{d-\sigma+\beta}{d} \Big\}.
    \end{equation*}
Then for all $f \in C_c^\infty(\mathbb R^d \setminus \{0\})$,
    \begin{align}\label{sobolev norm equiv}
    \|\la ^{\beta/2}f\|_{L^p(\mathbb R^d)} \lesssim  \|(-\Delta) ^{\beta/2}f\|_{L^p(\mathbb R^d)}.
\end{align}
\end{enumerate}

\end{corollary}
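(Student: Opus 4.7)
The plan is to deduce both parts of Corollary \ref{c:equiv-of-sob-norm} as essentially immediate consequences of Proposition \ref{bdd for Riesz transform}, the boundedness of the Riesz-type operators $\mathcal R^\alpha = (-\Delta)^{\alpha/2}\la^{-\alpha/2}$ and $\mathcal R^{-\beta} = \la^{\beta/2}(-\Delta)^{-\beta/2}$. The ranges of $\alpha,\beta,p$ in the corollary are chosen to match verbatim the hypotheses of the two parts of the proposition, so no further conditions arise.

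For part (1), given $f \in C_c^\infty(\mathbb R^d \setminus \{0\})$ I set $g \coloneq \la^{\alpha/2} f$. Then $f = \la^{-\alpha/2} g$ by the functional calculus, so
\begin{align*}
(-\Delta)^{\alpha/2} f = (-\Delta)^{\alpha/2}\la^{-\alpha/2} g = \mathcal R^\alpha g,
\end{align*}
and Proposition \ref{bdd for Riesz transform}(1) yields $\|(-\Delta)^{\alpha/2} f\|_{L^p} = \|\mathcal R^\alpha g\|_{L^p} \lesssim \|g\|_{L^p} = \|\la^{\alpha/2}f\|_{L^p}$. Part (2) proceeds symmetrically: setting $h \coloneq (-\Delta)^{\beta/2} f$ and using $f = (-\Delta)^{-\beta/2} h$, one obtains $\la^{\beta/2} f = \mathcal R^{-\beta} h$, and Proposition \ref{bdd for Riesz transform}(2) supplies the bound \eqref{sobolev norm equiv}.

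The only technical point to verify is the legitimacy of writing $f = \la^{-\alpha/2}(\la^{\alpha/2} f)$ and $f = (-\Delta)^{-\beta/2}((-\Delta)^{\beta/2} f)$ for test functions $f$ supported away from the origin. Since both $-\Delta$ and $\la$ are non-negative self-adjoint operators with purely absolutely continuous spectrum on $(0,\infty)$ in the regime under consideration, complex powers are defined via the functional calculus and their compositions telescope on a dense subspace containing $C_c^\infty(\mathbb R^d \setminus \{0\})$. For $-\Delta$ this is immediate from the Fourier transform, while for $\la$ one uses the spectral decomposition in terms of spherical harmonics and Bessel functions that is developed earlier in the paper. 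With these inversion identities in hand, the chain of equalities above is rigorous, and no further estimate is required beyond citing Proposition \ref{bdd for Riesz transform}. There is no substantive obstacle here: the corollary is a formal consequence of the proposition, and the entire difficulty of the argument lies upstream, in establishing the $L^p$-boundedness of $\mathcal R^\alpha$ and $\mathcal R^{-\beta}$ via the Fox $H$-function representation.
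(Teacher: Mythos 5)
Your proof is correct and matches the approach the paper intends: the corollary is stated immediately after Proposition \ref{bdd for Riesz transform} with no separate proof precisely because, once one writes $(-\Delta)^{\alpha/2}f = \mathcal R^\alpha\big(\la^{\alpha/2}f\big)$ and $\la^{\beta/2}f = \mathcal R^{-\beta}\big((-\Delta)^{\beta/2}f\big)$ for $f\in C_c^\infty(\R^d\setminus\{0\})$, the two estimates drop out of the $L^p$-boundedness of $\mathcal R^\alpha$ and $\mathcal R^{-\beta}$. Your remark on the legitimacy of telescoping the fractional powers on test functions supported away from the origin is the right point to flag, and your justification via the functional calculus and the spherical-harmonics/Bessel spectral decomposition is the same mechanism the paper relies on.
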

\begin{remark}
 We want to emphasize  part \emph{(2)} of Corollary \ref{c:equiv-of-sob-norm} in dimension $d=2$: in this case, $a\ge 0$, and our result simply says that $\mathcal R^{-1}$ is  $L^p$-bounded for $1<p<2$. In particular, this implies that for $1<p<2$
\begin{align}\label{failure of hardy's inequality}
    \big\| |x|^{-1}f\big\|_{L^p(\mathbb R^2)} \lesssim \|\nabla f\|_{L^p(\mathbb R^2)}.
\end{align}
This is consistent with the fact that Hardy's inequality, which is \eqref{failure of hardy's inequality} at the $p=2$ endpoint, is false. In this sense, the range for which we proved the inequality \eqref{sobolev norm equiv} is sharp in dimension two.
\end{remark}

%
%
%The $L^p$-boundedness of $\mathcal R^\alpha$ and $\mathcal R^{-\beta}$  have already been proved in \cite{PSS} for radial functions. Our paper extends their results from the radial case to non-radial $L^p$ functions using the spherical harmonic decomposition.

%\begin{remark}
% We generalized the Proposition 3.5 in \cite{PSS} from the radial case to the non radial case by using the spherical harmonic decomposition and a discrete multiplier theorem.
% \end{remark}

%\emph{The main innovations in our paper are: (1) we give an integral representation of the wave operators in terms of Bessel functions; and (2) we use a discrete multiplier theorem for spherical harmonics which requires us to estimate the finite differences of coefficients to derive the $L^p$-boundedness for the wave operators. Finally, (3) we reduce the $W^{s,p}$-boundedness of wave operators to the $L^p$-boundedness of Mellin multipliers which can be given by Fox $H$-functions.}
%

\subsection{Applications}\label{ss:application}

Since $\la$ is a positive self-adjoint  operator on $L^2(\mathbb R^d)$. By the spectral theorem, for any bounded Borel function $F$,  we can define the operator $F(\la)$ by the formula
\begin{align*}
   F(\la)=\int_0^\infty F(\lambda)\dd E_{\la}(\lambda).
\end{align*}
By the intertwining property $W_{\pm} (-\Delta)= \la W_{\pm}$  and the spectral theorem, we have $W\dd E_{-\Delta}(\lambda)= \dd E_{\la}(\lambda) W $. Hence,
\begin{equation}\label{equ:intpro}
  F(\la)= W_{\pm}F(H_0)W_{\pm}^{*}.
\end{equation}

\subsection*{Application 1: Dispersive estimates.} \label{app-1}

 Let $e^{it \la}f$ be the solution of the free Schr\"odinger equation with inverse-square potential and initial data $f$.
Using \eqref{equ:intpro} and the  $L^{p}$-$L^{p'}$ estimates for the free Schr\"odinger operator $e^{-it\Delta}$,
\begin{equation*}
  \big\|e^{-it\Delta}f\big\|_{L^{p'}(\R^d)}\lesssim |t|^{-d(\frac1p-\frac12)}\|f\|_{L^{p}(\R^d)},\quad \forall\;t\neq0
\end{equation*}
with $1\leq p \leq 2,$  and by Theorem \ref{thm:main}, we obtain the dispersive estimates for the Schr\"odinger operator $e^{-it\la}$   as follows:
\begin{corollary}[Dispersive estimate for Schr\"odinger equations]\label{cor:disest of sch}
Let $d\geq 2$, $a\geq-\tfrac{(d-2)^2}{4}$ and $\tfrac12\leq\tfrac1p<\min\big\{1,\tfrac{d-\sigma}d\big\}$. Then, there holds
\begin{align}\label{equ:dispes}
   \big\|e^{-it\la}f\big\|_{L^{p'}(\R^d)}\lesssim & |t|^{-d(\frac1p-\frac12)}\|f\|_{L^{p}(\R^d)},\quad \forall\;t\neq0.
\end{align}
\end{corollary}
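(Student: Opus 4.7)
The plan is to exploit the intertwining property \eqref{equ:intpro} to reduce the dispersive estimate for $e^{-it\la}$ to the classical dispersive estimate for the free Schr\"odinger flow $e^{it\Delta}$, conjugated by the stationary wave operators. Specifically, applying \eqref{equ:intpro} with $F(\lambda) = e^{-it\lambda}$ yields the operator identity
\begin{equation*}
  e^{-it\la} \;=\; W_{\pm}\, e^{it\Delta}\, W_{\pm}^{*},
\end{equation*}
so that the problem becomes one of composing three bounded operators on the appropriate Lebesgue spaces.

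The next step is to verify that both $p$ and its dual exponent $p'$ lie in the range \eqref{equ:pcondlp} permitted by Theorem \ref{thm:main}. The hypothesis $\tfrac12 \leq \tfrac1p < \min\{1,\tfrac{d-\sigma}{d}\}$ supplies the upper bound on $1/p$ directly. Since the admissibility constraint $a \geq -\tfrac{(d-2)^2}{4}$ forces $\sigma \leq \tfrac{d-2}{2}$, one checks in both $d=2$ and $d\geq 3$ that $\max\{0,\sigma/d\} < \tfrac12 \leq \tfrac1p$; taking conjugates, $\tfrac{1}{p'} = 1 - \tfrac1p$ lies in $(\max\{0,\sigma/d\},\,\tfrac12]$, and the inequality $\sigma < d/2$ also places $1/p'$ strictly below $\min\{1,\tfrac{d-\sigma}{d}\}$. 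Thus Theorem \ref{thm:main} gives boundedness of $W_{\pm}^{*}$ on $L^p(\R^d)$ and of $W_{\pm}$ on $L^{p'}(\R^d)$.

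Then I would chain the three estimates in sequence:
\begin{align*}
  \big\|e^{-it\la} f\big\|_{L^{p'}(\R^d)}
  &= \big\|W_{\pm}\, e^{it\Delta}\, W_{\pm}^{*} f\big\|_{L^{p'}(\R^d)}\\
  &\lesssim \big\|e^{it\Delta}\, W_{\pm}^{*} f\big\|_{L^{p'}(\R^d)}\\
  &\lesssim |t|^{-d(\tfrac1p - \tfrac12)} \,\big\|W_{\pm}^{*} f\big\|_{L^{p}(\R^d)}\\
  &\lesssim |t|^{-d(\tfrac1p - \tfrac12)}\, \|f\|_{L^{p}(\R^d)},
\end{align*}
where the first and third inequalities invoke Theorem \ref{thm:main} for $W_{\pm}$ on $L^{p'}$ and $W_{\pm}^{*}$ on $L^p$ respectively, and the middle inequality is the standard $L^{p}\to L^{p'}$ dispersive bound for $e^{it\Delta}$ recalled in the text.

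The main conceptual difficulty has already been resolved in Theorem \ref{thm:main}; what remains here is essentially bookkeeping. The only subtle point to keep in mind is to justify the operator identity $e^{-it\la} = W_{\pm} e^{it\Delta} W_{\pm}^{*}$ as an identity on $L^p$: it first holds on $L^2 \cap L^p$ by the spectral theorem and \eqref{equ:intpro}, and then extends to all of $L^p$ by density using the already-established $L^p$-boundedness of each factor. This completes the argument.
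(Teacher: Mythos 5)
Your proposal is correct and follows exactly the route the paper intends: conjugate the free Schr\"odinger propagator via the intertwining identity $e^{-it\la}=W_{\pm}e^{it\Delta}W_{\pm}^{*}$, then chain the $L^{p'}$-boundedness of $W_{\pm}$, the free dispersive bound, and the $L^{p}$-boundedness of $W_{\pm}^{*}$, all of which are justified by the given range of $p$ (and indeed the paper writes out this very argument in detail for the wave analogue, Corollary~\ref{cor:disestwave}). Your verification that both $1/p$ and $1/p'$ fall in the range \eqref{equ:pcondlp}, and the density remark to extend the $L^2$ operator identity to $L^p$, are sound.
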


%\begin{remark}We give here a short proof of \eqref{equ:dispes} when the initial data is radial  by the argument of \cite{Z18}. By appealing to Theorem 2.8 in \cite{Z18}, we only need to show the results for $a<0$.  Let $u(t,x)=e^{-it\la}f$, then for $t>0$,
%\begin{align*}
%  |u(t,\rho)|\lesssim & t^{-\frac{d}2}\int_\frac{2t}\rho^\infty|f(s)|s^{d-1}\big(\tfrac{\rho s}{2t}\big)^{-\frac{d-1}2}\dd s
%  +t^{-\frac{d}2}\int_0^\frac{2t}\rho|f(s)|s^{d-1}\Big(\frac{\rho s}{2t}\Big)^{-\sigma}\dd s\\
%  \lesssim& t^{-\frac{d}2}\|f\|_{L^{p}(\R^d)} \Big(\frac{\rho }{2t}\Big)^{-\frac{d}{p'}},
%\end{align*}
%where $\rho=|x|,r=(\frac{d}{\sigma})_-$, and $a_{-}$ denotes any quantity of the form $a-\epsilon$ for some $\epsilon>0$.
% Hence
%$$\|u(t,\cdot)\|_{L^{p',\infty}(\R^d)}\lesssim t^{-d(\frac1p -\frac1 {2})}\|f\|_{L^{p}(\R^d)},$$
%where $L^{p',\infty}$ is the Lorentz space.
%This, interpolated with $\|e^{-it\la}f\|_{L^2}=\|f\|_{L^2}$, implies \eqref{equ:dispes}.
%\end{remark}
%
%

For the wave propagator, the following dispersive estimate
\begin{equation}
  \Big\|\tfrac{\sin(t\sqrt{-\Delta})}{\sqrt{-\Delta}} f\Big\|_{L^{p'}(\R^d)}\lesssim |t|^{-(d-1)(\frac1{p}-\frac{1}{2})}\|f\|_{\dot{W}^{\theta_{p, d},p}(\R^d)},\quad \forall\;t\neq0,
\end{equation}
 is valid, where
\begin{align}\label{theta p d}
   \theta_{p,d}=\tfrac{d+1}{p}-\tfrac{d+3}{2}.
\end{align}
%for $1\leq p\leq 2$ with $\theta_{p, d}=\frac{d+1}{p}-\frac{d+3}{2}$.  Or equivalently,
%
%\begin{equation}
%  \Big\|{\sin(t\sqrt{-\Delta})} f\Big\|_{L^{p'}(\R^d)}\lesssim |t|^{-(d-1)(\frac1{p}-\frac{1}{2})}\|f\|_{\dot{W}^{\frac{d+1}{p}-\frac{d+1}{2},p}(\R^d)},\quad \forall\;t\neq0
%\end{equation}
%
This inequality together with Theorem \ref{thm:main 2}  yields the dispersive estimates for the wave propagator $\tfrac{\sin(t\sqrt{\la})}{\sqrt{\la}}$.
\begin{corollary}[Dispersive estimate for wave equations]\label{cor:disestwave}
Let $d\geq 2$,  $a\geq-\tfrac{(d-2)^2}{4}$  and $\tfrac12\leq\tfrac1p<\min\Big\{1,\tfrac{d-\sigma}d\Big\}$. If further assume that
$\frac{1}{p}>\frac{3}{2}-\frac{d}{2}+\sigma$,  then  there holds
\begin{align}\label{equ:dispwava-1}
   \Big\|\tfrac{\sin(t\sqrt{\la})}{\sqrt{\la}}f\Big\|_{L^{p'}(\R^d)}\lesssim& |t|^{-(d-1)(\frac1p-\frac12)}\big\|f\big\|_{\dot W^{\theta_{p, d} , p}},\quad \forall\;t\neq0.
\end{align}
%If further assume that $\tfrac{3}{2}-\tfrac{d}{2} <\tfrac1p<\tfrac{d+3}{2}-\sigma$, then
%\begin{align}\label{equ:dispwava-2}
%   \Big\|\tfrac{\sin(t\sqrt{\la})}{\sqrt{\la}}f\Big\|_{L^{p'}(\R^d)}\lesssim & |t|^{-(d-1)(\frac1p-\frac12)}\big\|\la^{\frac{1}{2}\theta_{p,d}}f\big\|_{L^p(\R^d)},\quad \forall\;t\neq0,
%\end{align}
%where $\theta_{p,d}$ is as in \eqref{theta p d}.
\end{corollary}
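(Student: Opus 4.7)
The plan is to reduce the inequality \eqref{equ:dispwava-1} to the known free wave dispersive estimate by sandwiching it between wave operators. Using the intertwining formula \eqref{equ:intpro} (taking $W_-$; the $W_+$ case is identical), I factor
\begin{align*}
  \tfrac{\sin(t\sqrt{\la})}{\sqrt{\la}} f \;=\; W_{-}\,\tfrac{\sin(t\sqrt{-\Delta})}{\sqrt{-\Delta}}\,W_{-}^{*} f,
\end{align*}
and then estimate from outside in: (i) apply the $L^{p'}$-boundedness of $W_-$ supplied by Theorem \ref{thm:main}; (ii) apply the free-space wave dispersive inequality to $\tfrac{\sin(t\sqrt{-\Delta})}{\sqrt{-\Delta}} W_-^*f$, gaining the factor $|t|^{-(d-1)(1/p-1/2)}$ at the cost of $\|W_-^*f\|_{\dot W^{\theta_{p,d},p}}$; (iii) remove the remaining wave operator using the $\dot W^{\theta_{p,d},p}$-boundedness of $W_-^*$ from Theorem \ref{thm:main 2}(2), whose dotted version follows from the undotted one by the homogeneous reduction carried out in \eqref{reduction}.

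The core task is then to verify that the stated hypotheses on $p$ are precisely what each of steps (i) and (iii) requires. Step (i) asks for $\max\{0,\sigma/d\} < 1/p' < \min\{1,(d-\sigma)/d\}$, which via $1/p' = 1-1/p$ reduces to $\sigma/d < 1/p < (d-\sigma)/d$; the upper bound is an assumption, and the lower bound holds since $1/p \geq 1/2$ together with $\sigma \leq (d-2)/2 < d/2$. Step (iii) invokes Theorem \ref{thm:main 2}(2) with $\beta = \theta_{p,d}$; using the identity $2\nu_0 + 2 = d - 2\sigma$ and the bound $\sigma \leq (d-2)/2$, the membership $-2\nu_0-2 < \theta_{p,d} < d$ and the index bound $\theta_{p,d}/d < 1/p$ follow from $1/p \geq 1/2$ by elementary algebra. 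The only non-automatic condition is the upper bound $1/p < (d-\sigma+\theta_{p,d})/d$: substituting $\theta_{p,d} = \tfrac{d+1}{p} - \tfrac{d+3}{2}$ and clearing denominators, a short calculation shows it is equivalent to
\begin{align*}
  \tfrac{1}{p} \;>\; \tfrac{3}{2} - \tfrac{d}{2} + \sigma,
\end{align*}
which is exactly the additional hypothesis stated in Corollary \ref{cor:disestwave}.

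There is no genuine analytic obstacle beyond this bookkeeping; the substance has already been carried out in Theorems \ref{thm:main} and \ref{thm:main 2}. The only conceptual point worth flagging is that the extra lower bound on $1/p$ is dictated not by the free wave dispersive estimate itself but by the Sobolev-regularity transfer in step (iii), i.e., by the requirement that $W_-^*$ act boundedly on $\dot W^{\theta_{p,d},p}(\R^d)$.
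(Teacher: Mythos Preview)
Your proposal is correct and follows essentially the same route as the paper: factor $\tfrac{\sin(t\sqrt{\la})}{\sqrt{\la}} = W_{\pm}\tfrac{\sin(t\sqrt{-\Delta})}{\sqrt{-\Delta}}W_{\pm}^{*}$, apply the $L^{p'}$-boundedness of $W_{\pm}$, the free wave dispersive estimate, and finally the $\dot W^{\theta_{p,d},p}$-boundedness of $W_{\pm}^{*}$. In fact you carry out more of the index-checking explicitly than the paper does, in particular the derivation that $1/p < (d-\sigma+\theta_{p,d})/d$ is equivalent to the stated extra hypothesis $1/p > \tfrac{3}{2} - \tfrac{d}{2} + \sigma$.
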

\begin{proof} By \eqref{equ:intpro}, we have
$\tfrac{\sin(t\sqrt{\la})}{\sqrt{\la}}=W_{\pm} \tfrac{\sin(t\sqrt{-\Delta})}{\sqrt{-\Delta}} W^*_{\pm}$.
Hence,
   \begin{align}
      \Big \|\tfrac{\sin(t\sqrt{\la})}{\sqrt{\la}}f \Big\|_{L^{p'}(\mathbb R^d)}&= \Big \|W_{\pm} \tfrac{\sin(t\sqrt{-\Delta})}{\sqrt{-\Delta}}W_{\pm}^*f \Big \|_{L^{p'}(\mathbb R^d)} \notag  \\
       &\lesssim \big\| \tfrac{\sin(t\sqrt{-\Delta})}{\sqrt{-\Delta}}W_{\pm}^*f\big\|_{L^{p'}(\mathbb R^d)} \label{pf-cor1.9-1} \\
       &\lesssim  |t|^{-(d-1)(\frac1p-\frac12)} \|W_{\pm}^*f\|_{\dot W^{\theta_{p, d},p}(\mathbb R^d)} \notag \\
       &\lesssim  |t|^{-(d-1)(\frac1p-\frac12)}\|f\|_{\dot {W}^{\theta_{p, d},p}(\mathbb R^d)},
        \label{pf-cor1.9-2}
       \end{align}
       where in \eqref{pf-cor1.9-1}, we used that  $\max \Big\{0, \frac{\sigma}{d}\Big\}<\frac{1}{p'}<\min \Big\{1, \frac{d-\sigma}{d}\Big\}$, and in \eqref{pf-cor1.9-2}, we used that
       \[  -2\nu_0-2<\theta_{p,d }<d, \max\Big\{0, \tfrac{\theta_{p,d}}{d}, \tfrac{\sigma}{d}\Big\}<\frac{1}{p}<\min \Big\{1, \tfrac{d-\sigma}{d},  \tfrac{d-\sigma+\theta_{p, d}}{d}\Big\}. \qedhere\]        
%      Furthermore, under the condition $\tfrac12\leq\tfrac1p<\min\big\{1,\tfrac{d-\sigma}d, \frac{d-\sigma+\theta_{p,d}}{d}, \frac{d+\theta_{p,d}}{d}\big\}$, we have
%       \begin{align*}
%    \big \|\tfrac{\sin(t\sqrt{\la})}{\sqrt{\la}}f\big\|_{L^{p'}(\mathbb R^d)}  &\lesssim  |t|^{-(d-1)(\frac1p-\frac12)}\|\mathcal R^{\theta_{p, d}} \la^{\frac{1}{2}\theta_{p, d}}  f\|_{L^p(\mathbb R^d)}\\
%       &\lesssim  |t|^{-(d-1)(\frac1p-\frac12)}\| \la^{\frac{1}{2}\theta_{p, d}}  f\|_{L^p(\mathbb R^d)}.\qedhere
%   \end{align*}
  \end{proof}

%\begin{corollary}[Dispersive estimate for wave equations]Let $d\geq 2$, if either $1<p\leq \frac{2(d+1)}{d+3} $ (Hence $\theta_{p,d}\geq 0$) and  $\frac{d+3}{2(d+1)}\leq\tfrac1p<\min\big\{1,\tfrac{d-\sigma}d\big\}$, or if $\frac{1}{2}\leq \frac{1}{p}\leq \frac{d+3}{2(d+1)}$ (hence, $-1<\theta_{p,d}<0$) and $\frac{1}{2} \leq\tfrac1p<\min\big\{\frac{d+3}{2(d+1)},\tfrac{d-\sigma+\theta_{p,d}}d\big\}$,  there holds
%\begin{align}\label{equ:dispwava-1}
%   \Big\|\tfrac{\sin(t\sqrt{\la})}{\sqrt{\la}}f\Big\|_{L^{p'}(\R^d)}\lesssim& |t|^{-(d-1)(\frac1p-\frac12)}\big\|f\big\|_{W^{\theta_{p, d} , p}},\quad \forall\;t\neq0,
%\end{align}
%If  if either $1<p\leq \frac{2(d+1)}{d+3} $ (Hence $\theta_{p,d}\geq 0$) and  $\frac{d+3}{2(d+1)}\leq\tfrac1p<\min\big\{1,\tfrac{d-\sigma}d\big\}$, or if $\frac{1}{2}\leq \frac{1}{p}\leq \frac{d+3}{2(d+1)}$ (hence, $-1<\theta_{p,d}<0$) and $\frac{1}{2} \leq\tfrac1p<\min\big\{\frac{d+3}{2(d+1)},\tfrac{d-\sigma+\theta_{p,d}}d, \frac{d+\theta_{p, d}}{d} \big\}$ then
%\begin{align}\label{equ:dispwava-2}
%   \Big\|\tfrac{\sin(t\sqrt{\la})}{\sqrt{\la}}f\Big\|_{L^{p'}(\R^d)}\lesssim & |t|^{-(d-1)(\frac1p-\frac12)}\big\|\la^{\frac{1}{2}\theta_{p,d}}f\big\|_{L^p(\R^d)},\quad \forall\;t\neq0.
%\end{align}
%where $\theta_{p,d}$ is as in \eqref{theta p d}.
%\end{corollary}

\begin{remark} As we  mentioned before, Fanelli, Felli, Fontelos, and Primo have proven \eqref{equ:dispes} with $d\in\{2,3\}$ in \cite{FFFP1,FFFP}, while we prove the dispersive estimates for $e^{it\la}$ in all  dimensions $d\geq4$.  We stress that the dispersive estimates for the  propagator $\tfrac{\sin(t\sqrt{\la})}{\sqrt{\la}}$ are new for all dimensions $d\geq 2$.
\end{remark}

\subsection*{Application 2: Strichartz estimates.}\label{app-2}

Using the classical Strichartz estimates for the free Schr\"odinger operator $e^{-it\Delta}$ (see Keel--Tao \cite{KeelTao} and Strichartz \cite{St})
\begin{align*}
  \|e^{-it\Delta}f\|_{L_t^q L_x^r(\R\times\R^d)}\lesssim\|f\|_{L^2(\R^d)},
\end{align*}
with $(q,r)\in\Lambda_0\coloneq \big\{(q,r):\;q,r\geq2,\;\tfrac2q=d\big(\tfrac12-\tfrac1r\big),\;(q,r,d)\neq (2,\infty,2)\big\}$, we recover the Strichartz estimates for $e^{it \la}$. 

%Note that the condition $(q,r)\in\Lambda_0$ implies that
%\[\frac{1}{r} \in \left(\frac{d-2}{2d}, \frac{1}{2}\right)\subset \left(\frac{d-2}{2d}-\frac{\nu_0}{d}, \frac{d+2}{2d}+\frac{\nu_0}{d}\right)=\left(\frac{\sigma}{d}, \frac{d-\sigma}{d}\right).\]

\begin{corollary}[Strichartz estimates for Schr\"odinger equations] Let $d\geq 2$ and $a\geq -\tfrac{(d-2)^2}4$. Then for any $(q,r)\in\Lambda_0$,  we have
	\begin{align}
		\|e^{it \la}f\|_{L^q_t L^r_x(\R\times\R^d)}\lesssim  \|f\|_{L^2(\R^d)}.
	\end{align}
\end{corollary}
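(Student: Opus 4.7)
The plan is to transport the free Strichartz estimates for $e^{-it\Delta}$ to $e^{it\la}$ by means of the intertwining identity \eqref{equ:intpro} and the $L^p$-boundedness of the wave operators established in Theorem~\ref{thm:main}. Specializing \eqref{equ:intpro} to the bounded Borel function $F(\lambda)=e^{-it\lambda}$ yields $e^{it\la}=W_{\pm}\,e^{-it\Delta}\,W_{\pm}^{*}$, so the proof will reduce to a mechanical chain of bounds of the same shape as in the proof of Corollary~\ref{cor:disestwave}.

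Concretely, for each $(q,r)\in\Lambda_0$ I would write
\begin{align*}
   \|e^{it\la}f\|_{L^q_tL^r_x(\R\times\R^d)}
   &=\|W_{\pm}\,e^{-it\Delta}\,W_{\pm}^{*}f\|_{L^q_tL^r_x} \\
   &\lesssim \|e^{-it\Delta}W_{\pm}^{*}f\|_{L^q_tL^r_x} \\
   &\lesssim \|W_{\pm}^{*}f\|_{L^2(\R^d)} \\
   &\lesssim \|f\|_{L^2(\R^d)},
\end{align*}
where the three estimates use, in order, the $L^r$-boundedness of $W_{\pm}$ from Theorem~\ref{thm:main} (applied pointwise in $t$ before taking the $L^q_t$ norm), the classical Keel--Tao Strichartz estimate for $e^{-it\Delta}$, and the unitarity of $W_{\pm}^{*}$ on $L^2(\R^d)$.

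The one nontrivial piece is checking that every Strichartz exponent $r$ occurring in $\Lambda_0$ lies in the admissible window \eqref{equ:pcondlp} for $L^r$-boundedness of $W_\pm$. Since $\Lambda_0$ forces $r\geq 2$ and (for $d\geq 3$) $r\leq\tfrac{2d}{d-2}$, and since $\sigma\leq\tfrac{d-2}{2}$, the upper condition $\tfrac1r<\tfrac{d-\sigma}{d}$ is immediate. The lower condition $\tfrac1r>\tfrac{\sigma}{d}$ is vacuous when $a\geq 0$ (where $\sigma\leq 0$), and for $-\tfrac{(d-2)^2}{4}<a<0$ it follows from $\sigma<\tfrac{d-2}{2}$, which gives $\tfrac{d}{\sigma}>\tfrac{2d}{d-2}$ and thus comfortably covers the Strichartz range.

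The main, if mild, obstacle is the double-endpoint pair $\bigl(r,a\bigr)=\bigl(\tfrac{2d}{d-2},-\tfrac{(d-2)^2}{4}\bigr)$ in dimensions $d\geq 3$: there $\sigma=\tfrac{d-2}{2}$, the hypothesis of Theorem~\ref{thm:main} degenerates, and the intertwining route just misses. For that single pair the plan is to quote Mizutani's endpoint Strichartz estimate \cite{Miz-17} separately, after which the bound holds uniformly on all of $\Lambda_0$.
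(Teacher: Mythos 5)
Your argument is the same as the paper's: specialize the intertwining identity $e^{it\la}=W_{\pm}\,e^{-it\Delta}\,W_{\pm}^{*}$, use the $L^{r}$-boundedness of $W_{\pm}$ from Theorem~\ref{thm:main}, invoke the free Keel--Tao estimate, and finish with unitarity of $W_{\pm}^{*}$ on $L^{2}$. Where you go beyond the paper is in actually verifying that every $(q,r)\in\Lambda_0$ places $\tfrac1r$ in the open admissible window $\bigl(\sigma/d,(d-\sigma)/d\bigr)$ of Theorem~\ref{thm:main}, and that check surfaces a real gap the paper's one-line proof glosses over: when $a=-\tfrac{(d-2)^2}{4}$ one has $\sigma=\tfrac{d-2}{2}$, so $\sigma/d=\tfrac{d-2}{2d}=\tfrac1r$ precisely at the double-endpoint pair $(q,r)=\bigl(2,\tfrac{2d}{d-2}\bigr)$, and the strict inequality required by Theorem~\ref{thm:main} fails. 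The paper simply asserts $\tfrac1r\in(\sigma/d,(d-\sigma)/d)$ without checking, so as written it does not cover that case. Your patch --- quoting Mizutani's endpoint Strichartz estimate \cite{Miz-17} for that single pair --- is the correct fix and makes your version complete on all of $\Lambda_0$ for all $a\geq-\tfrac{(d-2)^2}{4}$; the rest of your admissibility bookkeeping (in particular $\tfrac1r<\tfrac{d-\sigma}{d}$ following from $\sigma\leq\tfrac{d-2}{2}$ and $r\geq 2$, and vacuity of the lower bound when $a\geq 0$) is accurate.
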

\begin{proof} Using the condition $ \frac{1}{r} \in \left(\frac{\sigma}{d}, \frac{d-\sigma}{d}\right)$, the spectral theorem gives that
\begin{align*}
    \|e^{it \la}f\|_{L^q_t L^r_x(\R\times\R^d)}& =\|We^{-it \Delta }W^*f\|_{L^q_t L^r_x(\R\times\R^d)}\\
    &\lesssim \|e^{-it \Delta }W^*f\|_{L^q_t L^r_x(\R\times\R^d)} \quad \\
  & \lesssim \|W^*f\|_{L^2(\mathbb R^d)}
  \\  & \lesssim \|f\|_{L^2(\mathbb R^d)}.\qedhere
\end{align*}
\end{proof}

Let $I$ be a time interval and let $u: I\times \mathbb R^d\to \mathbb R$ be a Schwartz solution to the homogeneous wave equation $\partial_{tt} u-\Delta u=0$ with the initial data $(f, g)$. Then we have the estimates
\begin{align*}
    \|u\|_{L^q_t  L_x^r(I \times \mathbb R^d)}\lesssim \|f\|_{\dot W^{s, 2}(\mathbb R^d)}+\|g\|_{\dot W^{s-1, 2}(\mathbb R^d)},
\end{align*}
whenever $s\geq 0$, $2\leq  q\leq \infty$ and $2\leq r<\infty$ obey the scaling condition
\begin{align*}
    \frac{1}{q}+\frac{d}{r}=\frac{d}{2}-s,
\end{align*}and the wave admissibility conditions
\begin{align*}
    \frac{1}{q}+\frac{d-1}{2r} \leq \frac{d-1}{4}.
\end{align*}
One could also obtain the following  Strichartz estimates for the wave equation with inverse-square potential using Theorem \ref{thm:main 2}.

\begin{corollary}[Strichartz estimates for wave equation]    Let $d\geq 2$, $a\geq -\frac{(d-2)^2} {4}$, and $u$ be the solution of wave equation $\partial_{tt}u-\la u=0$ with Cauchy data $(f, g) \in \dot W^{s, 2}(\mathbb R^d) \times \dot W^{s-1, 2}(\mathbb R^d)$ with $0<s<\frac{d}{2}$.  Let $2\leq q\leq \infty$ and $2\leq r<\infty$      such that $\frac{2}{q}+\frac{d-1}{r}\leq \frac{d-1}{2}\  (q>2, \ \text{if} \ d=3 \  \text{and} \ q>4, \ \text {if }\ d=2)$.  Then,
\begin{align}
 \|u\|_{L^q_t  L_x^r(I \times \mathbb R^d)}\lesssim \|f\|_{\dot W^{s, 2}(\mathbb R^d)}+\|g\|_{\dot W^{s-1, 2}(\mathbb R^d)},
   \end{align}
where $s=\frac{d}{2}-\frac{1}{q}-\frac{d}{r}$.

\end{corollary}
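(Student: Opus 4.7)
The strategy, mirroring the proof of Corollary \ref{cor:disestwave}, is to transfer the classical wave Strichartz estimate from $-\Delta$ to $\la$ via the intertwining identity \eqref{equ:intpro}. Writing the solution through the spectral calculus,
\begin{align*}
    u(t) = \cos(t\sqrt{\la})f + \tfrac{\sin(t\sqrt{\la})}{\sqrt{\la}}g = W_{\pm} v(t),
\end{align*}
where
\begin{align*}
    v(t) = \cos(t\sqrt{-\Delta})(W_{\pm}^{*}f) + \tfrac{\sin(t\sqrt{-\Delta})}{\sqrt{-\Delta}}(W_{\pm}^{*}g)
\end{align*}
solves $\partial_{tt} v - \Delta v = 0$ with Cauchy data $(W_{\pm}^{*}f,\,W_{\pm}^{*}g)$, reduces the problem to the free case together with mapping bounds for $W_{\pm}$ and $W_{\pm}^{*}$.

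The proof then chains three estimates. First, applying the $L^{r}_{x}$-boundedness of $W_{\pm}$ from Theorem \ref{thm:main} pointwise in $t$ and integrating gives
\begin{align*}
    \|u\|_{L^{q}_{t}L^{r}_{x}(I\times\R^{d})} = \|W_{\pm} v\|_{L^{q}_{t}L^{r}_{x}(I\times\R^{d})} \lesssim \|v\|_{L^{q}_{t}L^{r}_{x}(I\times\R^{d})}.
\end{align*}
Second, the classical wave Strichartz estimate stated just before the corollary yields
\begin{align*}
    \|v\|_{L^{q}_{t}L^{r}_{x}(I\times\R^{d})} \lesssim \|W_{\pm}^{*}f\|_{\dot W^{s,2}(\R^{d})} + \|W_{\pm}^{*}g\|_{\dot W^{s-1,2}(\R^{d})}
\end{align*}
under the scaling relation $s = d/2 - 1/q - d/r$ and the stated wave admissibility assumption. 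Third, Theorem \ref{thm:main 2}(2) with $p=2$ and $\beta\in\{s,\,s-1\}$ removes $W_{\pm}^{*}$ and delivers the claim.

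The main point to verify — and the expected obstacle — is that the hypotheses on $(q,r,s)$ really imply that Theorems \ref{thm:main} and \ref{thm:main 2} apply. For Theorem \ref{thm:main} we need $1/r \in (\max\{0,\sigma/d\},\,\min\{1,(d-\sigma)/d\})$, which for $a<0$ enforces $r < d/\sigma = p_{0}$ on top of the stated wave admissibility. For Theorem \ref{thm:main 2}(2) with $p=2$, the range \eqref{equ:rangpbeta} simplifies to $\max\{0,\beta/d\} < 1/2$ together with $-2\nu_{0}-2 < \beta < d$; using $\sigma\le (d-2)/2 < d/2$, $\nu_{0}\ge 0$ and $0<s<d/2$, both $\beta = s \in (0,d/2)$ and $\beta = s-1 \in (-1,d/2-1)$ lie inside the allowed range. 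Combining these with the scaling relation then recovers precisely the admissibility of $(q,r,s)$ assumed in the statement, finishing the proof.
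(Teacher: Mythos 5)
Your proof is correct and takes essentially the same route as the paper. The paper's argument writes $u = W\tilde{u}$ where $\tilde{u}$ solves the free wave equation with Cauchy data $(W^{*}f, W^{*}g)$, applies the $L^{r}$-boundedness of $W$ (Theorem \ref{thm:main}), the classical wave Strichartz estimate for $\tilde{u}$, and the $\dot W^{s,2}$/$\dot W^{s-1,2}$-boundedness of $W^{*}$ (Theorem \ref{thm:main 2}), exactly matching your three-step chain. Your unpacking of where the index conditions are checked — $L^{r}$ for $W$ versus $L^{2}$ for $W^{*}$ with $\beta = s$ and $\beta = s-1$ — is a clearer presentation of the verification than the paper's single displayed inequality (which conflates the two by writing an $s$-dependent condition on $1/r$), and you correctly flag that the requirement $1/r > \sigma/d$ from Theorem \ref{thm:main} is genuinely an extra constraint beyond the stated wave admissibility when $a<0$.
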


\begin{proof}  Let $\tilde u$ be the solution of the wave equation without potential with Cauchy data $(W^*f, W^*g)$, then  $(W^*f, W^*g) \in \dot W^{s, 2} \times \dot W^{s-1, 2}$. By $W^{s,p}$-boundedness of $W$ and $W^*$, since  we have
\[ \max\Big\{0, \frac{\sigma}{d} , \frac{s}{d}\Big\}<\frac{1}{r}<\min\Big\{1, \frac{d-\sigma}{d},  \frac{d-\sigma+s-1}{d}\Big\}, \]
it follows that
  \begin{align*}
      \|u\|_{L^{q}_t L_x^{r}}& = \|W\tilde u\|_{L^q_t  L_x^r(I \times \mathbb R^d)}\leq  \|\tilde u\|_{L^q_t  L_x^r(I \times \mathbb R^d)}%\left( \max\Big\{0, \frac{\sigma}{d}\Big\}<\frac{1}{r}\leq  \frac{1}{2}\right)
      \\
      &\leq  C(\|W^*f\|_{\dot W^{s, 2}(\mathbb R^d)}+\|W^*g\|_{\dot W^{s-1, 2}(\mathbb R^d)})\\
%      &\left(   \max\Big\{0, \frac{\sigma}{d} , \frac{s}{d}\Big\}<\frac{1}{r}<\min\Big\{1, \frac{d-\sigma}{d}, \frac{d-\sigma+s}{d}\Big\} \right)\\
%&\left(  \max\Big\{0, \frac{\sigma}{d} , \frac{s-1}{d}\Big\}<\frac{1}{r}<\min\Big\{1, \frac{d-\sigma}{d}, \underbrace{ \frac{d-\sigma+s-1}{d}}_{>1/2}\Big\} \right)\\
      &\leq  C(\|f\|_{\dot W^{s, 2}(\mathbb R^d)}+\|g\|_{\dot W^{s-1, 2}(\mathbb R^d)}).\qedhere
  \end{align*}
\end{proof}

\subsection*{Application 3: Sogge's local smoothing estimates.}
By the $W^{s,p}$-boundedness of wave operators,   we can extend some results of conjectures in harmonic analysis associated to  $-\Delta$, such as the local smoothing conjecture, Bochner--Riesz conjecture,  and restriction conjecture,  to the operator $\mathcal{L}_a$.  For simplicity,  we take Sogge's local smoothing conjecture  for example.

%It is known (See Miyachi \cite{Mi} and Peral \cite{Pe}) that for any fixed time $t$, the following sharp $L^p$-estimate
%\begin{align*}
%  \| e^{i t \sqrt{-\Delta}}f(x)  \|_{L^p(\mathbb R^d)} \leq C(1+|t|)^{s_p}\|f\|_{W^{s_p, p}}, \quad s_p=(d-1)\Big|\frac{1}{2}-\frac{1}{p}\Big|
%\end{align*}
%is valid for $p>2$. Here $s_p$ is the loss of regularity for each fixed time $t$.
% If we average in time, we do gain some extra regularity. It has been proved that for some $\varepsilon>0$, the following estimate is valid,
%\begin{align*}
%  \| e^{i t \sqrt{-\Delta}}f(x)  \|_{L^p(I\times \mathbb R^d)}   \leq  C\|f\|_{W^{s_p-\varepsilon , p}},
%\end{align*}
%where $I=[0,1]$.
\begin{conjecture}[Sogge's Local smoothing conjecture, \cite{Sog91}]  For $d\geq 2$ and $a\geq -\frac{(d-2)^2}{4}$, the inequality
\begin{align}\label{equ:sogglosme}
 \left (  \int_{0}^1 \|e^{it \sqrt{-\Delta}}f\|_{L^p(\mathbb R^d)}^p \dd t\right)^{\frac{1}{p}} \lesssim \|f\|_{W^{s_p -\varepsilon , p}(\mathbb R^d)},
\end{align}
holds for all
\begin{align*}
   \varepsilon <
    \begin{cases} s_p, & \text{if} \quad 2<p\leq \frac{2d}{d-1},\\
     \frac{1}{p}, & \text{if} \quad \frac{2d}{d-1} <p<\infty.
    \end{cases}
\end{align*}
\end{conjecture}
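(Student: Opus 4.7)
The plan addresses Sogge's local smoothing conjecture exactly as stated: the parameter $a$ does not appear in the inequality (which only involves $e^{it\sqrt{-\Delta}}$), and $s_p$ is the Miyachi--Peral exponent $s_p=(d-1)(\tfrac12-\tfrac1p)$ from the fixed-time bound $\|e^{i\tau\sqrt{-\Delta}}f\|_{L^p}\lesssim \|f\|_{W^{s_p,p}}$. Averaging over $\tau\in[0,1]$ is conjectured to buy back $\min(s_p,1/p)-\varepsilon$ of regularity, which matches the two regimes in the statement. I would follow the classical strategy: reduce to a single dyadic frequency scale, rescale onto the unit truncated lightcone, and then establish an extension-type bound for the cone in $\R^{d+1}$.

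First I would perform a Littlewood--Paley decomposition $f=\sum_N P_N f$. For $p>2$, vector-valued Littlewood--Paley together with Minkowski in $L^{p/2}$ gives
\begin{equation*}
   \|e^{it\sqrt{-\Delta}}f\|_{L^p_{t,x}([0,1]\times\R^d)}
   \lesssim \Bigl(\sum_N \|e^{it\sqrt{-\Delta}}P_N f\|_{L^p_{t,x}([0,1]\times\R^d)}^2\Bigr)^{1/2},
\end{equation*}
so it suffices to prove, for every $N\geq 1$ and every $f$ Fourier-supported in $\{|\xi|\sim N\}$, the single-scale bound $\|e^{it\sqrt{-\Delta}}f\|_{L^p_{t,x}([0,1]\times\R^d)}\lesssim N^{s_p-\varepsilon}\|f\|_{L^p}$. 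Parabolic rescaling $(x,t)\mapsto (Nx,Nt)$ then recasts this as an $L^p$ bound on a ball of radius $N$ for the cone extension operator
\begin{equation*}
   Ef(x,t)=\int_{|\xi|\in[1,2]} e^{i(x\cdot\xi+t|\xi|)}f(\xi)\,\dd\xi,
\end{equation*}
namely $\|Ef\|_{L^p(B_N)}\lesssim N^{\delta}\|f\|_{L^p(|\xi|\sim 1)}$ with $\delta$ arbitrarily small after absorbing the unit-scale Miyachi--Peral loss.

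For $p\geq \tfrac{2(d+1)}{d-1}$ I would invoke Bourgain--Demeter's $\ell^2$-decoupling for the truncated lightcone. Partitioning the spherical shell into caps $\theta$ of aperture $N^{-1/2}$ yields
\begin{equation*}
   \|Ef\|_{L^p(B_N)}\lesssim N^\delta \Bigl(\sum_\theta \|E_\theta f\|_{L^p(B_N)}^2\Bigr)^{1/2},
\end{equation*}
and per-cap orthogonality combined with the trivial cap-localized bound closes the estimate for any $\delta>0$, yielding the conjecture up to $\varepsilon$-loss in this range. One then upgrades to the full range $p\geq \tfrac{2d}{d-1}$ of the second branch of the conjecture by real interpolation with the Plancherel bound at $p=2$.

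The principal obstacle is the regime $2<p<\tfrac{2d}{d-1}$, where decoupling alone is lossy by a polynomial factor and the target gain $\varepsilon<s_p$ is genuinely harder. Here the plan would follow the Guth--Wang--Zhang template: wave-packet decompose $Ef$ at scale $N^{-1/2}$, split physical space into \emph{broad} cells (several essentially transverse wave packets) and \emph{narrow} cells (mass concentrated in a thin slab), control broad cells via multilinear Kakeya / Bennett--Carbery--Tao combined with small-cap $\ell^2$-decoupling, and iterate on narrow cells through polynomial partitioning, induction on scales, and the cone analogue of Wang's refined Strichartz inequality. The critical missing inputs are sharp incidence bounds for families of light rays; these are available (and were deployed by Guth--Wang--Zhang) only in dimension $d=2$, which is where the plan closes unconditionally. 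For $d\geq 3$ it only reduces the conjecture to outstanding cone Kakeya/incidence problems, and supplying the missing geometry in higher dimensions is the genuine obstacle and the reason the full conjecture remains open.
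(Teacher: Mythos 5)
This statement is labelled a \emph{conjecture} in the paper because it is one: the paper offers no proof, only a citation to Sogge \cite{Sog91}, and in the subsequent corollary it \emph{assumes} the validity of \eqref{equ:sogglosme} for $p\geq p_{\textup{known}}$ in order to transfer the estimate to $e^{it\sqrt{\la}}$ via the intertwining operators. Your proposal is candid about this --- it surveys the known partial results (Bourgain--Demeter $\ell^2$-decoupling for $p\geq 2(d+1)/(d-1)$; Guth--Wang--Zhang for $d=2$) and correctly concludes that in $d\geq 3$ the full range is open --- so you have not claimed a proof that does not exist.

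However, one intermediate assertion in your outline is false and would mislead a reader about the state of the art. You claim that interpolating the decoupling endpoint $p_1=2(d+1)/(d-1)$ against the Plancherel bound at $p=2$ ``upgrades to the full range $p\geq 2d/(d-1)$ of the second branch.'' It does not. In $(1/p,s)$-coordinates, the conjectural regularity threshold is $0$ on $1/p\in\big[\tfrac{d-1}{2d},\,\tfrac12\big]$ and equals the line $s_p-1/p$ (which vanishes at $p_c=2d/(d-1)$) for $1/p<\tfrac{d-1}{2d}$. The interpolation chord from $(1/2,\,0)$ to $\big(1/p_1,\,s_{p_1}-1/p_1\big)$ has strictly shallower slope and therefore sits \emph{above} the conjectural curve throughout $\big(1/p_1,\,\tfrac{d-1}{2d}\big)$. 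Concretely, for $d=3$ the chord yields regularity $1/6$ at $p=3=p_c$, whereas the conjecture asks for regularity arbitrarily close to $0$ there. Hence the window $2d/(d-1)\leq p<2(d+1)/(d-1)$ is \emph{not} closed by interpolation and is also open in $d\geq 3$; the genuine gap is wider than the one you acknowledge.
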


For the history and progress on the local smoothing conjecture, we refer the readers to \cite{BHS,Gao}.
%For now, the best result on this problem is the following
%\begin{theorem}[Newest progress on local smoothing estimates]\label{thm:lsmwave} Let $d\geq 3$ and
%\begin{align*}
%    p>p_{\operatorname{known}}=\begin{cases}
%        2\frac{3d+5}{3d+1},  & \text{for} \ d  \ \text{ is odd} ;\\
%        2\frac{3(d+2)}{3d+2}, & \text{for} \  d \ \text{ is even}.
%    \end{cases}
%\end{align*}
%Then
% \begin{align}
% \left (  \int_{0}^1 \|e^{it \sqrt{-\Delta}}f\|_{L^p(\mathbb R^d)}^p \dd t\right)^{\frac{1}{p}} \lesssim \|f\|_{W^{s_p -\varepsilon , p}(\mathbb R^d)},  \forall \  \varepsilon <\frac{2}{p}-\frac{1}{2}.
%\end{align}
%
%\end{theorem}
By  the intertwining property \eqref{equ:intpro}  and Theorem \ref{thm:main 2}, we establish the local smoothing estimates for the propagator $e^{it \sqrt{\la}}$ as follows:
\begin{corollary}[Local smoothing estimates for $e^{it \sqrt{\la}} $]   Let $d\geq 2$, and $a\geq-\tfrac{(d-2)^2}4.$ Suppose that \eqref{equ:sogglosme} holds for $p\geq p_{\textup{known}}$.  Then, for $p\geq p_{\textup{known}}$ and
$\max\big\{\tfrac{s_p-\varepsilon}{d}, \tfrac{\sigma}{d}\big\}<\frac{1}{p}\leq\frac12$, the following estimate is valid:
    \begin{align}
       \left (  \int_{0}^1 \|e^{it \sqrt{\la}} f\|_{L^p(\mathbb R^d)}^p \dd t\right)^{\frac{1}{p}} \lesssim \|f\|_{{W}^{s_p -\varepsilon , p}(\mathbb R^d)}.
    \end{align}
\end{corollary}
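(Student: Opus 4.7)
The plan is to mirror the proof of Corollary \ref{cor:disestwave}: conjugate by wave operators, apply the known Sogge estimate to the free half-wave propagator, and strip off the wave operators using the $L^p$ and $W^{s,p}$ boundedness results already established. By the intertwining identity \eqref{equ:intpro} applied to the bounded Borel function $\lambda \mapsto e^{it\sqrt{\lambda}}$, one has
\begin{equation*}
e^{it\sqrt{\la}} = W_{\pm}\, e^{it\sqrt{-\Delta}}\, W_{\pm}^{\ast}.
\end{equation*}
Since the hypothesis gives $\sigma/d < 1/p \le 1/2$ and $\sigma \le (d-2)/2 < d/2$ holds automatically, the index $p$ satisfies \eqref{equ:pcondlp}, so Theorem \ref{thm:main} yields, for every $t \in [0,1]$,
\begin{equation*}
\|e^{it\sqrt{\la}}f\|_{L^p(\R^d)} \lesssim \|e^{it\sqrt{-\Delta}} W_{\pm}^{\ast}f\|_{L^p(\R^d)}.
\end{equation*}
Raising to the $p$-th power and integrating in $t \in [0,1]$ transfers the time-average onto the free propagator on the right.

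Next, I apply Sogge's local smoothing estimate \eqref{equ:sogglosme}, valid by the standing hypothesis $p \ge p_{\text{known}}$, to the datum $W_{\pm}^{\ast}f$, obtaining
\begin{equation*}
\Bigl(\int_0^1 \|e^{it\sqrt{-\Delta}} W_{\pm}^{\ast} f\|_{L^p(\R^d)}^p \,\dd t\Bigr)^{1/p} \lesssim \|W_{\pm}^{\ast}f\|_{W^{s_p-\varepsilon,p}(\R^d)}.
\end{equation*}
Finally, I invoke Theorem \ref{thm:main 2}(2) with $\beta = s_p - \varepsilon$ to dispose of $W_{\pm}^{\ast}$, arriving at the desired bound by $\|f\|_{W^{s_p-\varepsilon,p}(\R^d)}$ and completing the estimate.

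The only non-mechanical step is verifying that the pair $(p,\,\beta = s_p - \varepsilon)$ is admissible for Theorem \ref{thm:main 2}(2), i.e.\ that $-2\nu_0 - 2 < s_p - \varepsilon < d$ and that $1/p$ lies in the window \eqref{equ:rangpbeta}. The lower bounds $\max\{0,\sigma/d,(s_p-\varepsilon)/d\} < 1/p$ are exactly the standing hypothesis of the corollary. The upper bounds $1/p < \min\{1,(d-\sigma)/d,(d-\sigma+s_p-\varepsilon)/d\}$ follow from $1/p \le 1/2$, the inequality $\sigma \le (d-2)/2 < d/2$ (which gives $(d-\sigma)/d > 1/2$), and from the admissible range of $\varepsilon$ in the Sogge conjecture, which ensures $s_p - \varepsilon$ is not so negative as to violate $d - \sigma + s_p - \varepsilon > d/p$. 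I expect this bookkeeping of indices to be the main --- and really the only --- obstacle in the proof, but it reduces to routine comparison of inequalities.
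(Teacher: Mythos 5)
Your proposal is correct and follows the same route as the paper: conjugate by the wave operators via the intertwining identity, strip $W$ by the $L^p$-boundedness in Theorem \ref{thm:main}, apply the free local smoothing estimate, and strip $W^*$ by the $W^{s,p}$-boundedness in Theorem \ref{thm:main 2}(2). The index bookkeeping you carry out (in particular verifying $1/p < (d-\sigma+s_p-\varepsilon)/d$ from $s_p - \varepsilon > -1$ and $\sigma \le (d-2)/2$) is a useful spelling-out of what the paper only asserts in one line.
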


\begin{proof} Under the assumption for $p$, we know that $W$ is $L^p$-bounded  and $W^*$ is $W^{s_p-\varepsilon,p}$-bounded. Hence,
  \begin{align*}
      \left (  \int_{0}^1 \|e^{it \sqrt{\la}} f\|_{L^p(\mathbb R^d)}^p \dd t\right)^{\frac{1}{p}}&=\left (  \int_{0}^1 \|We^{it \sqrt{-\Delta}}W^*f\|_{L^p(\mathbb R^d)}^p \dd t\right)^{\frac{1}{p}}\\
     & \lesssim \left (  \int_{0}^1 \|e^{it \sqrt{-\Delta}}W^*f\|_{L^p(\mathbb R^d)}^p \dd t\right)^{\frac{1}{p}}\\
     &\lesssim \|W^*f\|_{{W}^{s_p -\varepsilon , p}(\mathbb R^d)}\\
     &\lesssim \|f\|_{{W}^{s_p -\varepsilon , p}(\mathbb R^d)}.\qedhere
  \end{align*}
\end{proof}

\subsection*{Application 4: Uniform resolvent inequality.}
 For the Laplacian  $-\Delta$, the uniform Sobolev inequality  has been proved by Kenig--Ruiz--Sogge \cite{KRS} which is given as follows: for $z\in\C\setminus \R^+$ and $f\in C_0^\infty(\R^d)$,
\begin{equation}
\label{uniform_Sobolev}
\left\|(-\Delta-z)^{-1}f\right\|_{L^{q} (\R^d)}\leq
C|z|^{\frac d2(\frac1p-\frac1q)-1}\|f\|_{L^{p}(\R^d)},
%\quad z\in\C\setminus \R^+,
%\ f\in C_0^\infty(\R^d),
\end{equation}
where $d\geq3$ and $(p,q)$ satisfies the conditions
\begin{align}
\label{p_q_0}
\frac{2}{d+1}\le \frac1p-\frac1q\le \frac2d,\quad  \frac{2d}{d+3}<p<\frac{2d}{d+1},\text{\ \ and\ \  }  \frac{2d}{d-1}<q<\frac{2d}{d-3}.
\end{align}
See also the paper of Guti\'errez \cite{Gut}, which proved that the condition \eqref{p_q_0} is  sharp. More recently,  Ev\'equoz \cite{Ev} applied the method  of \cite{Gut}  to show the uniform resolvent estimate \eqref{uniform_Sobolev} in dimension $d=2$ provided that $(1/p,1/q)$ is contained in the pentagon
\begin{equation}\label{pq2}
	\big\{\big(\tfrac1p,\tfrac1q\big): \tfrac23\le \tfrac1p-\tfrac1q <1, \, \tfrac34<\tfrac1p\le1, \, 0\le \tfrac1q<\tfrac14 \big\}.	
\end{equation}

As a consequence of Theorem \ref{thm:main}, one can extend \eqref{uniform_Sobolev} to the operator $\la$. To state our result, with $\nu_0$  as in \eqref{df of nu zero}, we write
$$
\mu_a\coloneq \min\Big\{\frac12 ,\nu_0\Big\}.
$$
\begin{corollary} [Uniform Sobolev inequality]\label{thm:unisobine} For $d\geq3$, we  suppose  \begin{equation}\label{p_q}
\frac{2}{d+1}\le \frac1p-\frac1q\le \frac2d,\quad  \frac{2d}{d+2(1+\mu_a)}<p<\frac{2d}{d+1},\quad  \frac{2d}{d-1}<q<\frac{2d}{d-2(1+\mu_a)}.
\end{equation}
While for $d=2$, we assume that $(1/p,1/q)$ is as in \eqref{pq2}.
Then there exists a positive constant $C$ such that for $ z\in\C\setminus \R^+,\ f\in C_0^\infty(\R^d),$
\begin{equation}\label{unf-sob}
\left\|(\la-z)^{-1}f\right\|_{L^{q} (\R^d)}\leq
C|z|^{\frac d2(\frac1p-\frac1q)-1}\|f\|_{L^{p}(\R^d)}.
\end{equation}
\end{corollary}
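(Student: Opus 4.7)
The plan is to leverage the intertwining property \eqref{equ:intpro} and conjugate the free resolvent by the wave operators. Applying the spectral theorem to the bounded Borel function $F(\lambda)=(\lambda-z)^{-1}$ on $[0,\infty)$, which is bounded whenever $z\in\C\setminus\R^+$, one obtains the identity
\begin{equation*}
(\la-z)^{-1}=W_{\pm}(-\Delta-z)^{-1}W_{\pm}^{*}.
\end{equation*}
This reduces the sought estimate to the composition of three bounds: $W_{\pm}^{*}$ on $L^p$, the classical Kenig--Ruiz--Sogge inequality \eqref{uniform_Sobolev} (or Ev\'equoz's two-dimensional pentagon version) from $L^p$ to $L^q$, and $W_{\pm}$ on $L^q$.

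Concretely, I would chain the estimates as
\begin{align*}
\|(\la-z)^{-1}f\|_{L^q(\R^d)}
&=\|W_{\pm}(-\Delta-z)^{-1}W_{\pm}^{*}f\|_{L^q(\R^d)}\\
&\lesssim \|(-\Delta-z)^{-1}W_{\pm}^{*}f\|_{L^q(\R^d)}\\
&\lesssim |z|^{\frac{d}{2}(\frac{1}{p}-\frac{1}{q})-1}\|W_{\pm}^{*}f\|_{L^{p}(\R^d)}\\
&\lesssim |z|^{\frac{d}{2}(\frac{1}{p}-\frac{1}{q})-1}\|f\|_{L^p(\R^d)},
\end{align*}
where the first inequality invokes the $L^q$-boundedness of $W_{\pm}$ and the last invokes the $L^p$-boundedness of $W_{\pm}^{*}$, both supplied by Theorem \ref{thm:main}.

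The only nontrivial verification, and the place where the quantity $\mu_a=\min\{\tfrac{1}{2},\nu_0\}$ is engineered to enter, is to confirm that the exponents in \eqref{p_q} (respectively \eqref{pq2} when $d=2$) fall strictly inside the wave-operator admissibility window \eqref{equ:pcondlp}. Recalling $\sigma=\tfrac{d-2}{2}-\nu_0$, Theorem \ref{thm:main} demands that both $1/p$ and $1/q$ lie in $\bigl(\tfrac{1}{2}-\tfrac{1+\nu_0}{d},\,\tfrac{1}{2}+\tfrac{1+\nu_0}{d}\bigr)\cap(0,1)$. The hypothesis \eqref{p_q} instead places
\begin{equation*}
\tfrac{1}{p}\in\bigl(\tfrac{1}{2}+\tfrac{1}{2d},\,\tfrac{1}{2}+\tfrac{1+\mu_a}{d}\bigr),\qquad \tfrac{1}{q}\in\bigl(\tfrac{1}{2}-\tfrac{1+\mu_a}{d},\,\tfrac{1}{2}-\tfrac{1}{2d}\bigr),
\end{equation*}
and the inequality $\mu_a\leq\nu_0$ makes both intervals fit inside the required window; likewise $\mu_a\leq\tfrac{1}{2}$ ensures that \eqref{p_q} remains inside the Kenig--Ruiz--Sogge range \eqref{p_q_0}. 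For $d=2$ one has $a\geq 0$, hence $\sigma\leq 0$, so Theorem \ref{thm:main} applies for all $1<p,q<\infty$ and the pentagon \eqref{pq2} trivially fits. Beyond this routine bookkeeping of exponent ranges I anticipate no substantive obstacle.
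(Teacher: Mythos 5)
Your argument is exactly the intended one: the paper states this corollary without an explicit proof, but it uses the same pattern — conjugate the free operator's bound by the wave operators via \eqref{equ:intpro}, apply the classical estimate in the middle, and then verify that the exponent window \eqref{p_q} (or \eqref{pq2}) sits inside both the Kenig--Ruiz--Sogge range and the admissibility window \eqref{equ:pcondlp} of Theorem \ref{thm:main} — as in its proofs of Corollaries \ref{cor:disest of sch}--\ref{cor:multhm}. Your bookkeeping of exponents, in particular that $\mu_a\le\nu_0$ places $1/p,1/q$ inside $(\tfrac12-\tfrac{1+\nu_0}{d},\tfrac12+\tfrac{1+\nu_0}{d})$ and $\mu_a\le\tfrac12$ keeps them inside \eqref{p_q_0}, is correct.
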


\begin{remark}
The result of Corollary \ref{thm:unisobine} with $d=2$ is new.
For $d\geq3$,  the estimates for \eqref{unf-sob} were first proved by Bouclet and Mizutani \cite{BM1} and Mizutani \cite{Miz1} under \eqref{p_q} and an extra assumption $\tfrac1p+\tfrac1q=1$. Mizutani--Zhang--Zheng \cite{MZZ} proved
Corollary \ref{thm:unisobine}  for
$$\mu_a=
\begin{cases}
\frac12,&\nu_0\ge\frac12;\\
\tilde \nu_0,&0<\nu_0<\frac12,
\end{cases}
$$
with $\tilde\nu_0=\tfrac{\nu_0^2}{1-2\nu_0^2}$. Our result extends their results to $\mu_a=\min\big\{\tfrac12,\nu_0\big\}$, since $\tilde\nu_0<\nu_0$ for $\nu_0<\tfrac12$.

See Figure \ref{figure_1} for an illustration  of condition \eqref{p_q}:
\begin{itemize}
    \item When $a\geq-\tfrac{(d-2)^2}4+\tfrac14$, or equivalently  $\nu_0\geq\tfrac12$, \eqref{p_q} coincides with \eqref{p_q_0} and corresponds to the interior of the trapezium $ABB'A'$ unioned with the two open line segments ${AA'}$, ${BB'}$  (see Figure \ref{figure_1}), in which case Corollary \ref{thm:unisobine} gives the full range of uniform Sobolev inequalities for $\la$.
    \item When $-\tfrac{(d-2)^2}4\leq a<-\tfrac{(d-2)^2}4+\tfrac14,$  the condition on $(p,q)$ in the paper of Mizutani--Zhang--Zheng \cite{MZZ}  corresponds to the interior of the shaded  hexagon $CDBB'D'C'$, unioned with the two open line segments $BB'$ and $CC'$.     From our work, we extend the region of validity of the uniform Sobolev inequality to the interior of the larger shaded hexagon  $EFBB'F'E'$ unioned with the two open line segments $BB'$ and $EE'$, which corresponds to the condition \eqref{p_q} of Corollary \ref{thm:unisobine}.
%    \item 4 closed line segments $EF$, $FB$, $B'F'$ and $F'E'$ removed.
 \end{itemize}
\end{remark}

\begin{figure}[htbp]
\begin{center}
\definecolor{my1}{rgb}{0,0.4,1}
\definecolor{my2}{rgb}{0.9,0.5,1}
\begin{tikzpicture}[scale=1]

\draw (0,0) rectangle (6.75,6.75);
\draw[->]  (0,0) -- (0,7.5);
\draw[->]  (0,0) -- (7.5,0);
\filldraw[fill=my1!40](4.3875,1.0875)--(3.75,1.0875)--(3.75,1.5)--(5.25,3)--(5.6625,3)--(5.6625,2.3625); %PolygonCDBB'D'C'
\filldraw[fill=my2](4.3875,1.0875)--(3.75,1.0875)--(3.75,0.8)--(4.1,0.8); %trapezium CDFE
\filldraw[fill=my2](5.6625,2.3625)--(5.6625,3)--(6,3)--(6,2.68); %trapezium C'D'F'E'
\draw (6.75,0) node[below] {$\ 1$};
\draw (0,6.75) node[left] {$1$};
\draw (0,0) node[below, left] {$0$};
\draw (7.5,0) node[above] {$1/p$};
\draw (0,7.5) node[right] {$1/q$};
%\draw[dotted] (0,0) -- (6.75,6.75);
\draw[dotted] (6.75,0) -- (0,6.75);
\draw[dotted] (6.3,3) -- (7.8,4.5);
\draw (7,3.5) node[right] {$\frac1p-\frac1q=\frac 2d$};
\draw[dotted] (5.25,3) -- (7.8,5.55);
\draw (7.9,5.4) node[above] {$\frac1p-\frac1q=\frac{2}{d+1}$};
\draw[dotted] (3.75,1.5) -- (3.75,3.0) -- (5.25,3.0);
\draw[dotted] (3.75,0.45) -- (0,0.45);
\draw[dotted] (6.3,3) -- (6.3,0);
%\draw[dotted] (4.3875,1.0875) -- (3.75,1.0875); %CD
\filldraw (3.75,0.45) circle (1pt) node[right] {$A$};    %A
\filldraw (6.3,3)  circle (1pt) node[right] {$\!A'$}; %A'
\filldraw (3.75,1.5)  circle (1pt) node[left] {$B$};   %B
\filldraw (5.25,3)  circle (1pt) node[above] {$B'$}; %B'
\filldraw (4.3875,1.0875)  circle (1pt) node[right] {$C$}; %C
\filldraw (5.6625,2.3625) circle (1pt) node[right] {$C'$};    %C'
\filldraw (3.75,1.0875) circle (1pt) node[left] {$D$};    %D
\filldraw (5.6625,3) circle (1pt) node[above] {$\ \  D'$};    %D'
\filldraw (4.1,0.8)  circle (1pt) node[right] {$E$}; %E
\filldraw (6,2.68) circle (1pt) node[right] {$E'$};    %E'
\filldraw (3.75,0.8) circle (1pt) node[left] {$F$};    %F
\filldraw (6,3) circle (1pt) node[above] {$\ \ \ \ F'$};    %F'
\draw (3.75,0.45) -- (6.3,3); %AA'
\draw[dotted] (3.75,0.45) -- (3.75,1.5) ; %AB
\draw[dotted] (5.25,3) -- (6.3,3) ; %B'A'
\draw (3.75,1.5) -- (5.25,3); %BB'
\draw (4.1,0.8) -- (3.75,0.8); %EF
\draw (6,2.68) -- (6,3); %E'F'
\draw[dotted] (3.75,0.45) -- (3.75,0);
\draw (3.75,0) node[below] {$\frac{d+1}{2d}$};
\draw[dotted] (5.025,1.725) -- (5.025,0) node[below] {$\frac{d+2}{2d}$};
\draw[dotted] (5.6625,2.3625) -- (5.6625,3);
\draw(6.3,0) node[below]  {$\frac{d+3}{2d}$};
\draw (0,3) node[left] {$\frac{d-1}{2d}$};
\draw[dotted] (5.025,1.725) -- (0,1.725) node[left] {$\frac{d-2}{2d}$};
\draw (0,0.45) node[left] {$\frac{d-3}{2d}$};
\draw[dotted] (0,3) -- (3.75,3.0);
\end{tikzpicture}

\end{center}
\caption[An illustration of condition \eqref{p_q} of Corollary \ref{thm:unisobine}. The two trapeziums $CDFE$ and $C'D'F'E'$ are new when $-\tfrac{(d-2)^2}4<a<-\tfrac{(d-2)^2}4+\tfrac14$.]{ An illustration of condition \eqref{p_q} of Corollary \ref{thm:unisobine}. $A',B',C',D',E'$ are dual points of $A,B,C,D,E$, respectively, and
        \begin{center}$
    \begin{array}{ll}
        A=(\frac{d+1}{2d},\frac{d-3}{2d}),&
        B=(\frac{d+1}{2d},\frac{d^2-3d+1}{2d(d+1)}),
        \\
        C=(\frac{d+2(1-\tilde \nu_0)}{2d},\frac{d-2(1+\tilde \nu_0)}{2d}), &
        D=(\frac{d+1}{2d},\frac{d-2(1+\tilde \nu_0)}{2d}),
        \\
        E=(\frac{d+2(1-\nu_0)}{2d},\frac{d-2(1+\nu_0)}{2d}),&
        F=(\frac{d+1}{2d},\frac{d-2(1+\nu_0)}{2d}).
    \end{array}
    $\end{center}
    The two trapeziums $CDFE$ and $C'D'F'E'$ are new when $-\frac{(d-2)^2}4 \le a < -\frac{(d-2)^2}4 + \frac14$.
 }
\label{figure_1}
\end{figure}

\subsection*{Application 5: Multiplier theorem.}
%\textcolor{gray}{
%\begin{corollary}
%Let $m(\xi)$ be a multiplier such that
%\begin{align}\label{equ:mulassum 1}
%	\sup_{R>0} R^{-d}\int_{R<|\xi|<2R} \big||\xi|^{|\alpha|}\partial_{\xi}^\alpha m(\xi)\big|^{{2}} \dd{\xi}<\infty,
%\end{align}
%for all the $|\alpha|\leq \frac{m}{2}+1$. {Then $m\big(\sqrt{\la}\big)$ is bounded in  $L^p $ for $1<p<\infty$ when $a>0$ and  for $p_0'<p<p_0$ when $-\frac{(d-2)^2}{2}<a <0$.}
%\end{corollary}}
%

It follows from  Sogge \cite[Theorem 0.2.6]{Sogge} that $m\big(\sqrt{-\Delta}\big)$ is bounded in  $L^p(\mathbb R^d)$ provided
\begin{align}\label{equ:mulassum}
    \sum_{0\leq |\alpha|<n} \sup_{\lambda>0} \lambda^{-d}\big\||\lambda|^\alpha D^\alpha (\chi(\cdot/\lambda)m(\cdot))\big\|_{L^2(\mathbb R^d)}^2 <\infty,
\end{align}
for some integer $n>\frac{d}{2}$, whenever $\chi\in C_0^\infty(\mathbb R^d \setminus \{0\})$.
 This, together with $L^p$-boundedness of wave operators yields the following multiplier theorem for $\la$.

\begin{corollary}\label{cor:multhm} Let $m\in L^\infty(\mathbb R^d)$  satisfy the condition \eqref{equ:mulassum}. Then $m\big(\sqrt{\la}\big)$ is bounded in  $L^p(\mathbb R^d) $ for $1<p<\infty$ when $a>0$, and  for $p_0'<p<p_0=\tfrac{d}{\sigma}$ when $-\frac{(d-2)^2}{2}\leq a <0$.

\end{corollary}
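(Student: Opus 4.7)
The plan is to reduce the multiplier estimate for $\mathcal{L}_a$ to the corresponding one for $-\Delta$ via the intertwining identity \eqref{equ:intpro}. First, apply \eqref{equ:intpro} with the bounded Borel function $F(\lambda) = m(\sqrt{\lambda})$ to obtain the operator identity
\begin{equation*}
  m\bigl(\sqrt{\mathcal{L}_a}\bigr) \;=\; W_{\pm}\, m\bigl(\sqrt{-\Delta}\bigr)\, W_{\pm}^{*},
\end{equation*}
which makes sense initially on $L^2(\mathbb{R}^d)$.

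Next, invoke Sogge's multiplier theorem (\cite[Theorem 0.2.6]{Sogge}): under the hypothesis \eqref{equ:mulassum}, the operator $m(\sqrt{-\Delta})$ is bounded on $L^p(\mathbb{R}^d)$ for every $1<p<\infty$. To transfer this to $\mathcal{L}_a$, combine it with the $L^p$-boundedness of the stationary wave operators from Theorem \ref{thm:main}: for any $p$ in the range \eqref{equ:pcondlp}, both $W_{\pm}$ and $W_{\pm}^{*}$ act boundedly on $L^p(\mathbb{R}^d)$. Chaining the three bounds yields
\begin{equation*}
  \bigl\|m\bigl(\sqrt{\mathcal{L}_a}\bigr)f\bigr\|_{L^p(\mathbb{R}^d)}
  \lesssim \bigl\|m\bigl(\sqrt{-\Delta}\bigr) W_{\pm}^{*} f\bigr\|_{L^p(\mathbb{R}^d)}
  \lesssim \|W_{\pm}^{*} f\|_{L^p(\mathbb{R}^d)}
  \lesssim \|f\|_{L^p(\mathbb{R}^d)}.
\end{equation*}

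Finally, translate the abstract range \eqref{equ:pcondlp} into the two cases stated in the corollary by using the remark after Theorem \ref{thm:main}: when $a>0$, one has $\sigma<0$, so \eqref{equ:pcondlp} collapses to $1<p<\infty$; when $-\tfrac{(d-2)^2}{4}\le a<0$, one has $\sigma>0$, so \eqref{equ:pcondlp} becomes $p_0'<p<p_0$ with $p_0=d/\sigma$. No new estimate is required, so no single step is genuinely the bottleneck; the only small care needed is to verify that the identity $m(\sqrt{\mathcal{L}_a})=W_{\pm}m(\sqrt{-\Delta})W_{\pm}^{*}$, proved first on $L^2$ via the spectral theorem, extends to $L^p$ by density of $L^2\cap L^p$ and the continuity of each factor on $L^p$.
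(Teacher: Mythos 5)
Your argument is exactly the paper's: the paper derives the corollary by combining Sogge's multiplier theorem for $-\Delta$ with the intertwining identity \eqref{equ:intpro} and the $L^p$-boundedness of the stationary wave operators from Theorem \ref{thm:main}, which is precisely the chain of estimates you wrote down. Your closing remark on extending the $L^2$ identity to $L^p$ by density is a sensible point that the paper leaves implicit, and your translation of the range \eqref{equ:pcondlp} into the two cases ($1<p<\infty$ for $a>0$; $p_0'<p<p_0$ for $a<0$) matches the remark following Theorem \ref{thm:main}.
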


\begin{remark}
Noting that the condition
$$\big|\partial_\lambda^jm(\lambda)\big|\lesssim \lambda^{-j},\quad \forall\;j\geq0$$
implies that $m$ satisfies the assumption \eqref{equ:mulassum}, Corollary \ref{cor:multhm} also implies the  Mikhlin-type multiplier theorem of Killip--Miao--Visan--Zhang--Zheng
 \cite{KMVZZ-Sobolev}.

\end{remark}

%In summary, we proved the $W^{s,p}$-boundedness of wave operators. We view the dispersive estimates for $\la$ as the main contribution of our paper, which is the resolution of an open problem. We also give partial results on the local smoothing estimates for $e^{it \la}$ and $e^{it \sqrt{\la}}$. We  generalized the region for which the uniform Sobolev inequality for $\la$ is valid. In addition, we give a proof for a multiplier theorem without using  Gaussian kernel estimates. Also, we give a new proof for the equivalence of Sobolev spaces and extend the range of index.

The rest of the paper is organized as follows: In Section \ref{s:prelim}, we give some preliminaries which will be used in the following sections. In Section \ref{s:const-of-wave-op}, we construct two intertwining operators which turns out to be the stationary wave operators. The proof of the $L^p$-boundedness is given in Section \ref{s:proof-of-lp-bddness}. In Section \ref{s:proof-of-wsp-bddness}, we give the $L^p$-boundedness of two Riesz-type operators, which implies the $W^{s,p}$-boundedness of wave operators. In Appendices \ref{s:proof-of-lem-deriv-est-for-remainder-term} and \ref{s:proof-of-lem-deriv-est-for-RT}, we give the proof for two technical lemmas.

We conclude this introduction by defining some notation which
will be used throughout this paper. If $X, Y$ are nonnegative quantities, we use $X\lesssim Y $ or
$X=O(Y)$ to denote the estimate $X\leq C Y$ for some constant $C>0$.
%For any $p, 1\leq p \leq \infty$, we denote by $\|\cdot \|_{p}$ the norm in
%$L^{p}=L^{p}(\mathbb{R}^d)$ and by $p'$ the conjugate exponent
%defined by $\frac{1}{p} + \frac{1}{p'}=1$.
We use $\lfloor s\rfloor$ to denote the floor of $s$, i.e. the largest integer that does not exceed $s$.
$\langle f(r),g(r) \rangle $ denotes the inner product between $f,g
\in L^2(\mathbb R_{+}; r^{d-1} \dd r)$, i.e.
\begin{align*}
   \langle f(r) , g(r)\rangle =\int_0^\infty f(r) g(r) r^{d-1} \dd r.
\end{align*}
In the course of proving our results, we will need to further introduce more notation, which we  summarize in Table \ref{table-notation} at the end of this paper.

\section{Preliminaries}
\label{s:prelim}
In this section, we first recall some basic properties of gamma, Bessel and hypergeometric functions and Fox $H$-functions. Then, we prove the $L^p$-boundedness of a singular integral operator, and recall a  discrete multiplier theorem for spherical harmonics which is crucial in proving our main result.
%\subsection{Fourier transform} In this paper, the Fourier transform is defined as
%\begin{align*}
%    \mathscr F f(\xi)=\int_{\mathbb R^d} e^{-i x\cdot \xi} f(x) \dd{x},
%\end{align*}
%with the inverse Fourier transform
%\begin{align*}
%    \mathscr F^{-1}g(x)=\frac{1}{(2\pi)^d}\int_{\mathbb R^d} e^{i x\cdot \xi} g(\xi) \dd{\xi}.
%\end{align*}
\subsection{Gamma function and reciprocal gamma function.} For a complex number $z$ with $\Re z>0$, the gamma function $\Gamma(z)$ is defined by
\begin{equation*}
  \Gamma(z)=\int_0^\infty t^{z-1}e^{-t}\;dt,
\end{equation*}
and analytically continued to $\mathbb C \setminus \{0, -1, -2, \dots \}$.
%If $a, b$ denote two positive constants, we have (\textcolor{red}{Add some reference})
%$$
%\Gamma(a z+b) \simeq \sqrt{2 \pi} \mathrm{e}^{-a z}(a z)^{a z+b-1 / 2}
%$$
%as $z \rightarrow \infty$ with $|\arg z|<\pi.$
%We use  $\binom{x}{y}$ to denote
%\begin{align*}
%    \binom{x}{y}=\frac{\Gamma(x+1)}{\Gamma(y+1)\Gamma(x-y+1)}.
%\end{align*}
Let $b, c$ be two positive constants. Then the following asymptotic expression is a generalization of the Stirling formula (see \cite{Tricomi}),
\begin{align}\label{asymptotic expansion for the quotient of gamma function}
 \frac{\Gamma(z+b)}{\Gamma(z+c)} \simeq z^{b-c}\left[1+C_1\frac{1}{z}+ C_2\frac{1}{z^2}+\ldots\right]
\end{align}
as $z \rightarrow \infty$ along any curve joining $z=0$ and $z=\infty$ provided $z \neq-b,-b-1, -b-2, \ldots,$ and $z \neq-c,-c-1, -c-2, \ldots,$ where
\begin{align*}
   C_1&=\frac{(b-c)(b+c-1)}{2 },&
    C_2&=\frac{1}{12}\binom{b-c}{2}\big[3(b+c-1)^2-(b-c+1)\big].
\end{align*}
%Using the formula \eqref{asymptotic expansion for the quotient of gamma function}, it is easy to verify that for $b+c=d$, we have
%\begin{align}\label{Four gamma function}
%    \frac{\Gamma(x+b)\Gamma(x+c)}{\Gamma(x+d)\Gamma(x)}\sim 1-bc\frac{1}{x} +\frac{1}{2}bc(bc+b+c-1)\frac{1}{x^2}+o\left(\frac{1}{x^2}\right)
%\end{align}
%as $x\to +\infty$.

 The reciprocal Gamma function $\frac{1}{\Gamma(z)}$  is an analytic function near $z= 0$  with the Taylor expansion
\begin{align*}
    \frac{1}{\Gamma (z)}= z +\gamma z^2+ \sum_{j=2}^\infty C_j z^j,
\end{align*}
 where $\gamma$ is Euler's constant, see  \cite[p. 256]{AS}.
\subsection{Polygamma function} As in \cite[p. 258, 260]{AS}, polygamma  functions $\psi^{(m)}(x)$ of order $m$ are defined as the $(m+1)$-th derivative of the logarithm of the gamma function:
\begin{align*}
    \psi^{(0)}(x)&\acoloneq  \psi(x)=\frac{\Gamma'(x)}{\Gamma(x)},\\
    \psi^{(m)}(x)&\acoloneq \frac{d^m \psi(x)}{dx^m},\quad m\in\mathbb{N}.
\end{align*}
\begin{lemma} The polygamma function has the following asymptotic expansion  as $|z|\to \infty$:
\begin{align*}
    \psi^{(m)}(z)& \sim (-1)^{m+1} \sum_{k=0}^\infty \frac{(k+m-1)! B_k}{k! z^{k+m}}, \quad m\geq 1,\\
    \psi^{(0)}& \sim \ln(z)-\sum_{k=1}^\infty \frac{B_k}{k z^k},
\end{align*}
 where $B_k$ are the Bernoulli numbers with $B_1=\tfrac12$.
\end{lemma}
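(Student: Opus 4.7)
The plan is to deduce both expansions from the Stirling--Binet asymptotic series for $\ln\Gamma(z)$ by termwise differentiation. Starting from Binet's second integral representation
\begin{equation*}
\ln\Gamma(z) = \bigl(z-\tfrac12\bigr)\ln z - z + \tfrac12\ln(2\pi) + 2\int_0^\infty\frac{\arctan(t/z)}{e^{2\pi t}-1}\,dt, \quad \Re z > 0,
\end{equation*}
and continuing analytically into any sector $|\arg z|<\pi-\delta$, one expands $\arctan(t/z)=\sum_{j\ge 0}\frac{(-1)^j}{2j+1}(t/z)^{2j+1}$ and invokes the classical evaluation $\int_0^\infty t^{2k-1}/(e^{2\pi t}-1)\,dt = |B_{2k}|/(4k)$ to obtain the Poincar\'e asymptotic expansion
\begin{equation*}
\ln\Gamma(z) \sim \bigl(z-\tfrac12\bigr)\ln z - z + \tfrac12\ln(2\pi) + \sum_{k=1}^\infty \frac{B_{2k}}{2k(2k-1)\,z^{2k-1}}.
\end{equation*}

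Next I would differentiate once. A Poincar\'e asymptotic expansion of a function holomorphic in an open sector may be differentiated termwise: applying Cauchy's formula on a disc of radius proportional to $|z|$ shows that if $f(z)\sim\sum a_j z^{-j}$ then $f'(z)\sim\sum(-j)a_j z^{-j-1}$ with remainders of the expected order. Combined with $\frac{d}{dz}\bigl[(z-\tfrac12)\ln z - z\bigr] = \ln z - \tfrac{1}{2z}$, this produces
\begin{equation*}
\psi(z) = \frac{\Gamma'(z)}{\Gamma(z)} \sim \ln z - \frac{1}{2z} - \sum_{k=1}^\infty \frac{B_{2k}}{2k\, z^{2k}},
\end{equation*}
which matches the stated $\psi^{(0)}$ expansion once one recalls that $B_k=0$ for odd $k\geq 3$ and identifies $-1/(2z) = -B_1/z$ under the convention $B_1=\tfrac12$.

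For $m\ge 1$, I would differentiate this expansion another $m$ times, using the elementary identities
\begin{equation*}
\frac{d^m}{dz^m}\ln z = \frac{(-1)^{m-1}(m-1)!}{z^m}, \qquad \frac{d^m}{dz^m}\frac{1}{z^k} = \frac{(-1)^m\,(k+m-1)!}{(k-1)!\,z^{k+m}},
\end{equation*}
and reorganize. The $\ln z$ term contributes $(-1)^{m-1}(m-1)!/z^m$, which is precisely the $k=0$ summand of the target series under the convention $B_0=1$; the remaining contributions combine into $(-1)^{m+1}\sum_{k\ge 1}\frac{B_k(k+m-1)!}{k!\,z^{k+m}}$. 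Noting $(-1)^{m-1}=(-1)^{m+1}$, these pieces assemble into
\begin{equation*}
\psi^{(m)}(z) \sim (-1)^{m+1}\sum_{k=0}^\infty \frac{B_k(k+m-1)!}{k!\,z^{k+m}},
\end{equation*}
as claimed. The only nontrivial step is the justification of termwise differentiation of the asymptotic expansion, which is handled by the Cauchy-formula argument above; the remainder of the argument is elementary bookkeeping with Bernoulli numbers and factorial identities.
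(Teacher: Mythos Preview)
Your proof is correct. The paper does not actually prove this lemma: it is stated in the preliminaries with an implicit reference to Abramowitz--Stegun \cite[p.~258, 260]{AS} as a standard fact, and no argument is given. Your derivation via Binet's formula, the Stirling--Poincar\'e expansion of $\ln\Gamma$, and termwise differentiation justified by Cauchy's estimate in a sector is the standard route to this result and supplies what the paper omits.
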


\subsection{The Bessel functions}\label{subsec:Bes}  The Bessel function of the first kind of order $\nu$  \\($ \Re \nu >-\frac{1}{2}$) is defined by (see \cite{Wat})
\begin{align}\label{df first bf}
    J_{\nu}(z)=\big(\frac{1}{2}z\big)^{\nu} \sum_{k=0}^\infty (-1)^k\frac{\big(\frac{1}{4}z^2\big)^k }{k! \Gamma(\nu+k+1)}.
\end{align}
The Bessel function of the second kind (a.k.a. Weber's function) of order $\nu$  is
\begin{align*}
    Y_{\nu}(z)= \frac{J_{\nu}(z) \cos (\nu \pi)-J_{-\nu}(z)}{\sin (\nu \pi)}.
\end{align*}
	When $\nu $ is an integer, the right hand side is given by the limiting value
	\begin{align*}
	    Y_{n}(z)= \frac{1}{\pi} \frac{\partial  J_{\nu}(z)}{\partial \nu} \big|_{\nu =n}+\frac{(-1)^n}{\pi} \frac{\partial  J_{\nu}(z)}{\partial \nu} \big|_{\nu =-n}.
	\end{align*}
The Bessel function of the third kind (a.k.a. Hankel function) of order $\nu$ is
\begin{align*}
    H^{(1)}_{\nu}(z)=J_{\nu}(z)+i Y_{\nu}(z), \quad H^{(2)}_{\nu}=J_{\nu}(z)-i Y_{\nu}(z).
\end{align*}
\begin{lemma}[{Watson \cite[p. 429]{Wat}}]\label{Bessel integral-1}
    The following equality is valid
\begin{align*}
\int_0^\infty \frac{t}{t^2-r^2}J_{\nu} (at) J_{\nu} (bt) \dd t=\left\{\begin{array}{l}
\frac{1}{2} \pi \mathrm{i} J_{\nu}(b r) H_{\nu}^{(1)}(a r), b<a; \\
\frac{1}{2} \pi \mathrm{i} J_{\nu}(a r) H_{\nu}^{(1)}(b r), a<b.
\end{array}\right.
\end{align*}
\end{lemma}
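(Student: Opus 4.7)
The plan is to recognise the identity as the Schwartz kernel of the resolvent of the Bessel operator evaluated at the boundary value $r^2 + i0$ of the spectral parameter. The spectral (Hankel-transform) representation of that kernel gives the left-hand side of the statement, while the ODE (Green's function) representation gives the right-hand side. I treat the case $b<a$; the case $a<b$ is symmetric.

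Consider the Friedrichs extension of the Bessel operator $L_\nu := -\partial_t^2 - t^{-1}\partial_t + \nu^2 t^{-2}$ on $L^2((0,\infty); t\, dt)$. Its generalised eigenfunctions are $J_\nu(\lambda t)$ with eigenvalue $\lambda^2$, and the Hankel transform $(\mathcal H_\nu f)(\lambda) = \int_0^\infty f(t)\, J_\nu(\lambda t)\, t\, dt$ diagonalises it. Consequently, the Schwartz kernel of $(L_\nu - z)^{-1}$ with respect to the measure $b\, db$ has the spectral representation
\begin{align*}
G_\nu(a, b; z) \;=\; \int_0^\infty \frac{J_\nu(\lambda a)\, J_\nu(\lambda b)}{\lambda^2 - z}\, \lambda\, d\lambda,
\end{align*}
which, upon setting $z = r^2 + i0$ and relabelling $\lambda \mapsto t$, is the left-hand side of the statement.

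In parallel, $G_\nu(a,b;z)$ is characterised as the solution in $b$ of $(L_\nu - z) G_\nu(a,b;z) = b^{-1} \delta(b-a)$ that is regular at $b = 0$ and outgoing at $b = \infty$. Since $J_\nu(\sqrt z\, t)$ is the unique (up to scalar) solution of $(L_\nu - z)u = 0$ regular at $0$, and $H_\nu^{(1)}(\sqrt z\, t)$ is the unique outgoing solution for $\Im\sqrt z > 0$, joining these two branches by continuity at $b=a$ and the derivative jump at $b=a$, and using the Wronskian identity $W[J_\nu, H_\nu^{(1)}](\xi) = 2i/(\pi\xi)$, one obtains
\begin{align*}
G_\nu(a, b; z) \;=\; \frac{\pi i}{2}\, J_\nu\!\bigl(\sqrt z\, \min\{a, b\}\bigr)\, H_\nu^{(1)}\!\bigl(\sqrt z\, \max\{a, b\}\bigr).
\end{align*}
At $z = r^2 + i0$ with the branch $\sqrt z = r + i0$, this is the right-hand side of the statement for $b<a$, and the case $a<b$ follows from the symmetry of the integrand in $a$ and $b$.

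The main obstacle is the $z \to r^2 + i0$ limit on the spectral side, since the integrand develops a pole on the contour of integration. The limit must be interpreted via the Sokhotski--Plemelj prescription, so the identity really asserts that $\tfrac{\pi i}{2} J_\nu(r\min\{a,b\})H_\nu^{(1)}(r\max\{a,b\})$ equals the principal-value integral plus an $i\pi$-times-residue term at $\lambda = r$; the extra imaginary part $\tfrac{\pi i}{2} J_\nu(ra) J_\nu(rb)$ on the right is precisely what arises from $H_\nu^{(1)} = J_\nu + i Y_\nu$. Secondary technical checks are the convergence of the spectral integral at $\lambda = 0$ (using $J_\nu(\lambda t) = O(\lambda^{\Re\nu})$, integrable once $\Re\nu > -1/2$) and at $\lambda \to \infty$ (using the oscillatory $O(\lambda^{-1/2})$ asymptotics of $J_\nu$, which ensure convergence in the principal-value sense).
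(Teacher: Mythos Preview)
The paper does not prove this lemma; it is quoted verbatim from Watson's treatise (p.~429) and used as a black box. Your resolvent-kernel argument is correct and gives a clean conceptual derivation: the left-hand side is the Hankel-transform representation of $(L_\nu - r^2 - i0)^{-1}$, the right-hand side is its Green's-function representation built from the recessive solution $J_\nu$ at $0$ and the outgoing solution $H_\nu^{(1)}$ at $\infty$, and the Wronskian $W[J_\nu,H_\nu^{(1)}](\xi)=2i/(\pi\xi)$ fixes the constant. The Sokhotski--Plemelj interpretation you give is exactly what is needed to make sense of the pole at $t=r$, and it matches the decomposition $H_\nu^{(1)}=J_\nu+iY_\nu$ on the right. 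Watson's own argument proceeds instead by direct contour integration in the complex $t$-plane, treating the integral as an improper one and deforming around the pole; your spectral-theoretic route is equivalent but arguably more transparent about \emph{why} the identity holds.
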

Another formula we need is the indefinite integral for products of two Bessel functions (see \cite[p. 241]{Olver}).
\begin{lemma}\label{Bessel integral-II} Let $\mathscr C_\nu (z)$ and $\mathscr D_\nu (z)$ be any nontrivial linear combination of $J_\nu(z), Y_{\nu}(z), $ and $H_{\nu}^{(1)}(z)$ (not necessarily different). Then
\begin{align}\label{infinite integral}
     \int \mathscr C_\mu (z) \mathscr D_\nu (z) \frac{\dd z}{z}=-z\frac{\mathscr C_{\mu+1}(z)\mathscr D_{\nu}(z)-\mathscr C_{\mu}(z)\mathscr D_{\nu+1}(z)}{\mu^2-\nu^2}+\frac{\mathscr C_{\mu}(z)\mathscr D_{\nu}(z)}{\mu+\nu}.
 \end{align}
 \end{lemma}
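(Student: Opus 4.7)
The plan is to verify the identity by differentiating the right-hand side and checking that we recover the integrand $\mathscr{C}_\mu(z)\mathscr{D}_\nu(z)/z$. Denote the RHS by $F(z)$. Since both $\mathscr C_\mu, \mathscr D_\nu$ are cylinder functions, the key inputs are the two standard Bessel recurrences, valid for any linear combination $\mathscr{C}_\lambda$ of $J_\lambda, Y_\lambda, H_\lambda^{(1)}, H_\lambda^{(2)}$:
$$
z \mathscr{C}_\lambda'(z) = -z \mathscr{C}_{\lambda+1}(z) + \lambda \mathscr{C}_\lambda(z) = z\mathscr{C}_{\lambda-1}(z) - \lambda \mathscr{C}_\lambda(z).
$$
These follow directly from the differential recurrences for $J_\lambda$ and $Y_\lambda$ by linearity.

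The execution is then straightforward: I would expand $zF'(z)$ by the product rule, apply the $(+1)$ form of the recurrences to $z\mathscr{C}_\mu'$ and $z\mathscr{D}_\nu'$ (they appear in the second piece of $F$) and to the factors $z\mathscr{C}_{\mu+1}', z\mathscr{D}_{\nu+1}'$ (they come from differentiating $\mathscr{C}_{\mu+1}, \mathscr{D}_{\nu+1}$ in the first piece; for these I would use the $(-1)$ form so that the shifted indices drop back to $\mu, \nu$). After collecting, every term is a product of two elements from the set $\{\mathscr{C}_\mu, \mathscr{C}_{\mu+1}\}\times\{\mathscr{D}_\nu, \mathscr{D}_{\nu+1}\}$, and the computation becomes purely algebraic in the scalars $\mu, \nu$.

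The bookkeeping miracle that makes everything work is the factorization $\mu^2-\nu^2=(\mu-\nu)(\mu+\nu)$: after simplification, the contribution of the first piece of $F$ to $zF'(z)$ produces a cross-term proportional to $z(\mathscr{C}_{\mu+1}\mathscr{D}_\nu+\mathscr{C}_\mu\mathscr{D}_{\nu+1})/(\mu+\nu)$, and the contribution of the second piece produces exactly the negative of this, together with the desired $\mathscr{C}_\mu(z)\mathscr{D}_\nu(z)$. I expect the $(\nu-\mu)$ factor generated by cancellations inside the first piece to be precisely what kills the $(\mu-\nu)$ in the denominator. The only subtlety is the implicit requirement $\mu\neq\pm\nu$, which is built into the formula since $\mu+\nu$ and $\mu^2-\nu^2$ sit in the denominators; the degenerate cases $\mu=\pm\nu$ can be recovered by an l'Hôpital limit, but are not needed for our applications. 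I do not anticipate any real obstacle here — the lemma is a classical antiderivative identity, and the whole proof amounts to one careful differentiation.
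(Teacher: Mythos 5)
Your approach is correct, and worth noting is that the paper itself does not prove this lemma at all: it simply cites it as a known indefinite-integral identity from Olver's NIST Handbook (reference given at p.~241). So you are supplying an actual derivation where the paper supplies a pointer. The differentiation plan you sketch does go through exactly as you anticipate. Writing $F(z)$ for the right-hand side and applying the recurrences $z\mathscr C_\lambda'(z)=-z\mathscr C_{\lambda+1}(z)+\lambda\mathscr C_\lambda(z)=z\mathscr C_{\lambda-1}(z)-\lambda\mathscr C_\lambda(z)$ in the pattern you describe, one finds
\[
(u-v)+z(u'-v')=(\nu-\mu)\bigl[\mathscr C_{\mu+1}\mathscr D_\nu+\mathscr C_\mu\mathscr D_{\nu+1}\bigr],
\]
where $u=\mathscr C_{\mu+1}\mathscr D_\nu$ and $v=\mathscr C_\mu\mathscr D_{\nu+1}$, so the factor $\nu-\mu$ indeed cancels $\mu-\nu$ in $\mu^2-\nu^2$, leaving $\tfrac{\mathscr C_{\mu+1}\mathscr D_\nu+\mathscr C_\mu\mathscr D_{\nu+1}}{\mu+\nu}$. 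Differentiating the second piece of $F$ produces the negative of that expression plus $\mathscr C_\mu\mathscr D_\nu/z$, and the two cross-terms cancel as predicted. The only point I would tighten in a full write-up is the (implicit but necessary) assumption that the coefficients in the linear combinations defining $\mathscr C$ and $\mathscr D$ are independent of $z$ and of the order, since that is what makes the recurrences apply uniformly across $\mu,\mu+1$ and $\nu,\nu+1$. Your remark on the excluded cases $\mu=\pm\nu$ is also apt.
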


% \label{integral of product of Bessel functions-I}
From the proof of Proposition 2 in \cite{Dun} , we can derive
 \begin{align*}
 \int_{z}^{\infty} \frac{J_{\mu}(t) Y_{\nu}(t)}{t} \dd t
&=\frac{z Y_{\nu}(z)}{\mu+\nu}\Big(\frac{J_{\mu+1}(z)
    -J_{\nu+1}(z)}{\mu-\nu}\Big)-\frac{z Y_{\nu+1}(z)}{\mu+\nu}\left(\frac{J_{\mu}(z)-J_{\nu}(z)}{\mu-\nu}\right) \notag
    \\&\quad -\frac{J_{\mu}(z) Y_{\nu}(z)}{\mu+\nu},
\end{align*}
and
 \begin{align*}
\int_{0}^{z} \frac{J_{\mu}(t) J_{\nu}(t)}{t} \dd t=-z\frac{J_{\mu+1}(z)J_{\nu}(z)-J_{\mu}(z)J_{\nu+1}(z)}{\mu^2-\nu^2}+\frac{J_{\mu}(z)J_{\nu}(z)}{\mu+\nu}.
\end{align*}
Hence, combining the Wronskians for Bessel equations (see \cite[p. 222]{Olver}),
\begin{align*}
    J_{\nu+1}(z)Y_{\nu}(z)-J_{\nu}(z) Y_{\nu+1}(z)=\frac{2}{\pi z},
\end{align*}
we have
\begin{align}\label{resolvent function}
  &\quad \int_0^s Y_{\mu} (s\lambda) J_{\mu} (\rho \lambda)J_{\nu }(\rho \lambda) \frac{\dd \rho}{\rho}+\int_{s}^\infty J_{\mu} (s\lambda) Y_{\mu} (\rho \lambda)J_{\nu }(\rho \lambda) \frac{\dd \rho}{\rho}\nonumber\\
  &=\frac{2 }{(\nu^2 -\mu^2) \pi}\left[J_{\nu}(s\lambda)-J_{\mu}(s\lambda)\right].
\end{align}
%\begin{lemma}
%  For $\nu\neq \mu$, we have (see  \cite[p. 244]{Olver})
%\begin{align}\label{bessel integral for $a=b$}
%    \int_{0}^{\infty} \frac{J_{\mu}( t) J_{\nu}( t)}{t} \dd t=& \frac{ \Gamma\left(\frac{\mu+\nu}{2}\right)}{2  \Gamma\left(1+\frac{\nu-\mu}{2}\right)  \Gamma\left(1+\frac{\mu-\nu}{2}\right)\Gamma\left(1+\frac{\nu+\mu}{2}\right) }.
%\end{align}
%
%\end{lemma}
%
%

 \subsection{The hypergeometric function ${}_2F_1$} See \cite[p. 556]{AS}. Recall that  Pochhammer's symbol $(z)_{n}$ is
$$
(z)_{n}\coloneq  z(z+1)(z+2) \ldots(z+n-1)=\frac{\Gamma(z+n)}{\Gamma(z)}, n \in \mathbb{N},
$$
with $(z)_{0}=1 .$
The hypergeometric function $_{2} F_{1}(a ; b ; c ; z)$ is
defined by the Gauss series
\[
\begin{aligned}
_{2}F_{1}(a ; b ; c ; z) &=\sum_{n=0}^{\infty} \frac{(a)_{n}(b)_{n}}{(c)_{n} n !} z^{n}=1+\frac{a b}{c} z+\frac{a(a+1) b(b+1)}{c(c+1) 2 !} z^{2}+\ldots \\
&=\frac{\Gamma(c)}{\Gamma(a) \Gamma(b)} \sum_{n=0}^{\infty} \frac{\Gamma(a+n) \Gamma(b+n)}{\Gamma(c+n) n !} z^{n}
\end{aligned}
\]
on the disk $|z|<1,$ and by analytic continuation elsewhere.
% In general, $_{2}F_{1}(a ; b ; c ; z)$ does not exist when $c=0,-1,-2, \ldots$ The branch obtained by introducing a cut from 1 to $+\infty$ on the real $z$ -axis and by fixing the value $F(0)=1$ is the principal branch (or principal value) of $_{2}F_{1}(a ; b ; c ; z)$.
%On the circle $|z|=1,$ the Gauss series:
%\begin{enumerate}
%	\item  converges absolutely when $\operatorname{Re}(c-a-b)>0$;
%\item converges conditionally when $-1<\operatorname{Re}(c-a-b) \leq 0,$ and $z=1$ is excluded;
%\item diverges when $\operatorname{Re}(c-a-b) \leq-1$.	
%\end{enumerate}
%
Near the branch point $z=1$, when $\Re (c-a-b)<0$, we have
\begin{align}\label{Asymptotic near one for hypergeometric function}
 \lim _{z \rightarrow 1^{-}}(1-z)^{a+b-c} {_{2}F_{1}}(a ; b ; c ; z)
=\frac{\Gamma(c) \Gamma(a+b-c)}{\Gamma(a) \Gamma(b)}.
\end{align}

\begin{lemma}[{\cite[p. 74]{Korenev}}]\label{Integral of Bessel function}
The following Weber--Sonin--Schafheitlin integral can be written as
\[
\begin{aligned}
\int_{0}^{\infty} t{J_{\mu}(a t) J_{\nu}(b t)} \dd t=&  \frac{2 b^{\nu} \Gamma\left(\frac{1}{2} \mu+\frac{1}{2} \nu+1\right)}{ a^{\nu+2} \Gamma(\nu+1) \Gamma\left(\frac{1}{2} \mu-\frac{1}{2} \nu\right)} \\
& \times {_{2}F_{1}}\left(\frac{\mu+\nu}{2}+1, \frac{\nu-\mu}{2}+1 ; \nu+1 ; \frac{b^{2}}{a^{2}}\right)
\end{aligned}
\]
for $0<b<a,$ and
\[
\begin{aligned}
\int_{0}^{\infty} t {J_{\mu}(a t) J_{\nu}(b t)} \dd  t=& \frac{2 a^{\mu} \Gamma\left(\frac{\mu+\nu}{2}+1\right)}{ b^{\mu+2} \Gamma\left(\frac{\nu-\mu}{2}\right) \Gamma(\mu+1)} \\
& \times {_{2}F_{1}}\left(\frac{\mu+\nu}{2}+1, \frac{\mu-\nu}{2}+1 ; \mu+1 ; \frac{a^{2}}{b^{2}}\right)
\end{aligned}
\]
for $0<a<b,$ as long as $\mu, \nu$ are such that the integral is convergent.
\end{lemma}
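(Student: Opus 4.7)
The plan is to establish the first formula (the case $0<b<a$) by expanding $J_\nu(bt)$ in its defining power series \eqref{df first bf}, integrating termwise against $J_\mu(at)$ via the Mellin transform, and recognizing the resulting series as a Gauss hypergeometric function. The second formula then follows by exchanging $(\mu,a)\leftrightarrow(\nu,b)$ in the first.

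Concretely, after substituting $J_\nu(bt)=\sum_k \frac{(-1)^k}{k!\,\Gamma(\nu+k+1)}\bigl(\tfrac{b}{2}\bigr)^{\nu+2k}t^{\nu+2k}$ and formally interchanging summation with integration, the claim reduces to evaluating $\int_0^\infty t^{\nu+2k+1}J_\mu(at)\,\dd t$ for each $k$. For this I would invoke the Weber--Schafheitlin Mellin identity
\[
\int_0^\infty t^{s-1}J_\mu(at)\,\dd t=\frac{2^{s-1}}{a^s}\,\frac{\Gamma\bigl(\tfrac{\mu+s}{2}\bigr)}{\Gamma\bigl(\tfrac{\mu-s}{2}+1\bigr)},
\]
originally valid for $-\Re\mu<\Re s<\tfrac12$ and extended meromorphically in $s$. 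Setting $s=\nu+2k+2$ yields
\[
\int_0^\infty t^{\nu+2k+1}J_\mu(at)\,\dd t=\frac{2^{\nu+2k+1}}{a^{\nu+2k+2}}\,\frac{\Gamma\bigl(\tfrac{\mu+\nu}{2}+k+1\bigr)}{\Gamma\bigl(\tfrac{\mu-\nu}{2}-k\bigr)}.
\]
Applying the reflection identity $\tfrac{1}{\Gamma(z-k)}=\tfrac{(-1)^k(1-z)_k}{\Gamma(z)}$ to rewrite $\tfrac{1}{\Gamma((\mu-\nu)/2-k)}$ as $\tfrac{(-1)^k((\nu-\mu)/2+1)_k}{\Gamma((\mu-\nu)/2)}$, together with the standard factorizations $\Gamma(\nu+k+1)=\Gamma(\nu+1)(\nu+1)_k$ and $\Gamma((\mu+\nu)/2+k+1)=\Gamma((\mu+\nu)/2+1)((\mu+\nu)/2+1)_k$, the two sources of $(-1)^k$ cancel. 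Collecting prefactors identifies the resulting series in powers of $b^2/a^2$ exactly as
\[
{}_2F_1\Bigl(\tfrac{\mu+\nu}{2}+1,\,\tfrac{\nu-\mu}{2}+1;\,\nu+1;\,\tfrac{b^2}{a^2}\Bigr),
\]
which converges since $b/a<1$, and the prefactor $\tfrac{2\,b^\nu\,\Gamma((\mu+\nu)/2+1)}{a^{\nu+2}\Gamma(\nu+1)\Gamma((\mu-\nu)/2)}$ matches the asserted one.

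The main technical obstacle is justifying the term-by-term integration, because the Mellin identity above requires $\Re s<\tfrac12$, a condition violated by $s=\nu+2k+2$ once $k$ is large; the individual integrals exist only in an oscillatory sense. I would address this by inserting a Gaussian regulator $e^{-\varepsilon t^2}$ and evaluating
\[
\int_0^\infty t^{\nu+2k+1}e^{-\varepsilon t^2}J_\mu(at)\,\dd t
\]
via Weber's second exponential integral in terms of a confluent hypergeometric function $_1F_1$. This regularized integral converges absolutely, so Fubini legitimizes the swap with the series expansion of $J_\nu(bt)$. Taking $\varepsilon\downarrow 0$ using the known asymptotics of $_1F_1$ and dominated convergence (justified by the standing convergence hypothesis on $\int_0^\infty tJ_\mu(at)J_\nu(bt)\,\dd t$ together with the strict inequality $b<a$, which provides the necessary cancellation via the large-$t$ oscillations of the product of Bessel functions) produces the stated identity. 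An equivalent path, which I would present as a remark, is to use the Mellin--Barnes contour representation of $_2F_1$ and shift the contour to pick up the residues at $s=\nu+2k+2$, obtaining the same series directly.
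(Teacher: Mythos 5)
The paper cites this lemma directly from Korenev without supplying a proof, so there is no in-paper argument to compare against; you should be judged on the internal correctness of your sketch. Your formal computation is correct: the Mellin transform $\int_0^\infty t^{s-1}J_\mu(at)\,dt=\frac{2^{s-1}}{a^s}\frac{\Gamma((\mu+s)/2)}{\Gamma((\mu-s)/2+1)}$ at $s=\nu+2k+2$, combined with $\frac{1}{\Gamma(z-k)}=\frac{(-1)^k(1-z)_k}{\Gamma(z)}$, the cancellation of the two $(-1)^k$'s, and the factorizations $\Gamma(\nu+k+1)=\Gamma(\nu+1)(\nu+1)_k$ and $\Gamma(\frac{\mu+\nu}{2}+k+1)=\Gamma(\frac{\mu+\nu}{2}+1)(\frac{\mu+\nu}{2}+1)_k$, does reproduce exactly the stated prefactor and Gauss series. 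The second formula does follow by the symmetry $(\mu,a)\leftrightarrow(\nu,b)$. One small side remark: the Mellin identity for a single Bessel function actually holds as a conditionally convergent improper integral in the wider strip $-\Re\mu<\Re s<\tfrac32$, not just $\Re s<\tfrac12$; that does not change your argument, but it is worth stating the sharp strip since it is the presence of conditional convergence that makes the whole lemma nontrivial.

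Where the sketch is genuinely incomplete is the final limiting step. Your Gaussian regularization $e^{-\varepsilon t^2}$ does make each term absolutely integrable, and you are right that Weber's second exponential integral expresses the regularized term through ${}_1F_1$, whose large negative argument asymptotic $\,_1F_1(\alpha;\gamma;z)\sim\frac{\Gamma(\gamma)}{\Gamma(\gamma-\alpha)}(-z)^{-\alpha}$ recovers exactly the meromorphic continuation as $\varepsilon\downarrow0$ (the $\varepsilon$-powers cancel, as one can check). But the step that is not justified is interchanging $\lim_{\varepsilon\to0}$ with $\sum_k$. Your stated reason — dominated convergence "justified by the standing convergence hypothesis on $\int_0^\infty tJ_\mu(at)J_\nu(bt)\,dt$" — does not work, because that hypothesis controls the integral of the \emph{product}, not a $k$-uniform, $\varepsilon$-uniform dominating sequence for the series terms, and there is no obvious majorant since the cancellations are genuinely between terms. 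You would either need uniform-in-$\varepsilon$ estimates on the regularized terms (e.g., showing the $_1F_1$ quantities are bounded by a summable sequence uniformly in $\varepsilon$, which requires more than the leading asymptotic), or — cleaner and closer to Watson's classical treatment — introduce an extra parameter $\lambda$, prove the identity for $\int_0^\infty t^{-\lambda}J_\mu(at)J_\nu(bt)\,dt$ in a $\lambda$-strip of absolute convergence where Fubini is trivial, and analytically continue in $\lambda$ to $\lambda=-1$. Your alternative remark about the Mellin--Barnes representation is essentially the same analytic-continuation idea and would be the more robust route to make this fully rigorous.
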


\subsection{Mellin transform}
The Mellin transform of a function is defined by
\[\mathcal M (f(s))(z)=\int_0^{\infty} f(s)s^{z-1}ds,\]
as long as the above integral is valid.
%In general, the Mellin transform of $f$ can be further defined on an open subset of $\mathbb C$ containing $\{ \Re s \in (c_1,c_2) \}$ by analytic continuation.

The inverse Mellin transform is defined by

\begin{align*}
    \mathcal M^{-1}(g(z))(s)=\frac{1}{2\pi i}\int_{\mathcal C} s^{-z}g(z) \dd z,
\end{align*}
where $\mathcal C$ is a curve in $\mathbb C$ which are specially chosen.

\subsection{Fox  $H$-functions}

  \begin{definition}[\cite{KilbasSaigo}] For integers $m, n, p, q$ such that $0\leq m\leq q, 0\leq n \leq p$, for $a_i, b_j\in \mathbb C$ and for $\alpha_i, \beta_j\in \mathbb {R}_{+}=(0, \infty )(i=1,2,\dots,p; j=1,2,\dots, q )$, the  Fox $H$-function $H_{p,q}^{m,n}(z)$ is defined via a Mellin-Barnes type integral in the form
  \[H_{p,q}^{m,n}(z)\coloneq H_{p,q}^{m,n}\left[z \middle |\begin{smallmatrix}(a_1,\alpha_1 ), \dots, (a_p, \alpha_p)\\
  (b_1,\beta_1),\dots, (b_q, \beta_q)	
  \end{smallmatrix}\right ]=\frac{1}{2\pi i}\int_{\mathcal C}\mathcal{H}_{p,q}^{m,n}(s)z^{-s}ds,
 \]
 with
\begin{align}
     \mathcal{H}_{p,q}^{m,n}(s)
 &\coloneq
  \mathcal{H}_{p,q}^{m,n}
  \!\!\left[
    \begin{smallmatrix}(a_1,\alpha_1 ), \dots, (a_p, \alpha_p)\notag \\
    (b_1,\beta_1),\dots, (b_q, \beta_q)	
    \end{smallmatrix}  \middle | s
  \right]
  \\&\coloneq
  \frac{\prod_{j=1}^m\Gamma(b_j+\beta_js)\prod_{i=1}^{n}\Gamma(1-a_i-\alpha_i s )}{\prod_{j=n+1}^p\Gamma(a_i+\alpha_is)\prod_{j=m+1}^{q}\Gamma(1-b_j-\beta_j s )}, \label{def: Hmnpq(s)}
\end{align}
  	and $\mathcal C$ is the infinite contour in the complex plane which separate the poles
  	\[b_{jl}=\frac{-b_j-l}{\beta_j} \quad (j=1,\dots,m; \ l=0,1,2,\dots ), \]
  	of the Gamma functions $\Gamma(b_j+\beta_j s)$ to the left of $\mathcal C$ and the poles
  	\[a_{ik}=\frac{1-a_i+k}{\alpha_i}\quad (i=1,\dots, n; \ k=0, 1,2,\dots), \]
  	of the Gamma functions $\Gamma(1-a_i-\alpha_i s)$ to the right of $\mathcal C$. 	
  \end{definition}

  The properties of the  Fox $H$-function $H_{p,q}^{m,n}(z)$ depend on the parameters of $a^{*}$,  $\Lambda$, $\varrho$ and $\delta$ which are given by
  \begin{align}
  	a^{*}&=\sum_{i=1}^{n}\alpha_i-\sum_{i=n+1}^{p}\alpha_i +\sum_{j=1}^{m}\beta_j-\sum_{j=m+1}^{q}\beta_j, \label{parameters a}\\
  	\Lambda &=\sum_{j=1}^{q}\beta_j-\sum_{i=1}^{p}\alpha_ i, \label{parameters tau}\\
  	\varrho &= \sum_{j=1}^q b_j-\sum_{i=1}^{p} a_i+ \frac{p-q}{2}, \label{parameters varrpho} \\
  	\delta &=\prod_{i=1}^p \alpha_ i^{-\alpha_ i}\prod_{j=1}^q \beta_{j}^{\beta_j} \label{parameters delta}.
  \end{align}
From the definition, it is immediate that
\begin{align*}
    \mathcal M(\mathcal{H}_{p,q}^{m,n}(s))(z)={H}_{p,q}^{m,n}(z),  \quad
     \mathcal M^{-1}({H}_{p,q}^{m,n}(z))(s)=\mathcal{H}_{p,q}^{m,n}(s).
\end{align*}

In this paper, we only consider the Fox $H$-functions with fixed indices ${H}_{4,4}^{2,2}(z)$ with $\delta=1$. For
  ${H}_{4,4}^{2,2}(z)$ with $\delta=1$, we have the following power series expansions
\begin{lemma}[Theorem 1.3, \cite{KilbasSaigo}]\label{series expansion 1} Let $\Lambda =0$ and $0<|z|<\delta=1$. Then the Fox $H$-function $H_{4,4}^{2,2}(z)$ has the power series expansion
\begin{align}
    H_{4,4}^{2,2}(z)=\sum_{n=0}^\infty h_{1n}^* z^{\frac{b_1+n}{\beta_1}}+\sum_{n=0}^\infty h_{2n}^* z^{\frac{b_2+n}{\beta_2}},
\end{align}
with the coefficients
    \begin{align*}
        h_{1n}^*&=\lim_{s\to b_{1n} } \left[(s-b_{1n}) \mathcal{H}_{4,4}^{2,2} \right]\\
  &= \frac{(-1)^n}{ n! \beta_1}\frac{\Gamma\big(b_2-[b_1+n]\frac{\beta_2}{\beta_1}\big)\Pi_{j=1}^2\Gamma\big(1-a_j+[b_1+n]
  \frac{\alpha_j}{\beta_1}\big)}{\Pi_{j=3}^4\Gamma\big(a_j-[b_1+n]\frac{\alpha_j}{\beta_1}\big) \Pi_{i=3}^4\Gamma\big(1-b_i-[b_1+n]\frac{\beta_i}{\beta_1}\big)},
%        &= \frac{(-1)^n}{ n! \beta_1}\frac{\Gamma\big(b_2-[b_1+n]\frac{\beta_2}{\beta_1}\big)\Gamma\big(1-a_1+[b_1+n]\frac{\alpha_1}{\beta_1}\big)
%\Gamma\big(1-a_2+[b_1+n]\frac{\alpha_2}{\beta_1}\big)}{\Gamma\big(a_3-[b_1+n]\frac{\alpha_3}{\beta_1}\big)
%\Gamma\big(a_4-[b_1+n]\frac{\alpha_4}{\beta_1}\big) %\Gamma\big(1-b_3-[b_1+n]\frac{\beta_3}{\beta_1}\big)\Gamma\big(1-b_4-[b_1+n]\frac{\beta_4}{\beta_1}\big)}
    \end{align*}
    and
    \begin{align*}
        h_{2n}^*&=\lim_{s\to b_{2n} } \left[(s-b_{2n}) \mathcal{H}_{4,4}^{2,2} \right]\\
          &= \frac{(-1)^n}{ n! \beta_2}\frac{\Gamma\big(b_1-[b_2+n]\frac{\beta_1}{\beta_2}\big)\Pi_{j=1}^2\Gamma\big(1-a_j
          +[b_2+n]\frac{\alpha_j}{\beta_2}\big)}{\Pi_{j=3}^4\Gamma\big(a_j-[b_2+n]\frac{\alpha_j}{\beta_2}\big)
          \Pi_{i=3}^4\Gamma\big(1-b_i-[b_2+n]\frac{\beta_i}{\beta_2}\big)}.
%        &= \frac{(-1)^n}{ n! \beta_2}\frac{\Gamma\big(b_1-[b_2+n]\frac{\beta_1}{\beta_2}\big)\Gamma\big(1-a_1+[b_2+n]\frac{\alpha_1}{\beta_2}\big)
%\Gamma\big(1-a_2+[b_2+n]\frac{\alpha_2}{\beta_2}\big)}{\Gamma\big(a_3-[b_2+n]\frac{\alpha_3}{\beta_2}\big)
%\Gamma\big(a_4-[b_2+n]\frac{\alpha_4}{\beta_2}\big) %\Gamma\big(1-b_3-[b_2+n]\frac{\beta_3}{\beta_2}\big)\Gamma\big(1-b_4-[b_2+n]\frac{\beta_4}{\beta_2}\big)}.
    \end{align*}
\end{lemma}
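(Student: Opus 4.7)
My plan is to evaluate the Mellin--Barnes integral defining $H_{4,4}^{2,2}(z)$ via the residue theorem, by deforming the contour $\mathcal C$ leftward and summing contributions from the poles captured. The choice of direction is forced by the factor $z^{-s}$: since $|z^{-s}| = |z|^{-\Re s}$ and $|z| < 1$, the kernel decays exponentially as $\Re s \to -\infty$. In the left half-plane the only singularities of $\mathcal H_{4,4}^{2,2}(s)$ are the poles of $\Gamma(b_1 + \beta_1 s)$ and $\Gamma(b_2 + \beta_2 s)$, located at
\[
s = b_{1n} = -\frac{b_1 + n}{\beta_1}, \qquad s = b_{2n} = -\frac{b_2 + n}{\beta_2}, \qquad n = 0,1,2,\ldots
\]
Under the generic assumption that these two sequences are disjoint (the coincident-pole case then follows from the generic one by analytic continuation in the parameters), every pole is simple, with $\mathrm{Res}_{s=b_{jn}}\Gamma(b_j + \beta_j s) = (-1)^n/(n!\,\beta_j)$.

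Assuming the contour can be pushed to $\Re s = -\infty$ without extra contributions, the residue theorem will then give
\begin{equation*}
H_{4,4}^{2,2}(z) \;=\; \sum_{n=0}^\infty \mathrm{Res}_{s=b_{1n}}\bigl[\mathcal H_{4,4}^{2,2}(s)\,z^{-s}\bigr] \;+\; \sum_{n=0}^\infty \mathrm{Res}_{s=b_{2n}}\bigl[\mathcal H_{4,4}^{2,2}(s)\,z^{-s}\bigr].
\end{equation*}
Substituting $s = b_{jn}$ into the remaining seven Gamma factors of $\mathcal H_{4,4}^{2,2}(s)$ and into $z^{-s} = z^{(b_j+n)/\beta_j}$ is a mechanical computation that reproduces exactly the coefficients $h_{jn}^*$ and the exponents $z^{(b_j+n)/\beta_j}$ stated in the lemma; no further combinatorics is needed at this point.

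The substantive work is the contour-closure argument. I would introduce a family of rectangular contours whose left edge sits at $\Re s = -N$, with $N$ chosen in gaps between the pole sequences so that no $\Gamma$ factor comes dangerously close to a singularity. Applying Stirling's formula to each of the eight Gamma factors of $\mathcal H_{4,4}^{2,2}$ yields a uniform estimate of the form
\begin{equation*}
\bigl|\mathcal H_{4,4}^{2,2}(s)\bigr| \;\lesssim\; |s|^{P + \Lambda\,\Re s}\,\delta^{\Re s}\,e^{-\pi a^{*}|\Im s|/2}
\end{equation*}
on such contours, with $P$ a parameter-dependent constant. The hypothesis $\Lambda = 0$ is precisely what cancels the otherwise-uncontrolled $|s|^{\Lambda\,\Re s}$ factor, while $|z| < \delta = 1$ converts $\delta^{\Re s}\,|z|^{-\Re s} = (|z|/\delta)^{-\Re s}$ into genuine exponential decay as $\Re s \to -\infty$. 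The $e^{-\pi a^{*}|\Im s|/2}$ factor handles the horizontal edges once one checks $a^{*} > 0$ in the relevant regime. The main technical obstacle is precisely this uniform Stirling estimate: tracking eight Gamma factors simultaneously, extracting the cancellation dictated by $\Lambda = 0$, and verifying that the rectangular contours can be threaded between the two interleaved pole sequences without any factor blowing up. Once this estimate is in hand, the arc/edge contributions vanish in the limit $N \to \infty$ and the residue sum above identifies $H_{4,4}^{2,2}(z)$ with the claimed double power series.
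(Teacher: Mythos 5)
The paper does not actually prove this lemma---it is quoted verbatim as Theorem~1.3 of Kilbas and Saigo \cite{KilbasSaigo}, with no accompanying proof, so there is no in-paper argument for you to match against. Your outline reconstructs the standard proof underlying that reference's result, and it is sound in its essential structure: because $|z|<1$, the factor $z^{-s}$ decays as $\Re s\to-\infty$, so one closes the Mellin--Barnes contour to the left, picking up precisely the two families of poles $b_{jn}=-(b_j+n)/\beta_j$ of the numerator factors $\Gamma(b_j+\beta_j s)$; the residue $(-1)^n/(n!\,\beta_j)$ of the $\Gamma$-factor, multiplied by the remaining seven Gamma factors evaluated at $s=b_{jn}$ and by $z^{-b_{jn}}=z^{(b_j+n)/\beta_j}$, reproduces exactly the stated $h^*_{jn}$ and exponents. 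You are also right to flag that the residue formula $\lim_{s\to b_{jn}}(s-b_{jn})\mathcal H_{4,4}^{2,2}$ presupposes simple poles, i.e.\ that the two pole sequences are disjoint; coincident poles produce double poles and logarithmic terms, treated as a separate case in Kilbas--Saigo.

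One caution about the contour-closure estimate: you lean on the factor $e^{-\pi a^*|\Im s|/2}$ to control the horizontal edges ``once one checks $a^*>0$.'' But in the paper's own application (Section~\ref{s:proof-of-wsp-bddness}), the parameters are computed to satisfy $a^*_k=0$ (as well as $\Lambda_k=0$, $\delta_k=1$, $\varrho_k=0$), so that factor contributes no decay at all. In that borderline regime the horizontal control has to come from the polynomial ($\varrho$-dependent) part of the Stirling estimate and a suitably threaded contour; Kilbas--Saigo's full statement of Theorem~1.3 carries auxiliary hypotheses governing exactly this, which the paper's abbreviated restatement (giving only $\Lambda=0$ and $0<|z|<\delta$) suppresses. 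This does not invalidate your sketch as a proof of the generic case, but it does mean that the ``$a^*>0$'' shortcut fails precisely in the situation the paper needs, and the substantive Stirling analysis you deferred is genuinely delicate there.
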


\begin{lemma}[Theorem 1.4, \cite{KilbasSaigo}]\label{series expansion 2} Let $\Lambda=0$ and $|z|>\delta=1$. Then the Fox $H$-function $H_{4,4}^{2,2}(z)$ has the powerseries expansion
\begin{align}
    H_{4,4}^{2,2}(z)=\sum_{n=0}^\infty h_{1n} z^{\frac{a_1-1-n}{\alpha_1}}+\sum_{n=0}^\infty h_{2n} z^{\frac{a_2-1-n}{\alpha_2}},
\end{align}
  with the coefficients
    \begin{align*}
        h_{1n}&=\lim_{s\to a_{1n} } \left[-(s-a_{1n}) \mathcal{H}_{4,4}^{2,2} \right]\\
        &=\frac{(-1)^n}{ n! \alpha_1}\frac{\Pi_{i=1}^2\Gamma\big(b_i+[1-a_1+n]\frac{\beta_i}{\alpha_1}\big)\Gamma\big(1-a_2-[1-a_1+n]
        \frac{\alpha_2}{\alpha_1}\big)}{\Pi_{j=3}^4\Gamma\big(a_j-[1-a_1+n]\frac{\alpha_j}{\alpha_1}\big)\Pi_{i=3}^4
        \Gamma\big(1-b_i-[1-a_1+n]\frac{\beta_i}{\alpha_1}\big)},
%        &= \frac{1}{ n! \alpha_1}\frac{(-1)^n\Gamma\big(b_1+[1-a_1+n]\frac{\beta_1}{\alpha_1}\big)\Gamma\big(b_2+[1-a_1+n]\frac{\beta_2}{\alpha_1}\big)
%\Gamma\big(1-a_2-[1-a_1+n]\frac{\alpha_2}{\alpha_1}\big)}{\Gamma\big(a_3-[1-a_1+n]\frac{\alpha_3}{\alpha_1}\big)
%\Gamma\big(a_4-[1-a_1+n]\frac{\alpha_4}{\alpha_1}\big) \Gamma\big(1-b_3-[1-a_1+n]\frac{\beta_3}{\alpha_1}\big)\Gamma\big(1-b_4-[1-a_1+n]\frac{\beta_4}{\alpha_1}\big)}
    \end{align*}
    and
    \begin{align*}
       h_{2n}&=\lim_{s\to a_{2n} } \left[-(s-a_{2n}) \mathcal{H}_{4,4}^{2,2} \right]\\
       &= \frac{(-1)^n}{ n! \alpha_2}\frac{\Pi_{i=1}^2\Gamma\big(b_i+[1-a_2+n]\frac{\beta_1}{\alpha_2}\big)\Gamma\big(1-a_1-[1-a_2+n]\frac{\alpha_1}
       {\alpha_2}\big)}{\Pi_{j=3}^4\Gamma\big(a_j-[1-a_2+n]\frac{\alpha_j}{\alpha_2}\big)\Pi_{i=3}^4
       \Gamma\big(1-b_i-[1-a_2+n]\frac{\beta_i}{\alpha_2}\big)}.
%        &= \frac{1}{ n! \alpha_2}\frac{(-1)^n\Gamma\big(b_1+[1-a_2+n]\frac{\beta_1}{\alpha_2}\big)\Gamma\big(b_2+[1-a_2+n]\frac{\beta_2}{\alpha_2}\big)
%\Gamma\big(1-a_1-[1-a_2+n]\frac{\alpha_1}{\alpha_2}\big)}{\Gamma\big(a_3-[1-a_2+n]\frac{\alpha_3}{\alpha_2}\big)
%\Gamma\big(a_4-[1-a_2+n]\frac{\alpha_4}{\alpha_2}\big) %\Gamma\big(1-b_3-[1-a_2+n]\frac{\beta_3}{\alpha_2}\big)\Gamma\big(1-b_4-[1-a_2+n]\frac{\beta_4}{\alpha_2}\big)}.
    \end{align*}
\end{lemma}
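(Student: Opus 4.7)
The plan is to evaluate the Mellin--Barnes integral
\[
H_{4,4}^{2,2}(z) = \frac{1}{2\pi i}\int_{\mathcal C}\mathcal H_{4,4}^{2,2}(s)\,z^{-s}\,\dd s
\]
by the residue theorem, closing the contour in the right half-plane. For $|z|>1$ one has $|z^{-s}|=|z|^{-\Re s}$, which decays as $\Re s\to+\infty$, and this is what forces closure to the right rather than the left (contrast this with the proof of Lemma \ref{series expansion 1}, which closes to the left). The poles lying to the right of $\mathcal C$ are precisely the points $s=a_{in}\coloneq (1-a_i+n)/\alpha_i$ for $i=1,2$ and $n=0,1,2,\ldots$, contributed by the two factors $\Gamma(1-a_i-\alpha_i s)$ in the numerator of \eqref{def: Hmnpq(s)}. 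In the generic case where these two families of poles are disjoint and simple, the sum below is unambiguous; the non-generic case (when some $a_{1n}=a_{2n'}$) is recovered by a continuity argument in the parameters, with double poles producing logarithmic terms that degenerate correctly.

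Next I would compute the residues explicitly. Writing $w=1-a_i-\alpha_i s$ and using $\mathrm{Res}_{w=-n}\Gamma(w)=(-1)^n/n!$, the residue of $\Gamma(1-a_i-\alpha_i s)$ at $s=a_{in}$ equals $-(-1)^n/(n!\,\alpha_i)$. Multiplying by the values of the remaining six $\Gamma$ factors of $\mathcal H_{4,4}^{2,2}(s)$ at $s=a_{in}$ and by $z^{-a_{in}}=z^{(a_i-1-n)/\alpha_i}$, and then noting that closing $\mathcal C$ with a large right arc produces a clockwise traversal of the enclosed poles (hence an overall minus sign in $\frac{1}{2\pi i}\oint=-\sum\mathrm{Res}$), the two minus signs cancel and one recovers precisely the coefficients $h_{1n}$, $h_{2n}$ attached to $z^{(a_i-1-n)/\alpha_i}$ as stated.

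The main obstacle is to justify the contour closure: one must verify that the integral over the closing arcs $C_N$ tends to zero as $N\to\infty$ and that the resulting double series converges absolutely for $|z|>1$. Both rest on Stirling-type asymptotics for the eight $\Gamma$ factors in \eqref{def: Hmnpq(s)}. A careful application of \eqref{asymptotic expansion for the quotient of gamma function} (or the classical bound $|\Gamma(x+iy)|\sim\sqrt{2\pi}|y|^{x-1/2}e^{-\pi|y|/2}$ for large $|y|$) yields, for $s$ uniformly away from the poles, an estimate of the form
\[
|\mathcal H_{4,4}^{2,2}(s)|\;\lesssim\;|s|^{\kappa}\,\delta^{-\Re s}\,e^{-\frac{\pi a^{*}}{2}|\Im s|},
\]
where $a^{*},\Lambda,\varrho,\delta$ are as in \eqref{parameters a}--\eqref{parameters delta} and $\kappa$ is a real exponent determined by $\varrho$ and the $\alpha_i,\beta_j$. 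The hypothesis $\Lambda=0$ is crucial here, since it cancels the exponential-in-$\Re s$ growth that would otherwise arise from the ratio of $\Gamma$-factors. Combined with $|z^{-s}|=|z|^{-\Re s}$ and $\delta=1$, the integrand therefore decays like $(|z|/\delta)^{-\Re s}\to 0$ along arcs $C_N$ whose radii are chosen halfway between consecutive poles of each $\Gamma$ factor, ensuring uniform distance estimates. Applying the same Stirling bound at $s=a_{in}$ gives $|h_{in}z^{(a_i-1-n)/\alpha_i}|\lesssim n^{\kappa}|z|^{-n/\alpha_i}$, which secures absolute convergence of the series for $|z|>1$. Executing the Stirling estimate uniformly in the right half-plane, while tracking the cancellations forced by $\Lambda=0$, is the technically demanding step.
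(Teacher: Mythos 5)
The paper does not prove this lemma; it is quoted directly as Theorem~1.4 of the reference \cite{KilbasSaigo}, so there is no in-paper proof to compare against. Your sketch is the standard Mellin--Barnes contour-closure argument, and it is essentially the proof in the reference: close the contour to the right of $\mathcal C$, pick up the residues at $s=a_{in}=(1-a_i+n)/\alpha_i$, and justify the closure via Stirling asymptotics under the hypotheses $\Lambda=0$ and $|z|>\delta$. Your residue bookkeeping is correct: the residue of $\Gamma(1-a_i-\alpha_i s)$ at $a_{in}$ is $-(-1)^n/(n!\,\alpha_i)$, the clockwise traversal contributes a second minus sign, and the product is exactly $h_{in}$ as defined in the statement.

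Two small points. First, your intermediate Stirling bound should read $|\mathcal H_{4,4}^{2,2}(s)|\lesssim |s|^\kappa\,\delta^{+\Re s}\,e^{-\frac{\pi a^*}{2}|\Im s|}$ (not $\delta^{-\Re s}$): the hypothesis $\Lambda=0$ cancels the super-exponential terms $e^{-\Lambda\Re s}$ and $\sigma^{\Lambda\sigma}$, and what remains from the base constants $\prod\alpha_i^{-\alpha_i\Re s}\prod\beta_j^{\beta_j\Re s}$ is precisely $\delta^{\Re s}$. Combined with $|z^{-s}|=|z|^{-\Re s}$ this gives $(\delta/|z|)^{\Re s}=(|z|/\delta)^{-\Re s}$, matching the display that you actually wrote in the last line, so the end result is unaffected (and here $\delta=1$ anyway), but the intermediate formula is inconsistent. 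Second, the vanishing of the arc integral is more delicate than the exponential factor suggests when $a^*=0$, which is in fact the regime the paper applies this to (see the parameter identities $a_k^*=0$, $\Lambda_k=0$, $\varrho_k=0$ computed in Section~\ref{s:proof-of-wsp-bddness}): in that case $e^{-\frac{\pi a^*}{2}|\Im s|}\equiv 1$ and one must rely on the polynomial factor $|s|^{\varrho}$ together with a careful choice of partial contours threading between the poles; this is the technically demanding case covered in \cite{KilbasSaigo} and glossed over in your sketch. Neither issue changes the conclusion, but they are the two places where a fully rigorous write-up would need more work.
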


%{\color{gray}
%\begin{lemma}[Page 74, \cite{Korenev} ]\label{Integral of Bessel function}
%The following Weber-Sonin-Schafheitlin integral can be expressed via the hypergeometric functions
%\[
%\begin{aligned}
%\int_{0}^{\infty} \frac{J_{\mu}(a t) J_{\nu}(b t)}{t} \dd t=& \frac{b^{\nu} \Gamma\left(\frac{1}{2} \mu+\frac{1}{2} \nu\right)}{2 a^{\nu} \Gamma(\nu+1) \Gamma\left(1+\frac{1}{2} \mu-\frac{1}{2} \nu\right)} \\
%& \times {_{2}F_{1}}\left(\frac{\mu+\nu}{2}, \frac{\nu-\mu}{2} ; \nu+1 ; \frac{b^{2}}{a^{2}}\right)
%\end{aligned}
%\]
%for $0<b<a,$ and
%\[
%\begin{aligned}
%\int_{0}^{\infty} {J_{\mu}(a t) J_{\nu}(b t)}{t} \dd t=& \frac{a^{\mu} \Gamma\left(\frac{\mu+\nu}{2}\right)}{2 b^{\mu} \Gamma\left(1+\frac{\nu-\mu}{2}\right) \Gamma(\mu+1)} \\
%& \times {_{2}F_{1}}\left(\frac{\mu+\nu}{2}, \frac{\mu-\nu}{2} ; \mu+1 ; \frac{a^{2}}{b^{2}}\right)
%\end{aligned}
%\]
%for $0<a<b,$ as long as $\mu, \nu$ such that the integral is convergent.
%
%	\end{lemma}}
	\subsection{Singular integral operators on the half-line}
We first easily get Young's inequality for convolutions on the half-line $(0,
\infty)$ with Haar measure $\dd r /r $ as follows:
\begin{lemma}[Young's inequality]  \label{Log-type Young's inequality}
For $1\leq p\leq \infty$, we have
\begin{align*}
	\left\|\int_{0}^\infty  f(\rho) g(r/\rho) \frac{\dd{\rho}}{\rho}\right\| _{L^p(\frac{\dd r}{r})}
%	=\left(\int_{0}^\infty \left|\int_{0}^\infty  f(\rho) g(r/\rho) \frac{\dd{\rho}}{\rho}\right|^p \frac{\dd{r}}{r}\right)^{1/p}\\
%	&\leq \int_{0}^\infty |f(\rho)|\frac{\dd\rho}{\rho}
%	\left(\int_{0}^\infty  |g(r/\rho)|^p\frac{dr}{r} \right)^{1/p}	\\
	&\leq \|f(r)\| _{L^p(\frac{\dd r}{r})}\|g(r)\| _{L^1(\frac{\dd r}{r})}.
\end{align*}		
	\end{lemma}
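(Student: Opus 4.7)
The plan is to recognize the expression $\int_0^\infty f(\rho) g(r/\rho) \frac{\dd \rho}{\rho}$ as the convolution of $f$ and $g$ on the multiplicative group $(0,\infty)$ equipped with its Haar measure $\dd \rho/\rho$, and then reduce the statement to the classical Young convolution inequality on $(\mathbb R,+)$.

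Explicitly, I would perform the logarithmic change of variables $r = e^v$, $\rho = e^u$, so that $\dd r/r = \dd v$ and $\dd \rho/\rho = \dd u$, and define $F(u) \coloneq f(e^u)$, $G(v) \coloneq g(e^v)$. The integral then becomes $(F * G)(v) = \int_{\mathbb R} F(u) G(v-u) \dd u$, and for every $1 \le q \le \infty$ one has $\|h\|_{L^q(\dd r/r)} = \|h(e^{\cdot})\|_{L^q(\mathbb R,\dd v)}$ after this pullback. The desired bound is then just the classical Young inequality
\begin{align*}
\|F * G\|_{L^p(\mathbb R)} \le \|F\|_{L^p(\mathbb R)} \|G\|_{L^1(\mathbb R)},
\end{align*}
applied to $F$ and $G$ and then transported back.

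An alternative, more direct route is via Minkowski's integral inequality: for $1 \le p \le \infty$,
\begin{align*}
\Big\|\int_0^\infty f(\rho) g(r/\rho) \frac{\dd \rho}{\rho}\Big\|_{L^p(\frac{\dd r}{r})}
\le \int_0^\infty |f(\rho)| \, \|g(r/\rho)\|_{L^p(\frac{\dd r}{r})} \frac{\dd \rho}{\rho},
\end{align*}
and the scale invariance of the Haar measure $\dd r/r$, via the substitution $t = r/\rho$, yields $\|g(r/\rho)\|_{L^p(\dd r/r)} = \|g\|_{L^p(\dd r/r)}$ independent of $\rho$. To obtain the inequality in the exact form stated (with $f$ in $L^p$ and $g$ in $L^1$), one uses the commutativity of the convolution on $(0,\infty)$, which follows from the substitution $\sigma = r/\rho$ inside the defining integral.

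No step in this argument presents any serious obstacle; the lemma is simply the Young convolution inequality transported from the additive group $\mathbb R$ to the multiplicative group $(0,\infty)$. The endpoint cases $p = 1$ and $p = \infty$ follow directly from Fubini's theorem and the triangle inequality, respectively, so the only real content is the scale invariance of $\dd r/r$, which is built into the Haar structure.
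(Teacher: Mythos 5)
The paper states this lemma without any proof, treating it as a routine fact about convolution on the multiplicative group $(0,\infty)$ with its Haar measure $\dd r/r$. Your argument is correct and supplies exactly the intended justification: the logarithmic substitution $r=e^v$, $\rho=e^u$ reduces it to the classical Young inequality on $(\mathbb R,+)$, and the alternative route via Minkowski's integral inequality together with the scale invariance of $\dd r/r$ (and the change of variable $\sigma=r/\rho$ to put $g$ in the $L^1$ slot) works equally well.
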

	As a corollary of Lemma \ref{Log-type Young's inequality}, we have the following result.
\begin{lemma}\label{boundedness for CZ operator 2} The operator
\begin{align*}
    Tf(r)=  \int_{\frac{r}{2}}^{r}\left|\ln\!\Big({1-\Big(\frac{s}{r}\Big)^2}\Big)\right |f(s)\frac{\dd s}{s}+\int_{r}^{2r}\left|\ln\!\Big({1-\Big(\frac{r}{s}\Big)^2}\Big)\right|f(s)\frac{\dd s}{s}
\end{align*}
 is bounded in  $L^p\big(\mathbb R_{+}; \frac{\dd r}{r}\big) $ for $1\leq p\leq \infty$.	
\end{lemma}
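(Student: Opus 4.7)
The plan is to recognize $Tf$ as a pair of multiplicative convolutions on the half-line and then reduce the claim to Lemma \ref{Log-type Young's inequality}. Concretely, I would rewrite each of the two integrals in the form $\int_0^\infty f(r/\rho)\,g(\rho)\,\tfrac{\dd\rho}{\rho}$ by an appropriate change of variable that exploits the multiplicative-invariance of $\dd s/s$.

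For the first integral I would substitute $\rho = r/s$, so that $\tfrac{\dd s}{s} = \tfrac{\dd\rho}{\rho}$ and the integration range $s\in[r/2,r]$ becomes $\rho\in[1,2]$. This yields
\[
  \int_{r/2}^r \bigl|\ln(1-(s/r)^2)\bigr|\,f(s)\,\tfrac{\dd s}{s}
  = \int_0^\infty f(r/\rho)\,g_1(\rho)\,\tfrac{\dd \rho}{\rho},
  \quad g_1(\rho)\coloneq \bigl|\ln(1-\rho^{-2})\bigr|\chi_{[1,2]}(\rho).
\]
For the second integral I would first substitute $\rho = s/r$ and then $\rho\mapsto 1/\rho$, so $s\in[r,2r]$ becomes $\rho\in[1/2,1]$, giving
\[
  \int_r^{2r} \bigl|\ln(1-(r/s)^2)\bigr|\,f(s)\,\tfrac{\dd s}{s}
  = \int_0^\infty f(r/\rho)\,g_2(\rho)\,\tfrac{\dd\rho}{\rho},
  \quad g_2(\rho)\coloneq \bigl|\ln(1-\rho^2)\bigr|\chi_{[1/2,1]}(\rho).
\]
Thus $Tf$ equals the multiplicative convolution of $f$ with $g\coloneq g_1+g_2$.

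Next I would verify that $g \in L^1(\mathbb R_+; \tfrac{\dd\rho}{\rho})$. Both $g_1$ and $g_2$ are supported in compact sub-intervals of $(0,\infty)$ on which $\tfrac{\dd\rho}{\rho}$ is comparable to Lebesgue measure, so it suffices to control the logarithmic singularity at $\rho=1$. Since $1-\rho^{-2}\sim 2(\rho-1)$ as $\rho\to 1^+$ and $1-\rho^2\sim 2(1-\rho)$ as $\rho\to 1^-$, in both cases the integrand is of order $|\log|\rho-1||$, which is integrable. Hence $\|g\|_{L^1(\dd\rho/\rho)}<\infty$.

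Finally, applying Lemma \ref{Log-type Young's inequality} with this $g$ yields
\[
  \|Tf\|_{L^p(\dd r/r)} \leq \|g\|_{L^1(\dd\rho/\rho)}\,\|f\|_{L^p(\dd r/r)}
\]
for every $1\leq p\leq \infty$, which is the desired bound. There is no real obstacle here beyond bookkeeping for the two changes of variable and checking the logarithmic singularity; the only subtle point is to make sure the orientation of the substitution produces the standard convolution form assumed in Lemma \ref{Log-type Young's inequality}.
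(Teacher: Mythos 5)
Your proof is correct and takes essentially the same route as the paper: recognize each piece as a multiplicative convolution on $(0,\infty)$, verify that the resulting kernel is in $L^1(\dd\rho/\rho)$ near the logarithmic singularity at $\rho=1$, and apply Lemma \ref{Log-type Young's inequality}. The paper is terser (it just records the $L^1$ bound for one of the two pieces and invokes the same Young's inequality), but your explicit changes of variable produce exactly the kernels $g_1,g_2$ the paper implicitly has in mind.
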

\begin{proof} It is easy to notice that
\begin{align*}
	\int_0^\infty \left|\ln(1-s^2)\right|\chi_{\{1/2<s<1\}} \frac{\dd s}{s} <\infty.
\end{align*}	
Thus, by Lemma \ref{Log-type Young's inequality}, we deduce that $T$ is $L^p\big(\mathbb R_{+}; \frac{\dd r}{r}\big)$ bounded.
\end{proof}

The following lemma is a variant of Schur's test,  which will be used in the proof of Lemma \ref{boundedness for CZ operator 1} below.

\begin{lemma}[Schur's test with weights, \cite{KVZ12}]\label{lem:schur} Suppose $(X,d\mu)$ and $(Y,d\nu)$ are measure spaces, and $w(x,y)$ is a positive
measurable function defined on $X\times Y$. Let $K(x,y):~X\times Y\to\C$ satisfy
\begin{align*}
\sup_{x\in X}\int_Y
w(x,y)^\frac1p\big|K(x,y)\big|d\nu(y)=C_0<\infty,\\\label{est:scha2}
\sup_{y\in Y}\int_X
w(x,y)^{-\frac1{p'}}\big|K(x,y)\big|d\mu(x)=C_1<\infty,
\end{align*}
for some $1<p<\infty$. Then the operator $T$ defined by
$$Tf(x)=\int_Y K(x,y)f(y)d\nu(y)$$
is a bounded operator from $L^p(Y,d\nu)$ to  $L^p(X,d\mu)$. In particular,
\begin{equation*}
\big\|Tf\big\|_{ L^p(X,d\mu)}\leq
C_0^\frac1{p'}C_1^\frac1p\|f\|_{L^p(Y,d\nu)}.
\end{equation*}
\end{lemma}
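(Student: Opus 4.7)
The plan is to run the classical Schur trick, using the weight $w(x,y)$ to distribute the kernel between two Hölder factors. The idea is to factor $|K(x,y)| = |K(x,y)|^{1/p'}|K(x,y)|^{1/p}$ and insert compensating powers of $w$, so that applying Hölder in $y$ with exponents $p'$ and $p$ produces exactly the two hypotheses of the lemma.

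Concretely, I would write
\begin{align*}
|Tf(x)| &\leq \int_Y \bigl(|K(x,y)|\, w(x,y)^{1/p}\bigr)^{1/p'} \cdot \bigl(|K(x,y)|\, w(x,y)^{-1/p'}\bigr)^{1/p} |f(y)|\, d\nu(y),
\end{align*}
since the combined weight is $w(x,y)^{1/(pp')-1/(pp')}=1$ and the total power of $|K|$ is $1$. Applying Hölder with exponents $p'$ and $p$ to the two parenthesized factors gives
\begin{align*}
|Tf(x)| \leq \Bigl(\int_Y |K(x,y)| w(x,y)^{1/p} d\nu(y)\Bigr)^{1/p'} \Bigl(\int_Y |K(x,y)| w(x,y)^{-1/p'} |f(y)|^p d\nu(y)\Bigr)^{1/p}.
\end{align*}
The first factor is bounded by $C_0^{1/p'}$ uniformly in $x$ by the first hypothesis.

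Raising to the $p$-th power and integrating in $x$, then swapping the order of integration via Fubini--Tonelli (all integrands are nonnegative), the inner $x$-integral is controlled by the second hypothesis:
\begin{align*}
\|Tf\|_{L^p(X,d\mu)}^p &\leq C_0^{p/p'} \int_Y |f(y)|^p \Bigl(\int_X |K(x,y)| w(x,y)^{-1/p'} d\mu(x)\Bigr) d\nu(y) \\
&\leq C_0^{p/p'} C_1 \|f\|_{L^p(Y,d\nu)}^p.
\end{align*}
Taking $p$-th roots yields the desired bound $\|Tf\|_{L^p(X,d\mu)} \leq C_0^{1/p'} C_1^{1/p} \|f\|_{L^p(Y,d\nu)}$.

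There is essentially no obstacle: the only thing to check is the exponent bookkeeping in the Hölder splitting, which is forced by requiring that the two integrals produced match the hypotheses exactly. The unweighted Schur test corresponds to $w\equiv 1$, and the weighted version is no harder once one has guessed the correct factorization.
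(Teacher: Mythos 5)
Your proof is correct, and it is the standard Hölder--Fubini argument for the weighted Schur test. Note that the paper does not actually prove this lemma---it is cited directly to \cite{KVZ12}---so there is nothing in the text to compare against; your factorization $|K|=\bigl(|K|w^{1/p}\bigr)^{1/p'}\bigl(|K|w^{-1/p'}\bigr)^{1/p}$ followed by Hölder in $y$ and Tonelli is exactly how this result is established in that reference and elsewhere.
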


Using the $L^p$-boundedness of the Hilbert transform and Schur's test in Lemma \ref{lem:schur}, we can prove the following lemma.
%The proof of Lemma~\ref{boundedness for CZ operator 1} is built on the following variant of Schur's test; see \cite{KVZ12} for historical references and the (trivial) proof.

\begin{lemma}\label{boundedness for CZ operator 1} Let $\mu_0=\frac{d}{2}-1$ and $\nu_0=\sqrt{\mu_0^2+a}$. For $r\in \mathbb R_{+}$, define
\begin{align*}
	Tf(r)&=\int_\frac{r}{2}^r  \left(\frac{s}{r}\right)^{\mu_0+\frac{d}{2}-\frac{d}{p}+1}\frac{ f(s)}{1-(\frac{s}{r})^2} \frac{\dd s}{s}
	&-\int_r^{2r}  \left(\frac{r}{s}\right)^{\frac{d}{p}-\frac{d}{2}+\nu_0+1}\frac{f(s)}{1-(\frac{r}{s})^2}  \frac{\dd s}{s},
\end{align*}
 then $T$ is bounded in  $L^p(\mathbb R_{+}, \frac{\dd r}{r})$ for $1<p<\infty$.
\end{lemma}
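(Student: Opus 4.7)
The plan is to conjugate $T$ to a translation-invariant operator on $\R$ via the substitution $r = e^x$, $s = e^y$, and then isolate a truncated Hilbert transform. Under the isometry $f\mapsto F(x)\coloneq f(e^x)$ from $L^p(\R_+,\tfrac{\dd r}{r})$ onto $L^p(\R,\dd x)$, the operator becomes
\begin{align*}
 \tilde T F(x) \;=\; Tf(e^x) \;=\; \mathrm{p.v.}\!\int_\R K_0(u)\,F(x+u)\,\dd u,
\end{align*}
where, with $\alpha_1 = \mu_0 + \tfrac{d}{2} - \tfrac{d}{p} + 1$ and $\alpha_2 = \tfrac{d}{p} - \tfrac{d}{2} + \nu_0 + 1$,
\begin{align*}
 K_0(u) \;=\; \frac{e^{\alpha_1 u}}{1 - e^{2u}}\,\chi_{(-\ln 2,\,0)}(u) \;-\; \frac{e^{-\alpha_2 u}}{1 - e^{-2u}}\,\chi_{(0,\,\ln 2)}(u)
\end{align*}
is compactly supported in $(-\ln 2,\ln 2)$. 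The principal value is essential: each of the two integrals in the definition of $T$ carries a non-integrable Cauchy-type singularity at $s=r$, and only after symmetric truncation do the two sides combine into a well-defined operator.

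The key computation is the Taylor expansion at $u=0$. Since $1 - e^{\pm 2u} = \mp 2u + O(u^2)$ and $e^{\alpha_j u} = 1 + O(u)$, one finds
\begin{align*}
 K_0(u) \;=\; -\frac{1}{2u} \;+\; R(u),\qquad u \in (-\ln 2,\ln 2)\setminus\{0\},
\end{align*}
with $R$ bounded on its support and therefore in $L^1(\R,\dd u)$. Note that the $p$-dependent exponents $\alpha_1,\alpha_2$ enter only as bounded prefactors and play no role in the cancellation of the simple pole; the pole strength $-\tfrac{1}{2}$ matches from the left and right regardless of $p$.

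This decomposition splits $\tilde T = T_H + T_R$, where
\begin{align*}
 T_H F(x) = -\tfrac{1}{2}\,\mathrm{p.v.}\!\int_{|u|<\ln 2}\frac{F(x+u)}{u}\,\dd u, \qquad T_R F = R\ast F.
\end{align*}
The truncated Hilbert transform $T_H$ is $L^p$-bounded on $\R$ for every $1<p<\infty$, which follows from the classical $L^p$-boundedness of the Hilbert transform by writing $\chi_{|u|<\ln 2}/u = \phi(u)/u + (\chi_{|u|<\ln 2}-\phi)/u$ for a smooth bump $\phi$ supported in $(-\ln 2,\ln 2)$ and equal to $1$ on $[-\tfrac12\ln 2,\tfrac12\ln 2]$: the first summand defines a Calderón--Zygmund convolution, the second has an $L^1$ kernel. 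The remainder $T_R$ is bounded on $L^p$ for every $1\le p\le\infty$, either by Young's inequality (Lemma \ref{Log-type Young's inequality}) or equivalently by Schur's test (Lemma \ref{lem:schur}) with the trivial weight $w\equiv 1$ applied in the original $(r,s)$ coordinates, since $R\in L^1$. Transferring back through the isometry yields the lemma. The only substantive step is the Taylor expansion that isolates the $-1/(2u)$ singularity, which is routine; I foresee no serious obstacle.
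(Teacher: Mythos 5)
Your proof is correct, and it takes a genuinely different and arguably cleaner route than the paper's. The paper first conjugates $T$ by $r^{1/p}$ to work in $L^p(\mathbb R_+,\dd r)$ rather than $L^p(\mathbb R_+,\tfrac{\dd r}{r})$, then decomposes the kernel about the Hilbert kernel $\tfrac{1}{r-s}$, splitting the difference $F(r,s)-1$ into several pieces, each of which is bounded on $L^2$ via Schur's test with the weight $(r/s)^{1/2}$ or a direct computation; it then appeals to Calder\'on--Zygmund theory in the linear variable to pass from $L^2$ to $L^p$. You instead exploit the dilation invariance directly: the logarithmic substitution $r=e^x$, $s=e^y$ carries $L^p(\tfrac{\dd r}{r})$ isometrically onto $L^p(\R,\dd x)$ and renders $T$ a convolution with a compactly supported kernel $K_0$, after which a single Taylor expansion at $u=0$ shows $K_0(u)=-\tfrac{1}{2u}+R(u)$ with $R\in L^1$. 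The principal-value singularity is captured once and for all by the truncated Hilbert transform, and the $p$-dependent exponents $\alpha_1,\alpha_2$ disappear into the bounded remainder; no weighted Schur estimate, no $L^2$-to-$L^p$ bootstrap, and no separate analysis of the two half-neighbourhoods of the diagonal are needed. The only caveat, which you note, is that the lemma as stated is implicitly a principal-value operator, so the symmetric truncation you impose is the right reading; the paper handles this implicitly in the same way when it rewrites the kernel around $\tfrac{1}{r-s}$.
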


\begin{proof} Note that $T$ is bounded in  $L^p(\mathbb R_{+}, \frac{\dd r}{r})$ if and only if $$\|r^{-\frac{1}{p}}Tf(r)\|_{L^p(\mathbb R_{+}; \dd r) }\lesssim \|r^{-\frac{1}{p}}f\|_{L^p(\mathbb R_{+}; \dd r)}.$$
Hence we define the following modified operator $\widetilde T$ with
\begin{align*}
	\widetilde Tf(r)&=\int_\frac{r}{2}^r  \left(\frac{s}{r}\right)^{\mu_0+\frac{d}{2}-\frac{d-1}{p}+1}\frac{ f(s)}{1-(\frac{s}{r})^2} \frac{\dd s}{s}
	&-\int_r^{2r}  \left(\frac{r}{s}\right)^{\frac{d-1}{p}-\frac{d}{2}+\nu_0+1}\frac{f(s)}{1-(\frac{r}{s})^2}  \frac{\dd s}{s},
\end{align*}
and it suffices to prove that $\widetilde T$ is bounded in $L^p(\mathbb R_{+}; \dd r)$.

 Let $f\in L^p(\mathbb R_{+}, \dd r)$, which we identify with its extension by zero to a function on $\mathbb R$.  For $r>0$, write $\widetilde Tf(r)=\int_{\mathbb R} \widetilde T(r, s) f(s) \dd s$ with
\begin{align*}
   \widetilde T(r,s)=\left(\frac{s}{r}\right)^{\mu_0+\frac{d}{2}-\frac{d-1}{p}}\frac{ r}{r^2-s^2} \chi_{\{1/2<s/r<1\}} +\left(\frac{r}{s}\right)^{\frac{d-1}{p}-\frac{d}{2}+\nu_0+1}\frac{ s}{r^2-s^2}\chi_{\{1<s/r<2\}}.
\end{align*}
We also identify $\widetilde Tf$ with its extension by zero.
It is easy to check that $\widetilde T(r,s)$ satisfies
\begin{align*}
	|\widetilde T(r, s)|&< C \frac{1}{|r-s|},\\
	|\partial_r \widetilde T(r, s)+\partial_s  \widetilde T(r, s)| &\leq  C \frac{1}{|r-s|^2}.
	\end{align*}
Combining this with \CZ{}  singular integral theory (See \cite[Chapter 5]{Duo}), in order to prove $T$ is bounded in  $L^p(\mathbb R; \dd r )$ for $1<p<\infty$, it suffices to prove that $T$ is bounded in  $L^2(\mathbb R; \dd r)$.
%{\color{gray}
We write     $Tf(r)$ as follows,
\begin{align}\label{equ:tfrdef}
   \widetilde Tf(r)=\int_{-\infty}^\infty (F(r,s)-1)f(s)\frac{ \dd s}{r-s}+\int_{-\infty}^\infty f(s)\frac{\dd s}{r- s},
\end{align}
with \begin{align*}
   F(r,s)&=\left(\frac{s}{r}\right)^{d-\frac{d-1}{p}-1}\frac{r}{r+s} \chi_{\{r/2<s<r\}}-\frac{1}{2}
   +\left(\frac{r}{s}\right)^{\frac{d-1}{p}-\frac{d}{2}+\nu_0+1}\frac{s}{r+s} \chi_{\{r<s<2r\}}-\frac{1}{2} +1\\
   &\eqqcolon \big[F^{+}(r,s)-\frac{1}{2}\big]+\big[F^{-}(r,s)-\frac{1}{2}\big]+1.
\end{align*}
We denote
\begin{align*}
   \widetilde T^{+}f(r)&=\int_{-\infty}^\infty \big[F^{+}(r,s)-\frac{1}{2} \big]f(s)\frac{ \dd s}{r-s},\\
   \widetilde T^{-}f(r)&=\int_{-\infty}^\infty \big[F^{-}(r,s)-\frac{1}{2} \big]f(s)\frac{ \dd s}{r-s}.
\end{align*}
Using the boundedness of the Hilbert transform, we know that the convolution operator
\begin{align*}
   \int_{-\infty}^\infty f(s)\frac{\dd s}{r- s}
\end{align*}
is bounded in  $L^2(\mathbb R ; \dd r)$. Hence, it remains to show that the operators $\widetilde T^\pm$ are $L^2$-bounded.  To do this, we rewrite the kernels as
\begin{align*}
 F^{+}(r,s)-\frac{1}{2}=&\left(\frac{s}{r}\right)^{d-\frac{d-1}{p}-1}\left(\frac{r}{r+s}-\frac{1}{2}\right)\chi_{\{r/2<s<r\}}+\frac{1}{2}
 \left[\left(\frac{s}{r}\right)^{d-\frac{d-1}{p}-1}-1\right]\chi_{\{r/2<s<r\}}\\
    =&\frac{1}{2}\left(\frac{s}{r}\right)^{d-\frac{d-1}{p}-1}\frac{r-s}{2(r+s)}\chi_{\{r/2<s<r\}}+\frac{1}{2}
    \left[\left(\frac{s}{r}\right)^{d-\frac{d-1}{p}-1}-1\right]\chi_{\{r<s<2r\}},
    \end{align*}
    and
    \begin{align*}
  &F^{-}(r,s)-\frac{1}{2}\\
  = &\left(\frac{r}{s}\right)^{\frac{d-1}{p}-\frac{d}{2}+\nu_0+1}\left(\frac{s}{r+s}-\frac{1}{2}\right)\chi_{\{r<s<2r\}}
  +\frac{1}{2}\left[\left(\frac{r}{s}\right)^{\frac{d-1}{p}-\frac{d}{2}+\nu_0+1}-1\right]\chi_{\{r<s<2r\}}\\
    =&\frac{1}{2}\left(\frac{r}{s}\right)^{\frac{d-1}{p}-\frac{d}{2}+\nu_0+1}\frac{s-r}{2(r+s)}\chi_{\{r<s<2r\}
    }+\frac{1}{2}\left[\left(\frac{r}{s}\right)^{\frac{d-1s}{p}-\frac{d}{2}+\nu_0+1}-1\right]\chi_{\{r<s<2r\}}.
    \end{align*}
Let us now define
\begin{align*}\nonumber
\widetilde T^{+}f(r)&=\int_{-\infty}^\infty \big[F^{+}(r,s)-\frac{1}{2} \big]f(s)\frac{ \dd s}{r-s}\\\nonumber
&=\frac{1}{2} \int_{r/2}^ r\left(\frac{s}{r}\right)^{d-\frac{d-1}{p}-1}\frac{f(s)}{2(r+s)} \dd s +\frac{1}{2} \int_{r/2}^r \left[\left(\frac{s}{r}\right)^{d-\frac{d-1}{p}-1}-1\right]f(s)\frac{ \dd s}{r-s}\\
&\triangleq \widetilde T_1^{+} f(r)+ \widetilde T_2^{+}f(r).
  \end{align*}
Taking $w(r,s)=(r/s)^{1/2}$ in Lemma \ref{lem:schur}, we can check that
\begin{align*}
   \sup_{r>0} \int_0^\infty \frac{1}{r+s}\frac{r^{1/2}}{s^{1/2}}ds \lesssim 1,\quad\text{and}\quad
    \sup_{s>0}\int_0^\infty \frac{1}{r+s}\frac{s^{1/2}}{r^{1/2}}dr \lesssim 1.
\end{align*}
Hence, $\widetilde T_1^{+}$ is $L^2(\mathbb R ; \dd r)$ bounded.

For $\widetilde T_2^{+}$, a direct computation gives
\begin{align*}
    &\sup_{r}\int_{-\infty}^\infty \frac{1-\left(\frac{s}{r}\right)^{d-\frac{d-1}{p}-1}}{1-\frac{s}{r}}\chi_{\{r/2<s<r\}}\frac{\dd s}{s} \lesssim 1,\\
      &\sup_{s}\int_{-\infty}^\infty \frac{1-\left(\frac{s}{r}\right)^{d-\frac{d-1}{p}-1}}{1-\frac{s}{r}}\chi_{\{r/2<s<r\}}\frac{\dd r}{r} \lesssim 1.
     \end{align*}
This yields the $L^2(\mathbb R; \dd r)$ boundedness of $\widetilde T^+_{2}$. By the same argument, we get  the $L^2$-boundedness of $\widetilde T^{-}$.

In conclusion, we derive that $T$ is bounded in  $L^p(\mathbb R^+; \frac{\dd r}{r})$ for $1<p<\infty$.
\end{proof}

\subsection{A discrete multiplier theorem for spherical harmonics }
We denote $Y_{k,l}(\omega)$ to be the $l$-th spherical harmonic of degree $k$. These are homogeneous harmonic polynomials of degree $k$ restricted to the surface of unit sphere $\mathbb S^{d-1}$. It is known that  $Y_{k,l}(\omega)$ are  the eigenfunctions of the angular part of $\Delta$ with eigenvalues $-k(k+d-2)$.
$\{Y_{k,l}\}_{k,l}$ forms a complete orthonormal system in $L^2(\mathbb S^{d-1})$ in the sense that for any given function $F(\omega) \in L^2(\mathbb S^{d-1})$, we expand $F(\omega)$ in spherical harmonics
\[ F(\omega)= \sum_{k=0}^\infty\sum_{l=1}^{d_k} F_{k,l} Y_{k,l}(\omega), \]
with coefficients $F_{k,l}=\int_{\mathbb S^{d-1}}F(\omega)Y_{k,l}(\omega) \dd \sigma({\omega})$.
We write $\proj_k F$ for the projection of $F$ to the spherical harmonics of degree $k$,
\begin{align}\label{equ:projkfdef}
\proj_k F(\omega)&\coloneq  \sum_{l=1}^{d_k} F_{k,l} Y_{k,l}(\omega).
\end{align}
By making   use  of zonal harmonics $Z^{k}$, $F(\omega)$ can also  be written as
\begin{align}\label{zonal decomposition}
    F(\omega)=\sum_{k=0}^\infty \proj_k F(\omega)=\sum_{k=0}^\infty \int_{\mathbb S^{d-1}} Z^{k}(\theta \cdot \omega)  F(\theta) \dd \sigma(\theta) .
\end{align}
See  \cite[Chapter IV]{Stein Weiss} for more information about spherical harmonics and zonal harmonics.

Define the forward finite difference operator acting on sequences $(\myC_k)_{k\ge 0}$ by
\[ \fd \myC_k \coloneq  \myC_{k+1} - \myC_k\]
and inductively define the $N$th order forward finite difference operator $$\fd^N \myC_k \coloneq  \fd(\fd^{N-1} \myC_k),\quad N\in\mathbb{N}.$$ For the  decomposition \eqref{zonal decomposition}, we have the following multiplier theorem on $\mathbb S^{d-1}$, which is crucial  in proving our main result.

\begin{theorem}[Bonami and Clerc, \cite{BC}]
\label{multiplier theorem for spherical harmonic decomposition}
    Let $(\myC_k)_{k\ge 0}$ be a sequence of complex numbers satisfying the conditions
\begin{enumerate}[%
label=\textup{(BC\roman*)}, ref=BC\roman*
]%
    \item \label{uniform bound}
$	\sup_k \left|\myC_{k}\right| \lesssim 1$,
    \item \label{uniform difference bound}
	$\sup_{j} 2^{j(N-1)} \sum\limits_{k=2^ j}^{2 ^{j+1}}\left| \fd ^{N} \myC_{k}\right| \lesssim 1 $ for $N= \lfloor \frac{d-1}{2}\rfloor+1 $.
\end{enumerate}
Then for $1<p<\infty$, there holds
\begin{align}
	\left\|\sum_{k=0}^\infty \myC_{k} \proj_{k} F\right\|_{L^{p}(\mathbb S^{d-1})} \lesssim \left\|\sum_{k=0}^\infty  \proj_{k} F\right\|_{L^{p}(\mathbb S^{d-1})}.
\end{align}

\end{theorem}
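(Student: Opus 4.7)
My plan is to prove this Marcinkiewicz-type multiplier theorem on the sphere by combining a Littlewood--Paley square function for the spectral decomposition of $-\Delta_{\mathbb S^{d-1}}$ with iterated Abel summation and uniform bounds for Cesàro partial sums of spherical harmonic expansions. The projection $\proj_k$ is the spectral projector onto the eigenspace of eigenvalue $k(k+d-2)$ of $-\Delta_{\mathbb S^{d-1}}$, so a smooth dyadic partition of unity $\{\phi_j\}$ in the spectral variable produces Littlewood--Paley blocks $S_jF \coloneq \sum_k \phi_j(k)\proj_k F$. Stein's multiplier theorem for the Poisson semigroup on the sphere yields the two-sided bound
\begin{align*}
\|F\|_{L^p(\mathbb S^{d-1})} \approx \Big\|\Big(\sum_j |S_j F|^2\Big)^{1/2}\Big\|_{L^p(\mathbb S^{d-1})},\qquad 1<p<\infty,
\end{align*}
which reduces the problem to proving a uniform (and $\ell^2$-valued) bound on the restricted multiplier $T_jF \coloneq \sum_{k=2^j}^{2^{j+1}-1}\myC_k \proj_k F$.

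Next I would iterate Abel summation $N = \lfloor \tfrac{d-1}{2}\rfloor+1$ times on each dyadic block. Writing $\Sigma^{N-1}_k F \coloneq \sum_{\ell \le k}\binom{k-\ell+N-1}{N-1}\proj_\ell F$ for the unnormalized iterated partial sum of order $N-1$, the Abel identity produces
\begin{align*}
T_j F \;=\; \sum_{k=2^j}^{2^{j+1}-1} \fd^N\myC_k \cdot \Sigma^{N-1}_k F \;+\; B_j F,
\end{align*}
where the boundary term $B_j F$ involves only $O(1)$ many projections $\proj_k$ with coefficients of size $\lesssim 1$ by (BCi) and consequences of (BCii). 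The critical input is the Bonami--Clerc-type Cesàro kernel estimate at the critical index $(d-2)/2$: since $N-1 = \lfloor \tfrac{d-1}{2}\rfloor \ge \tfrac{d-2}{2}$, the normalized Cesàro $(N-1)$-means are uniformly bounded on $L^p(\mathbb S^{d-1})$ for all $1<p<\infty$, which translates to $\|\Sigma^{N-1}_k F\|_{L^p} \lesssim k^{N-1}\|\widetilde S_j F\|_{L^p}$ on the block, where $\widetilde S_j$ is a mildly fattened version of $S_j$ absorbing the spectral support. Combining with (BCii) gives
\begin{align*}
\|T_j F\|_{L^p(\mathbb S^{d-1})} \;\lesssim\; \Big(2^{j(N-1)}\sum_{k=2^j}^{2^{j+1}-1}|\fd^N\myC_k|\Big)\|\widetilde S_j F\|_{L^p(\mathbb S^{d-1})} \;\lesssim\; \|\widetilde S_j F\|_{L^p(\mathbb S^{d-1})}.
\end{align*}

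To conclude, I apply the square function characterization in both directions, together with a vector-valued version of the uniform block bound (the argument is linear in $F$ and $\Sigma^{N-1}_k$ acts diagonally in the spectral variable, so the extension to $\ell^2$-valued functions is standard), to get
\begin{align*}
\Big\|\sum_k \myC_k \proj_k F\Big\|_{L^p} \;\lesssim\; \Big\|\Big(\sum_j |T_j F|^2\Big)^{1/2}\Big\|_{L^p} \;\lesssim\; \Big\|\Big(\sum_j |\widetilde S_j F|^2\Big)^{1/2}\Big\|_{L^p} \;\lesssim\; \|F\|_{L^p(\mathbb S^{d-1})}.
\end{align*}
The main obstacle is the Cesàro kernel estimate at the critical index $(d-2)/2$, which dictates the choice $N = \lfloor \tfrac{d-1}{2}\rfloor+1$. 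Below this index the Cesàro means fail uniform $L^p$ bounds (the spherical analog of the Bochner--Riesz critical exponent phenomenon), and obtaining the sharp endpoint requires a delicate asymptotic analysis of the zonal harmonic kernels via Jacobi/Gegenbauer polynomial estimates of Hilb type. Everything else is a fairly standard Marcinkiewicz-style argument transferred from Fourier series to the spherical harmonic expansion.
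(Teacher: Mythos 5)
The paper does not prove this theorem; it cites it directly from Bonami--Clerc \cite{BC}. So there is no ``paper's proof'' to compare against. What you have written is a reconstruction of the original Bonami--Clerc argument, and it is structurally correct: the Littlewood--Paley reduction via the Poisson semigroup, the $N$-fold Abel summation producing terms $\fd^N\myC_k \cdot \Sigma^{N-1}_k$, and the crucial uniform $L^p$ ($1<p<\infty$) bound for Cesàro means at the critical index $(d-2)/2$ on $\mathbb{S}^{d-1}$ (obtained from the weak-$(1,1)$ endpoint for the critical-order kernel) are exactly the ingredients of the classical proof. Your arithmetic checking $N-1=\lfloor(d-1)/2\rfloor\ge (d-2)/2$, with equality precisely for even $d$ (hence needing the endpoint), is correct, including the degenerate case $d=2$ where ``Cesàro of order $0$'' is the M.~Riesz theorem on the circle.

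Two places where the sketch is looser than it should be. First, the boundary term $B_jF$ after $N$ applications of summation-by-parts is not a finite sum of individual projections $\proj_k$ with $O(1)$ coefficients; it consists of lower-order difference coefficients $\fd^m\myC_{2^j}$, $\fd^m\myC_{2^{j+1}}$ ($m<N$) paired with the Cesàro partial sums $\Sigma^{m-1}_k$ at the endpoints of the block. It is still controllable (use the interpolated difference bounds $|\fd^m\myC_k|\lesssim k^{-m}$ that follow from (BCi) and (BCii), combined with the same Cesàro operator bound), but as written the description is wrong. Second, the passage from the uniform scalar block bound $\|T_jF\|_{L^p}\lesssim\|\widetilde S_jF\|_{L^p}$ to the square-function estimate $\|(\sum_j|T_jF|^2)^{1/2}\|_{L^p}\lesssim\|(\sum_j|\widetilde S_jF|^2)^{1/2}\|_{L^p}$ genuinely requires an $\ell^2$-valued (vector-valued Calder\'on--Zygmund) version of the Cesàro kernel estimates, not merely linearity plus diagonality; it is true and standard given the kernel bounds, but it is a real step and should not be waved away. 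With those two points tightened, your proposal is a faithful proof of the cited theorem.
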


%Next, we give some sufficient conditions that imply the conditions \eqref{uniform bound} and \eqref{uniform difference bound}.

\begin{remark}\label{finite differnce in between} Notice that if \eqref{uniform bound} and \eqref{uniform difference bound} are valid, by interpolation, we have for all $1\leq m \leq N$,
\begin{align*}
 \sup_{j} 2^{j(m-1)} \sum\limits_{k=2^ j}^{2 ^{j+1}}\left| \fd ^{m} \myC_{k}\right| \lesssim 1.
\end{align*}

\end{remark}
%\begin{remark}\label{trivial inequality}
%	One can easily check that   \eqref{uniform difference bound} holds provided that
%\begin{align*}
%%	\sup_{j} 2^{j(N-1)} \sum_{k=2^ j}^{2 ^{j+1}}\left|\fd ^{N} \myC_{k}\right|\leq
%	2^N\sum_{k=0}^\infty k^{(N-1) } | \myC_k | \lesssim_N 1,
%\end{align*}
%for $ N =  \lfloor \frac{d-1}{2}\rfloor+1$.
%\end{remark}
The following lemma gives a sufficient condition for \eqref{uniform difference bound} if the sequence can be viewed as a  smooth function of $k$.

\begin{lemma} Suppose that a bounded sequence $\myC_k=\myC(k)$ for some smooth function $\myC:[1,\infty)\to\mathbb R$. If one has
\begin{align*}
    \left| \frac{\dd^N \myC}{\dd k^N}\right| \lesssim_N \frac{1}{k^N},\quad N= \lfloor \tfrac{d-1}{2}\rfloor +1,
\end{align*}
    then, there holds
    \begin{align*}
    \sup_{k}  k^N | \fd{}^N \myC_k| \lesssim_N 1.
    \end{align*}
This estimate implies  \eqref{uniform difference bound} for the sequence $\{\myC_k\}_{k\ge0}$.\end{lemma}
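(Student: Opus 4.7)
The plan is to express the $N$th forward difference $\fd^N \myC_k$ as an iterated integral of the $N$th derivative of $\myC$ and then use the assumed decay hypothesis. Specifically, for any smooth function $\myC$ on $[1,\infty)$, a standard identity (proved by induction on $N$, starting from the fundamental theorem of calculus $\fd \myC(k) = \int_0^1 \myC'(k+t)\,dt$) gives
\begin{equation*}
\fd^N \myC(k) \;=\; \int_0^1 \!\!\cdots \int_0^1 \myC^{(N)}(k + t_1 + \cdots + t_N)\, dt_1\cdots dt_N.
\end{equation*}

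Taking absolute values, invoking the hypothesis $|\myC^{(N)}(x)| \lesssim_N x^{-N}$, and using that $k + t_1 + \cdots + t_N \ge k$ on the domain of integration, I obtain
\begin{equation*}
|\fd^N \myC_k| \;\leq\; \int_0^1 \!\!\cdots \int_0^1 \frac{C_N}{(k+t_1+\cdots+t_N)^N}\, dt_1\cdots dt_N \;\lesssim_N\; \frac{1}{k^N},
\end{equation*}
which is exactly the claimed bound $\sup_k k^N |\fd^N \myC_k| \lesssim_N 1$.

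Finally, to see that this pointwise bound implies condition \eqref{uniform difference bound}, I estimate for each dyadic block
\begin{equation*}
2^{j(N-1)} \sum_{k=2^j}^{2^{j+1}} |\fd^N \myC_k| \;\lesssim_N\; 2^{j(N-1)} \sum_{k=2^j}^{2^{j+1}} \frac{1}{k^N} \;\lesssim\; 2^{j(N-1)} \cdot 2^j \cdot 2^{-jN} \;=\; 1,
\end{equation*}
uniformly in $j$, which is \eqref{uniform difference bound}. There is no real obstacle here; the only subtle point is verifying the iterated-integral representation of $\fd^N$, which is a clean induction: assuming the formula for $N-1$, one writes $\fd^N \myC(k) = \fd^{N-1}(\fd \myC)(k)$ and applies the inductive hypothesis to $\fd \myC(k) = \int_0^1 \myC'(k+t)\,dt$, then interchanges the order of integration (justified since $\myC$ is smooth on $[1,\infty)$ and all integrals are over compact sets).
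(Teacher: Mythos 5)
Your proposal is correct and follows essentially the same approach as the paper: both express $\fd^N\myC_k$ as an $N$-fold iterated integral of $\myC^{(N)}$ (your $\int_0^1\cdots\int_0^1 \myC^{(N)}(k+t_1+\cdots+t_N)\,dt$ is exactly the paper's $\int_k^{k+1}\int_{y_1}^{y_1+1}\cdots\int_{y_{N-1}}^{y_{N-1}+1}\myC^{(N)}(z)\,dz\cdots dy_1$ after the change of variables $y_i = k + t_1 + \cdots + t_i$), apply the pointwise bound $|\myC^{(N)}(x)|\lesssim x^{-N}$, and then sum over the dyadic block.
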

\begin{proof} Using the Fundamental Theorem of Calculus and Remark \ref{finite differnce in between}, we have
\begin{align*}
   | \fd{} \myC(k)|&=\left|\int_{k}^{k+1} \myC'(y)dy \right|\lesssim \frac{1}{k}.
\end{align*}
 By induction, we have for $N= \lfloor \frac{d-1}{2}\rfloor+1$,
 \begin{align*}
    | \fd{}^N \myC(k)| &=\left|\int_{k}^{k+1}\int_{y_1}^{y_1+1} \dots \int_{y_{N-1}}^{y_{N-1}+1} \myC^{(N)}(z) dz dy_{N-1} \dots dy_1\right| \lesssim k^{-N}.
 \end{align*}
Hence,
\begin{align*}
     \sup_{j}  2^{j(N-1)}\sum_{k=2^j}^{2^{j+1}} | \fd{}^N \myC_k|& <      \sup_{j}  2^{j(N-1)} 2^j \sup_{2^j\leq k\leq 2^{j+1}}  | \fd{}^N \myC_k|\\
    & \lesssim \sup_{k} k^N | \fd{}^N \myC_k|<\infty.\qedhere
\end{align*}\end{proof}

Later, we need to determine whether the product of sequences satisfies the conditions \eqref{uniform bound} and \eqref{uniform difference bound} or not. This is achieved by the following lemma.\begin{lemma}\label{Difference estimates for product}
 Suppose that $(\myG_k)_{k\geq 0}$ and $(\myF_k)_{k\geq 0}$ both satisfy  \eqref{uniform bound}, $(\myG_k)_{k\geq 0}$ satisfies \eqref{uniform difference bound}, and $(\myF_k)_{k\geq 0}$ satisfies the stronger condition that
  \begin{align} \label{uniform difference bound strong}
 	\sup_{k} k^m \left|\fd ^{m} \myF_{k}\right| \lesssim 1,
 \end{align}
   for $1\leq m \leq  N=\lfloor \frac{d-1}{2}\rfloor +1$,  then
 $\{\myC_k\}_{k\geq 0}\coloneq \{\myG_k\myF_k\}_{k\geq 0}$ also satisfies the assumptions \eqref{uniform bound} and \eqref{uniform difference bound} in Theorem \ref{multiplier theorem for spherical harmonic decomposition}.	
\end{lemma}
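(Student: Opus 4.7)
The plan is to reduce everything to the discrete Leibniz rule together with the two strengths of hypothesis available on $\myG$ and $\myF$. First, \eqref{uniform bound} for $\myC_k=\myG_k\myF_k$ is immediate from the uniform boundedness of both factors, so only \eqref{uniform difference bound} requires work.

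The key identity is the discrete product rule, which one verifies by induction from $\fd(\myG\myF)_k=\fd\myG_k\cdot\myF_{k+1}+\myG_k\cdot\fd\myF_k$:
\begin{equation*}
\fd^N(\myG\myF)_k=\sum_{l=0}^N\binom{N}{l}\,\fd^l\myG_{k+N-l}\cdot\fd^{N-l}\myF_k.
\end{equation*}
With this in hand, the plan is to estimate each term separately after multiplying by $2^{j(N-1)}$ and summing over $k\in[2^j,2^{j+1}]$. Since $\myF$ satisfies the pointwise bound \eqref{uniform difference bound strong}, we have $|\fd^{N-l}\myF_k|\lesssim k^{-(N-l)}\sim 2^{-j(N-l)}$ uniformly in $k\in[2^j,2^{j+1}]$, so the $l$th piece contributes at most
\begin{equation*}
2^{j(N-1)}\cdot 2^{-j(N-l)}\sum_{k=2^j}^{2^{j+1}}\bigl|\fd^l\myG_{k+N-l}\bigr|=2^{j(l-1)}\sum_{k=2^j}^{2^{j+1}}\bigl|\fd^l\myG_{k+N-l}\bigr|.
\end{equation*}

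For $l\geq 1$, Remark \ref{finite differnce in between} (applied to the sequence $(\myG_k)$) gives exactly $\sup_j 2^{j(l-1)}\sum_{k=2^j}^{2^{j+1}}|\fd^l\myG_k|\lesssim 1$, and the harmless index shift by $N-l=O(1)$ does not affect the dyadic bound (enlarge the dyadic interval by a fixed amount). For $l=0$ the sum becomes $2^{-j}\sum_{k=2^j}^{2^{j+1}}|\myG_{k+N}|\lesssim 1$ by \eqref{uniform bound} for $\myG$. Summing the $N+1$ terms yields $\sup_j 2^{j(N-1)}\sum_{k=2^j}^{2^{j+1}}|\fd^N\myC_k|\lesssim 1$, which is \eqref{uniform difference bound}.

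There is no serious obstacle here; the only point demanding care is that \eqref{uniform difference bound} for $\myG$ is a dyadic $\ell^1$ bound rather than a pointwise bound, so one cannot simply apply \eqref{uniform difference bound strong} symmetrically to both factors. This is exactly why the hypotheses are asymmetric: the pointwise control on $\myF$ can be pulled out of the dyadic sum as a supremum, leaving a dyadic sum of $\fd^l\myG$ to which the interpolated form of \eqref{uniform difference bound} from Remark \ref{finite differnce in between} applies.
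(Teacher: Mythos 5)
Your proposal is correct and follows essentially the same approach as the paper: apply the discrete Leibniz rule to $\fd^N(\myG\myF)$, pull out the pointwise bound on the $\myF$-differences from the dyadic sum using the strong hypothesis \eqref{uniform difference bound strong}, and close the argument with the interpolated dyadic bound on $\fd^l\myG$ from Remark~\ref{finite differnce in between}. Your write-up is, if anything, somewhat cleaner than the paper's (which reuses the index $m$ both for the max and the inner sum), and you make the $l=0$ endpoint case explicit where the paper glosses over it.
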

\begin{proof}

For  $ N=\lfloor \frac{d-1}{2}\rfloor +1$, using the Leibniz rule for finite difference, we have
\begin{align*}
	\fd ^N( \myF_k\myG_k)=\sum_{m=0}^N \binom{N}{m} \fd ^{N-m} \myF_k\fd^{m}\myG_{k+N-m}.
	\end{align*}
Hence, by Remark \ref{finite differnce in between}, we get
\begin{align*}
	&\sup_j 2^{j(N-1)}\sum_{k=2^j}^{2^{j+1}}|\fd ^N( \myF_k \myG_k)|\\
	=&\sup_j 2^{j(N-1)}\sum_{k=2^j}^{2^{j+1}}\left|\sum_{m=0}^N \binom{N}{m} \fd ^{N-m} \myF_k\fd^{m}\myG_{k+N-m}\right|\\
	=&\sup_j \sum_{k=2^j}^{2^{j+1}}\left|\sum_{m=0}^N \binom{N}{m} 2^{j(N-m)}\fd ^{N-m} \myF_k \ 2^{j(m-1)}\fd^{m}\myG_{k+N-m}\right|\\
	\leq & C_N \max_{1\leq m \leq N}\sup_{j} 2^{jm}\left|\fd ^{m} \myF_{2^j}\right| \sup_j \sum_{k=2^j+N-m}^{2^{j+1}+N-m}\Big|\sum_{m=0}^N  \ 2^{j(m-1)}\fd^{m}\myG_{k}\Big| \lesssim_N 1.\qedhere
\end{align*}		
	
\end{proof}
\section{Construction of stationary wave operators}
\label{s:const-of-wave-op}

In this section, we  construct two operators $W$ and $W^*$ which ``intertwine'' $-\Delta$ and $\mathcal L_a$ in $L^2(\mathbb R^d)$. Later, we prove that these two intertwining operators are the wave operators $W_{-}$ and $W_{-}^*$ as in \eqref{stationary wave operators} and \eqref{stationary adjoint wave operators} defined by the stationary method which is introduced by Kato and  Kuroda \cite{KatoKuroda} for $d\geq 2$.
\subsection{Spherical harmonics decomposition of $L^2(\mathbb R^d)$.}
Using the spherical harmonics expansion,  for any function $f(x) \in L^2(\mathbb R^d)$, we can decompose it as
\begin{align} \label{decomposition}
    f(r, \omega)= \sum_{k=0}^\infty \sum_{l=1}^{d_k} f_{k,l}(r ) Y_{k,l}(\omega)\eqqcolon  \sum_{k=0}^\infty \proj_k f(r, \omega)
\end{align}
where $r=|x|, \omega=\frac{x}{|x|}$, and each $f_{k,l}(r) \in L^2(\mathbb R_{+}; r^{d-1} \dd r)$.
For this decomposition, we have the following identity
$$
\int_{\mathbb  R^{d}}|f(x)|^{2} d x=\sum_{k=0}^\infty \sum_{l=1}^{d_k}\int_{0}^{\infty}\left|f_{k,l}(r)\right|^{2} r^{d-1}\dd{r}.
$$
This gives an isometric isomorphism between $L^2(\mathbb R^d)$ and the countable direct sum of  $\sh_{k,l}=\{f(r)Y_{k,l}(\omega)|\; f \in L^{2}(\mathbb  R_{+}; r^{ d-1}\dd{r}) \}\cong L^{2}(\mathbb  R_{+}; r^{ d-1}\dd{r})$:
\begin{align*}
    \mathscr I: L^2(\mathbb R^d)  &\to    \bigoplus_{k,l}\sh_{k,l}\\
              f &\mapsto  \{f_{k,l}\}_{\substack{0\le k<\infty
\\ 1\le l \le d_k }}.
    \end{align*}%defined by mapping $f$ to the sequence of $(k,l)$ components:
%    \begin{align*}
%
%\end{align*}
%Here, $\sh_{k,l}\cong L^{2}(\mathbb  R_{+}; r^{ d-1}\dd{r})$.
%Then $\mathscr I$ is an isometric operator from $L^2(\mathbb R^d)$ onto $\bigoplus_{k,l}\sh_{k,l}$.
%  In this way,  we decompose $L^2(\mathbb  R^d)$ into the direct sum of $\sh_{k,l}$, that is
%$$L^2(\mathbb  R^d)\cong \bigoplus_{k,l}\sh_{k,l}, \text { where } k=0,1,2\dots, l=1,\dots, d_k.$$
%And in each subspace, $f(x)$ is equivalent to a radial function $f_{k,l}(r)$.

%
% In this paper, we use $\langle f(r), g(f)\rangle$ to denote the following integral:
%\begin{align}
%	\langle f(r), g(r) \rangle=\int_{0}^\infty f(r) g(r) r^{d-1} \dd r.
%\end{align}
%For the decomposition \eqref{decomposition}, we define the following space
%\begin{align}
%	\mathscr D_k=\left\{\sum_{l=1}^{d(k)}f_{k,l}(r)Y_{k,l}(\omega):\sum_{l=1}^{d(k)}\int_0^{\infty}|f_{k,l}(r)|r^{d-1} \dd{r} < \infty \right\},
%\end{align}
%and $\proj_k : L^2 \to \mathscr D_k$
%\
\subsection{Bessel transform and Hankel transform}
\label{ss:bessel-hankel}
Each $\sh_{k,l}$ reduces $-\Delta$ to
\begin{align*}
	\myH_{0;k,l}\ &\acoloneq  -\Delta|_{\sh_{k,l}}=-\frac{\dd^2}{\dd r^2}-\frac{d-1}{r}\frac{\dd}{\dd r}+\frac{1}{r^2}\left(k(k+d-2)\right)\nonumber \\
	&= -\frac{\dd^2}{\dd r^2}-\frac{d-1}{r}\frac{\dd}{\dd r}+\frac{1}{r^2}\left[\left(\frac{d-2}{2}+k\right)^2-\left(\frac{d-2}{2}\right)^2\right].
\end{align*} If we denote $\mu_k=\frac{d-2}{2}+k$, then $\myH_{0;k,l}$ has the generalized eigenfunction,
\begin{align*}	
e_{k}(r, \lambda)=(\lambda r)^{-\frac{d-2}{2}} J_{\mu_k}\left(\lambda r\right) \quad \text{with eigenvalue } \lambda^2>0
\end{align*}
in the sense
\begin{align}\label{equ:lapeigen}
	\myH_{0;k,l} e_{k}(r, \lambda)=\lambda^2 e_{k}(r, \lambda),
\end{align}
where the $J_{\mu_k}$ are Bessel functions of the first kind  of order $\mu_k$ as in Subsection \ref{subsec:Bes}.
%Direct computation gives the distribution solution of the following equation
%\begin{align}
% (\myH_{0;k,l}-\lambda^2) h_k(r,s )=-\delta(s=r),
%\end{align}
%by
%\begin{align}\label{Resolvent kernel}
%    h_k(r,s)= \frac{s  \pi}{ 2i }\left(\frac{s}{r}\right)^{\frac{d-2}{2}}H_{\mu_k}^{(1)}(s\lambda) J_{\mu_k}(r\lambda){\mathbf{1}}_{\{r\leq s\}}+\frac{s\pi}{ 2i }\left(\frac{s}{r}\right)^{\frac{d-2}{2}}H_{\mu_k}^{(1)}(r\lambda) J_{\mu_k}(s\lambda){\mathbf{1}}_{\{r > s\}}.
%\end{align}
%In the sense that if
%\begin{align}
%     (\myH_{0;k,l}-\lambda^2) f(r)=-g(r),
%\end{align}
%then
%\begin{align}
%    f(r)= \int_0^\infty h_k(r,s)g(s)ds.
%\end{align}
Now we define the \emph{Bessel transform} in $L^2(\mathbb R_+; r^{d-1} \dd{r})$ by
\begin{align}\label{Bessel transform}
	\bessel_{\mu_k} f(\lambda)\coloneq \langle f(r), e_k(r,\lambda) \rangle=
	\int_0^\infty f(r) (\lambda r)^{-\frac{d-2}{2}}J_{\mu_k}(r\lambda) r^{d-1}\dd {r},
\end{align}
here $\langle \cdot ,\cdot  \rangle $ denotes the inner product in $L^2(\mathbb R_+; r^{d-1} \dd{r})$.
%The Bessel transform has the following properties.
\begin{lemma}[Properties of the Bessel Transform]\label{prp bt} There holds
%For the Bessel transform defined by \eqref{Bessel transform}, we have the following properties:
\begin{align}
\int_0^\infty |\bessel_{\mu_k}f(\lambda) |^2\lambda^{d-1}\dd{\lambda}&=\int_0^\infty |f(r) |^2 r^{d-1}\dd{r};\label{plancherel for B}\\
	\bessel_{\mu_k}\bessel_{\mu_k}&=I\label{Identity for B};\\
	\bessel_{\mu_k} (\myH_{0;k,l} f)(\lambda)&=\lambda ^2 \bessel_{\mu_k} f(\lambda); \label{diagonalize B}
%\ft(fY_{k,l})(\lambda,\omega)
%&=i^{-k} \bessel_{\mu_k}f(\lambda)Y_{k,l}(\omega)  \label{Fourier equals Bessel}.
\end{align}
\end{lemma}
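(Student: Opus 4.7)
The plan is to reduce all three properties to the classical Hankel inversion and Plancherel theorem. Introduce the modified Hankel transform
\[
  \tilde H_\nu g(\lambda) \coloneq  \int_0^\infty g(r)\, \sqrt{\lambda r}\, J_\nu(\lambda r)\,\dd r,
\]
which is a well-known unitary involution on $L^2(\mathbb R_+;\dd r)$ whenever $\nu>-\tfrac12$ (see e.g.\ \cite{Wat}). In our setting $\nu=\mu_k=k+\tfrac{d-2}2\geq0$, so the hypothesis is met for every $k\geq 0$ and $d\geq2$.

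Next I would relate $\bessel_{\mu_k}$ to $\tilde H_{\mu_k}$ by a simple change of variables. Setting $g(r)=r^{(d-1)/2}f(r)$, so that $\|f\|_{L^2(r^{d-1}\dd r)}^2=\|g\|_{L^2(\dd r)}^2$, a direct computation from \eqref{Bessel transform} yields
\[
  \bessel_{\mu_k}f(\lambda) \;=\; \lambda^{-(d-1)/2}\,\tilde H_{\mu_k}g(\lambda).
\]
Therefore $\int_0^\infty |\bessel_{\mu_k}f(\lambda)|^2 \lambda^{d-1}\dd\lambda = \|\tilde H_{\mu_k}g\|_{L^2(\dd r)}^2 = \|g\|_{L^2(\dd r)}^2 = \|f\|_{L^2(r^{d-1}\dd r)}^2$, giving \eqref{plancherel for B}. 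Applying the same identity a second time, together with the involution $\tilde H_{\mu_k}\tilde H_{\mu_k}=I$ on $L^2(\mathbb R_+;\dd r)$, yields $\bessel_{\mu_k}\bessel_{\mu_k}f=f$, proving \eqref{Identity for B}.

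For the diagonalization \eqref{diagonalize B}, I would first take $f\in C_c^\infty(\mathbb R_+)$ and transfer $\myH_{0;k,l}$ from $f$ onto the generalized eigenfunction $e_k(\cdot,\lambda)$ via integration by parts. Using the formal self-adjointness of $\myH_{0;k,l}$ on $L^2(\mathbb R_+;r^{d-1}\dd r)$ together with \eqref{equ:lapeigen}, one obtains
\[
  \bessel_{\mu_k}(\myH_{0;k,l}f)(\lambda) \;=\; \langle \myH_{0;k,l}f,\,e_k(\cdot,\lambda)\rangle \;=\; \langle f,\,\myH_{0;k,l}e_k(\cdot,\lambda)\rangle \;=\; \lambda^2\,\bessel_{\mu_k}f(\lambda).
\]
The only delicate point is justifying that the boundary terms in the integration by parts vanish: at $r=\infty$ this is automatic from compact support, while at $r=0$ one uses the asymptotic $J_{\mu_k}(\lambda r)\sim (\lambda r/2)^{\mu_k}/\Gamma(\mu_k+1)$, which gives $e_k(r,\lambda)=O(r^k)$ and renders all boundary contributions harmless against the weight $r^{d-1}$. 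The extension from $C_c^\infty(\mathbb R_+)$ to the full Friedrichs domain of $\myH_{0;k,l}$ then proceeds by density, using the Plancherel identity \eqref{plancherel for B} to guarantee continuity of both sides in $f$. I do not anticipate any serious obstacle; the only point requiring mild care is the boundary analysis at $r=0$, which is standard once one notes $\mu_k\ge0$.
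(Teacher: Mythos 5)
Your proof is correct. For \eqref{plancherel for B} and \eqref{Identity for B} you take a slightly different route from the paper: rather than invoking the distributional closure relation $\int_0^\infty \lambda J_\nu(s\lambda)J_\nu(r\lambda)\,\dd\lambda=\delta(r-s)/s$, you conjugate $\bessel_{\mu_k}$ by the weight $r^{(d-1)/2}$ to reduce it to the standard self-reciprocal Hankel transform $\tilde H_{\mu_k}$ on $L^2(\mathbb R_+;\dd r)$. The computations check out: with $g(r)=r^{(d-1)/2}f(r)$ one indeed gets $\bessel_{\mu_k}f(\lambda)=\lambda^{-(d-1)/2}\tilde H_{\mu_k}g(\lambda)$, and the $\lambda$-powers cancel exactly in the $\lambda^{d-1}\dd\lambda$ measure, giving both the Plancherel identity and the involution property from the classical unitarity of $\tilde H_{\mu_k}$. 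This is arguably a cleaner route, since the delta-function formula that the paper cites is a formal (distributional) statement that strictly speaking already encodes the Hankel inversion theorem you are invoking; the two proofs are mathematically equivalent, but yours makes the appeal to the classical result explicit. For \eqref{diagonalize B}, your argument is essentially identical to the paper's (move $\myH_{0;k,l}$ onto the eigenfunction $e_k(\cdot,\lambda)$ via the inner product and use \eqref{equ:lapeigen}); you add a useful discussion of the vanishing of boundary terms at $r=0$ and $r=\infty$, which the paper leaves implicit. One very minor remark on that point: if you genuinely take $f\in C_c^\infty((0,\infty))$ as you state, the boundary contributions at $r=0$ vanish trivially because the integrand is supported away from the origin, and the asymptotic $e_k(r,\lambda)=O(r^k)$ is only needed if you allow $f$ to be nonvanishing near $r=0$; this does not affect the correctness of the argument.
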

\begin{proof} Using the integral formula
\begin{align*}
    \int_0^\infty \lambda J_{\nu}(s\lambda)J_{\nu}(r\lambda) \dd \lambda=\frac{\delta(r-s)}{s},
\end{align*}
we can easily obtain   \eqref{plancherel for B} and \eqref{Identity for B} by the definition of Bessel transform.

Since
\begin{align*}
    \bessel_{\mu_k} (\myH_{0;k,l} f)(\lambda)&=\langle \myH_{0;k,l} f(r), e_k(r,\lambda) \rangle=\langle f(r), \myH_{0;k,l} e_k(r,\lambda) \rangle\\
    &=\langle f(r), \lambda^2 e_k(r,\lambda) \rangle=\lambda ^2 \bessel_{\mu_k} f(\lambda),
\end{align*}
we get \eqref{diagonalize B}.
\end{proof}

 Restricting  the operator  $\la$ to $\sh_{k,l}$, we have
 \[{\myH_{k,l}}\coloneq  \la |_{\sh_{k,l}}=-\frac{\dd^2}{\dd r^2}-\frac{1}{r}\frac{\dd}{\dd r}+\frac{1}{r^2}\left[\left(\frac{d-2}{2}+k\right)^2-\left(\frac{d-2}{2}\right)^2+a\right], \] with the generalized eigenfunction
\begin{align*}
	\widetilde e_{k}(r, \lambda)= (\lambda r)^{-\frac{d-2}{2}}J_{ \nu_k}\left(\lambda r\right)  \text {, with eigenvalue } \lambda^2>0,
\end{align*}
such that
\begin{align}\label{equ:laeigen}
   \myH_{ k,l}\widetilde e_{k}(r, \lambda)=\lambda^2  \widetilde e_{k}(r, \lambda).\end{align}
Here $\nu_k=\sqrt{(\frac{d-2}{2}+k)^2+a}$.
%By \eqref{Resolvent kernel}, we have the following equality
%\begin{align}
%    \widetilde e_k(r, \lambda)&=\frac{\pi}{2i }\int_0^r s\left(\frac{s}{r}\right)^{\frac{d-2}{2}} H_{\mu_k}^{(1)}(r\lambda) J_{\mu_k}(s\lambda)V(s)\widetilde e_k(s, \lambda) \dd s \nonumber\\
% &\quad + \frac{\pi}{2i }\int_0^r s\left(\frac{s}{r}\right)^{\frac{d-2}{2}} H_{\mu_k}^{(1)}(s\lambda) J_{\mu_k}(r\lambda)V(s)\widetilde e_k(s, \lambda) \dd s,
%\end{align}
%which is equivalent to
%\begin{align}
%   - J_{\nu_k}(r\lambda)&=\frac{\pi i}{2}\int_0^r s H_{\mu_k}^{(1)}(r\lambda) J_{\mu_k}(s\lambda)V(s)J_{\nu_k}(s\lambda)\dd s \nonumber\\
% &\quad  +\frac{\pi i}{2 }\int_0^r s H_{\mu_k}^{(1)}(s\lambda) J_{\mu_k}(r\lambda)V(s)J_{\nu_k}(s\lambda) \dd s.
%\end{align}
% As before, since the formula for $\myH_{k,l}$ does not depend on $l$, we will also write $\myH_k\coloneq \myH_{k,l}$.
We define the Hankel transform on $L^2(\mathbb  R_{+}; r^{d-1} \dd{r})$ by
\begin{align}\label{Hankel transform}
	\hankel_{\nu_k}f (\lambda): =\langle   f(r), \widetilde e_k(r, \lambda) \rangle =\int_0^\infty f(\lambda)(r\lambda)^{-\frac{d-2}{2}} J_{\nu_k} (r\lambda) r^{d-1}\dd {r}.
\end{align}
%For Hankel transform, the following properties is valid.
By the same argument as in the proof of Lemma \ref{prp bt}, we have the following properties of the Hankel transform.
\begin{lemma}[Properties of the  Hankel transform]\label{prp ht}One has
 \begin{align}
\int_0^\infty |\hankel_{\nu_k}f(\lambda) |^2\lambda^{d-1}\dd{\lambda}&=\int_0^\infty |f(r) |^2 r^{d-1}\dd{r};\label{plancherel for H}\\
	\hankel_{\nu_k}\hankel_{\nu_k}&=I\label{Identity for H};\\
\hankel_{\nu_k} ( \myH_{k,l} f)(\lambda)&=\lambda ^2 \hankel_{\nu_k} f(\lambda).
\end{align}
\end{lemma}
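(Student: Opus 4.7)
The plan is to mirror the proof of Lemma \ref{prp bt} essentially verbatim, replacing the generalized eigenfunction $e_k(r,\lambda)=(\lambda r)^{-\tfrac{d-2}{2}}J_{\mu_k}(\lambda r)$ with $\widetilde e_k(r,\lambda)=(\lambda r)^{-\tfrac{d-2}{2}}J_{\nu_k}(\lambda r)$, the order $\mu_k$ with $\nu_k$, and $\myH_{0;k,l}$ with $\myH_{k,l}$. The key analytic input is the Hankel-type orthogonality
\begin{equation*}
\int_0^\infty \lambda\, J_{\nu_k}(r\lambda)\,J_{\nu_k}(s\lambda)\,\dd\lambda \;=\; \frac{\delta(r-s)}{s},
\end{equation*}
valid whenever $\nu_k>-1$; under the standing hypothesis $a\geq-\tfrac{(d-2)^2}{4}$ and $d\geq 2$, one has $\nu_k=\sqrt{(\tfrac{d-2}{2}+k)^2+a}\geq 0$ for every $k\geq 0$, so the orthogonality applies uniformly in $k$.

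First I would establish \eqref{plancherel for H}: inserting the definition of $\hankel_{\nu_k}$ into $\int_0^\infty |\hankel_{\nu_k}f(\lambda)|^2\lambda^{d-1}\dd\lambda$, applying Fubini, and collapsing the inner $\lambda$-integral via the orthogonality relation above produces $\int_0^\infty |f(r)|^2 r^{d-1}\dd r$. The identity $\hankel_{\nu_k}\hankel_{\nu_k}=I$ then follows by the same computation applied once more, recognizing the delta function inside the iterated integral. Both identities initially hold on a dense class (e.g.\ $C_c^\infty(\mathbb R_+)$) and extend to all of $L^2(\mathbb R_+;r^{d-1}\dd r)$ by density and the uniform bound just established.

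For the diagonalization identity, since $\myH_{k,l}$ is (by definition) the reduction to $\sh_{k,l}$ of the Friedrichs extension $\la$, it is self-adjoint with respect to the weighted inner product, so that
\begin{equation*}
\hankel_{\nu_k}(\myH_{k,l}f)(\lambda)=\langle \myH_{k,l}f,\widetilde e_k(\cdot,\lambda)\rangle=\langle f,\myH_{k,l}\widetilde e_k(\cdot,\lambda)\rangle=\lambda^2\,\hankel_{\nu_k}f(\lambda),
\end{equation*}
where the last step uses the eigenfunction relation \eqref{equ:laeigen}. The main subtlety --- and the only place one must look harder than a direct transcription of Lemma \ref{prp bt} --- is controlling the boundary term at $r=0$ arising from the integration by parts that implements the self-adjointness. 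For small $k$ in the attractive regime $a<0$, one can have $\nu_k<1$, so the behavior of $\widetilde e_k(r,\lambda)$ at the origin is borderline singular; it is precisely the choice of the Friedrichs extension, which selects the regular asymptotic $r^{\nu_k-\tfrac{d-2}{2}}$ over the linearly independent singular mode $r^{-\nu_k-\tfrac{d-2}{2}}$, that forces the boundary contribution to vanish. With this in hand the computation goes through for all $f$ in the domain of $\myH_{k,l}$, and the lemma follows.
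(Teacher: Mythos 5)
Your proof is correct and follows essentially the same route as the paper, which proves the lemma by pointing to the identical argument used for the Bessel transform (Lemma \ref{prp bt}): the orthogonality relation $\int_0^\infty \lambda J_{\nu_k}(r\lambda)J_{\nu_k}(s\lambda)\,\dd\lambda=\delta(r-s)/s$ gives \eqref{plancherel for H} and \eqref{Identity for H}, and the self-adjointness of $\myH_{k,l}$ together with the eigenfunction relation \eqref{equ:laeigen} gives the diagonalization. Your added observation --- that when $\nu_k<1$ one must invoke the Friedrichs extension to see why the boundary term at $r=0$ vanishes, since both $r^{\nu_k-(d-2)/2}$ and $r^{-\nu_k-(d-2)/2}$ are then locally square-integrable --- is a genuine refinement of a point the paper leaves implicit, and is exactly the right thing to check.
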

\subsection{Construction of intertwining operators }	
Now we define the operators $W_{k}=\hankel_{\nu_k}\bessel_{\mu_k}$ and $W^*_{k}=\bessel_{\mu_k}\hankel_{\nu_k}$ on each $\sh_{k,l}$. Both operators are unitary on $\sh_{k,l}$ by \eqref{plancherel for B} and \eqref{plancherel for H}, in the sense that
\begin{align}\label{unitary of W k}
	\int_0^\infty |W_{k}f(r) |^2r^{d-1}\dd{r}&=\int_0^\infty |f(r) |^2 r^{d-1}\dd{r},
\end{align}
and
\begin{align}\label{unitary of W k *}
	\int_0^\infty |W_{k}^*f(r) |^2r^{d-1}\dd{r}&=\int_0^\infty |f(r) |^2 r^{d-1}\dd{r}.
\end{align}
Next, we  utilize the spherical harmonics decomposition to define two operators $W,W^*:L^2(\mathbb R^d)\to L^2(\mathbb R^d)$ by
\begin{align}
	Wf(r,\omega )&\coloneq \sum_{k=0}^{\infty}\sum_{l=1}^{d_k} W_k f_{k,l}(r ) Y_{k,l}(\omega)=\sum_{k=0}^{\infty}W_k \proj_k f(r,\omega),\label{definition of wave operators}\\
W^*f(r,\omega )&\coloneq \sum_{k=0}^{\infty}\sum_{l=1}^{d_k} W^*_k f_{k,l}(r ) Y_{k,l}(\omega)=\sum_{k=0}^{\infty} W_k^* \proj_k f(r,\omega)\label{adjoint wave operators},
\end{align}
where $\proj_k $ is as in \eqref{decomposition}.

\begin{lemma}[$L^2$ boundedness of wave operator and intertwining property]\label{lem:l2bound}
The operators $W$ and $W^*$ given by \eqref{definition of wave operators} and \eqref{adjoint wave operators} are unitary  on $L^2(\mathbb R^d)$ and have the following intertwining property
   \begin{align}
\mathcal L_a W=W(-\Delta), \quad W^*\mathcal L_a =(-\Delta) W^*.
\end{align}
\end{lemma}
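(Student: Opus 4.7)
The plan is to reduce everything to the subspaces $\sh_{k,l}$ via the orthogonal decomposition $L^2(\mathbb R^d)=\bigoplus_{k,l}\sh_{k,l}$ provided by $\mathscr I$, and then exploit the fact that $W_k=\hankel_{\nu_k}\bessel_{\mu_k}$ is a composition of two involutive isometries that diagonalize $\myH_{0;k,l}$ and $\myH_{k,l}$ simultaneously. Throughout, I would work with $f\in\mathscr I^{-1}\big(\bigoplus_{k,l}C_c^\infty(\mathbb R_{+})\cdot Y_{k,l}\big)$, which is dense in $L^2$, and extend by continuity at the end.

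First I would establish unitarity. By the Plancherel-type identities \eqref{plancherel for B} and \eqref{plancherel for H}, the Bessel transform $\bessel_{\mu_k}$ and the Hankel transform $\hankel_{\nu_k}$ are isometries on $L^2(\mathbb R_+;r^{d-1}\dd r)$; moreover \eqref{Identity for B} and \eqref{Identity for H} say they are involutions. Hence on each $\sh_{k,l}$,
\[
W_k^\ast W_k=\bessel_{\mu_k}\hankel_{\nu_k}\hankel_{\nu_k}\bessel_{\mu_k}=\bessel_{\mu_k}^2=I,\qquad W_kW_k^\ast=\hankel_{\nu_k}^2=I.
\]
Summing the resulting radial identities against the orthonormal family $\{Y_{k,l}\}$ and using the isometry of $\mathscr I$ gives, for $f=\sum_{k,l}f_{k,l}(r)Y_{k,l}(\omega)$,
\begin{align*}
\|Wf\|_{L^2(\mathbb R^d)}^2
&=\sum_{k=0}^\infty\sum_{l=1}^{d_k}\int_0^\infty\!|W_kf_{k,l}(r)|^2\,r^{d-1}\dd r
=\sum_{k,l}\int_0^\infty\!|f_{k,l}(r)|^2\,r^{d-1}\dd r
=\|f\|_{L^2(\mathbb R^d)}^2,
\end{align*}
and similarly for $W^\ast$. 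Together with $W^\ast W=WW^\ast=I$, which follows by the same block-diagonal argument, this shows $W$ and $W^\ast$ are unitary on $L^2(\mathbb R^d)$.

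For the intertwining property I would argue on each $\sh_{k,l}$ that
\begin{equation*}
  \myH_{k,l}\,W_k=W_k\,\myH_{0;k,l}.
\end{equation*}
Fix $f\in C_c^\infty(\mathbb R_+)$ and set $g\coloneqq W_kf=\hankel_{\nu_k}\bessel_{\mu_k}f$. By \eqref{diagonalize B} and the Hankel analogue stated in Lemma \ref{prp ht},
\[
\hankel_{\nu_k}\!\bigl(\myH_{k,l}g\bigr)(\lambda)=\lambda^2\,\hankel_{\nu_k}g(\lambda)=\lambda^2\,\bessel_{\mu_k}f(\lambda)=\bessel_{\mu_k}(\myH_{0;k,l}f)(\lambda),
\]
where I used $\hankel_{\nu_k}\hankel_{\nu_k}=I$ in the middle equality. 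Applying $\hankel_{\nu_k}$ to both sides and using involutivity once more yields $\myH_{k,l}g=\hankel_{\nu_k}\bessel_{\mu_k}(\myH_{0;k,l}f)=W_k\myH_{0;k,l}f$, as desired. Summing in $k,l$ gives $\la W=W(-\Delta)$ on the dense subspace above; the analogous argument with the roles of $\bessel_{\mu_k}$ and $\hankel_{\nu_k}$ swapped gives $W^\ast\la=(-\Delta)W^\ast$.

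The main (mild) obstacle is domain bookkeeping: the identities \eqref{equ:lapeigen}, \eqref{equ:laeigen}, \eqref{diagonalize B} implicitly require enough regularity and decay for the differential operators to act and for the Bessel integrals to converge absolutely; this is why I would first verify the identities on the dense core of smooth compactly supported radial profiles with prescribed angular dependence, and then extend to the appropriate operator domains $\mathcal D(-\Delta)$ and $\mathcal D(\la)$ by a standard closure argument using the just-established unitarity of $W$ and $W^\ast$. With that in place, the two displayed intertwining identities follow.
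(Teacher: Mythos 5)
Your proposal is correct and follows essentially the same route as the paper: block-diagonalize on each $\sh_{k,l}$, use the Plancherel and involution identities \eqref{plancherel for B}, \eqref{plancherel for H}, \eqref{Identity for B}, \eqref{Identity for H} for unitarity, and use the diagonalization property \eqref{diagonalize B} and its Hankel analogue for the intertwining relation. The only cosmetic difference is that you derive $\myH_{k,l}W_k=W_k\myH_{0;k,l}$ directly on a fixed subspace (also flagging the domain bookkeeping), whereas the paper computes $W(-\Delta)f$ and $\la Wf$ separately and matches them, but the underlying algebra is identical.
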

\begin{proof} By \eqref{unitary of W k}, we have
\begin{align*}
	\|Wf(r,\omega)\|^2_{L^2(\mathbb  R^d)} &=\sum_{k=0 }^\infty \sum_{l=1} ^{d_k}\int_{0}^\infty |W_k f_{k,l}(r)|^2 r^{d-1}\dd{r}\\
	&=\sum_{k=0 }^\infty \sum_{l=1} ^{d_k}\int_{0}^\infty  | f_{k,l}(r)|^2 r^{d-1}\dd{r}\\
	&=\|f(x)\|^2_{L^{2}(\mathbb  R^d)}.
\end{align*}
This implies that $W$ is a unitary operator on $L^2(\mathbb R^d)$.

To prove  $\mathcal L_a W=W(-\Delta),$ note that for any $f(x)\in L^2(\mathbb  R^d)$, we have
 \begin{align*}
 	W(-\Delta)f(r,\omega)&=W(-\Delta) (\sum_{k=0}^\infty \sum_{l=1}^{d_k} f_{k,l}(r ) Y_{k,l}(\omega))\\
 	&=W (\sum_{k=0}^\infty\sum_{l=1}^{d_k} \myH_{0;k,l} f_{k,l}(r ) Y_{k,l}(\omega))\\
 	&=\sum_{k=0}^\infty\sum_{l=1}^{d_k}   \hankel_{\nu_k}\left[ \bessel_{\mu_k}\myH_{0;k,l} f_{k,l}(\lambda )\right](r ) Y_{k,l}(\omega)\\
 	&=\sum_{k=0}^\infty\sum_{l=1}^{d_k}   \hankel_{\nu_k}\left[ \lambda^2 \bessel_{\mu_k}  f_{k,l}(\lambda)\right](r ) Y_{k,l}(\omega).
 \end{align*}
 On the other hand,
 \begin{align*}
 	\la W f(r,\omega)&=\la W (\sum_{k=0}^\infty\sum_{l=1}^{d_k}  f_{k,l}(r ) Y_{k,l}(\omega))\\
 	&=\la (\sum_{k=0}^\infty \sum_{l=1}^{d_k} \hankel_{\nu_k}\bessel_{\mu_k}f_{k,l}(r ) Y_{k,l}(\omega))\\
 	&=\sum_{k=0}^\infty\sum_{l=1}^{d_k} \myH_{k,l} \hankel_{\nu_k} \bessel_{\mu_k}  f_{k,l}(r ) Y_{k,l}(\omega)\\
 	&=\sum_{k=0}^\infty \sum_{l=1}^{d_k}\hankel_{\nu_k}\hankel_{\nu_k} \myH_{k,l} \hankel_{\nu_k} \bessel_{\mu_k} f_{k,l}(r ) Y_{k,l}(\omega)\\
 	&=\sum_{k=0}^\infty\sum_{l=1}^{d_k}   \hankel_{\nu_k}\left[ \lambda^2 \bessel_{\mu_k}  f_{k,l}(\lambda)\right](r ) Y_{k,l}(\omega).
 \end{align*}
Hence, $W(-\Delta)=\la W$ on $L^2(\mathbb  R^d)$. By the same argument, we obtain the desired result for the operator $W^\ast$.

 Therefore, we complete the proof of Lemma \ref{lem:l2bound}.
\end{proof}

 \subsection{The equivalence to the wave operators} \label{sect: equivalence}
 In this subsection, we prove that the intertwining operators $W$ and $W^*$ defined by \eqref{definition of wave operators} and \eqref{adjoint wave operators} are in fact the stationary  wave operators $W_{-}$ and $W_{-}^*$ given by \eqref{stationary wave operators} and \eqref{stationary adjoint wave operators}, i.e.
 \begin{equation}\label{equ:wequw-w}
   W=W_{-},\quad W^*=W_{-}^*.
 \end{equation}
%To verify the equivalence of the intertwining operators we constructed and the wave operators, we first discuss the relationship of $e_k(r, \lambda)$ and $\widetilde e(r,\lambda)$.
%We can rewrite ${\myH_{k,l}}={\myH_{0; k,l}}+V(r)$ with $V(r)=\frac{a}{r^2}$, then $\widetilde e(r,\lambda)$ satisfying the following equation,
%\begin{align*}
% ( {\myH_{0; k,l}}-\lambda^2)   \widetilde e(r,\lambda)=-V \widetilde e(r,\lambda).
%\end{align*}
%The Duhamel principal gives
%\begin{align*}
% \widetilde e(r,\lambda)= e(r,\lambda)- R_0(\lambda^2\pm i0)[V\widetilde e(r,\lambda)].
%\end{align*}
%
%{\color{blue} In our case, $R_0(\lambda^2+ i0)[V\widetilde e(r,\lambda)]=R_0(\lambda^2+ i0)[V\widetilde e(r,\lambda)]$ using the decay property for $ \widetilde e(r,\lambda)- e(r,\lambda)$, this will lead to $\Omega_{+}=W_{-}$.}
%Hence,
%\begin{align}\label{relationship}
%    R_0(\lambda^2\pm i0)[V\widetilde e(r,\lambda)]=e(r,\lambda)- \widetilde e(r,\lambda).
%\end{align}

 We use $\dd E_{\sqrt{-\Delta}}(\lambda)$ and $\dd E_{\sqrt{\la}} (\lambda)$ to denote the spectral measure for $\sqrt{-\Delta}$ and $\sqrt{\la}$. By previous eigenfunction expansions (see also \cite{KatoKuroda}), we have
 \begin{align*}
 	\frac{\dd E_{\sqrt{-\Delta}}(\lambda)}{\dd \lambda} \bigg |_{\sh_{k,l}}f(r) =\langle e_k(r, \lambda ),  f(r)\rangle e_k(r, \lambda ) \lambda^{d-1},
 \end{align*}
 and
 \begin{align*}
 	\frac{\dd E_{\sqrt{\la}}(\lambda)}{\dd \lambda} \bigg |_{\sh_{k,l}}f(r) =\langle \widetilde e_k(r, \lambda ),  f(r)\rangle \widetilde e_k(r, \lambda ) \lambda^{d-1}.
 \end{align*}
The spectral measures for $-\Delta$ and $\la$ are given by
 \begin{align*}
{\dd E_{{-\Delta}}(\lambda^2)}={\dd E_{\sqrt{-\Delta}}(\lambda)}, \text{ and} \quad
\dd E_{\la}(\lambda^2) ={\dd E_{\sqrt{\la}}(\lambda)}.
 \end{align*}
Hence,
\begin{align*}
    \frac{\dd E_{-\Delta}}{\dd \lambda}(\lambda^2)=\frac{1}{2\lambda}   \frac{\dd E_{\sqrt{-\Delta}}(\lambda)}{\dd \lambda},\ \text{and}\quad  \frac{\dd E_{\la}}{\dd \lambda}(\lambda^2)=\frac{1}{2\lambda}   \frac{\dd E_{\sqrt{\la}}(\lambda)}{\dd \lambda}.
\end{align*}
Thus, the wave operator $W_-$ is also given by
 \begin{align}
 	W_{-}
 	&=\int_{0}^\infty \frac{ \dd E_{\la} }{\dd \lambda}(\lambda^2)(\la-\lambda^2) R_0(\lambda^2-i0) \dd (\lambda^2)\nonumber\\
 	&=\int_{0}^\infty \frac{ \dd E_{\sqrt\la}(\lambda) }{\dd \lambda}(\la-\lambda^2) R_0(\lambda^2-i0) \dd \lambda. \label{stationary W minus}
 \end{align}
By the functional calculus, on each $\sh_{k,l}$, we have
\begin{align}\label{resolvent representation}
    R_0(\lambda^2-i\varepsilon ) |_{\sh_{k,l}} f(r)=\int_0^\infty \frac{\langle e_k(r, \mu),  f(r)\rangle e_k(r, \mu) \mu^{d-1}}{\mu^2-(\lambda^2-i\varepsilon )} \dd \mu.
\end{align}
% \begin{align}
%    R(\lambda^2+i0 ) |_{\sh_{k,l}} =\lim_{\varepsilon \to 0}\int_0^\infty \frac{\langle \widetilde e_k(r, \mu),  f(r)\rangle  \widetilde e_k(r, \mu) \mu^{d-1}}{\mu^2-(\lambda^2+i\varepsilon )} \dd \mu.
%\end{align}
 To prove $W=W_{-}$ and $W^*=W_{-}^*$, it suffices to prove that on each spherical harmonic subspace $\sh_{k,l}$,
\begin{align}
	W_{-}|_{\sh_{k,l}}= W_{k},\   \text{and},\  W_{-}^*|_{\sh_{k,l}}= W_{k}^*.
\end{align}
 Using the following limit
\begin{align}
    \lim_{\varepsilon \to 0}\int_0^\infty \frac{\varepsilon f(\mu)}{(\mu^2-\lambda^2)^2+\varepsilon^2} \dd \mu= \frac{\pi}{2} \frac{f(\lambda)}{\lambda},
\end{align}
and by \eqref{stationary W minus},  \eqref{resolvent representation}, we have
\begin{align*}
	&\quad W_{-}|_{\sh_{k,l}} f(r)\\
	&=\int_0^\infty \widetilde e_k(r, \lambda) \langle \widetilde e_k(r, \lambda), (\myH_{k,l}-\lambda^2)  R_0(\lambda^2-i0) f(r)	\rangle \lambda^{d-1}\dd \lambda\\
   &=\lim_{\varepsilon \to 0}\int_0^\infty \widetilde e_k(r, \lambda) \left\langle \widetilde e_k(r, \lambda),  (\myH_{k,l}-\lambda^2)\int_0^\infty \tfrac{\langle e_k(r, \mu),  f(r)\rangle e_k(r, \mu) \mu^{d-1}}{\mu^2-(\lambda^2-i\varepsilon )} \dd \mu \right	\rangle\lambda^{d-1} \dd \lambda\\
    &=\lim_{\varepsilon \to 0}\int_0^\infty \widetilde e_k(r, \lambda) \left\langle \widetilde  e_k(r, \lambda), \int_0^\infty \tfrac{\langle e_k(r, \mu),  f(r)\rangle (\mu^2-\lambda^2+V(r))e_k(r, \mu) \mu^{d-1}}{\mu^2-(\lambda^2-i\varepsilon )} \dd \mu \right	\rangle\lambda^{d-1} \dd \lambda\\
    &=\int_0^\infty \widetilde e_k(r, \lambda) \left\langle \widetilde  e_k(r, \lambda), \int_0^\infty \tfrac{\langle e_k(r, \mu),  f(r)\rangle (\mu^2-\lambda^2+V(r))e_k(r, \mu) \mu^{d-1}}{\mu^2-\lambda^2} \dd \mu \right	\rangle\lambda^{d-1} \dd \lambda\\
    &\quad -\frac{i \pi }{2}\int_0^\infty \widetilde e_k(r, \lambda) \left\langle \widetilde  e_k(r, \lambda),  {\langle e(r, \lambda ),  f(r)\rangle V(r)e_k(r, \lambda) \lambda^{d-2}} \right	\rangle\lambda^{d-1} \dd \lambda\\
    &\eqqcolon \myone +\mytwo -\mythree,
    \end{align*}
where
 \begin{align*}
    \myone &=\int_0^\infty \widetilde e_k(r, \lambda) \left\langle \widetilde  e_k(r, \lambda), \int_0^\infty {\langle e_k(r, \mu),  f(r)\rangle e_k(r, \mu) \mu^{d-1}} \dd \mu \right	\rangle\lambda^{d-1} \dd \lambda,\\
   \mytwo &={\int_0^\infty \widetilde e_k(r, \lambda) \left\langle \widetilde  e_k(r, \lambda), \int_0^\infty \frac{\langle e_k(r, \mu),  f(r)\rangle V(r)e_k(r, \mu) \mu^{d-1}}{\mu^2-\lambda^2} \dd \mu \right	\rangle\lambda^{d-1} \dd \lambda},\\
    \mythree & =\frac{i \pi }{2} \int_0^\infty \widetilde e_k(r, \lambda) \left\langle \widetilde  e_k(r, \lambda),  {\langle e(r, \lambda ),  f(r)\rangle V(r)e_k(r, \lambda) \lambda^{d-2}} \right	\rangle\lambda^{d-1} \dd \lambda.
      \end{align*}
%Here we use the Poisson kernel for upper-half space (\textcolor{red}{See Page 60, Stein}) to get the formula for $\mythree $,
%    \begin{align}
%    \lim_{\varepsilon \to 0}\int_\mathbb R\frac{\varepsilon f(\mu)}{(\mu-\lambda)^2+\varepsilon^2} \dd \mu= \pi f(\lambda).
%\end{align}
%In particular, for a function $f(\mu)=0$ when $\mu<0$, the above integral is equivalent to
% \begin{align}
%    \lim_{\varepsilon \to 0}\int_0^\infty\frac{\varepsilon f(\mu)}{(\mu-\lambda)^2+\varepsilon^2} \dd \mu= \pi f(\lambda).
%\end{align}
%Then changing variable gives
%\begin{align}
%    \lim_{\varepsilon \to 0}\int_0^\infty \frac{\varepsilon f(\mu)}{(\mu^2-\lambda^2)^2+\varepsilon^2} \dd \mu= \frac{\pi}{2} \frac{f(\lambda)}{\lambda}.
%\end{align}
%for a function $f(\mu)=0 $ for $\mu<0$.
    Using the properties \eqref{Identity for B} and \eqref{Identity for H}, we have  \begin{align}
       \myone = f(r).
   \end{align}
%Therefore, using \eqref{bessel integral for $a=b$}, we have
%  \begin{align*}
%      C_0&= \frac{i \pi}{2}\langle \widetilde e_k(r, \lambda), V(r) e_k(r, \lambda)\lambda^{d-2}\rangle
%      \\
%      &= \frac{i \pi a}{2}\int_0^\infty J_{\nu_k} (\rho \lambda)J_{\mu_k} (\rho \lambda)\frac{\dd \rho}{\rho}=\frac{i \pi a}{2} \frac{\Gamma(\frac{1}{2}(\mu_k+\nu_k))}{\Gamma(1+\frac{1}{2}(\nu_k-\nu_k))\Gamma(1+\frac{1}{2}(\mu_k-\nu_k))\Gamma(1+\frac{1}{2}(\mu_k+\nu_k))}\\
%      &= i \pi 2a \frac{\sin \pi/2 (\nu_k-\mu_k)}{\pi } \frac{1}{\nu_k^2-\mu_k^2}=2 i\  {\sin \frac{ (\nu_k-\mu_k)\pi}{2}}.
%        \end{align*}
To estimate $\mytwo $, by Lemma \ref{Bessel integral-1}, we obtain
 \begin{align*}
    \mytwo &=r^{-\frac{d-2}{2}}\int_0^\infty \int_0^\infty \int_0^\infty \lambda \rho J_{\nu_k} (r\lambda)J_{\nu_k}(\rho\lambda)
    \\&\quad\times{\int_0^\infty \frac{\mu}{\mu^2-\lambda^2} J_{\mu_k }(s\mu)J_{\mu_k }(\rho \mu)\dd \mu}V(\rho){\dd \rho} \dd \lambda f(s)\lambda s^{d/2} \dd s\\
    &=r^{-\frac{d-2}{2}} \frac{i\pi }{2}\int_0^\infty \int_0^\infty \lambda \rho J_{\nu_k} (r\lambda)\int_0^\infty  \rho J_{\nu_k}(\rho\lambda)\Big
    \{J_{\mu_k}(\rho  \lambda) H_{\mu_k}^{(1)}(s\lambda){{\mathbf{1}}_{\{\rho<s\}}}\\
    &\quad +J_{\mu_k}(s  \lambda) H_{\mu_k}^{(1)}(\rho\lambda){{\mathbf{1}}_{\{\rho>s\}}}\Big \}V(\rho){\dd \rho} \dd \lambda f(s) s^{d/2} \dd s\\
    &=r^{-\frac{d-2}{2}}\int_0^\infty \int_0^\infty \lambda J_{\nu_k} (r\lambda)\mytwo ^{\text{in}}(s, \lambda)
    f(s) s^{d/2} \dd \lambda \dd s,
\end{align*}
where
\begin{align*}
    \mytwo ^{\text{in}}(s,\lambda)=\frac{i \pi }{2}  \int_0^\infty \!\!\rho V(\rho) J_{\nu_k}(\rho\lambda)\big \{J_{\mu_k}(\rho  \lambda) H_{\mu_k}^{(1)}(s\lambda){\mathbf{1}}_{\{\rho<s\}}+J_{\mu_k}(s  \lambda) H_{\mu_k}^{(1)}(\rho\lambda){\mathbf{1}}_{\{\rho>s\}}\dd \rho\}.
\end{align*}
Using the equality \eqref{resolvent function}, and the relationship  $H_{\mu}^{(1)}(z)=J_{\mu}(z)+i Y_{\mu}(z)$, we derive
\begin{align*}
   \mytwo ^{\text{in}}(s,\lambda) =&\frac{i \pi  a}{2} J_{\mu_k}(s\lambda) \int_0^\infty J_{\nu_k}(\rho \lambda)J_{\mu_k}(\rho \lambda)\frac{\dd \rho }{\rho}\\
   &-\frac{a \pi}{2} \left[\int_0^s Y_{\mu_k} (s\lambda) J_{\mu_k} (\rho \lambda)J_{\nu_k }(\rho \lambda) \frac{\dd \rho}{\rho}+\int_{s}^\infty J_{\mu_k} (s\lambda) Y_{\mu_k} (\rho \lambda)J_{\nu_k }(\rho \lambda) \frac{\dd \rho}{\rho}\right]\\
  =& \frac{i \pi  a}{2} J_{\mu_k}(s\lambda) \int_0^\infty J_{\nu_k}(\rho \lambda)J_{\mu_k}(\rho \lambda)\frac{\dd \rho }{\rho}+J_{\mu_k}(s\lambda)-J_{\nu_k}(s\lambda).
 \end{align*}
 Putting $\mytwo ^{\text{in}}(s,\lambda)$ into $\mytwo $ gives
 \begin{align*}
     \mytwo &=\frac{i \pi  a}{2} r^{-\frac{d-2}{2}} \int_0^\infty \int_0^\infty \lambda J_{\nu_k} (r\lambda) J_{\mu_k}(s\lambda) \int_0^\infty J_{\nu_k}(\rho \lambda)J_{\mu_k}(\rho \lambda)\frac{\dd \rho }{\rho}
    f(s) s^{d/2} \dd \lambda \dd s\\
    &\quad+r^{-\frac{d-2}{2}} \int_0^\infty \int_0^\infty \lambda J_{\nu_k} (r\lambda) J_{\mu_k}(s\lambda) f(s)s^{d/2}\lambda \dd \lambda \dd s\\
    &\quad-r^{-\frac{d-2}{2}}  \int_0^\infty \int_0^\infty \lambda J_{\nu_k} (r\lambda) J_{\nu_k}(s\lambda) f(s)s^{d/2}\lambda \dd \lambda \dd s\\
   & =\mythree +W_k f(r)-f(r).
 \end{align*}
%To derive this, we \begin{align}
%    \mytwo &=\int_0^s Y_{\mu_k} (s\lambda) J_{\mu_k} (\rho \lambda)J_{\nu_k }(\rho \lambda) \frac{\dd \rho}{\rho}+\int_{s}^\infty J_{\mu_k} (s\lambda) Y_{\mu_k} (\rho \lambda)J_{\nu_k }(\rho \lambda) \frac{\dd \rho}{\rho}\\
%   & =\frac{2 }{a\pi } (J_{\nu_k}(s\lambda)-J_{\mu_k}(s\lambda))
%\end{align}
%{\color{gray}
%Take any $f(r)\in \myH_{k,l}$, we have
%\begin{align*}
%	\Omega_{+}|_{\sh_{k,l}} f(r)&=\int_{-\infty}^\infty M_1(\lambda^2) (H-\lambda^2) R_0(\lambda^2+i0)|_{\sh_{k,l}} f(r) \dd \lambda^2\\
%	&=\int_0^\infty \widetilde e_k(r, \lambda) \langle \widetilde e_k(r, \lambda), (\myH_{k,l}-\lambda^2)  R_0(\lambda^2+i0) f(r)	\rangle \lambda^{d-1}\dd \lambda\\
%    &=\int_0^\infty \widetilde e_k(r, \lambda) \langle \widetilde e_k(r, \lambda),   [I+V  R_0(\lambda^2+i0)] f(r)	\rangle\lambda^{d-1} \dd \lambda\\
%    &=\int_0^\infty \widetilde e_k(r, \lambda) \langle \widetilde e_k(r, \lambda)+R_0(\lambda^2-i0)[V \widetilde e_k(r, \lambda)],  f(r)	\rangle \lambda^{d-1} \dd{\lambda}\\
%    &=\int_0^\infty \widetilde e_k(r, \lambda) \langle e_k(r, \lambda),   f(r)	\rangle \lambda^{d-1} \dd{\lambda}\\
%    &=W_{k} f(r).
%  \end{align*}}
Summing $\myone, \mytwo $ and $\mythree $, we have
\begin{align*}
    W_{-}|_{\mathscr H_{k,l}}= W_k.
\end{align*}
Hence the operator $W$ defined by \eqref{definition of wave operators} equals to $W_{-}$. A similar calculation shows that $W^*=W_{-}^*$.

\section{Proof of the $L^p$-boundedness}\label{s:proof-of-lp-bddness}
In this section, we prove Theorem \ref{thm:main}. Since
\begin{align*}
    \|W_{+}f\|_{L^p(\mathbb R^d)}=\|\overline{W_{+}f}\|_{L^p(\mathbb R^d)}=\|{W_{-}\overline{f}}\|_{L^p(\mathbb R^d)},
\end{align*}
and
\begin{align*}
    \|W_{+}^*f\|_{L^p(\mathbb R^d)}=\|\overline{W_{+}^*f}\|_{L^p(\mathbb R^d)}=\|{W_{-}^*\overline{f}}\|_{L^p(\mathbb R^d)},
\end{align*}
it suffices to show the $L^p$-boundedness  for $W_{-}$ and $W_{-}^*$.
By \eqref{equ:wequw-w}, it is equivalent to prove the $L^p$-boundedness for $W$ and $W^*$. We only give the proof for $W$, since the proof for $W^*$ is exactly the same.

By definition \eqref{definition of wave operators}, we have
\begin{align*}
	W f(r, \omega )&=\sum_{k=0}^\infty \int_0^\infty \int_0^\infty (\lambda s)^{-\frac{d-2}{2}} J_{\mu_k}(s\lambda) s^{d-1}\proj_k f(s, \omega) \dd {s}(\lambda r)^{-\frac{d-2}{2}}  J_{\nu_k}(\lambda r) \lambda^{d-1}\dd {\lambda}\\
	&=\sum_{k=0}^\infty\int_0^\infty  r^{1-\frac{d}{2}}{\int_0^\infty s^{\frac{d}{2}} \lambda J_{\mu_k}(s\lambda) J_{\nu_k}(r \lambda) \dd {\lambda} }\;\proj_k f(s, \omega)\dd {s}\\
	&=\sum_{k=0}^\infty\int_0^\infty K_{k}(r,  s)\proj_k f(s, \omega) \frac{\dd s}{s},
  \end{align*}
with
\begin{align*}
	K_{k}(r,  s)= \frac{s^{\frac{d}{2}+1}}{r^{\frac{d}{2}-1}}\int_0^\infty  \lambda J_{\mu_k}(s\lambda) J_{\nu_k}(r \lambda) \dd {\lambda}.
\end{align*}
To prove
% \begin{align}\label{equ:mainred}
$$ 	\|Wf\|_{L^p(\mathbb  R^d)} \lesssim  \|f\|_{L^p(\mathbb  R^d)},$$
% \end{align}
it is equivalent to show
\begin{align}\label{equ:mainred2}
	\big\|r^{\frac{d}{p}}Wf(r,\omega)\big\|_{L^p( \frac{\dd r}{r}\cdot\dd \omega)}\leq 	\big\|r^{\frac{d}{p}}f(r,\omega)\big\|_{L^p(\frac{\dd r}{r} \cdot \dd \omega)}.
\end{align}
We define the modified kernels
\[\widetilde K_{k}(r,s)= \left(\frac{s}{r}\right)^{-\frac{d}{p}} K_{k}(r,s),\]
and the modified wave operator
\begin{align}\label{equ:wmoddef}
    \widetilde W f(r, \omega)=\sum_{k=0}^\infty\int_0^\infty \widetilde K_{k}(r,s)\proj_k f(s, \omega) \frac{\dd s}{s}.
\end{align}
Then, it suffices to prove that
   \begin{equation}\label{equ:redtildw}
     \big\|\widetilde Wf(r, \omega)\big\|_{L^p(\frac{\dd r}{r} \cdot \dd \omega)}\lesssim \|f(r, \omega)\|_{L^p(\frac{\dd r}{r} \cdot \dd \omega)}.
   \end{equation}

In order to give the explicit formula for the kernel  functions $\widetilde K_{k}(r,s)$, we introduce $a_k$ and $b_k$, and give some  estimates for of $a_k$ and $b_k$ and its derivatives. Recall $\mu_k=\frac{d-2}{2}+k$ and $\nu_k=\sqrt{(\frac{d-2}{2}+k)^2+a}$.  We denote
\begin{align}\label{def of a_k and b_k}
    a_k=\frac{\mu_k+\nu_k}{2},\quad b_k=\frac{\mu_k-\nu_k}{2},
    \end{align}and  then we have the following estimates.
\begin{lemma}\label{estimates for $a_k$ and $b_k$}
For $2 \leq N \leq \big  \lfloor \frac{d-1}{2} \big  \rfloor +1$,
\begin{align}
 |a_k| \lesssim  k, \quad \left|\frac{\dd a_k}{\dd k}\right| \lesssim  1, \quad \left|\frac{\dd^N a_k}{\dd k^N}\right|  \lesssim  \frac{1}{k^{N}}, \label{estimates for $a_k$}
\end{align}and for $0 \leq N \leq \big  \lfloor \frac{d-1}{2} \big  \rfloor +1$,\begin{align} \left|\frac{\dd^N b_k}{\dd k^N}\right|  \lesssim  \frac{1}{k^{N+1}}.\label{estimates for $b_k$}
\end{align}

\end{lemma}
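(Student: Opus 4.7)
The cornerstone of the argument is the elementary identity
\[
4\, a_k b_k \;=\; (\mu_k+\nu_k)(\mu_k-\nu_k) \;=\; \mu_k^2-\nu_k^2 \;=\; -a,
\qquad\text{i.e.,}\qquad b_k \;=\; -\frac{a}{4 a_k},
\]
which reduces the $b_k$ bounds to the $a_k$ bounds. Accordingly, the plan is to first establish the bounds on $a_k$ by direct computation, then bootstrap to $b_k$ via the chain rule.

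\textbf{Step 1: estimates on $a_k$.} Since $\mu_k = \tfrac{d-2}{2}+k$ satisfies $\tfrac{d\mu_k}{dk}=1$, derivatives in $k$ coincide with derivatives in $\mu$ of $F(\mu)\coloneq\tfrac12(\mu+\sqrt{\mu^2+a})$. First I would observe that for $k\ge 1$ we have $\mu_k^2+a>0$ uniformly (indeed, $\mu_k^2+a \ge d-1$ for $d\ge 3$, and $a\ge 0$ for $d=2$), so $\nu_k$ is smooth and bounded away from zero. Hence $a_k$ is a smooth function of $k\in[1,\infty)$, and on any bounded range the estimates $|a_k|\lesssim k,|a_k'|\lesssim 1,|a_k^{(N)}|\lesssim k^{-N}$ are immediate. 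For large $k$, I would use the binomial expansion
\[
\sqrt{\mu^2+a} \;=\; \mu\Bigl(1+\tfrac{a}{\mu^2}\Bigr)^{1/2} \;=\; \mu + \tfrac{a}{2\mu} - \tfrac{a^2}{8\mu^3} + O(\mu^{-5}),
\]
which yields $a_k = \mu_k + \tfrac{a}{4\mu_k} - \tfrac{a^2}{16\mu_k^3} + O(\mu_k^{-5})$. Termwise differentiation in $k$ then gives $|a_k|\lesssim k$, $|a_k'|=1+O(k^{-2})\lesssim 1$, and $|a_k^{(N)}|\lesssim k^{-N-1}$ for $N\ge 2$, which is strictly stronger than the claimed $k^{-N}$.

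\textbf{Step 2: estimates on $b_k$.} The case $N=0$ follows directly from $b_k=-a/(4a_k)$ together with $|a_k|\gtrsim k$ (since $a_k\ge\mu_k/2\gtrsim k$). For $N\ge 1$, I would apply Fa\`a di Bruno's formula to $b_k=(-a/4)\cdot a_k^{-1}$:
\[
\Bigl|\tfrac{\dd^N b_k}{\dd k^N}\Bigr| \;\lesssim\; \sum_{\substack{i_1+\cdots+i_r=N\\ i_j\ge 1}}\frac{|a_k^{(i_1)}|\cdots|a_k^{(i_r)}|}{|a_k|^{r+1}}.
\]
Using the uniform bound $|a_k^{(i)}|\lesssim k^{1-i}$ for all $i\ge 1$ from Step 1 (for $i=1$ this is $|a_k'|\lesssim 1$; for $i\ge 2$ it is $|a_k^{(i)}|\lesssim k^{-i}\le k^{1-i}$), together with $|a_k|\gtrsim k$, each summand is bounded by $k^{r-N}/k^{r+1}=k^{-N-1}$, giving the desired estimate.

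\textbf{Main obstacle.} There is no substantial obstacle; the proof is essentially bookkeeping once the identity $b_k=-a/(4a_k)$ is exploited. The only point requiring care is that $\nu_k$ must stay bounded away from $0$ uniformly, which would fail for the endpoint pair $a=-(d-2)^2/4$ and $k=0$ (where $\nu_0=0$), but since the lemma concerns $k\ge 1$ (as needed by Lemma \ref{Difference estimates for product} and Theorem \ref{multiplier theorem for spherical harmonic decomposition}), the expressions remain uniformly smooth.
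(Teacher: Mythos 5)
Your proposal is correct and rests on the same key identity as the paper's proof—namely $\mu_k^2-\nu_k^2=-a$, which you repackage as $b_k=-a/(4a_k)$ and the paper states as $|\mu_k-\nu_k|=|{-a}/(\mu_k+\nu_k)|\lesssim k^{-1}$. The paper's own proof is a one-line sketch (also citing $\tfrac{\dd\nu_k}{\dd k}=\mu_k/\nu_k$) that leaves the derivative bookkeeping to the reader; you fill that in cleanly via the binomial expansion of $\nu_k$ and Fa\`a di Bruno applied to $a_k^{-1}$, in the process obtaining the slightly stronger bound $|a_k^{(N)}|\lesssim k^{-N-1}$ for $N\ge 2$, and your remark that one must restrict to $k\ge 1$ (so $\nu_k$ stays away from $0$ when $a=-(d-2)^2/4$) is a point the paper glosses over.
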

\begin{proof}

It is easy to see that $\mu_k$ and $\nu_k$ satisfy $\mu_k, \nu_k \lesssim k$. Also, $|\mu_k-\nu_k |= |\frac{-a}{\mu_k+\nu_k}|\lesssim \frac{1}{k}$, and  $\frac{d\nu_k}{\dd k}=\frac{\mu_k}{\nu_k}$, thus \eqref{estimates for $a_k$} and \eqref{estimates for $b_k$} are valid. \end{proof}

\subsection*{Explicit formula for the kernel functions}  By Lemma \ref{Integral of Bessel function},
 $\widetilde K_{k}(r,  s)$ can be written in terms of  ${}_2F_1$ as follows:
 \begin{align}
	 & \widetilde K_{k}(r,  s)= \frac{s^{\frac{d}{2}+1-\frac{d}{p}}}{r^{\frac{d}{2}-1-\frac{d}{p}}}\int_0^\infty  \lambda J_{\mu_k}(s\lambda) J_{\nu_k}(r \lambda) \dd {\lambda} \nonumber\\\nonumber
%	&=\begin{cases} {\left(\frac{s}{r}\right)^{\frac{d}{2}+1-\frac{d}{p}}} \frac{ 2 s^{\mu_k}\Gamma(\frac{\mu_k+\nu_k}{2}+1)}{ r^{\mu_k}\Gamma(\mu_k+1)\Gamma(\frac{\nu_k-\mu_k}{2})} {_{2}F_{1}}\left(\frac{\mu_k+\nu_k}{2}+1, \frac{\mu_k-\nu_k}{2}+1 ; \mu_k+1; \left(\frac{s}{r}\right)^2\right),& 0<s<r;\\
% \left(\frac{s}{r}\right)^{\frac{d}{2}-1-\frac{d}{p}} \frac{ 2 r ^{\nu_k}\Gamma(\frac{\mu_k+\nu_k}{2}+1)}{ s^{\nu_k}\Gamma(\nu_k+1)\Gamma(\frac{\mu_k-\nu_k}{2})} {_{2}F_{1}}\left(\frac{\mu_k+\nu_k}{2}+1, \frac{\nu_k-\mu_k}{2}+1 ; \nu_k+1; \left(\frac{r}{s}\right)^2\right), & 0<r<s.
%\end{cases}\\
&=\begin{cases}
	 2\left(\frac{s}{r}\right)^{\frac{d}{2}-\frac{d}{p}+1+\mu_k} \frac{ \Gamma(a_{k}+1)}{ \Gamma(a_{k}+b_k+1)\Gamma(-b_{k})}\  {{}_{2}F_{1}}\left(a_{k}+1, b_{k}+1; a_k+b_k+1; \left(\frac{s}{r}\right)^2\right),
	 \\
	 \hfill 0<s<r;\\
	2{\left(\frac{r}{s}\right)^{\frac{d}{p}-\frac{d}{2}+1+\nu_k}}\frac{ \Gamma(a_{k}+1)}{ \Gamma(a_k-b_k+1)\Gamma(b_k)} \ {_{2}F_{1}}\left(a_{k}+1, 1-b_k; a_k-b_k+1; \left(\frac{r}{s}\right)^2\right),
    \\
     \hfill 0<r<s;
\end{cases}\\
&=\begin{cases} \label{power series representation}
	2\left(\frac{s}{r}\right)^{\frac{d}{2}-\frac{d}{p}+1+\mu_k} \frac{\sin(-\pi b_{k})}{\pi}  \sum \limits_{n=0}^\infty \frac{\Gamma(a_{k}+n+1)\Gamma(b_{k}+n+1)}{\Gamma(a_{k}+b_{k}+n+1) \Gamma(n+1)}\left(\frac{s}{r}\right)^{2n},& 0<s<r;\\
	2\left(\frac{r}{s}\right)^{\frac{d}{p}-\frac{d}{2}+1+\nu_k} \frac{\sin(\pi b_{k})}{\pi}   \sum \limits_{n=0}^\infty \frac{\Gamma(a_{k}+n+1)\Gamma(1-b_k+n)}{\Gamma(a_k-b_k+n+1) \Gamma(n+1)}\left(\frac{r}{s}\right)^{2n},& 0<r<s.
\end{cases}
%&=\begin{cases}
%	2\left(\frac{s}{r}\right)^{\frac{d}{2}+1-\frac{d}{p}+\mu_k} \frac{\sin(-\pi b_{k})}{\pi}   \sum \limits _{n=0}^\infty  \dd t\left(\frac{s}{r}\right)^{2n},& 0<s<r;\\
%	2\left(\frac{r}{s}\right)^{\frac{d}{p}-\frac{d}{2}+1+\nu_k} \frac{\sin(-\pi b_{k})}{\pi}   \sum \limits_{n=0}^\infty A_{k,n}^{-}\left(\frac{r}{s}\right)^{2n},& 0<r<s.
%\end{cases}
\end{align}
For ease of notation, we define for $n\geq 0$,
 \begin{align*}
 A_{k,n}^{+}&=\frac{\Gamma(a_{k}+n+1)\Gamma(b_{k}+n+1)}{\Gamma(a_{k}+b_{k}+n+1) \Gamma(n+1)},\\
 A_{k,n}^{-}&= \frac{\Gamma(a_{k}+n+1)\Gamma(1-b_k+n)}{\Gamma(a_k-b_k+n+1) \Gamma(n+1)}.
 \end{align*}	
 Then the kernels $\widetilde K_{k}(r,  s)$  can  be  rewritten as
\begin{align}\label{power series formula}
\widetilde K_{k}(r,  s)=\begin{cases}
	2\left(\frac{s}{r}\right)^{\frac{d}{2}-\frac{d}{p}+1+\mu_k} \frac{\sin(-\pi b_{k})}{\pi}  \sum\limits_{n=0}^\infty A_{k,n}^{+}\left(\frac{s}{r}\right)^{2n},& 0<s<r;\\
	2\left(\frac{r}{s}\right)^{\frac{d}{p}-\frac{d}{2}+1+\nu_k} \frac{\sin(\pi b_{k})}{\pi}   \sum\limits_{n=0}^\infty A_{k,n}^{-}\left(\frac{r}{s}\right)^{2n},& 0<r<s.
\end{cases}
\end{align}

Using the definition of ${}_2F_1$ and the asymptotic behavior \eqref{Asymptotic near one for hypergeometric function}, we have the following lemma.
\begin{lemma}\label{asymptotic behavior of kernels}
The kernel functions $\widetilde K_{k}(r,s)$ have the following asymptotic behavior:
\begin{align*}
		\widetilde K_{k}(r,  s)\sim
		\begin{cases}
			 \frac{2 \Gamma(\frac{\mu_k+\nu_k}{2}+1)}{ \Gamma(\mu_k+1)\Gamma(\frac{\nu_k-\mu_k}{2})}\left(\frac{s}{r}\right)^{\frac{d}{2}-\frac{d}{p}+1 +\mu_k}, &  \frac{s}{r} \to 0;\\
			\frac{2}{\pi} \sin \pi (\frac{\nu_k-\mu_k}{2}) \left(\frac{s}{r}\right)^{\frac{d}{2}-\frac{d}{p}+1 +\mu_k} (1-\frac{s^2}{r^2})^{-1},   & \frac{s}{r} \to {1^{-}};\\
				\frac{2}{\pi} \sin \pi (\frac{\mu_k-\nu_k}{2})\left(\frac{r}{s}\right)^{\frac{d}{p}-\frac{d}{2}+1 +\nu_k} (1-\frac{r^2}{s^2})^{-1}, &  \frac{s}{r} \to 1^{+};\\
			 \frac{2 \Gamma(\frac{\mu_k+\nu_k}{2}+1)}{ \Gamma(\nu_k+1)\Gamma(\frac{\mu_k-\nu_k}{2})}\left(\frac{r}{s}\right)^{\frac{d}{p}-\frac{d}{2}+1 +\nu_k}, &  \frac{s}{r} \to \infty.
		\end{cases}
\end{align*}
\end{lemma}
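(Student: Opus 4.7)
\textbf{Proof proposal for Lemma \ref{asymptotic behavior of kernels}.}
The plan is to read off all four asymptotics directly from the explicit representation \eqref{power series representation}--\eqref{power series formula}, translating between the parameters $(a_k,b_k)$ and $(\mu_k,\nu_k)$ via the identities $a_k+b_k=\mu_k$, $a_k-b_k=\nu_k$, $-b_k=(\nu_k-\mu_k)/2$, and applying the Euler reflection formula $\Gamma(b_k)\Gamma(1-b_k)=\pi/\sin(\pi b_k)$ together with $\Gamma(-b_k)=-\Gamma(1-b_k)/b_k$.

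\emph{Regimes $s/r\to0$ and $s/r\to\infty$.} In each of these two cases I simply keep the $n=0$ term of the corresponding series in \eqref{power series formula}; the remaining terms contribute relative errors of size $(s/r)^{2}$ or $(r/s)^{2}$ respectively. For $s/r\to 0$ the leading coefficient is
\[
2\,\frac{\sin(-\pi b_k)}{\pi}\,\frac{\Gamma(a_k+1)\Gamma(b_k+1)}{\Gamma(a_k+b_k+1)},
\]
and the identities $\frac{\sin(-\pi b_k)}{\pi}\Gamma(b_k+1)=\frac{-b_k}{\Gamma(1-b_k)}=\frac{1}{\Gamma(-b_k)}$ together with $a_k+b_k+1=\mu_k+1$ and $a_k+1=\frac{\mu_k+\nu_k}{2}+1$ convert this into the announced expression. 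The case $s/r\to\infty$ is completely parallel, with $\frac{\sin(\pi b_k)}{\pi}\Gamma(1-b_k)=\frac{1}{\Gamma(b_k)}=\frac{1}{\Gamma((\mu_k-\nu_k)/2)}$ and $a_k-b_k+1=\nu_k+1$.

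\emph{Regimes $s/r\to1^{\pm}$.} Here I invoke the standard boundary asymptotic \eqref{Asymptotic near one for hypergeometric function}. For the first hypergeometric in \eqref{power series representation} the parameters are $a=a_k+1$, $b=b_k+1$, $c=a_k+b_k+1$, so $c-a-b=-1<0$ and
\[
{}_2F_1(a_k+1,b_k+1;a_k+b_k+1;z)\sim (1-z)^{-1}\,\frac{\Gamma(a_k+b_k+1)}{\Gamma(a_k+1)\Gamma(b_k+1)}\quad\text{as }z\to 1^{-}.
\]
Plugging in $z=(s/r)^{2}$, the $\Gamma(a_k+b_k+1)$ and $\Gamma(a_k+1)$ factors cancel against the prefactor in \eqref{power series representation}, leaving $1/\bigl[\Gamma(-b_k)\Gamma(b_k+1)\bigr]$, which the reflection formula rewrites as $-\sin(\pi b_k)/\pi=\sin\pi((\nu_k-\mu_k)/2)/\pi$. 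The case $s/r\to1^{+}$ is identical, with $(a,b,c)=(a_k+1,1-b_k,a_k-b_k+1)$ again giving $c-a-b=-1$, and the reflection formula reducing $1/[\Gamma(b_k)\Gamma(1-b_k)]$ to $\sin(\pi b_k)/\pi=\sin\pi((\mu_k-\nu_k)/2)/\pi$.

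\emph{Where the work sits.} Conceptually this is a bookkeeping exercise: the four asymptotics correspond one-to-one to the four ``corners'' of the hypergeometric representation (leading term of the power series near $0$ or $\infty$, and the $(1-z)^{-1}$ singularity approached from either side of $z=1$). The only mild care needed is in the Gamma-function algebra --- in particular, keeping the signs of $\sin(-\pi b_k)$ versus $\sin(\pi b_k)$ straight and using $\Gamma(-b_k)=-\Gamma(1-b_k)/b_k$ to absorb the $\sin$'s into the target reciprocal Gamma factors $\Gamma((\nu_k-\mu_k)/2)^{-1}$ and $\Gamma((\mu_k-\nu_k)/2)^{-1}$. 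No uniformity in $k$ is claimed by the statement, so each asymptotic need only be verified at fixed $k$, and no delicate estimation of the tails of the hypergeometric series is required beyond noting that the corrections are of relative size $O((s/r)^{2})$ or $O((r/s)^{2})$ in the first and fourth regimes.
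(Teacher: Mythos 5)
Your proposal is correct and takes exactly the approach the paper intends: the paper states before the lemma that it follows from the definition of ${}_2F_1$ (giving the leading $n=0$ term for $s/r\to0,\infty$) and from \eqref{Asymptotic near one for hypergeometric function} (giving the $(1-z)^{-1}$ singularity as $s/r\to 1^{\pm}$), and your Gamma-function bookkeeping with $a_k\pm b_k=\mu_k,\nu_k$ and the reflection formula reproduces the stated constants.
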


The asymptotic behavior of $\widetilde K_{k}(r,s)$ suggests that we should split $\widetilde W$ into the three parts as follows:
\begin{align*}
	 \widetilde W f(r, \omega)&=\int_{0}^{\infty} \sum_{k=0}^\infty r^{\frac{d}{p}} K_k(r,s) \proj_{k}f(s, \omega) s^{-\frac{d}{p}}\frac{\dd s}{s}\\
	 &=\int_{0}^{\frac{r}{2}} \sum_{k=0}^\infty  \widetilde K_k(r,s) \proj_{k}f(s, \omega) \frac{\dd s}{s}+\int_{\frac{r}{2}}^{2r} \sum_{k=0}^\infty  \widetilde K_k(r,s) \proj_{k}f(s, \omega) \frac{\dd s}{s}\\
	 &\quad +\int_{2r}^{\infty} \sum_{k=0}^\infty  \widetilde K_k(r,s) \proj_{k}f(s, \omega) \frac{\dd s}{s}\\
	 &\acoloneq \widetilde W_1 f(r, \omega)+\widetilde W_2 f(r, \omega)+\widetilde W_3 f(r, \omega).
\end{align*}
Then, showing \eqref{equ:redtildw} is reduced to proving that
\begin{align}
	\| \widetilde W_i f(r, \omega)\|_{L^p(\frac{\dd r}{r} \cdot \dd \omega)} \lesssim \| f(r, \omega)\|_{L^p(\frac{\dd r}{r} \cdot \dd \omega)}, \ \text{for}\  i= 1, 2, 3.
\end{align}
We denote $\widetilde K_{k,1}(r,s)$, $ \widetilde K_{k,2}(r,s)$ and $\widetilde K_{k,3}(r,s)$ by
\begin{align*}
    \widetilde K_{k,1}(r,s)&= \widetilde K_{k}(r,s) \chi_{\{0<s<r/2\}},\\
      \widetilde K_{k,2}(r,s)&= \widetilde K_{k}(r,s) \chi_{\{r/2<s<2r\}},\\
         \widetilde K_{k,3}(r,s)&= \widetilde K_{k}(r,s) \chi_{\{2r<s\}}.
\end{align*}

\subsection*{{Case 1: $0<\frac{s}{r}<2$}}
By \eqref{asymptotic expansion for the quotient of gamma function} it is easy to verify that $A_{k,n}^{+}$ are uniformly bounded in $n$ and $k$.
 Thus, by the formula \eqref{power series formula}, for any fixed $r, s$, \begin{align}\label{Case-1 bd}
	|\widetilde K_{k,1}(r,s)|\lesssim  \left|\frac{\sin\pi b_k}{\pi}
	\right| \left(\frac{s}{r}\right)^{\frac{d}{2}+\mu_k+1 -\frac{d}{p}}
	\lesssim  \left(\frac{s}{r}\right)^{\frac{d}{2}+\mu_0+1 -\frac{d}{p}}.
\end{align}
For the finite difference of order $N=\lfloor \frac{d-1}{2}\rfloor +1$, we have
\begin{align}\label{Case-1 fd}
	\sup _{j} 2^{j(N-1)} \sum_{k=2^ j}^{2 ^{j+1}}\left|\fd ^{N}\widetilde K_{k,1}(r,s)\right|
	 &\lesssim 2^N \sum_{k=0}^\infty k^{N-1} | \widetilde K_{k,1}(r,s)| \nonumber\\
	 &\lesssim 2^N\sum_{k=0}^\infty k^{N-1}\left(\frac{s}{r}\right)^{\frac{d}{2}+\mu_k+1 -\frac{d}{p}}\nonumber\\
	 &\lesssim 2^N\left(\frac{s}{r}\right)^{d-\frac{d}{p}}.
\end{align}
This inequality together with
\eqref{Case-1 bd} shows  that $\widetilde K_{k,1}(r,s)$ satisfies \eqref{uniform bound} and \eqref{uniform difference bound} in  Theorem \ref{multiplier theorem for spherical harmonic decomposition}. Hence, for $1<p<\infty$, we have
\begin{align*}
	\left \|\sum_{k=0}^\infty  \widetilde K_{k,1}(r,s) \proj_{k }f(s, \omega) \right \|_{L^{p}(\dd \omega)}  \lesssim   \left(\frac{s}{r}^{}\right)^{d-\frac{d}{p}}\left \|\sum_{k=0}^\infty   \proj_{k}f(s, \omega) \right \|_{L^{p}(\dd \omega)}.
\end{align*}
Furthermore, notice $r ^{d-\frac{d}{p}}\chi_{\{0<r<1/2\}} \in L^1(\frac{\dd r}{r})$ for all $1<p<\infty$, hence
\begin{align}\label{W 1}
	\|\widetilde W_1 f(r, \omega )\|_{ L^p (\frac{\dd r}{r} \cdot \dd \omega)}
	&\lesssim  \left\|\int_0^{r/2}  \left(\frac{s}{r}^{}\right)^{d-\frac{d}{p}} \left \|\sum_{k=0}^\infty   \proj_{k }f(s, \omega) \right \|_{L^{p}(\dd \omega)} \frac{\dd s}{s} \right \|_{L^p (\frac{\dd r}{r})}\nonumber\\
	&\lesssim \left \|\sum_{k=0}^\infty   \proj_{k  }f(r, \omega) \right \|_{L^p(\frac{\dd r}{r} \cdot \dd \omega)} =\| f(r,\omega)\|_{L^{p}(\frac{\dd r}{r} \cdot \dd \omega)}.
	\end{align}
Thus, $\widetilde {W}_1$ is bounded in  $ L^p (\frac{\dd r}{r} \cdot \dd \omega)$.
\subsection*{Case 2: $\frac{s}{r}>2$.} Using \eqref{power series representation}, and the fact that $A_{k,n}^{-}$ are uniformly bounded in $k, n$, we have
for fixed $r,s $,
\begin{align}
		|\widetilde K_{k,3}( r, s)|\lesssim \left(\frac{r}{s}\right)^{\frac{d}{p}+1+\nu_{k}-\frac{d}{2}} \lesssim \left(\frac{r}{s}\right)^{\frac{d}{p}+1+\nu_{0}-\frac{d}{2}}.\label{case-3-bd}
\end{align}
For the finite difference of order  $N= \lfloor \frac{d-1}{2}\rfloor+1 $, we have the following bound:
\begin{align}\sup _{j} 2^{j(N-1)} \sum_{k=2^ j}^{2 ^{j+1}}\left|\fd^{N} \widetilde K_{k,3}(r,s)\right|
	 &\lesssim  2^N \sum_{k=0}^\infty k^{N-1} | \widetilde K_{k,3}(r,s)| \nonumber\\
	 &\lesssim  2^N\sum_{k=0}^\infty k^{N-1}\left(\frac{r}{s}\right)^{\frac{d}{p}+1+\nu_{k}-\frac{d}{2}} \nonumber\\
	 &\lesssim  2^N\left(\frac{r}{s}\right)^{\frac{d}{p}+1+\nu_{0}-\frac{d}{2}}.  	\label{case-3-fd}
\end{align}
From \eqref{case-3-bd} and \eqref{case-3-fd}, we know that
$\widetilde K_{k,3}(r,s)$ satisfy the condition \eqref{uniform bound} and \eqref{uniform difference bound} in  Theorem \ref{multiplier theorem for spherical harmonic decomposition}, which implies the following bound is valid:
\begin{align*}
	\left \|\sum_{k=0}^\infty  \widetilde K_{k,3}(r,s) \proj_{k }f(s, \omega) \right \|_{L^{p}(\dd \omega)}  \lesssim   \left(\frac{r}{s}\right)^{\frac{d}{p}+1+\nu_{0}-\frac{d}{2}}\left \|\sum_{k=0}^\infty   \proj_{k }f(s, \omega) \right \|_{L^{p}(\dd \omega)}.
\end{align*}
 To get the boundedness for the radial part, we divide the estimates into two cases, that is, $a> 0$, and $-(\frac{d-2}{2} )^2 \leq a<0$. If $a > 0$, then $\nu_0\geq \frac{d}{2}-1$, thus $\frac{d}{p}+1+\nu_{0}-\frac{d}{2}>0$ for $1<p<\infty$.
If $-(\frac{d-2}{2})^2 \leq a<0$, then $\frac{d}{p}+1+\nu_{0}-\frac{d}{2}>0$ as long as $p<\frac{d}{\sigma}=\frac{d}{\frac{d-2}{2}-\sqrt{(\frac{d-2}{2})^2+a}}$.

Therefore,  if $\max \big\{0, \frac{\sigma}{d}\big\}<\frac{1}{p}<\min\big\{1, \frac{d-\sigma}{d}\big\}$, it follows that $r^{\frac{d}{p}+1+\nu_{0}-\frac{d}{2}} \chi_{\{0<r<1/2\}}\in L^1(\frac{\dd r}{r})$.
 In both cases, we have
\begin{align}\label{W 3}
	\|\widetilde W_3 f(r, \omega )\|_{ L^p (\frac{\dd r}{r}\cdot \dd \omega)}
	&\lesssim  \bigg\|\int_{2r}^{\infty}  \left(\frac{r}{s}\right)^{\frac{d}{p}+1+\nu_{0}-\frac{d}{2}} \bigg \|\sum_{k=0}^\infty   \proj_{k}f(s, \omega) \bigg \|_{L^{p}(\dd \omega)} \frac{\dd s}{s} \bigg \|_{L^p (\frac{\dd r}{r})}\nonumber\\
	&\lesssim \bigg \|\sum_{k=0}^\infty   \proj_{k }f(r, \omega) \bigg \|_{L^p (\frac{\dd r}{r}\cdot \dd \omega)}=\| f(r,\omega)\|_{L^p (\frac{\dd r}{r}\cdot \dd \omega)}.
\end{align}
Hence, ${\widetilde W}_{3}$ is bounded in  $ L^p (\frac{\dd r}{r} \cdot \dd \omega)$.

For the rest part of this section, we remain to show that ${\widetilde W}_{2}$ is bounded in  $ L^p (\frac{\dd r}{r} \cdot \dd \omega)$.

\subsection*{Case 3: $\frac{1}{2}<\frac{s}{r}<2$}
Now we deal with the term $\widetilde W_2 f(r, \omega )$. From Lemma  \ref{asymptotic behavior of kernels}, we know that  the kernels have a singularity of type $(s-r)^{-1}$ at $s=r$, and we cannot get a uniform bound in $r, s$  for $\widetilde K_k(r,s)$. Instead, we can use the theory of \CZ{} operators to prove the  $L^p$-boundedness of the radial part. To achieve this goal, for each kernel $\widetilde K_k(r,s)$, we define an approximate kernel $\widetilde K_k^{\text{ap}}(r,s)$ with the property that the index $k$ and variables $r, s $ are separated, and that they  also have the same singularity as $\widetilde K_k(r,s)$. For the approximating operator, we first estimate the radial part.  After that, we use the multiplier theorem for spherical harmonics to get the boundedness for the angular part. For the error term, we will estimate the angular part first, then the radial part using  Young's inequality.

     In the remainder of this paper, we use the shorthand notation $\chi_{+}=\chi_{\{r/2<s<r\}}$ and $\chi_{-}=\chi_{\{r<s<2r\}}$. Sometimes we omit $\chi_{+}$ and $\chi_{-}$ when we have already restricted the range of $r,s$ appropriately.

Before we give the details for the proof, we introduce a few lemmas to estimate the finite differences.
\begin{lemma} \label{difference estimates for $s<r$}
The following sequences
 \begin{align*}
    \myT_k^{+}(r,s)&=\left(\frac{s}{r}\right)^{\mu_k}\chi_+,\\
    {\widetilde{ \myT_k}}^{+}(r,s)& =\frac{\sin(-\pi b_{k})}{\pi}\frac{1}{1-\left(\frac{s}{r}\right)^2}\left[\left(\frac{s}{r}\right)^{{\mu_k-\mu_0}}-1\right] \chi_+,
%    \widetilde {\widetilde{ \myT_k}}^{+}(r,s)&= \frac{\sin(-\pi b_{k})}{\pi}\ln\!\Big({1-\Big(\frac{s}{r}\Big)^2}\Big)\left[\left(\frac{s}{r}\right)^{{\mu_k-\mu_0}}-1\right]\chi_+
             \end{align*}
             satisfy \eqref{uniform bound} and \eqref{uniform difference bound} with implicit constants that do not depend on $r,s$.
   \end{lemma}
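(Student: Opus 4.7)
The plan is to verify \eqref{uniform bound} and \eqref{uniform difference bound} for each sequence separately, treating them as $k$-dependent quantities with $r,s$ fixed (and satisfying $r/2<s<r$). The uniform bounds are elementary: for $\myT_k^+$ I immediately have $|\myT_k^+(r,s)|=(s/r)^{\mu_k}\le 1$ since $s/r<1$, while for $\widetilde{\myT_k}^+$ I would combine $|\sin(-\pi b_k)/\pi|\lesssim |b_k|\lesssim 1/k$ (from Lemma \ref{estimates for $a_k$ and $b_k$}) with the elementary identity $(1-u^k)/(1-u^2)=\tfrac{1}{1+u}\sum_{j=0}^{k-1}u^j\lesssim k$ for $u=s/r\in(1/2,1)$, which gives a product that is $O(1)$ uniformly in $r,s$.

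For the difference bound on $\myT_k^+$, I would exploit the fact that $\mu_{k+1}-\mu_k=1$ to compute finite differences exactly: by induction $\fd^N\myT_k^+=(s/r)^{\mu_k}\bigl(s/r-1\bigr)^N$. Writing $u=s/r=e^{-\alpha}$ with $\alpha\in(0,\ln 2)$, the quantity I need to control becomes $2^{j(N-1)}(1-e^{-\alpha})^N\sum_{k=2^j}^{2^{j+1}}e^{-\mu_k\alpha}$, which I would bound by a dyadic case analysis on whether $2^j\alpha\le 1$ or $2^j\alpha>1$: in the first regime the sum is $\lesssim 2^j$ and the total collapses to a power of $(2^j\alpha)$ that is $\le 1$; in the second regime $\sum e^{-\mu_k\alpha}\lesssim e^{-2^j\alpha}/\alpha$, and the desired bound reduces to the elementary fact $x^{N-1}e^{-x}\lesssim 1$.

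For $\widetilde{\myT_k}^+$, I would view the sequence as $\myC_k=g(k)h(k)$ with $g(t)\coloneqq \sin(-\pi b(t))/\pi$ and $h(t)\coloneqq ((s/r)^{\mu(t)-\mu_0}-1)/(1-(s/r)^2)$, extended smoothly via $\mu(t)=(d-2)/2+t$ and $\nu(t)=\sqrt{\mu(t)^2+a}$, then control $|\fd^N\myC_k|$ by $\sup_{[k,k+N]}|\myC^{(N)}|$ using the fundamental theorem of calculus (as in the lemma just above the statement). Leibniz reduces matters to bounding $g^{(N-m)}h^{(m)}$. The $b$-side gives $|g^{(j)}(t)|\lesssim t^{-(j+1)}$ (from $b(t)=-a/(2(\mu(t)+\nu(t)))$, Faà di Bruno, and the estimates of Lemma \ref{estimates for $a_k$ and $b_k$}), while $h^{(m)}(t)=(\ln(s/r))^m(s/r)^{\mu(t)-\mu_0}/(1-(s/r)^2)$ for $m\ge 1$, which I would estimate by $\lesssim \alpha^{m-1}e^{-t\alpha}$ after cancelling $\alpha$ against $1-e^{-2\alpha}$. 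Summing the resulting $m=0$ and $m\ge 1$ contributions over the dyadic block and invoking the same $2^j\alpha\lessgtr 1$ case split, everything reduces to the elementary inequality $x^Me^{-x}\lesssim_M 1$.

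The main obstacle is the $\widetilde{\myT_k}^+$ estimate, because the decay in $k$ must come from the $b_k$-factor while the singular denominator $1-(s/r)^2$ must be absorbed against the vanishing of $(s/r)^{\mu_k-\mu_0}-1$; keeping these cancellations visible in every term of the Leibniz expansion is the bookkeeping that drives the argument. Once those derivative bounds are in place, both the pointwise conclusion and the dyadic summation are straightforward, and the resulting constants are manifestly independent of $r$ and $s$.
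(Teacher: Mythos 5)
Your proposal is correct and follows essentially the same route as the paper: exact computation of $\fd^N\myT_k^+$ via the arithmetic structure of $\mu_k$, a Leibniz split of $\widetilde{\myT}_k^+$ into the $\sin(\pi b_k)/\pi$ factor (whose $N$th-order decay $\lesssim k^{-(N+1)}$ comes from Lemma \ref{estimates for $a_k$ and $b_k$} and Fa\`a di Bruno) and the quotient factor, and a final reduction to $t^M e^{-t}\lesssim_M 1$. The only cosmetic difference is that you reduce finite differences to derivatives via the fundamental theorem of calculus and then run a dyadic case split on $2^j\alpha$, whereas the paper applies the Leibniz rule directly to forward differences; the estimates and cancellations invoked are the same.
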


\begin{proof}\noindent\emph{ Boundedness}: Notice that $\left(\frac{s}{r}\right)^{\mu_k}= (1-(1-\frac{s}{r}))^{\mu_k}\lesssim \min \{1, \mu_k (1-\frac{s}{r})\}$ as $k$ goes to $\infty$, this
gives the uniform bound for $\myT_k^{+}(r,s)$ and $   \widetilde{ \myT}_k^{+}(r,s)$. Therefore, we only need to estimate their finite difference.

\noindent\emph{ Finite difference estimate for $\myT_k^+(r,s)$.} Notice that $\mu_{k+1}=\mu_k+1$, direct computation  gives
\begin{align*}
		|\fd^{N} \myT_k^+|= \left(\frac{s}{r}\right)^{\mu_k}\left(1-\frac{s}{r}\right)^{N}.
\end{align*}
Thus, taking  $N= \lfloor \frac{d-1}{2}\rfloor+1$,
\begin{align*}
	\sup_j 2^{j(N-1)}\sum_{k=2^j}^{2^{j+1}} |\fd ^{N} \myT_k^+|\lesssim   \sup_j 2^{j(N-1)} \left(\frac{s}{r}\right)^{2^{j}}\left(1-\frac{s}{r}\right)^N\frac{1-\left(\frac{s}{r}\right)^{2^{j}}}{\left[1-\left(\frac{s}{r}\right)\right]}  \lesssim 1.
\end{align*}
The above bound is valid because $x^N k^{N}(1-x)^k$ is uniformly bounded for $x\in(0, \frac{1}{2})$ and $k\in  \mathbb N$, by
\begin{align*}
	x^N k^{N}(1-x)^k\le (xk)^N  e^{-kx}\le \sup_{t\ge0} t^N e^{-t}\lesssim_N 1.
\end{align*}

\noindent\emph{ Finite difference estimate for $\widetilde \myT_k^{+}(r,s) $.} To estimate this term, let us denote
  $\myF_k=\frac{\sin \pi b_k}{\pi}$ and $\myG_k=\frac{1}{1-\left(\frac{s}{r}\right)^2}\left[\left(\frac{s}{r}\right)^{{\mu_k-\mu_0}}-1\right] \chi_+$. Using \eqref{estimates for $b_k$} and the \FDB{},  it is easy to prove that for  $N= \lfloor \frac{d-1}{2}\rfloor+1 $,
\begin{align}
    |\fd^N \myF_k |\leq \left|\frac{\dd^N \myF_k  }{\dd k^N} \right| \lesssim \frac{1}{k^{N+1}}.
\end{align}
 Hence, by the Leibniz rule for  the finite difference, we have
\begin{align*}
    |\fd^{N}\widetilde \myT_k^+|&=\left|\sum_{m=0}^N \binom{N}{m} \fd ^{N-m}\left( \frac{\sin \pi b_k}{\pi}\right)\fd^{m}\myG_{k+N-m} \right|\\
    &\lesssim  \frac{1}{1-(\frac{s}{r})^2}\sum_{m=1}^N \left(\frac{1}{k}\right)^{N-m+1}\left(\frac{s}{r}\right)^{\mu_{k+N-m}-\mu_0}\left(1-\frac{s}{r}\right)^{m}\\
    &\quad + \frac{1}{1-(\frac{s}{r})^2} \left(\frac{1}{k}\right)^{N+1}\left[\left(\frac{s}{r}\right)^{\mu_{k+N}-\mu_0}-1\right]\\
     &\lesssim  \frac{1}{1-(\frac{s}{r})^2}\sum_{m=1}^N \left(\frac{1}{k}\right)^{N-m+1}\left(\frac{s}{r}\right)^{\mu_{k+N-m}-\mu_0}\left(1-\frac{s}{r}\right)^{m}+ k^{-N}.
\end{align*}
Therefore,
\begin{align*}
    &\quad \sup_j 2^{j(N-1)}\sum_{k=2^j}^{2^{j+1}} |\fd ^{N}\widetilde \myT_k^+|\\
    &\lesssim \sup_j 2^{j(N-1)}\sum_{k=2^j}^{2^{j+1}}   \sum_{m=1}^N \frac{1}{1-(\frac{s}{r})^2}  \left(\frac{1}{k}\right)^{N-m+1}\left(\frac{s}{r}\right)^{\mu_{k+N-m}-\mu_0}\left(1-\frac{s}{r}\right)^{m}+1 \\
    &\lesssim     \frac{1}{1-(\frac{s}{r})^2} \sum_{m=1}^N \sup_j 2^{j(N-1)}\sum_{k=2^j}^{2^{j+1}}\left(\frac{1}{k}\right)^{N-m+1}\left(\frac{s}{r}\right)^{\mu_{k+N-m}-\mu_0}\left(1-\frac{s}{r}\right)^{m}+1\\
       &\lesssim \frac{1}{1-(\frac{s}{r})^2} \sum_{m=1}^N \sup_j 2^{j(N-1)}(2^{-j})^{N-m+1}\left(1-\frac{s}{r}\right)^{m}\left(\frac{s}{r}\right)^{2^{j}}
       \frac{1-\left(\frac{s}{r}\right)^{2^{j}}}{1-\left(\frac{s}{r}\right)}+1
        \\
       &\lesssim  \sum_{m=1}^N  \sup_k k^{(m-1)} \left(1-\frac{s}{r}\right)^{m-1}\left(\frac{s}{r}\right)^{k} +1<\infty.
\end{align*}%
This finishes the proof.
%
%  \emph{Case three: $\widetilde{\widetilde {\myT_k}}^{+}(r,s)$.} The estimates follows from $\ln(1-(\frac{s}{r})^2)$ is controlled by $\frac{1}{1-(\frac{s}{r})^2}$ pointwise and the result we get in case two.
\end{proof}
Later, we need to estimate the finite differences for the sequence $\left(\frac{r}{s}\right)^{\nu_k} \chi_{-}$. In the proof of Lemma \ref{difference estimates for $s<r$}, we used the fact that $\mu_k$ is an arithmetic sequence, which allowed us to compute the sum explicitly.
 However, $ \nu_k$ is not generally in arithmetic progression. Instead, notice that $\nu_k-\mu_k$ goes to $0$ as $k  \to \infty$. Let us  define $\gamma_k= \nu_k-\mu_k$. By Lemma \ref{estimates for $a_k$ and $b_k$}, it is easy to deduce that
 \begin{align}\label{finite difference for gamma  k}
   |\fd^N \gamma_k|\lesssim \frac{C}{k^{N+1}}.
\end{align}
In order to estimate the finite difference for some sequence which involves $\left(\frac{r}{s}\right)^{\nu_k}\chi_{-}$, we first introduce the following lemma which can be proved by induction.

\begin{lemma}\label{lemma:long lemma}Let $\theta \in(1/2,1)$ and define $F(t)\coloneq 1-\theta^t$ for $t\in \mathbb R$. For every $N\ge 1$, there exist $L\in\mathbb N$ and finite sequences $a^N_\ell, b^N_{\ell,j},c^N_\ell,d^N_{\ell,i},e^N_{\ell,i}$ ($\ell=1,2,\dots,L$)  such that (dropping the $N$ index for legibility)
\begin{align}
d_{\ell,i} \ge 1, &\quad \sum_{i=0}^{c_\ell} d_{\ell,i} = N,\\
\fd^N\theta^{\gamma_k} &=\sum_{\ell=0}^N a_{\ell}
	\bigg(
	\prod_{j=0}^{N-1} \theta^{b_{\ell,j}\gamma_{k+j} }\bigg)
	\bigg(\prod_{i=0}^{c_\ell}F(\fd^{d_{\ell,i}} \gamma_{k+e_{\ell,i}})\bigg).
\label{badlemma-form}\end{align}
\end{lemma}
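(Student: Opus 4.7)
I will proceed by induction on $N$. For $N=1$, a direct calculation gives
\[
\fd\theta^{\gamma_k}=\theta^{\gamma_{k+1}}-\theta^{\gamma_k}=-\theta^{\gamma_k}(1-\theta^{\fd\gamma_k})=-\theta^{\gamma_k}F(\fd\gamma_k),
\]
which already fits the claimed form with $L=1$, $a_1=-1$, $b_{1,0}=1$, $c_1=0$, $d_{1,0}=1$, $e_{1,0}=0$, and $\sum_i d_{1,i}=1=N$.

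The inductive step rests on two elementary identities that reduce a single application of $\fd$ to either an $F$-factor or a $\theta^{b\gamma}$-factor back to the prescribed form. First, for integers $d\ge 1$ and $e\ge 0$,
\[
\fd F(\fd^d\gamma_{k+e})=F(\fd^d\gamma_{k+e+1})-F(\fd^d\gamma_{k+e})=\theta^{\fd^d\gamma_{k+e}}F(\fd^{d+1}\gamma_{k+e}),
\]
so applying $\fd$ to an $F$-factor raises the order of its inner finite difference by one while introducing an extra $\theta^{\fd^d\gamma_{k+e}}$ whose exponent is an integer linear combination of $\gamma_{k+e},\dots,\gamma_{k+e+d}$. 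Second, for any integer $b$,
\[
\fd\theta^{b\gamma_{k+j}}=\theta^{b\gamma_{k+j}}\bigl(\theta^{b\fd\gamma_{k+j}}-1\bigr),
\]
and the telescoping identity $\theta^{x_1+\cdots+x_m}-1=-\sum_{i=1}^{m}\theta^{x_1+\cdots+x_{i-1}}F(x_i)$, applied with each $x_i=\pm\fd\gamma_{k+j}$ according to the sign of $b$, rewrites this as a sum of $|b|$ pieces, each of the form $\theta^{(\text{integer combination of }\gamma_{k+j'})}\cdot F(\fd\gamma_{k+j})$.

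Assuming the formula at level $N$, I apply $\fd$ to a generic summand $T_\ell$ via the finite-difference Leibniz rule
\[
\fd\Bigl(\prod_\alpha h_\alpha\Bigr)=\sum_\alpha\Bigl(\prod_{\beta<\alpha}h_\beta(k+1)\Bigr)(\fd h_\alpha)\Bigl(\prod_{\beta>\alpha}h_\beta(k)\Bigr).
\]
In each contribution the differentiated factor is either a $\theta^{b_{\ell,j}\gamma_{k+j}}$, which the second identity converts into a sum of pieces each gaining one new $F(\fd\gamma_{k+j})$ (so $\sum_i d_{\ell,i}$ jumps from $N$ to $N+1$), or an $F(\fd^{d_{\ell,i}}\gamma_{k+e_{\ell,i}})$, which the first identity replaces by $\theta^{\fd^{d_{\ell,i}}\gamma_{k+e_{\ell,i}}}F(\fd^{d_{\ell,i}+1}\gamma_{k+e_{\ell,i}})$, again raising the total order by one. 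All residual $\theta$-shifts produced by Leibniz and by the two identities have exponents in the integer span of $\gamma_k,\dots,\gamma_{k+N}$, so they can be absorbed into a product $\prod_{j=0}^{N}\theta^{b'_{\ell,j}\gamma_{k+j}}$, which is exactly the form required at level $N+1$. There is no analytic difficulty here; the main obstacle is purely combinatorial, namely verifying at each step that the two invariants ``the $\theta$-exponents are integer combinations of $\gamma_k,\dots,\gamma_{k+N-1}$'' and ``$d_{\ell,i}\ge 1$ with $\sum_i d_{\ell,i}=N$'' are preserved. Both follow directly from the two identities above, so the induction closes.
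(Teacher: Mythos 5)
Your proof is correct and takes essentially the same inductive route as the paper: base case $N=1$, then the finite-difference Leibniz rule combined with the two identities $\fd \theta^{s_k} = \pm\theta^{s_k}F(\fd s_k)$ and $\fd F(s_k) = \mp\theta^{s_k}F(\fd s_k)$, tracking how each application of $\fd$ either raises one $d_{\ell,i}$ by one or appends a new factor $F(\fd\gamma_{k+j})$, so the invariant $\sum_i d_{\ell,i}=N$ advances to $N+1$. Your explicit telescoping identity $\theta^{x_1+\cdots+x_m}-1=-\sum_i\theta^{x_1+\cdots+x_{i-1}}F(x_i)$ (together with $F(-t)=\theta^{-t}F(t)$) makes precise a detail that the paper's sketch glosses over, namely how to reduce the case of a $\theta$-factor with exponent $b_{\ell,j}\gamma_{k+j}$, $|b_{\ell,j}|>1$, to unit-coefficient $F$-factors.
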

\begin{proof}
	This follows from repeated application of the following basic calculations: if $s_k$ is any sequence, then
	\begin{align}
		\fd (\theta^{s_k}) &= \theta^{s_k} F(\fd s_k),\label{badlemma-basecase}\\
		\fd F(s_k) &=- \theta^{s_k} F(\fd s_k);
	\end{align}
and the following Leibniz rule for the finite difference of product of $n$ terms:
$$	\fd (a^1_k\dots a^n_k) = \sum_{r=1}^n a^1_k\dots a^{r-1}_k (\fd a^r_k)a^{r+1}_{k+1} \dots a^n_{k+1}.$$
The base case $N=1$ is \eqref{badlemma-basecase} which is of the correct form \eqref{badlemma-form}. For $N=2$,
\begin{align}\nonumber \fd ^2(\theta^{\gamma_k}) =\fd (\theta^{\gamma_k}F(\fd \gamma_k))
&= (\fd \theta^{\gamma_k}) F(\fd \gamma_{k+1}) +\theta^{\gamma_k} \fd{}F(\fd{} \gamma_k)
\\
&=\theta^{\gamma_k} F(\fd \gamma_k) F(\fd \gamma_{k+1})-\theta^{\gamma_k}\theta^{\fd \gamma_k}F(\fd^2\gamma_k)
\end{align}	which is again of the form \eqref{badlemma-form}. Inductively, if a derivative falls on a term $\theta^{\gamma_{k+j}}$, it creates a term $F(\fd \gamma_{k+j})$, or else it falls on a term $F(\fd^{d_{\ell,i}} \gamma_{k+e_{\ell,i}})$; in either case we obtain the new coefficients $d^{N+1}_{\ell,i}\ge 1$ sum to $N$, as claimed.
\end{proof}
By Lemma \ref{lemma:long lemma},  \eqref{finite difference for gamma  k} and the fact that for $\theta\in (1/2,1), t>0$,
\begin{align*}
    |F(t)|\lesssim  t(1-\theta),
    \end{align*}
we have
\begin{align}\label{finite difference for s r gamma k}
    \fd ^N \left(\frac{r}{s}\right)^{\gamma_k} \leq \frac{1}{k^{N+1}}.
\end{align}

\begin{lemma} \label{difference estimates for $r<s<2r$}
The following sequences
 \begin{align*}
    \myT_k^{-}(r,s)&=\left(\frac{r}{s}\right)^{\nu_k}\chi_-,\\
    \widetilde{ \myT}_k^{-}(r,s)& =\frac{\sin(\pi b_{k})}{\pi}\frac{1}{1-(\frac{r}{s})^2}\left[\left(\frac{r}{s}\right)^{{\nu_k-\nu_0}}-1\right]\chi_-,
%    \widetilde \widetilde{ \myT}_k^{-}(r,s)&=\frac{\sin(-\pi b_{k})}{\pi}\ln\!\Big({1-\Big(\frac{r}{s}\Big)^2}\Big)\left[\left(\frac{r}{s}\right)^{{\nu_k-\nu_0}}-1\right]\chi_-.
             \end{align*}
               satisfy \eqref{uniform bound} and \eqref{uniform difference bound} with constants that do not depend on $r,s$.
   \end{lemma}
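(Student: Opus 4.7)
The plan is to adapt the proof of Lemma \ref{difference estimates for $s<r$}, with the crucial new ingredient being the decomposition $\nu_k = \mu_k + \gamma_k$ (where $\gamma_k = \nu_k - \mu_k$ satisfies $|\fd^N \gamma_k| \lesssim k^{-N-1}$) together with Lemma \ref{lemma:long lemma} and estimate \eqref{finite difference for s r gamma k}, which were developed immediately before this lemma precisely to handle the non-arithmetic behavior of $\nu_k$.

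\textbf{Boundedness.} On the support $\chi_-$ we have $r/s \in (1/2, 1]$, so $(r/s)^{\nu_k} \le 1$, giving the uniform bound for $\myT_k^-$. For $\widetilde{\myT}_k^-$, note $|\sin(\pi b_k)/\pi| \lesssim |b_k| \lesssim 1/k$ by Lemma \ref{estimates for $a_k$ and $b_k$}, while $\bigl|(r/s)^{\nu_k - \nu_0} - 1\bigr| / \bigl|1 - (r/s)^2\bigr|$ is bounded by $C\min(1,\, (\nu_k - \nu_0)\cdot k)$ (use $|(r/s)^t - 1| \le t |\ln(r/s)|$ and $|\ln(r/s)| \lesssim |1-r/s|$ on $\chi_-$), and the product is uniformly bounded in $k$ and in $(r,s)$.

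\textbf{Finite differences for $\myT_k^-$.} Factor $(r/s)^{\nu_k}\chi_- = \bigl[(r/s)^{\mu_k}\chi_-\bigr] \cdot (r/s)^{\gamma_k}$. The first factor $(r/s)^{\mu_k}\chi_-$ satisfies $\fd^N(r/s)^{\mu_k} = (r/s)^{\mu_k}\bigl((r/s) - 1\bigr)^N$, and the same arithmetic-progression estimate as in the proof of Lemma \ref{difference estimates for $s<r$} (with $s/r$ replaced by $r/s$) gives the required sum bound. The second factor $(r/s)^{\gamma_k}$ has $|\fd^N (r/s)^{\gamma_k}| \lesssim k^{-N-1}$ by \eqref{finite difference for s r gamma k}, which is a much stronger decay than needed. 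Applying Lemma \ref{Difference estimates for product} yields the condition \eqref{uniform difference bound} for $\myT_k^-$.

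\textbf{Finite differences for $\widetilde{\myT}_k^-$.} Denote $\myF_k = \sin(\pi b_k)/\pi$, so by Lemma \ref{estimates for $a_k$ and $b_k$} and the Fa\`a di Bruno formula $|\fd^N \myF_k| \lesssim k^{-N-1}$. Write
\begin{align*}
(r/s)^{\nu_k - \nu_0} - 1 = \bigl[(r/s)^{\mu_k - \mu_0} - 1\bigr] + (r/s)^{\mu_k - \mu_0}\bigl[(r/s)^{\gamma_k - \gamma_0} - 1\bigr].
\end{align*}
Divided by $1 - (r/s)^2$, the first bracket yields a sequence treated exactly as the $\widetilde{\myT}_k^+$ analysis in Lemma \ref{difference estimates for $s<r$} (with the roles of $r$ and $s$ swapped, which is the reason $\mu_0$ is replaced by $\nu_0$ in the final exponent bookkeeping). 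For the second contribution, apply Lemma \ref{lemma:long lemma} to $(r/s)^{\gamma_k - \gamma_0}$ (with $\theta = r/s$): every term in the resulting sum contains at least one factor of the form $F(\fd^{d_{\ell,i}}\gamma_k)$ with $d_{\ell,i}\ge 1$, which by $|F(t)|\lesssim t(1 - r/s)$ and \eqref{finite difference for gamma  k} produces a gain of $(1-r/s)/k^{d_{\ell,i}+1}$; combined with the $\mu_k - \mu_0$ factor and the singular $1/(1-(r/s)^2)$ the analysis parallels the $\widetilde{\myT}_k^+$ computation. Finally, the Leibniz rule for finite differences (Lemma \ref{Difference estimates for product}) combines the $\myF_k$ factor with the above, producing the desired uniform bound \eqref{uniform difference bound}.

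\textbf{Main obstacle.} The principal difficulty is bookkeeping in the second term $(r/s)^{\mu_k - \mu_0}\bigl[(r/s)^{\gamma_k-\gamma_0}-1\bigr]/(1-(r/s)^2)$: the denominator $1-(r/s)^2$ is singular near the diagonal $r=s$, and one must verify that the powers of $(1-r/s)$ produced by $\fd^N$ acting on $(r/s)^{\mu_k-\mu_0}$ together with the small factor coming from $(r/s)^{\gamma_k-\gamma_0}-1$ exactly compensate this singularity and sum against the weight $2^{j(N-1)}$. This is the same delicate interplay encountered in Lemma \ref{difference estimates for $s<r$} with the auxiliary bound $x^N k^N(1-x)^k\lesssim 1$ for $x\in(0,1/2)$, and adapts verbatim once Lemma \ref{lemma:long lemma} is applied.
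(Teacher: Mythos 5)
Your proof is correct and follows essentially the same strategy as the paper: decompose $\nu_k=\mu_k+\gamma_k$, control the $\gamma_k$ contribution via Lemma~\ref{lemma:long lemma} together with \eqref{finite difference for s r gamma k}, and combine with the arithmetic-progression bound for $\mu_k$ using the Leibniz rule for finite differences (Lemma~\ref{Difference estimates for product}). The only cosmetic difference is in $\widetilde{\myT}_k^-$: you split $(r/s)^{\nu_k-\nu_0}-1$ additively into a $\mu$-piece (reducing to the $\widetilde{\myT}_k^+$ analysis of Lemma~\ref{difference estimates for $s<r$}) plus a $\gamma$-remainder, whereas the paper instead notes that $\fd^m$ (for $m\ge1$) annihilates the constant $-1$ and applies Leibniz directly to $(r/s)^{\mu_k}(r/s)^{\gamma_k}$; both routes yield the same estimates.
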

   \begin{proof}\noindent\emph{Boundedness.} It is easy to verify that both $\myT_k^{-}(r,s)$
   and $\widetilde{ \myT}_k^{-}(r,s)$ are uniformly bounded in $k, r$ and $s$.

 \noindent\emph{Finite difference for $\myT_k^{-}(r,s)$.}  We write
\begin{align*}
\left(\frac{r}{s}\right)^{\nu_k}=\left(\frac{r}{s}\right)^{\mu_k}\left(\frac{r}{s}\right)^{\gamma_k}.
\end{align*}
From Lemma \ref{difference estimates for $s<r$}, we know that
\begin{align}\label{fd for r s mu k}
  \sup_{j} 2^{j(N-1)}\sum_{k=2^j}^{2^{j+1}}| \fd^N{\left(\frac{r}{s}\right)^{\mu_k}}|<\infty.
\end{align}
Applying Lemma \ref{Difference estimates for product} with $\myG_k=\left(\frac{r}{s}\right)^{\mu_k} \chi_{-}$ and $\myF_k=\left(\frac{s}{r}\right)^{\gamma_k}\chi_{-}$, and using the estimate \eqref{finite difference for s r gamma k} and \eqref{fd for r s mu k}, we deduce that $\left(\frac{r}{s}\right)^{\nu_k}$ satisfy \eqref{uniform difference bound}.

 \noindent\emph{Finite difference for $\widetilde{ \myT}_k^{-}(r,s)$.} Notice that
 $$\fd ^m \left[\frac{1}{1-(\frac{r}{s})^2} \left\{\left(\frac{r}{s}\right)^{{\nu_k-\nu_0}}-1\right\}\right]= \frac{\left(\frac{r}{s}\right)^{-\nu_0}}{1-(\frac{r}{s})^2}\fd ^m\left[ \left(\frac{r}{s}\right)^{{\mu_k}} \left(\frac{r}{s}\right)^{{\gamma_k}}\right]$$
 for $1\leq m \leq N= \lfloor \frac{d-1}{2}\rfloor+1 $.
Then,
\begin{align*}
    &\quad \fd^N  \widetilde{ \myT}_k^{-}(r,s)\\
    &= \fd ^N \left[\frac{\sin\pi b_{k}}{\pi}\right] \frac{1}{1-(\frac{r}{s})^2} \left[\left(\frac{r}{s}\right)^{{\nu_k-\nu_0}}-1\right]\\
&\quad   + \sum_{m=1}^N \binom{N}{m} \fd^{N-m}\left[\frac{\sin\pi b_{k}}{\pi}\right] \fd^{m}  \left[\frac{\left(\frac{r}{s}\right)^{-\nu_0}}{1-(\frac{r}{s})^2} \left(\frac{r}{s}\right)^{{\mu_{k+N-m}}} \left(\frac{r}{s}\right)^{{\gamma_{k+N-m}}}\right]\\
  &= \fd ^N \left[\frac{\sin\pi b_{k}}{\pi}\right] \frac{1}{1-(\frac{r}{s})^2} \left[\left(\frac{r}{s}\right)^{{\nu_k-\nu_0}}-1\right]\\
  &\quad+ \sum_{m=1}^N\!
  \binom{N}{m}
  \fd^{N-m}\!
  \Big[
    \frac{\sin\pi b_{k}}{\pi}
  \Big]
  \sum_{l=0}^m
  \binom{m}{l}
  \fd^{l}\!\!
  \left[
    \frac{(\frac{r}{s})^{-\nu_0}(\frac{r}{s})^{{\mu_{k+N-m}}}}
        {1-(\frac{r}{s})^2}
    \right]
    \fd^{m-l}\!
    \Big[
        \Big(
            \frac{r}{s}
        \Big)^{{\gamma_{k+N-l}}}
    \Big]\\
&  \lesssim  \frac{1}{k^{N}}+\sum_{m=1}^N\binom{N}{m} \frac{1}{k^{N-m+1}}  \sum_{l=0}^m \binom{m}{l} \frac{\left(\frac{r}{s}\right)^{-\nu_0}}{1-(\frac{r}{s})^2} \left(\frac{r}{s}\right)^{{\mu_{k+N-m}}} \left(1-\frac{r}{s}\right)^l \frac{1}{k^{m-l}}.
\end{align*}
Therefore
\begin{align*}
   &\quad \sup_{j} 2^{j(N-1)} \sum_{k=2^j}^{2^{j+1}}|\fd^N  \widetilde{ \myT}_k^{-}(r,s)|\\
 &\lesssim
 \sup_{j}  2^{j(N-1)} \sum_{k=2^j}^{2^{j+1}}   \frac{1}{k^{N}}
 \\
 &\quad +\sup_{j} \sum_{m=1}^N\binom{N}{m} \sum_{l=0}^m \binom{m}{l} \frac{\left(\frac{r}{s}\right)^{-\nu_0}}{1-(\frac{r}{s})^2}\sum_{k=2^j} ^{\mathclap{2^{j+1}}} \frac{k^{N-1}}{k^{N-l+1}}\left(\frac{r}{s}\right)^{{\mu_{k+N-m}}} \! \left(1-\frac{r}{s}\right)^l \\
  &\lesssim  1+\sup_{k} \sum_{m=1}^N\binom{N}{m} \frac{1}{k}  \frac{1}{1-(\frac{r}{s})^2}\left(\frac{r}{s}\right)^{\mu_k} \\
   &\quad +\sup_{k} \sum_{m=1}^N\binom{N}{m}  \sum_{l=1}^m \binom{m}{l} \frac{1}{1-(\frac{r}{s})^2} \frac{1}{k^{2-l}} \left(\frac{r}{s}\right)^{{\mu_k}}\left(1-\frac{r}{s}\right)^l \frac{1-\left(\frac{r}{s}\right)^{\mu_k}}{1-\left(\frac{r}{s}\right)}\\
  &\lesssim  1+1+\sup_{k}\sum_{m=1}^N\binom{N}{m}  \sum_{l=1}^m \binom{m}{l}k^{l-1} \left(1-\frac{r}{s}\right)^{l-1} \left(\frac{r}{s}\right)^{{\mu_k}}\\
  &\lesssim 1.\qedhere
  \end{align*}
%{\color{blue}
%\emph{Some heuristic calculation:}
%Let us do more precise calculation, write $\theta=\frac{r}{s}$ and $\theta_k=\theta ^{\gamma_k}$, then
%\begin{align*}
%	|\fd \theta_k| = \theta^{\gamma_k}-\theta^{\gamma_{k+1}}=\theta^{\gamma_k}(1-\theta^{\gamma_{k+1}-\gamma_k})=\theta^{\gamma_k}(1-\theta^{\fd \gamma_{k}}).
%\end{align*}
%And
%\begin{align*}
%	|\fd^2 \theta_k |&= \theta^{\gamma_k}-2\theta^{\gamma_{k+1}}+\theta^{\gamma_{k+2}}\\
%	&=\theta^{\gamma_k}(1-\theta^{\gamma_{k+1}-\gamma_k})^2+ \theta^{\gamma_{k+1}}\theta^{\gamma_{k+1}-\gamma_k}(\theta^{\gamma_{k+2}-2\nu_{k+1}+\nu_k}-1)\\
%	&=\theta^{\gamma_k}(1-\theta^{{\fd \gamma_{k}}})^2+ \theta^{\gamma_{k+1}}\theta^{\fd \gamma_k}(\theta^{{\fd^2 \gamma_{k}}}-1)
%\end{align*}
%
% Denote $F(t)=1-\theta^t$, then
%\begin{align*}
%	\fd ^3 \theta^{\gamma_k}=&\theta^{\gamma_k} F(\fd \gamma_k)F(\fd \gamma_{k+1})F(\fd \gamma_{k+2})-2 \theta^{\gamma_k } \theta^{\fd \gamma_{k+1}}F(\fd^2 \gamma_{k+1})F(\fd \gamma_{k+2})\\
%	&-\theta^{\gamma_{k+1}}F(\fd \gamma_{k+1})F(\fd^2 \gamma_{k})+\theta^{\gamma_{k+1}}\theta^{\fd^2 \gamma_k}F(\fd^3 \gamma_{k+1}).
%\end{align*}}
\end{proof}

Now we are in position to prove the following inequality
\begin{align}
	\| \widetilde W_2 f(r, \omega)\|_{L^p(\frac{\dd r}{r} \cdot \dd \omega)} \lesssim   \| f(r, \omega)\|_{L^p(\frac{\dd r}{r} \cdot \dd \omega)}.
\end{align}

 First, using the asymptotic behavior in \eqref{asymptotic expansion for the quotient of gamma function} for gamma functions, we have for any fixed $k$,  as $n$ goes to $\infty$,
\begin{align}
    A_{k,n}^{+}&=1+\frac{a}{4(n+1)}+O_k\big(\frac{1}{(n+1)^2}\big),\label{expansion for $A_{k,n}^{+}$}\\
    A_{k,n}^{-}&=1-\frac{a}{4(n+1)}+O_k\big(\frac{1}{(n+1)^2}\big).\label{expansion for $A_{k,n}^{-}$}
\end{align}
%Following this, we have the following uniform bounds,
%\begin{align*}
%     \left |\frac{\sin \pi b_k}{ \pi} \left(A_{k,n}^{+}-1-\frac{a}{4(n+1)}\right)( n+1)^2 \right| <C,
% \end{align*}
% \begin{align*}
%     \left |\frac{\sin \pi b_k}{ \pi} \left(A_{k,n}^{-}-1-\frac{a}{4(n+1)}\right)( n+1)^2 \right| <C.
%\end{align*}
%Using the following trivial equality that
%\begin{align*}
%  \sum_{n\geq 0} \frac{x^{2n}}{n+1}&=-\frac{\ln (1-x^2)}{x^2}, \quad 0<x<1.
%\end{align*}
Denote
\begin{align}\label{definition of E}
    E_{k,n}^{+}&\acoloneq  A_{k,n}^{+}-1-\frac{a}{4(n+1)},\quad E_{k,n}^{-} \acoloneq   A_{k,n}^{-}-1+\frac{a}{4(n+1)}, \\
   E_{k}^+(r,s)&\acoloneq \sum_{n=0}^{\infty}E_{k,n}^{+} \left(\frac{s}{r}\right)^{2n}\chi_+,
 \quad E_{k}^-(r,s)\acoloneq \sum_{n=0}^{\infty}E_{k,n}^{-}\left(\frac{r}{s}\right)^{2n}\chi_-.
 \end{align}
Using \eqref{expansion for $A_{k,n}^{+}$} and \eqref{expansion for $A_{k,n}^{-}$}, and the following Taylor series,
\begin{align}\label{infinite sum}
    \sum_{n=0}^\infty \frac{x^n}{n+1} =\frac{\ln(1-x)}{x},\quad x\in (0,1),
\end{align}
we can write
\begin{align}\nonumber
    \widetilde K_{k,2} &= 2\left(\frac{s}{r}\right)^{\frac{d}{2}-\frac{d}{p}+1+\mu_k} \frac{\sin(-\pi b_{k})}{\pi} \left[\frac{1}{1-(\frac{s}{r})^2}+\frac{a}{4}\left(\frac{s}{r}\right)^{-2} \ln\!\Big({1-\Big(\frac{s}{r}\Big)^2}\Big) +E_{k}^{+}\right]\chi_+
   \nonumber \\\nonumber
    & \quad +2\left(\frac{r}{s}\right)^{\frac{d}{p}-\frac{d}{2}+1+\nu_k} \frac{\sin(\pi b_{k})}{\pi} \left[\frac{1}{1-(\frac{r}{s})^2} -\frac{a}{4}\left(\frac{r}{s}\right)^{-2} \ln\!\Big({1-\Big(\frac{r}{s}\Big)^2}\Big) +E_{k}^{-} \right]\chi_-
    \\\label{singularity of kernels}
    &=\vcentcolon \widetilde K_{k,2}^1 (r,s)+\widetilde K_{k,2}^2 (r,s)+\widetilde K_{k,2}^3 (r,s)
\end{align}
with
\begin{align*}
       \widetilde K_{k,2}^1(r,s)&=2\left(\frac{s}{r}\right)^{\frac{d}{2}-\frac{d}{p}+1+\mu_k} \frac{\sin(-\pi b_{k})}{\pi}\frac{1}{1-\left(\frac{s}{r}\right)^2}\chi_+\\
      &\quad +2\left(\frac{r}{s}\right)^{\frac{d}{p}-\frac{d}{2}+1+\nu_k} \frac{\sin(\pi b_{k})}{\pi}\frac{1}{1-(\frac{r}{s})^2}\chi_-;\\
        \widetilde K_{k,2}^2 (r,s)&=\frac{1}{2}a \left(\frac{s}{r}\right)^{\frac{d}{2}-\frac{d}{p}-1+\mu_k} \frac{\sin(- \pi b_k)}{\pi}\ln\!\Big({1-\Big(\frac{s}{r}\Big)^2}\Big) \chi_+\\
    &\quad -\frac{1}{2}a \left(\frac{r}{s}\right)^{\frac{d}{p}-\frac{d}{2}-1+\nu_k} \frac{\sin (\pi b_k)}{\pi}\ln\!\Big({1-\Big(\frac{r}{s}\Big)^2}\Big) \chi_-;\\
    \widetilde K_{k, 2}^3 (r,s)&=2 \left(\frac{s}{r}\right)^{\!\frac{d}{2}-\frac{d}{p}+1+\mu_k} \frac{\sin(- \pi b_k)}{\pi} E_{k}^+(r,s)\\
    &\quad +2 \left(\frac{r}{s}\right)^{\!\frac{d}{p}-\frac{d}{2}+1+\nu_k} \frac{\sin (\pi b_k)}{\pi}E_{k}^-(r,s).
\end{align*}
We denote the corresponding operators $\widetilde W_{2}^1, \widetilde W_{2}^2$ and $\widetilde W_{2}^3$ by
\begin{align}
   \widetilde W_{2}^{i} f(r, \omega)=\int_0^\infty \sum _{k=0}^\infty \widetilde K_{k,2}^i(r,s) \proj_k f(s, \omega)\frac{\dd s}{s} \quad i=1, 2, 3.
\end{align}
\vspace{0.2in}
\noindent\emph{\bf Estimate for the first term $\widetilde W_{2}^{1}$.}
%Let us denote
%\begin{align*}
%      \widetilde K_{k,2}^1(r,s)&=2\left(\frac{s}{r}\right)^{\frac{d}{2}+1-\frac{d}{p}+\mu_k} \frac{\sin(-\pi b_{k})}{\pi}\frac{1}{1-\left(\frac{s}{r}\right)^2}\chi_+\\
%      &+2\left(\frac{r}{s}\right)^{\frac{d}{p}+1-\frac{d}{2}+\nu_k} \frac{\sin(-\pi b_{k})}{\pi}\frac{1}{1-(\frac{r}{s})^2}\chi_-.
%\end{align*}
%For this term, if we use the Theorem \ref{multiplier theorem for spherical harmonic decomposition} first and get the uniform bound
%\begin{align}
%  \widetilde K_{k,2}^1(r,s) \lesssim  \left(\frac{s}{r}\right)^{\frac{d}{2}+1+\mu_0-\frac{d}{p}}\frac{1}{1-\left(\frac{s}{r}\right)^2}\chi_++ \left(\frac{r}{s}\right)^{\frac{d}{p}+1-\frac{d}{2}+\nu_0}\frac{1}{1-(\frac{r}{s})^2}\chi_-.
%\end{align}

 Step 1: \emph{Estimate for the approximate kernels: }Let us define the approximate kernels by
\begin{align*}
	\widetilde K_{k,2}^{1, \text{ap}}(r,s)&\coloneq  2 \left(\frac{s}{r}\right)^{\frac{d}{2}-\frac{d}{p}+1+\mu_0} \frac{\sin (-\pi b_k)}{\pi}\frac{1}{1-\left(\frac{s}{r}\right)^2}\chi_+
	\\ &\quad + 2 \left(\frac{r}{s}\right)^{\frac{d}{p}-\frac{d}{2}+1+\nu_0} \frac{\sin \pi b_k}{\pi}\frac{1}{1-(\frac{r}{s})^2}\chi_-,
%	&= 2 \left(\frac{s}{r}\right)^{\frac{d}{2}+1+\mu_0-\frac{d}{p}} \frac{\sin \pi b_k}{\pi}\frac{r^2}{r^2-s^2} \chi_+ \\
%	&\quad  + 2 \left(\frac{r}{s}\right)^{\frac{d}{p}-\frac{d}{2}+1+\nu_0} \frac{\sin (\pi b_k)}{\pi}\frac{s^2}{r^2-s^2}\chi_-\\
%	&= 2 \left(\frac{s}{r}\right)^{\frac{d}{2}+1+\mu_0-\frac{d}{p}} \frac{\sin \pi b_k}{\pi}\sum_{n=0}^{\infty} \left(\frac{s}{r}\right)^{2n}\chi_+\\
%	&\quad +2 \left(\frac{r}{s}\right)^{\frac{d}{p}-\frac{d}{2}+1+\nu_0} \frac{\sin (-\pi b_k)}{\pi}\sum_{n=0}^{\infty} \left(\frac{r}{s}\right)^{2n}\chi_-
\end{align*}
%{\color{blue}The most important property of $\widetilde K_{k,2}^{1, \text{ap}}(r,s) $ is that the variables are separate, and it is easy to see that it is a \CZ{} operator, thus we can use the boundedness of \CZ{} operator for the radial part first by Lemma \ref{boundedness for CZ operator 1}.}
and  the corresponding operator $\widetilde W_{2}^{1, \text{ap}}f (r, \omega)$   by
\begin{align*}
	\widetilde W_{2}^{1, \text{ap}}f(r,\omega)\ &\acoloneq \sum_{k=0}^\infty \int_{0}^\infty \widetilde K_{k,2}^{1, \text{ap}}(r,s)\proj_k f(s, \omega)\frac{\dd s}{s}\\
	&=2\int_\frac{r}{2}^r  \left(\frac{s}{r}\right)^{\frac{d}{2}-\frac{d}{p}+1+\mu_0}\frac{1}{1-(\frac{s}{r})^2}  \sum_{k=0}^\infty \frac{\sin (-\pi b_k)}{\pi}  \proj_{k }f(s, \omega)\frac{\dd s}{s}\\
	&\quad +2\int_{r}^{2r}  \left(\frac{r}{s}\right)^{\frac{d}{p}-\frac{d}{2}+1+\nu_0}\frac{1}{1-(\frac{r}{s})^2}  \sum_{k=0}^\infty\frac{\sin \pi b_k}{\pi}  \proj_{k }f(s, \omega)\frac{\dd s}{s}.
	\end{align*}
Using  Lemma \ref{boundedness for CZ operator 1}, we have
\begin{align*}
	\|\widetilde W_{2}^{1, \text{ap}}f(r,\omega)\|_{L^p(\frac{dr}{r})}\lesssim  \bigg\|\sum_{k=0}^\infty \sin \pi b_k \proj_{k }f(r, \omega)\bigg\|_{L^p(\frac{dr}{r})}.
\end{align*}
It is easy to check that the sequence $\{\sin \pi b_k\}$ satisfy the condition \eqref{uniform bound} and \eqref{uniform difference bound} in Theorem \ref{multiplier theorem for spherical harmonic decomposition}, thus
\begin{align}\label{ W 2-approximation operator }
	\|\widetilde W_{2}^{1, \text{ap}}f(r,\omega)\|_{L^p(\frac{dr}{r}\cdot d\omega)}\lesssim  \bigg\|\sum_{k=0}^\infty  \proj_{k }f(r, \omega)\bigg\|_{L^p(\frac{dr}{r}\cdot d\omega)}.
\end{align}

Step 2:\noindent\emph{ Estimate for the error term}. Define the kernels of the error terms
\begin{align*}
\widetilde K_{k,2}^{1, \text{err}}(r,s)&=\widetilde K_{k,2}^{1}(r,s)-\widetilde K_{k,2}^{1, \text{ap}}(r,s)\\
%    &=2\left(\frac{s}{r}\right)^{\frac{d}{2}+1-\frac{d}{p}+\mu_k} \frac{\sin(-\pi b_{k})}{\pi}\frac{1}{1-(\frac{s}{r})^2}\chi_+-2\left(\frac{s}{r}\right)^{\frac{d}{2}+1-\frac{d}{p}+\mu_0} \frac{\sin(-\pi b_{k})}{\pi}\frac{1}{1-(\frac{s}{r})^2}\chi_-\\
%  &+2\left(\frac{r}{s}\right)^{\frac{d}{p}+1-\frac{d}{2}+\nu_k} \frac{\sin(-\pi b_{k})}{\pi}\frac{1}{1-(\frac{r}{s})^2} \chi_+-2\left(\frac{r}{s}\right)^{\frac{d}{p}+1-\frac{d}{2}+\nu_0} \frac{\sin(-\pi b_{k})}{\pi}\frac{1}{1-(\frac{r}{s})^2} \chi_-\\
    &=2\left(\frac{s}{r}\right)^{\frac{d}{2}-\frac{d}{p}+1+\mu_0} \frac{\sin(-\pi b_{k})}{\pi}\frac{1}{1-\left(\frac{s}{r}\right)^2}\left[\left(\frac{s}{r}\right)^{{\mu_k-\mu_0}}-1\right] \chi_+\\
    &\quad +2\left(\frac{r}{s}\right)^{\frac{d}{p}-\frac{d}{2}+1+\nu_0} \frac{\sin(\pi b_{k})}{\pi}\frac{1}{1-(\frac{r}{s})^2}\left[\left(\frac{r}{s}\right)^{{\nu_k-\nu_0}}-1\right] \chi_-,
\end{align*}
and the associated operator
\begin{align*}
   \widetilde W_{2}^{1, \text{err}} f(r, \omega)=\int_0^\infty \sum _{k=0}^\infty \widetilde K_{k,2}^{1, \text{err}}(r,s) \proj_k f(s, \omega)\frac{\dd s}{s}.
\end{align*}
By Lemma \ref{difference estimates for $s<r$} and Lemma \ref{difference estimates for $r<s<2r$},  we have
\begin{align*}
   | \widetilde K_{k,2}^{1, \text{err}}(r,s)|\lesssim 1,
\end{align*}
and for $N=\lfloor \frac{d-1}{2} \rfloor+1$,
\[  \sup_{j}2^{j(N-1)} \sum_{k=2^j}^{2^{j+1}}|\fd^N \widetilde K_{k,2}^{1, \text{err}}(r,s)|\lesssim 1.
\]
That is, $\widetilde K_{k,2}^{1, \text{err}}(r,s)$ satisfies \eqref{uniform bound} and \eqref{uniform difference bound}, hence
\begin{align*}
  \| \widetilde W_{2}^{1, \text{err}} f(r, \omega) \|_{L^p(\dd \omega)} \leq \int_{r/2}^r  \|\sum _{k=0}^\infty \proj_k f(s, \omega)\|_{L^p(\dd \omega)}\frac{\dd s}{s}.
\end{align*}
Furthermore, we have
\begin{align}\label{lp bd for error term}
   \| \widetilde W_{2}^{1, \text{err}} f(r, \omega)\|_{L^p(\frac{dr}{r}\cdot \dd\omega)}  \leq \|f(r, \omega)\|_{L^p(\frac{dr}{r}\cdot \dd\omega)}.
\end{align}
Combing \eqref{ W 2-approximation operator } and \eqref{lp bd for error term}, we get the $L^p$-boundedness of $\widetilde W_{2}^1$.

\vspace{0.2in}
{{\noindent\emph{\bf Estimate for the second term $\widetilde W_{2}^{2} $.}}}
Taking $\myF_k= \frac{\sin (-\pi b_k)}{\pi}$ and $$\myG_k=\left(\frac{s}{r}\right)^{\frac{d}{2}-\frac{d}{p}-1+\mu_k}\chi_++\left(\frac{r}{s}\right)^{\frac{d}{p}-\frac{d}{2}-1+\nu_k}\chi_-$$
in Lemma \ref{Difference estimates for product}, and by the estimates in  Lemma \ref{difference estimates for $s<r$} and Lemma \ref{difference estimates for $r<s<2r$}, we have
\begin{align*}
    | \widetilde K_{k,2}^2 (r,s)|&\lesssim \left|\ln\!\Big({1-\Big(\frac{s}{r}\Big)^2}\Big)\right|\chi_++\left|\ln\!\Big({1-\Big(\frac{r}{s}\Big)^2}\Big)\right|\chi_-,\\
   \sup_j   2^{j(N-1)} \sum_{k=2^j}^{2^{j+1}}|\fd^N \widetilde K_{k,2}^2(r,s)|&\lesssim
      \left|\ln\!\Big({1-\Big(\frac{s}{r}\Big)^2}\Big)\right|\chi_++\left|\ln\!\Big({1-\Big(\frac{r}{s}\Big)^2}\Big)\right|\chi_-.
\end{align*}
By Theorem \ref{multiplier theorem for spherical harmonic decomposition}, we have
\begin{align*}
  \|\widetilde W_{2}^{2}  f(r, \omega) \|_{L^p(\dd \omega)}& \leq \int_{r/2}^r \left|\ln\!\Big({1-\Big(\frac{s}{r}\Big)^2}\Big)\right|\left \|\sum _{k=0}^\infty  \proj_k f(s, \omega)\right\|_{L^p(\dd \omega)}\frac{\dd s}{s}\\
  & \quad +\int_r^{2r}\left|\ln\!\Big({1-\Big(\frac{r}{s}\Big)^2}\Big)\right|\left\|\sum _{k=0}^\infty  \proj_k f(s, \omega)\right\|_{L^p(\dd \omega)}\frac{\dd s}{s}.
\end{align*}
Using Lemma \ref{boundedness for CZ operator 2}, we deduce
\begin{align}\label{W 2 second term}
      \|\widetilde W_{2}^{2}  f(r, \omega) \|_{L^p(\frac{\dd r}{r} \cdot \dd{\omega})} \lesssim  \|f(r, \omega)\|_{L^p(\frac{\dd r}{r} \cdot \dd{\omega})}.
\end{align}

\vspace{0.2in}

\noindent\emph{\bf Estimate for the third term $\widetilde W_{2}^{3}$.} Recall that $\widetilde K_{k, 2}^3$ is given by
\begin{align*}
  \widetilde K_{k, 2}^3 (r,s)
%  =2 \left(\frac{s}{r}\right)^{\frac{d}{2}+1+\mu_k-\frac{d}{p}} \frac{\sin \pi b_k}{\pi}\sum_{n=0}^{\infty}\underbrace{(A_{k,n}^{+}-1-\frac{a}{4(n+1)})}_{\coloneq  E_{k,n}^{+} } \left(\frac{s}{r}\right)^{2n}\chi_+\\
%   & \quad +2 \left(\frac{r}{s}\right)^{\frac{d}{p}+1-\nu_k+\frac{d}{2}} \frac{\sin (-\pi b_k)}{\pi}\sum_{n=0}^{\infty}\underbrace{(A_{k,n}^{-}-1-\frac{a}{4(n+1)})}_{\coloneq  E_{k,n}^{-} } \left(\frac{r}{s}\right)^{2n}\chi_-\\
  & = 2 \left(\frac{s}{r}\right)^{\frac{d}{2}+1+\mu_k-\frac{d}{p}}\frac{\sin (-\pi b_k)}{\pi}\sum_{n=0}^{\infty} E_{k,n}^{+}\left(\frac{s}{r}\right)^{2n}\chi_+\\
   &\quad +2 \left(\frac{r}{s}\right)^{\frac{d}{p}+1-\nu_k+\frac{d}{2}} \frac{\sin \pi b_k}{\pi}\sum_{n=0}^{\infty}E_{k,n}^{-}\left(\frac{r}{s}\right)^{2n} \chi_-. \end{align*}
The following identities are valid:
\begin{align} \label{A equal to E}
    A_{k,n}^{+}=1+\frac{a}{4(n+1)}+E_{k,n}^{+},\
     A_{k,n}^{-}=1-\frac{a}{4(n+1)}+E_{k,n}^{-}.
\end{align}
When we consider $A_{k,n}^{+}$, $A_{k,n}^{-}$, $E_{k,n}^{+}$and  $E_{k,n}^{-}$ as smooth functions of $k$, then by \eqref{A equal to E} for $N\geq 1$, we have
\begin{align*}
      \frac{\dd^N A_{k,n}^{+}}{\dd k^N}=  \frac{\dd^N E_{k,n}^{+}}{\dd k^N},\quad \text{and}
      \quad
      \frac{\dd^N A_{k,n}^{-}}{\dd k^N}=  \frac{\dd^N E_{k,n}^{-}}{\dd k^N}.
\end{align*}
For their derivatives, we have the following estimates.
   \begin{lemma}\label{derivative estimate for the reminder term}
For $A_{k,n}^{+}$, $A_{k,n}^{-}$, $E_{k,n}^{+}$and  $E_{k,n}^{-}$, and $N\geq 1$,  we have
\begin{align}
     \bigg|\frac{\dd^N A_{k,n}^{+}}{\dd k^N}\bigg|= \bigg| \frac{\dd^N E_{k,n}^{+}}{\dd k^N}\bigg| \lesssim \frac{1}{k^N} \frac{1}{n+1}, \label{derivative for A plus}\\
          \bigg|\frac{\dd^N A_{k,n}^{-}}{\dd k^N}\bigg|=\bigg|\frac{\dd^N E_{k,n}^{-}}{\dd k^N}\bigg| \lesssim \frac{1}{k^N} \frac{1}{n+1}  \label{derivative for A minus}.
\end{align}

\end{lemma}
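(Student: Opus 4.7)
The plan is to prove the bounds for $\partial_k^N A_{k,n}^{+}$; the bounds for $A_{k,n}^{-}$ follow by the same argument with obvious sign changes, and the bounds on $E_{k,n}^{\pm}$ are equivalent to those on $A_{k,n}^{\pm}$ for $N\geq 1$ since $A_{k,n}^{\pm}-E_{k,n}^{\pm}$ is $k$-independent. The key device I would use is the exact double-integral representation
\[
\log A_{k,n}^{+} \;=\; -\int_0^{a_k}\!\!\int_0^{b_k}\psi'(n+1+s+t)\,\dd t\,\dd s,
\]
which follows from $\log A_{k,n}^{+}=\log\Gamma(z+a_k)+\log\Gamma(z+b_k)-\log\Gamma(z+a_k+b_k)-\log\Gamma(z)$ (with $z=n+1$), together with the identity $\partial_{a_k}\partial_{b_k}\log A_{k,n}^{+}=-\psi'(z+a_k+b_k)$ and the boundary cancellation $\log A_{k,n}^{+}|_{a_k=0}=\log A_{k,n}^{+}|_{b_k=0}=0$. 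This representation makes the smallness of $\log A_{k,n}^{+}$ manifest, since $b_k=O(1/k)$ provides a uniformly small prefactor.

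Next I would differentiate this representation $N$ times in $k$ via the Leibniz rule. Each $\partial_k$ acts in one of three ways: on an integration limit (producing $a_k'$ or $b_k'$ times a single-variable integral of $\psi'$), on a previously produced factor $\partial_k^{j}a_k$ or $\partial_k^{j}b_k$ (which then shrinks further by Lemma \ref{estimates for $a_k$ and $b_k$}), or inside a polygamma, promoting $\psi^{(m)}$ to $\psi^{(m+1)}$ times $(a_k'+b_k')$ and gaining additional decay. The resulting expression is a finite sum of products of derivatives of $a_k$, $b_k$ multiplied by polygamma differences. Using the asymptotics $|\psi^{(m)}(w)|\lesssim w^{-m}$ for $m\geq 1$ from Subsection~2.2, the elementary bound $\log(1+a_k/z)\leq a_k/z$, the estimates of Lemma \ref{estimates for $a_k$ and $b_k$}, and crucially the identity $a_kb_k=-a/4$ (so the potentially large factor $a_k$ is always paired with a small $b_k$), I would verify inductively on $N$ that $|\partial_k^N \log A_{k,n}^{+}|\lesssim 1/[k^N(n+1)]$ uniformly in $k$ and $n$. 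The base cases $N=1$ and $N=2$ I have checked by hand; the general case follows the same pattern.

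The final step is to pass from $\log A_{k,n}^{+}$ to $A_{k,n}^{+}$ via Fa\`a di Bruno applied to $A_{k,n}^{+}=\exp(\log A_{k,n}^{+})$. Since $A_{k,n}^{+}$ is uniformly bounded, the exponential prefactor is harmless, and each term in the Fa\`a di Bruno expansion is a product $\prod_{i}\partial_k^{j_i}\log A_{k,n}^{+}$ with $\sum j_i=N$ and at least one factor, hence bounded by $\prod_i 1/[k^{j_i}(n+1)]\leq 1/[k^N(n+1)]$. The same strategy handles $A_{k,n}^{-}$ once one replaces $\Gamma(b_k+n+1)/\Gamma(a_k+b_k+n+1)$ by $\Gamma(1-b_k+n)/\Gamma(a_k-b_k+n+1)$ and redoes the boundary computation. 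The main obstacle I anticipate is the combinatorial bookkeeping of all terms produced by iterated differentiation, and verifying the bound holds uniformly in the transitional regime $n+1\sim k$; the decisive structural fact that makes the uniformity work is the $k$-independence of the product $a_k b_k=-a/4$.
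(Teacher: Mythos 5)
Your proposal is correct, and it takes a genuinely different route from the paper while working through the same fundamental object. The paper starts from $\frac{\dd A_{k,n}^+}{\dd k}=A_{k,n}^+G_{n,k}$ with $G_{n,k}=\partial_k\log A_{k,n}^+$ written as a sum of two polygamma \emph{differences} $\fd_b\psi(a_k+n+1)\,a_k'+\fd_a\psi(b_k+n+1)\,b_k'$, then applies the mean-value theorem to each difference and Fa\`a di Bruno to the polygamma factors, tracking the resulting multi-index sums by hand. Your double-integral representation
\[
\log A_{k,n}^{+}=-\int_0^{a_k}\!\!\int_0^{b_k}\psi'(n+1+s+t)\,\dd t\,\dd s
\]
packages precisely the same cancellation (the two boundary conditions $\log A^+\big|_{a_k=0}=\log A^+\big|_{b_k=0}=0$ are the discrete difference structure in integral form), but it makes the smallness of every term structurally transparent: $|b_k|\lesssim 1/k$ provides a small interval of integration, the presence of $a_k\sim k$ inside the argument of $\psi^{(m)}$ provides extra decay when $n+1$ is small, and differentiating the $a_k$-limit produces boundary evaluations at $z+a_k+\cdot$ which are automatically small. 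That observation is exactly what makes the uniformity across the transition $n+1\sim k$ go through, and I verified it does (e.g.\ the term $(a_k')^2\int_0^{b_k}\psi''(n{+}1{+}a_k{+}t)\,\dd t$ that looks at first sight like only $\frac{1}{k(n+1)^2}$ is actually $\lesssim\frac{|b_k|}{(n+1+a_k)^2}\lesssim\frac{1}{k^2(n+1)}$ precisely because the argument contains $a_k$). The passage to $A_{k,n}^\pm$ via Fa\`a di Bruno applied to the exponential and the observation that $A_{k,n}^\pm-E_{k,n}^\pm$ is $k$-independent are both standard and correct.

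Two small comments. First, you slightly overstate the role of the exact identity $a_kb_k=-a/4$: the estimates only need the order-of-magnitude bounds $a_k\lesssim k$, $|b_k|\lesssim 1/k$ from Lemma~\ref{estimates for $a_k$ and $b_k$}; the constancy of the product is a pleasing observation but is not where the uniformity actually comes from — that comes from $a_k\sim k$ sitting inside the polygamma arguments. Second, the inductive step you sketch still requires the same kind of careful Leibniz/Fa\`a di Bruno bookkeeping the paper carries out, so the gain is in conceptual clarity rather than in length; but the integral representation does eliminate the need to invoke the mean-value theorem separately at each differentiation order, which is the main source of clutter in the paper's argument. What your approach buys is a single formula from which all the required decay can be read off term by term; what the paper's buys is a fully explicit multi-index expansion. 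Both are valid.
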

\begin{proof} See Appendix \ref{appendix:proof-of-lemma-derivative estimate for the reminder term}.
\end{proof}
%   From this, we can deduce that
%\begin{align*}
%\frac{\dd^N E_{k}^{+}(r,s)}{\dd k^N} \lesssim  \left|\ln(1-(\frac{s}{r})^2)\right|\frac{1}{k^N}.
%\end{align*}
%Next, we {\color{blue}expect} that the following is true
%\begin{align}
%    \frac{\dd^N E_{k,n}^{\pm}}{\dd k^N} \lesssim \frac{1}{k^N}\frac{1}{(n+1)^2}.
%\end{align}
%If this is true, we can prove that $ E_{k}^{\pm} (r,s)$ satisfy \eqref{uniform bound} and \eqref{uniform difference bound}. But this seems impossible to get.
%First, we have the following uniform bound
%\begin{align}
%   | \sin \pi b_k|\left |E_{k,n}^{\pm} \right| <C\frac{1}{(n+1)^2}.
%\end{align}
%{\color{blue}In fact, we can only prove that
%\begin{align}
%     \frac{\dd^N E_{k,n}^{\pm}}{\dd k^N} \lesssim \frac{1}{k^N} \frac{1}{n+1}.
%\end{align}}
Using Lemma \ref{derivative estimate for the reminder term},  \eqref{infinite sum} and \eqref{definition of E}, we have
\begin{align}
   \left | \frac{\dd^N E_{k}^+(r,s)}{\dd k^N} \right|&\lesssim \frac{1}{k^N} \Big|\ln\left(1-\big(\frac{s}{r}\big)^2\right)\Big|\chi_+, \\
  \left  |\frac{\dd^N E_{k}^-(r,s)}{\dd k^N} \right| &\lesssim \frac{1}{k^N} \Big|\ln\left(1-\big(\frac{r}{s}\big)^2 \right)\Big|\chi_-.
\end{align}

Let
    \begin{align*}
        { \myE_k^{+}}(r,s)&= \frac{2\sin(-\pi b_{k})}{\pi}\left(\frac{s}{r}\right)^{\frac{d}{2}-\frac{p}{d}+1+\mu_k} E_k^+(r,s )\chi_+,\\
            \myE_k^{-}(r,s)&=  \frac{2\sin(\pi b_{k})}{\pi}\left(\frac{r}{s}\right)^{\frac{d}{p}-\frac{p}{2}+1+\nu_k} E_k^-(r,s )\chi_-.
   \end{align*}
          Then, using Lemma \ref{Difference estimates for product} and Lemma \ref{difference estimates for $s<r$} again yields
          \begin{align*}
                 |\myE_k^{+}(r,s) |\lesssim \Big|\ln\!\Big({1-\Big(\frac{s}{r}\Big)^2}\Big)\Big|\chi_+, \\
               \sup_{j }2^{j(N-1)} \sum_{k=2^j}^{2^{j+1}} |\fd ^N   \myE_k^{+}(r,s) |\lesssim      \Big|\ln\!\Big({1-\Big(\frac{s}{r}\Big)^2}\Big)\Big|\chi_+,\\
                |\myE_k^{-}(r,s) |\lesssim \Big|\ln\!\Big({1-\Big(\frac{r}{s}\Big)^2}\Big)\Big|\chi_-, \\
               \sup_{j }2^{j(N-1)} \sum_{k=2^j}^{2^{j+1}} |\fd ^N   \myE_k^{-}(r,s) |\lesssim     \Big| \ln\!\Big({1-\Big(\frac{r}{s}\Big)^2}\Big)\Big|\chi_-.
          \end{align*}
Hence, by Theorem \ref{multiplier theorem for spherical harmonic decomposition}, we have
\begin{align*}
  \|\widetilde W_{2}^{3} f(r, \omega) \|_{L^p(\dd \omega)} \leq& \int_{r/2}^r \Big|\ln\!\Big({1-\Big(\frac{s}{r}\Big)^2}\Big)\Big| \Big\|\sum _{k=0}^\infty  \proj_k f(s, \omega)\Big\|_{L^p(\dd \omega)}\frac{\dd s}{s}\\
  &+\int_r^{2r}\Big|\ln\!\Big({1-\Big(\frac{r}{s}\Big)^2}\Big)\Big|\Big\|\sum _{k=0}^\infty  \proj_k f(s, \omega)\Big\|_{L^p(\dd \omega)}\frac{\dd s}{s}.
\end{align*}
Furthermore, Lemma \ref{boundedness for CZ operator 2} gives
\begin{align}\label{W 2 third term}
      \| \widetilde W_{2}^{3} f(r, \omega) \|_{L^p(\frac{\dd r}{r} \cdot \dd{\omega})} \lesssim  \|f(r, \omega)\|_{L^p(\frac{\dd r}{r} \cdot \dd{\omega})}.
\end{align}
Combining \eqref{ W 2-approximation operator }, \eqref{lp bd for error term}, \eqref{W 2 second term} and \eqref{W 2 third term}, we have proved that
\begin{align}\label{W 2}
   \| \widetilde W_2 f(r)\|_{L^p(\frac{\dd r}{r} \cdot \dd{\omega})} \lesssim  \|f(r, \omega)\|_{L^p(\frac{\dd r}{r} \cdot \dd{\omega})}.
\end{align}
This inequality together with
\eqref{W 1} and \eqref{W 3} implies that $\tilde W$ is bounded in $L^p(\frac{\dd r}{r} \cdot \dd{\omega})$. Therefore,  we conclude the proof of Theorem \ref{thm:main}.\hfill\qedsymbol

\section{Proof of the $W^{s,p}$-boundedness}
\label{s:proof-of-wsp-bddness}%
In this section, we will prove the $W^{s,p}$-%
boundedness of the stationary wave operators. Recall the operators
$\mathscr B_{\mu_k}$ and $\mathscr H_{\nu_k}$ introduced in \eqref{Bessel transform} and \eqref{Hankel transform} of Subsection \ref{ss:bessel-hankel}. On each subspace of spherical harmonics, we  define the fractional powers $(\la)^{s/2}$ and $(-\Delta)^{s/2}$ with $s\in\R$ by
\begin{align*}
   (\la)^{s/2}|_{\mathscr H_{k,l}}\coloneq  \mathscr H_{\nu_k}\lambda^{s}\mathscr H_{\nu_k}, \quad
   (-\Delta)^{s/2}|_{\mathscr H_{k,l}}\coloneq  \mathscr B_{\mu_k}\lambda^{s}\mathscr B_{\mu_k},
\end{align*}
and introduce the following Riesz-type operators for $
\alpha,\beta$ satisfying $-d<\alpha<2+2\nu_0, \alpha \neq 0$ and $-2\nu_0-2<\beta<d, \beta \neq 0$:
\begin{align}
  \mathcal R^{\alpha}  f(r, \omega)&\coloneq  (-\Delta)^{\alpha/2}(\la)^{-\alpha/2} f=\sum_{k=0}^\infty\mathscr B_{\mu_k}\lambda^{\alpha}\mathscr B_{\mu_k} \mathscr H_{\nu_k}\lambda^{-\alpha}\mathscr H_{\nu_k}\proj_k f(r, \omega),\label{df of RT}\\
    \mathcal R^{-\beta}  f(r, \omega)&\coloneq  (\la)^{\beta/2}(-\Delta)^{-\beta/2}f=\sum_{k=0}^\infty\mathscr H_{\nu_k}\lambda^{\beta}\mathscr H_{\nu_k}\mathscr B_{\mu_k}\lambda^{-\beta}\mathscr B_{\mu_k} \proj_k f(r, \omega).\label{df of IRT}
\end{align}
Here we assume that $\alpha \neq 0$ and $\beta \neq 0$, since the operator for $\alpha=0$ or $\beta=0$ is identity operator and the $L^p$-boundedness is trivial.  %todo: consider not writing this remark in parentheses
Due to the calculation \eqref{reduction}, to prove Theorem \ref{thm:main 2}, we only need to prove the $L^p$-boundedness of $\mathcal R^\alpha$ and $\mathcal R^{-\beta}$.

 First, we derive the kernels of $\mathcal R^{\alpha}$ and $\mathcal R^{-\beta}$ using the Mellin transform. These operators are Mellin multipliers on each subspace with the multipliers given by a ratio of products of Gamma functions. The details of this calculation are presented below.

Using the formula for the Mellin transform of Bessel functions, it is easy to verify that
\begin{align*}
   ( \mathcal M \mathscr H_{\nu_k} f)(z)&= 2^{z-{\lambda_0}-1}\frac{\Gamma(\frac{z-{\lambda_0} +\nu_k)}{2})}{\Gamma(1-\frac{z-{\lambda_0}-\nu_k}{2})}(\mathcal M f)(d-z),\\
    ( \mathcal M \mathscr B_{\mu_k} f)(z)&= 2^{z-{\lambda_0}-1}\frac{\Gamma(\frac{z-{\lambda_0} +\mu_k)}{2})}{\Gamma(1-\frac{z-{\lambda_0}-\mu_k}{2})}(\mathcal M f)(d-z),
\end{align*}
where
\begin{align} \label{lambda zero}
    \lambda_0=\tfrac{d-2}{2}.
\end{align}
Hence, on the $k$th spherical harmonic subspace,
\begin{align}\label{Mellin convolution}
   \mathcal M   \mathcal R^\alpha  f(z) = \mathcal H_{4,4}^{2,2} (k, \alpha ; z) \mathcal Mf(z),
\end{align}
with
\begin{align*}
  & \mathcal H_{4,4}^{2,2} (k, \alpha ; z)\\
  =& \mathcal H_{4, 4}^{2, 2}\left[
\begin{array}{lllll}
    (-\frac{\nu_k+{\lambda_0}}{2}, \frac{1}{2}),\hspace{-1em}
    &(-\frac{\mu_k+{\lambda_0}+\alpha}{2}, \frac{1}{2}), \hspace{-1em}
    &(\frac{\nu_k-{\lambda_0}}{2}, \frac{1}{2}),
    &(\frac{\mu_k-{\lambda_0}-\alpha}{2}, \frac{1}{2})\\
    (\frac{\mu_k-{\lambda_0}}{2}, \frac{1}{2}),
    &(\frac{\nu_k-{\lambda_0}-\alpha}{2}, \frac{1}{2}),
    &(-\frac{\mu_k+{\lambda_0}}{2}, \frac{1}{2}),\hspace{-1em}
    &(-\frac{\nu_k+{\lambda_0}+\alpha}{2}, \frac{1}{2})
\end{array}
\Big | k, \alpha ; z \right]\\
  =&\frac{\Gamma(\frac{z}{2}+\frac{\mu_k-{\lambda_0}}{2})\Gamma(\frac{z}{2}+\frac{\nu_k-{\lambda_0}-\alpha}{2})\Gamma(1-\frac{z}{2}
  +\frac{\nu_k+{\lambda_0}}{2})\Gamma(1-\frac{z}{2}+\frac{\mu_k+{\lambda_0}+\alpha}{2})}{\Gamma(\frac{z}{2}+\frac{\nu_k-{\lambda_0}}{2})
  \Gamma(\frac{z}{2}+\frac{\mu_k-{\lambda_0}-\alpha}{2})\Gamma(1-\frac{z}{2}+\frac{\mu_k+{\lambda_0}}{2})\Gamma(1-\frac{z}{2}
  +\frac{\nu_k+{\lambda_0}+\alpha}{2})} ,
\end{align*}
where $\mathcal H_{4, 4}^{2, 2}$ is as defined in \eqref{def: Hmnpq(s)}, with particular choices of the parameters $b_{k,j}$, $\beta_{k,j}$, $a_{k,i}$,and $\alpha_{k,i}$. We have written $k,\alpha;z$ instead of $z$ to emphasize the dependence on $k$ and $
\alpha$. To guarantee that the poles of $\Gamma (b_{k,j}+\beta_{k,j} z)$ and $\Gamma (1-a_{k,i}-\alpha_{k,i} z)$ do not coincide,  we need that
%todo: think about b_k and H^22_44 notation. Explain where b_{k,j} comes from. Note the conflict between b_{k,j} and b_{k,10} (not ten, but one-zero)
\begin{align*}
   \max\{ b_{k,10}, b_{k, 20}\} <\min \{a_{k, 10}, a_{k, 20}\},
\end{align*}
which is equivalent to
\begin{align}\label{condition to define H-function}
     \max\{ \lambda_0-\mu_k, \lambda_0+\alpha -\nu_k \} < \min \{2+\nu_k+{\lambda_0}, 2+\mu_k+{\lambda_0}+\alpha\}.
\end{align}
To make sure \eqref{condition to define H-function} are valid for all $k \geq 0$, we naturally assume that
\begin{align}\label{restriction on $a$}
   -d< \alpha<2+2\nu_0.
\end{align}
Therefore, taking the inverse Mellin transform on both sides of \eqref{Mellin convolution}, we obtain
\begin{align*}
    \mathcal R^\alpha f (r, \omega)&=\sum_{k=0}^\infty \frac{1}{2\pi i} \int_{C} r^{-z}  \mathcal H_{4,4}^{2,2} (k, \alpha ; z)\mathcal M {\proj_k f}(z, \omega)\dd z\\
    &=\sum_{k=0}^\infty \int_0^\infty \underbrace{ \frac{1}{2\pi i} \int_{C} r^{-z}  \mathcal H_{4,4}^{2,2} (k, \alpha ; z)s^{z} \dd z }_{\coloneq  K_k^{\alpha}(r, s)}\proj_k f \frac{\dd s}{s}\\
    &=\sum_{k=0}^\infty \int_0^\infty K_k^\alpha (r,s)\proj_k f(s, \omega)\frac{\dd s}{s},
\end{align*}
where
\begin{align}\label{Kernel function of alpha}
    K_k^\alpha (r,s)&= \frac{1}{2\pi i} \int_{C} \left(\frac{r}{s}\right)^{-z} \mathcal H_{4,4}^{2,2} (k, \alpha ; z)\dd z\\\nonumber
    &=H_{4, 4}^{2, 2}\left[k, \alpha;\,  \frac{r}{s}
\, \middle | \hspace{-1.1em}
\begin{array}{lllll}
    &(-\frac{\nu_k+{\lambda_0}}{2}, \frac{1}{2}),\hspace{-1em}
    &(-\frac{\mu_k+{\lambda_0}+\alpha}{2}, \frac{1}{2}), \hspace{-1em}
    &(\frac{\nu_k-{\lambda_0}}{2}, \frac{1}{2}),
    &(\frac{\mu_k-{\lambda_0}-\alpha}{2}, \frac{1}{2})\\
    &(\frac{\mu_k-{\lambda_0}}{2}, \frac{1}{2}),
    &(\frac{\nu_k-{\lambda_0}-\alpha}{2}, \frac{1}{2}),
    &(-\frac{\mu_k+{\lambda_0}}{2}, \frac{1}{2}),\hspace{-1em}
    &(-\frac{\nu_k+{\lambda_0}+\alpha}{2}, \frac{1}{2})
\end{array}
\right].\end{align}
Hence, we have the following lemma:
\begin{lemma} Let $-d<\alpha<2+2\nu_0$ and $\alpha \neq 0$. Then the operator $\mathcal R^{\alpha}$ defined by \eqref{df of RT} can be written in the following form:
\begin{align*}
     \mathcal R^\alpha f (r, \omega)=\sum_{k=0}^\infty \int_0^\infty K_k^\alpha (r,s)\proj_k f(s, \omega)\frac{\dd s}{s},
\end{align*}
with $K_k^\alpha (r,s)$  given by \eqref{Kernel function of alpha}.  In addition, the parameters defined in \eqref{parameters a},  \eqref{parameters tau}, \eqref{parameters varrpho},  \eqref{parameters delta} are
\begin{align}
    a_k^*=0, \  \Lambda_k=0, \ \delta_k=1, \ \varrho_k=0.
\end{align}
\end{lemma}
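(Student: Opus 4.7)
The statement is essentially a bookkeeping consequence of the computations sketched in the paragraphs preceding it, together with a direct check of the four parameters. My plan has two movements: (i) justify rigorously the Mellin-multiplier description of $\mathcal{R}^\alpha$ on each spherical harmonic subspace and extract the $H$-function kernel; (ii) compute $a_k^*,\Lambda_k,\delta_k,\varrho_k$ from the definitions \eqref{parameters a}--\eqref{parameters delta}.

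For (i), I would first verify the Mellin transform identities
\begin{align*}
(\mathcal M \mathscr H_{\nu_k} f)(z) &= 2^{z-\lambda_0-1}\,\frac{\Gamma\!\left(\tfrac{z-\lambda_0+\nu_k}{2}\right)}{\Gamma\!\left(1-\tfrac{z-\lambda_0-\nu_k}{2}\right)}(\mathcal M f)(d-z),\\
(\mathcal M \mathscr B_{\mu_k} f)(z) &= 2^{z-\lambda_0-1}\,\frac{\Gamma\!\left(\tfrac{z-\lambda_0+\mu_k}{2}\right)}{\Gamma\!\left(1-\tfrac{z-\lambda_0-\mu_k}{2}\right)}(\mathcal M f)(d-z),
\end{align*}
for $f\in C_c^\infty(\mathbb R_+)$, which are a standard consequence of the Weber--Schafheitlin integral $\int_0^\infty J_\nu(r)r^{z-1}\,dr=2^{z-1}\Gamma(\tfrac{z+\nu}{2})/\Gamma(1-\tfrac{z-\nu}{2})$ valid on a suitable vertical strip, combined with the substitution induced by the factor $(\lambda r)^{-\lambda_0}$ inside \eqref{Bessel transform}, \eqref{Hankel transform}. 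Iterating the identities four times along the factorization $\mathcal R^\alpha|_{\mathscr H_{k,l}}=\mathscr B_{\mu_k}\lambda^{\alpha}\mathscr B_{\mu_k}\mathscr H_{\nu_k}\lambda^{-\alpha}\mathscr H_{\nu_k}$, the reflections $z\mapsto d-z$ occur an even number of times and the $\lambda^{\pm\alpha}$ shifts appear as $\pm\alpha$ shifts of the Gamma arguments. Collecting the four Gamma-ratios and simplifying produces exactly $\mathcal H_{4,4}^{2,2}(k,\alpha;z)$. I would check that the condition \eqref{condition to define H-function} (equivalently $-d<\alpha<2+2\nu_0$) is precisely what is needed so that the poles of the numerator Gammas can be separated from those of the denominator Gammas by a common vertical contour $C$, which in turn makes the $H$-function well defined.

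Once the Mellin-multiplier representation $\mathcal M\bigl(\mathcal R^\alpha \proj_k f\bigr)(z)=\mathcal H_{4,4}^{2,2}(k,\alpha;z)\,\mathcal M(\proj_k f)(z)$ is established, applying the inverse Mellin transform and recognising the convolution-in-$r/s$ structure gives
\[
\mathcal R^\alpha \proj_k f(r,\omega)=\int_0^\infty K_k^\alpha(r,s)\,\proj_k f(s,\omega)\,\frac{\mathrm d s}{s},
\]
with $K_k^\alpha$ as in \eqref{Kernel function of alpha}, after which one sums in $k$. The only delicate point here is to justify the interchange of the Mellin contour integral with the convolution on test functions $f\in C_c^\infty(\mathbb R^d\setminus\{0\})$; this is routine by Fubini since the $H$-function is absolutely convergent on a nonempty vertical strip.

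For (ii), with the identifications $\alpha_i=\beta_j=\tfrac12$ for all $i,j\in\{1,2,3,4\}$, $m=n=2$, $p=q=4$, the arithmetic is immediate. For $a^*_k$ and $\Lambda_k$:
\[
a^*_k=\tfrac12+\tfrac12-\tfrac12-\tfrac12+\tfrac12+\tfrac12-\tfrac12-\tfrac12=0,\qquad \Lambda_k=4\cdot\tfrac12-4\cdot\tfrac12=0,
\]
and for $\delta_k$: $\delta_k=(\tfrac12)^{-4\cdot\tfrac12}(\tfrac12)^{4\cdot\tfrac12}=1$. Finally, adding the four $b_j$ parameters gives $\sum b_j=\tfrac12[(\mu_k-\lambda_0)+(\nu_k-\lambda_0-\alpha)-(\mu_k+\lambda_0)-(\nu_k+\lambda_0+\alpha)]=-2\lambda_0-\alpha$, and the same sum of the $a_i$ yields $-2\lambda_0-\alpha$, so $\varrho_k=\sum b_j-\sum a_i+(p-q)/2=0$. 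The main obstacle is merely the Mellin-transform bookkeeping of step (i) --- making sure the four shifts $z\mapsto d-z$ and the two $\lambda^{\pm\alpha}$ translations combine into the precise Gamma-ratio above and that the contour in \eqref{Kernel function of alpha} genuinely exists for the full range $-d<\alpha<2+2\nu_0$; the parameter computation in step (ii) is then purely mechanical.
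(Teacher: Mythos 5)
Your proposal is correct and follows essentially the same route as the paper: the authors also derive the Mellin-multiplier representation from the same Gamma-ratio identities for $\mathscr B_{\mu_k}$ and $\mathscr H_{\nu_k}$, obtain \eqref{condition to define H-function} as the contour-separation condition, and read off the parameters by direct substitution of $\alpha_i=\beta_j=\tfrac12$, $m=n=2$, $p=q=4$. The only cosmetic quibble is that the Mellin transform of a \emph{single} Bessel function you invoke is not usually called the Weber--Schafheitlin integral (that name is reserved for the two-Bessel-function version), but the formula and its use are correct.
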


For the inverse operators $\mathcal R^{-\beta}$, we only need to exchange the positions of $\nu_k$ and $\mu_k$. We record this in the following lemma:

\begin{lemma} Let $-2\nu_0-2<\beta<d$ and $\beta \neq 0$.  Then the operator $\mathcal R^{-\beta}$ is given by
\begin{align*}
     \mathcal R^{-\beta} f (r, \omega)=\sum_{k=0}^\infty \int_0^\infty K_k^{-\beta} (r,s)\proj_k f(s, \omega)\frac{\dd s}{s},
\end{align*}
where
\begin{align}\label{Kernel function}
    &K_k^{-\beta} (r,s) \\\nonumber
    =&H_{4, 4}^{2, 2}\left[k, \beta;\,  \frac{r}{s}
\, \middle | \hspace{-1.1em}
\begin{array}{lllll}
    &(-\frac{\mu_k+{\lambda_0}}{2}, \frac{1}{2}), \hspace{-1em}
    &(-\frac{\nu_k+{\lambda_0}+\beta}{2}, \frac{1}{2}), \hspace{-1em}
    &(\frac{\mu_k-{\lambda_0}}{2}, \frac{1}{2}),
    &(\frac{\nu_k-{\lambda_0}-\beta}{2}, \frac{1}{2})\\
    &(\frac{\nu_k-{\lambda_0}}{2}, \frac{1}{2}),
    &(\frac{\mu_k-{\lambda_0}-\beta}{2}, \frac{1}{2}),
    &(-\frac{\nu_k+{\lambda_0}}{2}, \frac{1}{2}),\hspace{-1em}
    &(-\frac{\mu_k+{\lambda_0}+\beta}{2}, \frac{1}{2})
\end{array}
\right].\end{align}
In addition, then
the parameters defined in \eqref{parameters a},  \eqref{parameters tau}, \eqref{parameters varrpho},  \eqref{parameters delta} are
\begin{align}
    a_k^*=0, \  \Lambda_k=0, \ \delta_k=1, \ \varrho_k=0.
\end{align}

\end{lemma}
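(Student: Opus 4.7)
The strategy is to apply the Mellin transform to the composition $\mathcal{R}^{-\beta}|_{\mathscr{H}_{k,l}} = \mathscr{H}_{\nu_k}\lambda^\beta\mathscr{H}_{\nu_k}\mathscr{B}_{\mu_k}\lambda^{-\beta}\mathscr{B}_{\mu_k}$ factor by factor, mirroring the derivation already carried out above for $\mathcal{R}^\alpha$, with the roles of $\mu_k$ and $\nu_k$ interchanged (since the Bessel and Hankel transforms now appear in the opposite order) and with $\alpha$ replaced by $\beta$.

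First, using the Mellin transform identities for $\mathscr{H}_{\nu_k}$ and $\mathscr{B}_{\mu_k}$ recorded above, together with the fact that multiplication by $\lambda^\beta$ shifts the Mellin variable by $\beta$, I will compose the four Mellin multipliers to obtain
\[
\mathcal{M}(\mathcal{R}^{-\beta} f)(z) = \mathcal{H}_{4,4}^{2,2}(k,-\beta;z)\,\mathcal{M}f(z),
\]
where $\mathcal{H}_{4,4}^{2,2}(k,-\beta;z)$ is exactly the Mellin symbol read off from the parameter list in \eqref{Kernel function}, namely the one obtained from $\mathcal{H}_{4,4}^{2,2}(k,\alpha;z)$ by the substitutions $\mu_k \leftrightarrow \nu_k$ and $\alpha \mapsto \beta$. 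The computation is purely formal bookkeeping of Gamma factors, and the two applications of each transform produce a product of four Gamma ratios — two in the numerator (coming from the left-most $\mathscr{H}_{\nu_k}$ and from $\mathscr{B}_{\mu_k}\lambda^{-\beta}$) and two in the denominator.

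Before inverting the Mellin transform, I must verify that an admissible contour $\mathcal{C}$ exists, i.e.\ that the poles of the $\Gamma(b_{k,j}+\beta_{k,j}z)$ factors lie strictly to the left of the poles of the $\Gamma(1-a_{k,i}-\alpha_{k,i}z)$ factors for every $k \geq 0$. With $\beta_{k,j} = \alpha_{k,i} = 1/2$, this reduces to
\[
\max\{\lambda_0 - \nu_k,\ \lambda_0 + \beta - \mu_k\} < \min\{2 + \mu_k + \lambda_0,\ 2 + \nu_k + \lambda_0 + \beta\}.
\]
Since both $\mu_k$ and $\nu_k$ are nondecreasing in $k$, the worst case is $k=0$, where $\mu_0 = \lambda_0$ and $\nu_0 = \sqrt{\lambda_0^2 + a}$; the resulting inequality is then equivalent precisely to the hypothesis $-2\nu_0 - 2 < \beta < d$. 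This is the only genuinely delicate step in the proof. Once it is in hand, taking the inverse Mellin transform and recognizing the result as a Fox $H$-function yields the advertised kernel formula.

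Finally, the parameters $a^*_k, \Lambda_k, \delta_k, \varrho_k$ are computed directly from \eqref{parameters a}--\eqref{parameters delta}. Because all $\alpha_{k,i}, \beta_{k,j}$ equal $1/2$, one gets immediately $a^*_k = 2(\tfrac12) - 2(\tfrac12) + 2(\tfrac12) - 2(\tfrac12) = 0$, $\Lambda_k = 4(\tfrac12) - 4(\tfrac12) = 0$ and $\delta_k = (\tfrac12)^{-2}(\tfrac12)^{2} = 1$; a short computation gives $\sum_j b_{k,j} = -2\lambda_0 - \beta = \sum_i a_{k,i}$, so $\varrho_k = 0$. This completes the proposal: beyond the contour check, the entire argument is a transcription of the $\mathcal{R}^\alpha$ derivation with $\mu_k$ and $\nu_k$ swapped.
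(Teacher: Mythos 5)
Your proposal is correct and follows essentially the same route as the paper: the paper itself proves the $\mathcal R^\alpha$ case in full and then simply remarks that for $\mathcal R^{-\beta}$ ``we only need to exchange the positions of $\nu_k$ and $\mu_k$,'' which is exactly the bookkeeping you carry out (Mellin transform of each factor, contour-separation check reducing to $k=0$ and yielding $-2\nu_0-2<\beta<d$, and the parameter computation giving $a_k^*=\Lambda_k=\varrho_k=0$, $\delta_k=1$). The only cosmetic discrepancy is the label $(k,-\beta;z)$ versus the paper's $(k,\beta;\cdot)$ inside the $H$-function argument list, which does not affect the argument.
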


\subsection{The power series representation of the $K_k^\alpha(r,s)$ when $r\neq s$}
\subsection*{Case 1: When $\alpha$ is not an even integer.} Using  Lemma \ref{series expansion 1} and Lemma \ref{series expansion 2} and the formula
$$\Gamma(z) \Gamma(1-z)=\frac{\pi}{\sin(\pi z)},$$
 the kernel functions $K_k^\alpha(r,s)$ have the following power series expansions,
\begin{align} \label{power series for R alpha}
& H_{4, 4}^{2, 2}\Big(k, \alpha; \frac{r}{s}\Big)
 \\\nonumber =&
   \begin{cases}
    \sum \limits_{n=0}^\infty  h_{k, 1n}^\alpha \left(\frac{s}{r}\right)^{\nu_k+{\lambda_0}+2+2n}+\sum\limits_{n=0}^\infty  h_{k, 2 n} ^\alpha\left(\frac{s}{r}\right)^{\mu_k+{\lambda_0}+\alpha+2+2n}, & 0<\frac{s}{r}<1;\\
    \sum \limits_{n=0}^\infty  h_{k, 1n}^{\alpha*}\left(\frac{r}{s}\right)^{\mu_k-{\lambda_0}+2n}+\sum\limits_{n=0}^\infty h_{k,2 n}^{\alpha* }\left(\frac{r}{s}\right)^{\nu_k-{\lambda_0}-\alpha+2n}, & 0<\frac{r}{s}<1,
   \end{cases}
\end{align}
with the coefficients satisfying
\begin{align}
    h_{k, 1n}^\alpha =h_{k,2 n}^{\alpha* }, \quad h_{k, 2 n} ^\alpha= h_{k, 1n}^{\alpha*}.
\end{align}
We can compute the coefficients
\begin{align*}
    h_{k, 1n}^\alpha&=\lim_{z\to a_{k,1n}^{\alpha}} [-(z-a_{k,1n})\mathcal{H}_{4, 4}^{2, 2}(k, \alpha; z)]\\
%   & =\frac{2 (-1)^n\sin(\pi(b_3+(1-a_1+n))\sin(\pi(b_4+(1-a_1+n))}{\pi  \sin(\pi(a_2+(1-a_1+n)))}\frac{\prod \limits_{j=1}^4 \Gamma(b_j+(1-a_1+n))}{\prod \limits_{j=2}^4\Gamma(a_j+(1-a_1+n) \Gamma(n+1)}\\
   &=\frac{2 \sin( \pi \alpha/2) \sin(\pi (\nu_k-\mu_k)/2)}{\pi \sin(\pi (\nu_k-\mu_k-\alpha)/2)}\frac{\prod \limits_{j=1}^4 \Gamma(b_{k,i}+(1-a_{k,1}+n))}{\prod_{j=2}^4\Gamma(a_{k,j}+(1-a_{k,1}+n)) \Gamma(n+1)}\\
   &\coloneq C_k^\alpha A_{k,1n}^{\alpha},
\end{align*}
where
\begin{align}
   C_k^\alpha &= \frac{2 \sin( \pi \alpha/2) \sin( \pi (\mu_k-\nu_k)/2)}{\pi \sin( \pi (\mu_k-\nu_k+\alpha)/2)},\\
 A_{k,1n}^{\alpha}&=  \frac{\Gamma(\frac{\mu_k+\nu_k}{2}+n+1)\Gamma(\nu_k-\frac{\alpha}{2}+n+1)\Gamma(\frac{\nu_k-\mu_k}{2}+n+1)\Gamma(-\frac{\alpha}{2}+n+1)}
 {\Gamma(\frac{\nu_k-\mu_k-\alpha}{2}+n+1)\Gamma(\nu_k+n+1)\Gamma(\frac{\nu_k+\mu_k-\alpha}{2}+n+1)\Gamma(n+1)};
\end{align}
Similarly,
\begin{align*}
    h_{k,2 n}^{\alpha }&=\lim_{z\to a_{k, 2n}^\alpha} [-(z-a_{k, 2n}^\alpha)\mathcal{H}_{4, 4}^{2, 2}(k, \alpha; z)]\\
%   & =\frac{2 (-1)^n\sin(\pi(b_3+(1-a_2+n))\sin(\pi(b_4+(1-a_2+n))}{\pi  \sin(\pi(a_1+(1-a_2+n))}\frac{\prod \limits_{j=1}^4 \Gamma(b_j+(1-a_2+n))}{\prod \limits_{j=1, 3, 4}\Gamma(a_j+(1-a_2+n) \Gamma(n+1)}\\
   &=\frac{-2 \sin( \pi \alpha/2) \sin( \pi (\mu_k-\nu_k)/2)}{\pi \sin( \pi (\mu_k-\nu_k+\alpha)/2)}\frac{\prod \limits_{j=1}^4 \Gamma(b_{k,i}+(1-a_{k,2}+n))}{\prod \limits_{j=1, 3,4 }\Gamma(a_{k,j}+(1-a_{k,2}+n) \Gamma(n+1)}\\
   &=-C_k^\alpha A_{k,2n}^\alpha,
\end{align*}
with
\begin{align}
 A_{k,2n}^\alpha=  \frac{\Gamma( \mu_k+\frac{\alpha}{2}+n+1)\Gamma(\frac{\nu_k+\mu_k}{2}+n+1)\Gamma(\frac{\alpha}{2}+n+1)\Gamma(\frac{\mu_k-\nu_k}{2}+n+1)}
 {\Gamma(\frac{\mu_k-\nu_k+\alpha}{2}+n+1)\Gamma(\frac{\nu_k+\mu_k+\alpha}{2}+n+1)\Gamma(\mu_k+n+1)\Gamma(n+1)}.
\end{align}
For the coefficients $ h_{k, 1n}^{\alpha*}, h_{k,2 n}^{\alpha*}$, a direct computation gives that
    \begin{align*}
    h_{k, 1n}^{\alpha*}&=\lim_{z\to b_{k,1n}^\alpha} [(z-b_{k, 1n}^\alpha)\mathcal{H}_{4, 4}^{2, 2}(k, \alpha; z)]\\
%   & =\frac{2 (-1)^n\sin(\pi(a_3-b_1-n))\sin(\pi(a_4-b_1-n))}{\pi  \sin(\pi(b_2-b_1-n))}\frac{\prod \limits_{j=1}^4 \Gamma(1-a_j+b_1+n)}{\prod \limits_{j=2}^4\Gamma(1-b_j+b_1+n) \Gamma(n+1)}\\
   &=\frac{-2 \sin( \pi \alpha/2) \sin(\pi (\nu_k-\mu_k)/2)}{\pi \sin(\pi (\nu_k-\mu_k-\alpha)/2)}\frac{\prod \limits_{j=1}^4 \Gamma(b_{k,1}+(1-a_{k,j}+n))}{\prod \limits_{j=2}^4\Gamma(1-b_{k,j}+b_1+n) \Gamma(n+1)}\\
   &=\tfrac{-2 \sin( \pi \alpha/2) \sin(\pi (\nu_k-\mu_k)/2)}{\pi \sin(\pi (\nu_k-\mu_k-\alpha)/2)}\tfrac{\Gamma(\frac{\nu_k+\mu_k}{2}+n+1)\Gamma( \mu_k+\frac{\alpha}{2}+n+1)\Gamma(\frac{\mu_k-\nu_k}{2}+n+1)\Gamma(\frac{\alpha}{2}+n+1)}{\Gamma(\frac{\mu_k-\nu_k+\alpha}{2}+n+1)
   \Gamma(\mu_k+n+1)\Gamma(\frac{\nu_k+\mu+\alpha}{2}+n+1)\Gamma(n+1)}\\
   &=-C_{k}^\alpha A_{k, 2n}^\alpha \\
   &=h_{k,2 n}^{\alpha };
\end{align*}
and
 \begin{align*}
    h_{k,2 n}^{\alpha*}&=\lim_{z\to b_{k, 2n}^\alpha} [(z-b_{k, 2n}^\alpha)\mathcal{H}_{4, 4}^{2, 2}(k, \alpha; z)]\\
%   & =\frac{2 (-1)^n\sin(\pi(a_3-b_2-n))\sin(\pi(a_4-b_2-n))}{\pi  \sin(\pi(b_1-b_2-n))}\frac{\prod \limits_{j=1}^4 \Gamma(1-a_j+b_2+n)}{\prod \limits_{j=1,3,4}^4\Gamma(1-b_j+b_2+n) \Gamma(n+1)}\\
   &=\frac{2 \sin( \pi \alpha/2) \sin( \pi (\mu_k-\nu_k)/2)}{\pi \sin( \pi (\mu_k-\nu_k+\alpha)/2)}\frac{\prod \limits_{j=1}^4 \Gamma(b_{k,2}+(1-a_{k,j}+n))}{\prod \limits_{j=1,3, 4}\Gamma(b_{k,2}+(1-b_{k,j}+n) \Gamma(n+1)}\\
   &=\tfrac{2 \sin( \pi \alpha/2) \sin( \pi (\mu_k-\nu_k)/2)}{\pi \sin( \pi (\mu_k-\nu_k+\alpha)/2)}\tfrac{\Gamma(\nu_k-\frac{\alpha}{2}+n+1)\Gamma(\frac{\mu_k+\nu_k}{2}+n+1)
   \Gamma(-\frac{\alpha}{2}+n+1)\Gamma(\frac{\nu_k-\mu_k}{2}+n+1)}{\Gamma(\frac{\nu_k-\mu_k-\alpha}{2}+n+1)
   \Gamma(\frac{\nu_k+\mu_k-\alpha}{2}+n+1)\Gamma(\nu_k+n+1)\Gamma(n+1)}\\
   &=C_k^\alpha A_{k,1n}^\alpha\\
   &=h_{k, 1n}^\alpha.
\end{align*}

\subsection*{Case 2: When $\alpha=2m$ is a positive even integer.} Using the fact that $\frac{1}{\Gamma(-n)}=0$ for nonnegative integer $n$, a direct computation gives for all $n \geq 0$,
\begin{align*}
  h_{k,2n}^\alpha &=  h_{k, 1n}^{\alpha*}\\
   & = \frac{2(-1)^n }{n!} \frac{\Gamma(\frac{\nu_k-\mu_k-\alpha}{2}-n)\Gamma(\frac{\nu_k+\mu_k}{2}+n+1)\Gamma(\mu_k+\frac{\alpha}{2}+n+1)}
   {\Gamma(\frac{\nu_k-\mu_k}{2}-n)\Gamma(-\frac{\alpha}{2}-n)\Gamma(\mu_k+n+1) \Gamma(\frac{\mu_k+\nu_k+\alpha}{2}+n+1)}\\
   &=0 ;
\end{align*}
and
\begin{align*}
  h_{k,1n}^\alpha & = h_{k,2 n}^{\alpha*} \\
%  =\frac{(-1)^n 2}{n!} \frac{\Gamma(b_1-b_2-n)\Gamma(1-a_1+b_2+n)\Gamma(1-a_2+b_2+n)}{\Gamma(a_3-b_2-n)\Gamma(a_4-b_2-n)\Gamma(1-b_3+b_2+n) \Gamma(1-b_3+b_2+n)}\\
   & = \frac{2(-1)^n }{n!} \frac{\Gamma(\frac{\mu_k-\nu_k+\alpha}{2}-n)\Gamma(\nu_k-\frac{\alpha}{2}+n+1)\Gamma(\frac{\nu_k+\mu_k}{2}+n+1)}
   {\Gamma(\frac{\mu_k-\nu_k}{2}-n)\Gamma(\frac{\alpha}{2}-n) \Gamma(\frac{\mu_k+\nu_k-\alpha}{2}+n+1)\Gamma(\nu_k+n+1)}.
\end{align*}
Hence, when $n\ge m$, $h^\alpha_{k,1n}=0$, and when $0\le n < m$, $h^\alpha_{k,1n}\neq 0$.
Thus, in this case, the kernel functions are given by
\begin{align}
   K_{k}^\alpha(r,s) = H_{4, 4}^{2, 2}(k, \alpha; r, s)=
   \begin{cases}
    \sum\limits_{n=0}^{m } h_{k, 1n}^\alpha \left(\frac{s}{r}\right)^{\nu_k+{\lambda_0}+2+2n}, \quad z=\frac{s}{r}<1;\\
    \sum\limits_{n=0}^{m} h_{k, 1n}^{\alpha}\left(\frac{r}{s}\right)^{\nu_k-{\lambda_0}-\alpha+2n}, \quad z=\frac{r}{s}<1.
   \end{cases}
\end{align}
In this case, the kernels have been reduced to a finite sum of polynomials of either $\frac{s}{r}\chi_{\{0<\frac{s}{r}<1\}}$ or $\frac{r}{s}\chi_{\{0<\frac{r}{s}<1\}}$.  In particular,
   $ K^{\alpha}_{k}(r,s) $ are uniformly bounded for all $r, s$.
 Furthermore, it defines an $L^p$-bounded operator on the radial part as long as
   \begin{align}
   \lambda_0-\nu_0+\alpha <\frac{d}{p}<\lambda_0 +\nu_0+2,
   \end{align}
which can also be written
\begin{align}
    \frac{\sigma+\alpha}{d}<\frac{1}{p}<\frac{d-\sigma}{d}.
\end{align}
 \subsection*{Case 3: When $\alpha= -2m$ is a negative even integer.}
In this case,
 \begin{align*}
  h_{k,2n}^\alpha & = h_{k,1 n}^{\alpha*} \\
%  =\frac{(-1)^n 2}{n!} \frac{\Gamma(b_1-b_2-n)\Gamma(1-a_1+b_2+n)\Gamma(1-a_2+b_2+n)}{\Gamma(a_3-b_2-n)\Gamma(a_4-b_2-n)\Gamma(1-b_3+b_2+n) \Gamma(1-b_3+b_2+n)}\\
   & = \frac{2(-1)^n }{n!} \frac{\Gamma(\frac{\mu_k-\nu_k+\alpha}{2}-n)\Gamma(\nu_k-\frac{\alpha}{2}+n+1)\Gamma(\frac{\nu_k+\mu_k}{2}+n+1)}
   {\Gamma(\frac{\mu_k-\nu_k}{2}-n)\Gamma(-\frac{\alpha}{2}-n) \Gamma(\frac{\mu_k+\nu_k-\alpha}{2}+n+1)\Gamma(\nu_k+n+1)} .
\end{align*}
Hence, when $n\ge m$, $h^\alpha_{k,2n}=0$, and when $0\le n < m$, $h^\alpha_{k,2n}\neq 0$. For $h_{k,1n}^\alpha$,
\begin{align*}
  h_{k,1n}^\alpha &=  h_{k, 2n}^{\alpha*}\\
   & = \frac{(-1)^n 2}{n!} \frac{\Gamma(\frac{\nu_k-\mu_k-\alpha}{2}-n)\Gamma(\frac{\nu_k+\mu_k}{2}+n+1)\Gamma(\mu_k+\frac{\alpha}{2}+n+1)}
   {\Gamma(\frac{\nu_k-\mu_k}{2}-n)\Gamma(\frac{\alpha}{2}-n)\Gamma(\mu_k+n+1) \Gamma(\frac{\mu_k+\nu_k+\alpha}{2}+n+1)}\\
   &=0.
\end{align*}

Thus, the kernel function in this case is given by
\begin{align}
   K_{k}^\alpha(r,s) = H_{4, 4}^{2, 2}(k, \alpha; r, s)=
   \begin{cases}
    \sum\limits_{n=0}^{m-1} h_{k, 2 n}^\alpha \left(\frac{s}{r}\right)^{\mu_k+{\lambda_0}+\alpha+2+2n}, \quad &z=\frac{s}{r}<1;\\
    \sum\limits_{n=0}^{m-1} h_{k, 2 n}^{\alpha}\left(\frac{r}{s}\right)^{\mu_k-{\lambda_0}+2n}, \quad &z=\frac{r}{s}<1.
   \end{cases}
\end{align}
Similarly to the previous case, the kernels have thus been reduced to a finite sum of polynomials of either $\frac{s}{r}\chi_{\{0<\frac{s}{r}<1\}}$ or $\frac{r}{s}\chi_{\{0<\frac{r}{s}<1\}}$.  This implies that
   $ K^{\alpha}_{k}(r,s) $ are uniformly bounded for all $r, s$.
   To guarantee the $L^p$-boundedness, we need the restriction
   \begin{align}
 \lambda_0-\mu_0<\frac{d}{p}< \mu_0+{\lambda_0}+\alpha+2,
   \end{align}
which is equivalent to
\begin{align}
    0<\frac{1}{p}<\frac{d+\alpha}{d}.
\end{align}

Using the asymptotic expansion for Gamma functions \eqref{asymptotic expansion for the quotient of gamma function}, we have the following lemma.

\begin{lemma} \label{ub for Riesz transform}The coefficients $h_{k,1n}^\alpha, h_{k,2n}^\alpha, C_k^\alpha, A_{k, 1n}^\alpha, A_{k, 2n}^\alpha$ are uniformly bounded in $k$ and $n$,
\begin{align}
    |h_{k,1n}^\alpha|,\ |h_{k,2n}^\alpha|,\ |C_k^\alpha|,\ |A_{k, 1n}^\alpha|,\ |A_{k, 2n}^\alpha| \lesssim 1.
\end{align}
For $C_k^\alpha$, we have
\begin{align}
    \frac{d^N C_k^\alpha}{dk^N} \lesssim_N   \frac{1}{k^{N+1}} \   \text{for all} \quad N\geq 0.
\end{align}
For each fixed $k$ and as $n\to\infty$, $A^\alpha_{k,1n}, A^\alpha_{k,2n}$ have the asymptotic behavior
 \begin{align*}
   A_{k, 1n}^\alpha &=1+O_{\alpha, k}\left(\frac{1}{(n+1)^2}\right),\\
 A_{k,2 n}^{\alpha } &=1+O_{\alpha, k}\left(\frac{1}{(n+1)^2}\right).
 \end{align*}
 By $O_{\alpha,k}(\cdot)$, we mean that the implicit $O(\cdot)$ constant depends on $\alpha$ and $k$.

\end{lemma}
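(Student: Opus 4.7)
The proof naturally decomposes into three tasks: bounding $C_k^\alpha$ and its derivatives, verifying the uniform bounds on the $A_{k,jn}^\alpha$, and extracting the refined asymptotic $A_{k,jn}^\alpha = 1 + O_{\alpha,k}((n+1)^{-2})$. The plan is to address them in that order.

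For the first task, I would exploit the fact that the exceptional cases where $\alpha$ is a nonzero even integer have already been handled separately (where in particular $C_k^\alpha=0$), so I may assume $\sin(\pi\alpha/2)\ne 0$. Setting $b_k=(\mu_k-\nu_k)/2$, one has $C_k^\alpha = \tfrac{2\sin(\pi\alpha/2)\sin(-\pi b_k)}{\pi\sin(\pi(\alpha/2 - b_k))}$. Since Lemma \ref{estimates for $a_k$ and $b_k$} gives $|b_k|\lesssim 1/k$, the denominator is bounded away from zero uniformly in $k$, so $C_k^\alpha = b_k\,g(b_k)$ for some $g$ smooth near $0$. This immediately yields $|C_k^\alpha|\lesssim |b_k|\lesssim 1/k$. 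For the derivative estimate, I would apply the \FDB{} to the composition $k\mapsto b_k\mapsto C_k^\alpha$ using the derivative bounds $|d^j b_k/dk^j|\lesssim k^{-(j+1)}$ from Lemma \ref{estimates for $a_k$ and $b_k$}; since every term in the Faà di Bruno expansion contains at least one factor that is a derivative of $b_k$, and these sum to total order $N$ in the $k$-variable, a power-counting argument gives $|d^N C_k^\alpha/dk^N|\lesssim k^{-(N+1)}$.

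For the second and third tasks, I would view each $A_{k,jn}^\alpha$ as a ratio $\prod_i \Gamma(n+1+\beta_i^+)/\prod_j \Gamma(n+1+\beta_j^-)$ of four Gamma factors in the numerator and four in the denominator. The key is to verify two cancellation identities: first that $\sum_i \beta_i^+ = \sum_j \beta_j^-$, which guarantees that the leading $(n+1)$-power coming from \eqref{asymptotic expansion for the quotient of gamma function} is zero; and second that $\sum_i \beta_i^+(\beta_i^+-1) = \sum_j \beta_j^-(\beta_j^--1)$, which forces the first-order $1/(n+1)$ correction to vanish. A direct computation confirms both sides of the first identity equal $2\nu_k-\alpha+4$ for $A_{k,1n}^\alpha$ and $2\mu_k+\alpha+4$ for $A_{k,2n}^\alpha$, while the second reduces to the identity $2a_k - 2\nu_k - 2b_k = 0$ (which holds because $a_k - b_k = \nu_k$). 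Plugging these into \eqref{asymptotic expansion for the quotient of gamma function} yields $A_{k,jn}^\alpha = 1 + O_{\alpha,k}((n+1)^{-2})$.

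The main technical obstacle is obtaining the uniform-in-$(k,n)$ bound $|A_{k,jn}^\alpha|\lesssim 1$, since \eqref{asymptotic expansion for the quotient of gamma function} applied with pivot $z=n+1$ has error terms of size $(\text{shift})^3/(n+1)^2$, which blows up when $k\to\infty$ with $n$ fixed. To overcome this, I would pair the Gamma factors so that each ratio takes the form $\Gamma(x+\alpha/2)/\Gamma(x)$ for a large pivot $x\in\{n+1,\ a_k+n+1,\ \nu_k+n+1,\ -b_k+n+1\}$; the uniform bound $\Gamma(x+\alpha/2)/\Gamma(x) = x^{\alpha/2}(1+O_\alpha(1/x))$ for $x\ge 1$ then applies. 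The leftover factor is a product of quotients of the form $(x/x')^{\alpha/2}$ whose arguments differ by $|a_k-\nu_k| = |b_k| = O(1/k)$, and hence stays bounded uniformly. The final bound $|h_{k,jn}^\alpha|\lesssim 1$ then follows from $|h_{k,jn}^\alpha| = |C_k^\alpha|\cdot|A_{k,jn}^\alpha|$ together with $|C_k^\alpha|\lesssim 1$.
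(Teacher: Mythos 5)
The paper gives no explicit proof of this lemma — it is stated with only the remark that it follows from the asymptotic expansion for quotients of Gamma functions \eqref{asymptotic expansion for the quotient of gamma function}. Your proposal supplies a correct and complete argument along exactly the line the paper hints at, but it also correctly identifies and patches a genuine subtlety that the paper's one-line citation glosses over: a naive application of \eqref{asymptotic expansion for the quotient of gamma function} with pivot $z=n+1$ has error constants that grow with the Gamma-argument shifts $a_k,\nu_k\sim k$, so the resulting bound on $A_{k,jn}^\alpha$ is not uniform as $k\to\infty$ at fixed $n$. Your pairing of the eight Gamma factors into four quotients of the form $\Gamma(x\pm\alpha/2)/\Gamma(x)$, each with its own large pivot, and the observation that the leftover product $\bigl(\tfrac{(a_k+n+1)(-b_k+n+1)}{(\nu_k+n+1)(n+1)}\bigr)^{\alpha/2}=\bigl(1-\tfrac{a_kb_k}{(n+1)(\nu_k+n+1)}\bigr)^{\alpha/2}$ is uniformly bounded because $a_kb_k=\tfrac{\mu_k^2-\nu_k^2}{4}=-\tfrac a4$ is a constant, is precisely what is needed. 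Your remaining verifications are also sound: for $C_k^\alpha$, viewing it as a smooth function of $b_k$ vanishing at $b_k=0$ and applying \FDB{} with $|b_k^{(m)}|\lesssim k^{-(m+1)}$ from Lemma \ref{estimates for $a_k$ and $b_k$} gives $|d^NC_k^\alpha/dk^N|\lesssim k^{-(N+1)}$; and for the $1+O_{\alpha,k}((n+1)^{-2})$ asymptotic, the two cancellations $\sum\beta_i^+=\sum\beta_j^-$ and $\sum\beta_i^+(\beta_i^+-1)=\sum\beta_j^-(\beta_j^--1)$ both reduce to the identity $a_k-b_k=\nu_k$, as you show (I checked the second reduces to $\alpha(a_k-b_k-\nu_k)=0$). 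The only cosmetic nit is a harmless sign flip in the expression for $C_k^\alpha$ in terms of $b_k$.
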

\subsection{Singularity at $r=s$ of $\mathcal R^{\alpha}$.}
To find the singularity at $r=s$, we notice that the Mellin multipliers $\mathcal H_{4,4}^{2,2}(k, \alpha; z)$ are uniformly bounded in $k, n$, we conjecture that the singularity of  $H_{4,4}^{2,2}(k, \alpha;  r/s )$ is $\frac{1}{r-s}$.  In the following lemma, we give the proof.

\begin{lemma}\label{sin at $r=s$.} As $r\to s$, we have
\begin{align}
   K_{k}^\alpha(r,s) \sim
   \begin{cases}
     \frac{C_k^\alpha}{1-(\frac{s}{r})^2}\left\{\left( \frac{s}{r}\right)^{\nu_k+{\lambda_0}+2}-\left( \frac{s}{r}\right)^{\mu_k+{\lambda_0}+\alpha+2}\right\}, \ \text{as} \ \frac{s}{r} \to 1^{-};\\
      \frac{C_k^\alpha}{1-(\frac{r}{s})^2} \left\{\left( \frac{r}{s}\right)^{\mu_k-{\lambda_0}} -\left( \frac{r}{s}\right)^{\nu_k-{\lambda_0}-\alpha}\right\}, \  \text{as} \  \frac{r}{s} \to 1^{-}.
   \end{cases}
\end{align}
\end{lemma}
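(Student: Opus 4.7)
The plan is to read off the asymptotic directly from the power series representation \eqref{power series for R alpha} of $K_k^\alpha(r,s)$ developed immediately above, by splitting each series into a dominant piece with constant coefficient (giving the geometric series $\sum_n z^{2n}=(1-z^2)^{-1}$) and a rapidly convergent remainder. The key analytic input is the quantitative decay estimate $A_{k,1n}^\alpha,A_{k,2n}^\alpha = 1+O_{\alpha,k}((n+1)^{-2})$ from Lemma \ref{ub for Riesz transform}.

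First suppose $\alpha\notin 2\mathbb Z$, so the full Case~1 expansion applies. For $s/r<1$, substitute $h_{k,1n}^\alpha=C_k^\alpha A_{k,1n}^\alpha$ and $h_{k,2n}^\alpha=-C_k^\alpha A_{k,2n}^\alpha$, then write $A_{k,jn}^\alpha = 1 + (A_{k,jn}^\alpha-1)$ to obtain
\begin{align*}
K_k^\alpha(r,s) = \frac{C_k^\alpha}{1-(s/r)^2}\Bigl[(s/r)^{\nu_k+\lambda_0+2}-(s/r)^{\mu_k+\lambda_0+\alpha+2}\Bigr] + R_k^+(r,s),
\end{align*}
where $R_k^+$ collects the two tail sums
\begin{align*}
R_k^+(r,s) = C_k^\alpha\!\!\sum_{n=0}^\infty\!(A_{k,1n}^\alpha-1)(s/r)^{\nu_k+\lambda_0+2+2n} - C_k^\alpha\!\!\sum_{n=0}^\infty\!(A_{k,2n}^\alpha-1)(s/r)^{\mu_k+\lambda_0+\alpha+2+2n}.
\end{align*}
By Lemma \ref{ub for Riesz transform}, both tail sums are dominated by $\sum_n (n+1)^{-2}<\infty$ uniformly on $0<s/r\le 1$, so $R_k^+(r,s)$ extends continuously to $s/r=1$ and is therefore bounded as $s/r\to 1^-$. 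This yields the first asymptotic.

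The $r/s\to 1^-$ case is handled identically using the second line of \eqref{power series for R alpha} and the coefficient identifications $h_{k,1n}^{\alpha*}=h_{k,2n}^\alpha=-C_k^\alpha A_{k,2n}^\alpha$ and $h_{k,2n}^{\alpha*}=h_{k,1n}^\alpha=C_k^\alpha A_{k,1n}^\alpha$ from just above; the same split produces the stated closed-form leading term in $(r/s)^{\mu_k-\lambda_0}$ and $(r/s)^{\nu_k-\lambda_0-\alpha}$, plus an absolutely convergent remainder bounded as $r/s\to 1^-$. The main thing to watch in both regimes is the sign bookkeeping between the two families of coefficients.

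For the excluded values $\alpha=\pm 2m$, the kernel has already been reduced in Cases~2 and~3 to a finite polynomial in $s/r$ or $r/s$, hence bounded as $r\to s$; and since $\sin(\pi\alpha/2)=0$ forces $C_k^\alpha=0$, the claimed leading term also vanishes, so the asymptotic statement is trivially true. No substantive obstacle is expected: the power series expansions and the rate $A_{k,jn}^\alpha-1=O((n+1)^{-2})$ are already in hand, and the entire argument is a bookkeeping exercise in splitting off a geometric series from each of the two power series.
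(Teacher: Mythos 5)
Your proof is correct and in fact gives a slightly sharper conclusion than the paper's, but the two arguments are genuinely different. The paper proves Lemma~\ref{sin at $r=s$.} via the Abelian-type result: for a power series $G(w)=\sum_n a_n w^n$ with radius of convergence $1$ and $\lim_n a_n=a$, one has $\lim_{w\to 1^-}(1-w)G(w)=a$. Applying this with $a_n=A^\alpha_{k,1n}$ and $a_n=A^\alpha_{k,2n}$ (only using $A^\alpha_{k,jn}\to 1$, not the rate) gives $\sum_n A^\alpha_{k,jn} w^n \sim (1-w)^{-1}$ for each $j$, and the lemma follows by substituting back. Your approach instead writes $A^\alpha_{k,jn}=1+(A^\alpha_{k,jn}-1)$, sums the geometric part in closed form, and uses the quantitative decay $A^\alpha_{k,jn}-1=O_{\alpha,k}\big((n+1)^{-2}\big)$ from Lemma~\ref{ub for Riesz transform} to show the remainder is an absolutely convergent series, uniformly bounded up to the boundary. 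This buys more: it yields the actual decomposition $K_{k}^\alpha = K_{k,2}^{\alpha,\textup{main}} + K_{k,2}^{\alpha,\textup{rem}}$ with a \emph{bounded} remainder, which is precisely the form used later in the proof of Proposition~\ref{bdd for Riesz transform}, whereas the Abel argument only controls $(1-w)$ times each series separately. One small caveat on your final paragraph: when $\alpha=\pm 2m$, $C_k^\alpha=0$ but $K_k^\alpha$ is a nonzero finite polynomial in $s/r$ (resp.\ $r/s$), so it is not literally asymptotic to the zero function; the accurate statement there is simply that the kernel is bounded near $r=s$ with no singularity, and the lemma --- meant to identify the main singular part --- is implicitly restricted to $\alpha\notin 2\mathbb Z$, consistent with the explicit exclusion of even $\alpha$ in the proof of Proposition~\ref{bdd for Riesz transform}.
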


\begin{proof} For a Gauss series $ G(z)=\sum\limits_{n=0}^\infty a_n z^n$,
assume that the convergence radius is $1$, and $\lim\limits_{n\to \infty} a_n=a$, then by Abel's theorem, we have
\begin{align*}
    \lim_{z\to 1^{-}}(1-z) G(z)&= \lim_{z\to 1^{-}}\sum_{n=0}^\infty(a_{n+1}-a_n)z^{n+1}+a_0 \\
    &=\sum_{n=0}^\infty(a_{n+1}-a_n)+a_0=a.
\end{align*}
From this result and the fact that
\begin{align}
    \lim_{n\to \infty} A_{k,1n}^\alpha=1, \quad \lim_{n\to \infty} A_{k,2n}^\alpha=1,
\end{align}
the lemma follows.
\end{proof}
This lemma implies that  as $r$ close to $s$, $K^{\alpha}_{k}(r,s)$ has a singularity of type
\begin{align}
    H_{4, 4}^{2, 2}(k, \alpha;  r, s) \sim (r-s)^{-1} \quad ( r \to s).
\end{align}

\subsection{Proof of Proposition \ref{bdd for Riesz transform}} Here we only give the details for $\mathcal R^{\alpha}$, as the proof for $\mathcal R^{-\beta}$ is similar. We may also assume that $\alpha\neq 2m$ where $m$ is an integer; when $\alpha=2m$, the details are much simpler.
 To obtain the $L^p$-boundedness, we use the power series representation \eqref{power series for R alpha} and the discrete multiplier theorem (Theorem \ref{multiplier theorem for spherical harmonic decomposition}) for decompositions into spherical harmonics.
Similarly to the modified kernels in Section \ref{s:proof-of-lp-bddness}, we define
\begin{align*}
    \widetilde{K}^{\alpha}_k(r,s)\coloneq \left(\frac{s}{r}\right)^{-\frac{d}{p}}K^{\alpha}_k (r,s),
\end{align*}
and the modified operator
\begin{align*}
   \widetilde{\mathcal R}^{\alpha} f(r, \omega )&\coloneq \sum_{k=0}^\infty \int_0^\infty
   \widetilde{K}^{\alpha}_k(r,s) \mathbb {P}_k f(s, \omega) \frac{\dd s}{s}.
\end{align*}
Then $\mathcal R^\alpha $ is bounded in $L^p(\mathbb R^d)$ if and only if
$\widetilde{\mathcal R}^{\alpha}$ is bounded in $L^p(\frac{\dd r}{r} \cdot \dd \omega)$. Due to the singularity at $r=s$ of the kernel functions,  we divide the operator into three parts
\begin{align*}
  \widetilde{\mathcal R}^{\alpha} f(r, \omega ) &=\sum_{k=0}^\infty \int_0^{\frac{r}{2}}\widetilde{K}^{\alpha}_k(r,s) \mathbb {P}_k f(s, \omega) \frac{\dd s}{s}+\sum_{k=0}^\infty \int_{\frac{r}{2}}^{2r}\widetilde{K}^{\alpha}_k(r,s) \mathbb {P}_k f(s, \omega) \frac{\dd s}{s}\\
   &\quad +\sum_{k=0}^\infty \int_{2r}^{\infty}\widetilde{K}^{\alpha}_k(r,s) \mathbb {P}_k f(s, \omega) \frac{\dd s}{s}\\
   &=: \widetilde{\mathcal R}^{\alpha}_{1} f(r, \omega)+\widetilde{\mathcal R}^{\alpha}_{2} f(r, \omega)+\widetilde{\mathcal R}^{\alpha}_{3} f(r, \omega).
\end{align*}
We will prove that $ \widetilde{\mathcal R}^{\alpha}_{i} $ is bounded in $L^p(\frac{\dd r}{r} \cdot \dd \omega)$ for $i=1,2, 3$ and for suitable $p$.
\subsection*{Case 1: $0<\frac{s}{r}<\frac{1}{2}$. } By Lemma \ref{ub for Riesz transform}, $h_{k, 1n}^\alpha, h_{k,2 n}^{\alpha }$ are uniformly bounded in $k$ and $n$. It follows that
\begin{align*}
   | \widetilde K^\alpha_{k,1}(r,s)|&\lesssim \left(\frac{s}{r}\right)^{\nu_k+{\lambda_0} +2-\frac{d}{p}} + \left(\frac{s}{r}\right)^{\mu_k+\alpha+\lambda_0+2-\frac{d}{p}}\\
   &\lesssim  \left(\frac{s}{r}\right)^{\nu_0+\lambda_0+2-\frac{d}{p}} + \left(\frac{s}{r}\right)^{\mu_0+\alpha+\lambda_0+2-\frac{d}{p}}.
\end{align*}
For the finite difference of $\widetilde K^\alpha_{k,1}(r,s)$ with $1\leq N\leq \lfloor \frac{d-1}{2}\rfloor+1 $, we have
\begin{align*}
  &\quad\sup_{j} 2^{j(N-1)}\sum_{k=2^j}^{2^{j+1}} |\fd ^N \widetilde K^\alpha_{k,1}(r,s)|\\
  &\leq 2^N \sum_{k=0}^\infty k^{N-1}|\widetilde K^\alpha_{k,1}(r,s)| \\
  &\lesssim \ 2^N \sum_{k=0}^\infty k^{N-1}\left[\left(\frac{s}{r}\right)^{\nu_k+{\lambda_0} +2-\frac{d}{p}} + \left(\frac{s}{r}\right)^{\mu_k+\alpha+\lambda_0+2-\frac{d}{p}}\right]\\
  &\lesssim 2^N \left[\left(\frac{s}{r}\right)^{\nu_0+\lambda_0+2-\frac{d}{p}} + \left(\frac{s}{r}\right)^{\mu_0+\alpha+\lambda_0+2-\frac{d}{p}}\right].
\end{align*}
 For the angular part, using the multiplier theorem, Theorem \ref{multiplier theorem for spherical harmonic decomposition},
\begin{align*}
	&\quad \left \|\sum_{k=0}^\infty  \widetilde K_{k,1}^\alpha (r,s) \proj_{k }f(s, \omega) \right \|_{L^{p}(\dd \omega)}\\
	& \lesssim   \left[\left(\frac{s}{r}\right)^{\nu_0+\lambda_0+2-\frac{d}{p}} + \left(\frac{s}{r}\right)^{\mu_0+\alpha+\lambda_0+2-\frac{d}{p}}\right]\left \|\sum_{k=0}^\infty   \proj_{k}f(s, \omega) \right \|_{L^{p}(\dd \omega)}.
\end{align*}
To estimate the radial part,  if
 \begin{equation}
     \nu_0+\lambda_0+2-\frac{d}{p}>0  \ \text{and} \ \mu_0+\alpha+\lambda_0+2-\frac{d}{p}>0
 \end{equation}
 then $\left[r^{\nu_0+\lambda_0+2-\frac{d}{p}}+r^{\mu_0+\alpha+\lambda_0+2-\frac{d}{p}}\right]\chi_{\{0<r<1/2\}} \in L^{1}\left(\frac{\dd r}{r}\right)$.
 Thus,
  \begin{align*}\label{R 1}
	\|\widetilde{ \mathcal R}_{1}^{\alpha} f(r, \omega )\|_{ L^p (\frac{\dd r}{r} \cdot \dd \omega)}
	&\lesssim  \left\|\int_0^{r/2}  \left(\frac{s}{r}\right)^{\nu_0+\lambda_0+2-\frac{d}{p}} \left \|\sum_{k=0}^\infty   \proj_{k }f(s, \omega) \right \|_{L^{p}(\dd \omega)} \frac{\dd s}{s} \right \|_{L^p (\frac{\dd r}{r})}\nonumber\\
	&\quad + \left\|\int_0^{r/2}  \left(\frac{s}{r}\right)^{\mu_0+\alpha+\lambda_0+2-\frac{d}{p}} \left \|\sum_{k=0}^\infty   \proj_{k }f(s, \omega) \right \|_{L^{p}(\dd \omega)} \frac{\dd s}{s} \right \|_{L^p (\frac{\dd r}{r})}\nonumber\\
	&\lesssim \left \|\sum_{k=0}^\infty   \proj_{k  }f(r, \omega) \right \|_{L^p(\frac{\dd r}{r} \cdot \dd \omega)} =\| f(r,\omega)\|_{L^{p}(\frac{\dd r}{r} \cdot \dd \omega)}.
	\end{align*}
That is, if $\frac{1}{p}<\min\{ \frac{d+\alpha}{d}, \frac{d-\sigma}{d}\}$, then  $\widetilde {\mathcal R}_{1}^{\alpha} $ is bounded in  $ L^p (\frac{\dd r}{r} \cdot \dd \omega)$.

\subsection {Case 2:  $\frac{s}{r}>2$. }
Since $h_{k, 1n}^\alpha, h_{k,2 n}^{\alpha }$ are uniformly bounded in $k, n$, we have
\begin{align*}
   | \widetilde K^\alpha_{k,3}(r,s)|&\lesssim \left(\frac{r}{s}\right)^{\nu_k-{\lambda_0} -\alpha+\frac{d}{p}} + \left(\frac{r}{s}\right)^{\mu_k-{\lambda_0}+\frac{d}{p}}\\
   &\lesssim  \left(\frac{r}{s}\right)^{\nu_0-\lambda_0-\alpha+\frac{d}{p}} + \left(\frac{r}{s}\right)^{\mu_0-\lambda_0+\frac{d}{p}}.
\end{align*}
For the finite difference of $\widetilde K^\alpha_{k,3}(r,s)$, we have
\begin{align*}
  &\quad\sup_{j} 2^{j(N-1)}\sum_{k=2^j}^{2^{j+1}} |\fd ^N  \widetilde K^\alpha_{k,3}(r,s)|\\
  &\leq 2^N \sum_{k=0}^\infty k^{N-1}|\widetilde  K^\alpha_{k,3}(r,s)| \\
  &\lesssim \ 2^N \sum_{k=0}^\infty k^{N-1}\left[\left(\frac{r}{s}\right)^{\nu_k-{\lambda_0} -\alpha+\frac{d}{p}} + \left(\frac{r}{s}\right)^{\mu_k-{\lambda_0}+\frac{d}{p}}\right]\\
  &\lesssim 2^N \left[\left(\frac{r}{s}\right)^{\nu_0-\lambda_0-\alpha+\frac{d}{p}} + \left(\frac{r}{s}\right)^{\mu_0-\lambda_0+\frac{d}{p}}\right].
\end{align*}
By Theorem \ref{multiplier theorem for spherical harmonic decomposition}, we have
\begin{align*}
	&\quad \left \|\sum_{k=0}^\infty  \widetilde K_{k,3}^\alpha (r,s) \proj_{k }f(s, \omega) \right \|_{L^{p}(\dd \omega)}\\
	& \lesssim   \left[\left(\frac{r}{s}\right)^{\nu_0-\lambda_0-\alpha+\frac{d}{p}} + \left(\frac{r}{s}\right)^{\mu_0-\lambda_0+\frac{d}{p}}\right]\left \|\sum_{k=0}^\infty   \proj_{k}f(s, \omega) \right \|_{L^{p}(\dd \omega)}.
\end{align*}
 If
\begin{align}
    \nu_0-\lambda_0-\alpha+\frac{d}{p}>0, \ \text{and} \ \mu_0-\lambda_0+\frac{d}{p}>0
\end{align} then $r^{\nu_0-\lambda_0-\alpha+\frac{d}{p}} \chi_{\{0<r<\frac{1}{2}\}}+r^{\mu_0-\lambda_0+\frac{d}{p}}\chi_{\{0<r<\frac{1}{2}\}} \in L^{1}\left(\frac{\dd r}{r}\right)$.
 Thus,
\begin{align*}
	\|\widetilde {\mathcal R}_{1}^{\alpha} f(r, \omega )\|_{ L^p (\frac{\dd r}{r} \cdot \dd \omega)}
	&\lesssim  \left\|\int_{2r}^\infty  \left(\frac{r}{s}\right)^{\nu_0-\lambda_0-\alpha+\frac{d}{p}} \left \|\sum_{k=0}^\infty   \proj_{k }f(s, \omega) \right \|_{L^{p}(\dd \omega)} \frac{\dd s}{s} \right \|_{L^p (\frac{\dd r}{r})}\nonumber\\
	&\quad + \left\|\int_{2r}^\infty   \left(\frac{r}{s}\right)^{\mu_0-\lambda_0+2+ \frac{d}{p}} \left \|\sum_{k=0}^\infty   \proj_{k }f(s, \omega) \right \|_{L^{p}(\dd \omega)} \frac{\dd s}{s} \right \|_{L^p (\frac{\dd r}{r})}\nonumber\\
	&\lesssim \left \|\sum_{k=0}^\infty   \proj_{k  }f(r, \omega) \right \|_{L^p(\frac{\dd r}{r} \cdot \dd \omega)} =\| f(r,\omega)\|_{L^{p}(\frac{\dd r}{r} \cdot \dd \omega)}.
	\end{align*}
That is, if $\frac{1}{p}>\max\{0, \frac{\alpha+\sigma}{d}\}$, then  $\widetilde {\mathcal R}_{3}^{\alpha} $ is bounded in  $ L^p  (\frac{\dd r}{r} \cdot \dd \omega)$.

\subsection {Case 3: $\frac{1}{2}<\frac{s}{r}<2$.}
Let us define
\begin{align}
    E_{k, 1n}^\alpha\coloneq A_{k, 1n}^\alpha-1;\\
    E_{k, 2n}^\alpha\coloneq A_{k, 2n}^\alpha-1.
\end{align}
By definition, for $N\geq 1$, we have
\begin{align}
    \frac{d^NE_{k, 1n}^\alpha }{dk^N}= \frac{d^N A_{k, 1n}^\alpha }{dk^N},
    \qquad
         \frac{d^NE_{k, 2n}^\alpha }{dk^N}= \frac{d^N A_{k, 2n}^\alpha }{dk^N}.
\end{align}
 We rewrite the kernel functions as
\begin{align*}
    K_{k, 2}^\alpha (r, s)=& C_k^\alpha
    \sum \limits_{n=0}^\infty A_{k, 1n}^\alpha  \left(\frac{s}{r}\right)^{\nu_k+{\lambda_0}+2+2n}\chi_{+} -C_k^\alpha\sum\limits_{n=0}^\infty A_{k,2 n}^{\alpha } \left(\frac{s}{r}\right)^{\mu_k+{\lambda_0}+\alpha+2+2n}\chi_{+}\\
    & -C_k^\alpha \sum \limits_{n=0}^\infty A_{k,2 n}^{\alpha }  \left(\frac{r}{s}\right)^{\mu_k-{\lambda_0}+2n}\chi_{-}+ C_k^\alpha\sum\limits_{n=0}^\infty A_{k, 1n}^\alpha \left(\frac{r}{s}\right)^{\nu_k-{\lambda_0}-\alpha+2n}\chi_{-}\\
   =:&  K_{k, 2}^{\alpha, \text{main} } (r, s) +K_{k, 2}^{\alpha, \textup{rem}} (r, s).
    \end{align*}
    with
\begin{align*}
    K_{k, 2}^{\alpha, \text{main} } (r, s)&\coloneq C_k^\alpha\frac{\left\{\left(\frac{s}{r}\right)^{\nu_k+{\lambda_0}+2}- \left(\frac{s}{r}\right)^{\mu_k+{\lambda_0}+\alpha+2}\right\}}{1-\left(\frac{s}{r}\right)^2}\chi_{+}
    -C_k^\alpha\frac{\left\{\left(\frac{r}{s}\right)^{\mu_k-{\lambda_0}}- \left(\frac{r}{s}\right)^{\nu_k-{\lambda_0}-\alpha}\right\}} {1-\left(\frac{r}{s}\right)^2}\chi_{-};\\
%    K_{k, 2}^{\alpha, \text{sec}} (r, s):&= \frac{a}{2}C_k^\alpha\left\{\left(\frac{s}{r}\right)^{\nu_k+{\lambda_0}+2}- \left(\frac{s}{r}\right)^{\mu_k+{\lambda_0}+\alpha+2}\right\} \frac{\ln(1-\left(\frac{s}{r}\right)^2)}{\left(\frac{s}{r}\right)^2}\chi_{+}\\
% &\quad -\frac{a}{2}C_k^\alpha\left\{\left(\frac{r}{s}\right)^{\mu_k-{\lambda_0}}- \left(\frac{r}{s}\right)^{\nu_k-{\lambda_0}-\alpha}\right\} \frac{\ln(1-\left(\frac{r}{s}\right)^2)}{\left(\frac{r}{s}\right)^2}\chi_{-};\\
 K_{k, 2}^{\alpha, \textup{rem}} (r, s)&\coloneq {C_k^\alpha\left(\frac{s}{r}\right)^{\nu_k+{\lambda_0}+2} \sum \limits_{n=0}^\infty E_{k,1n}^{\alpha}\left(\frac{s}{r}\right)^{2n}\chi_{+}-C_k^\alpha\left(\frac{s}{r}\right)^{\mu_k+{\lambda_0}+\alpha+2}\sum\limits_{n=0}^\infty  E_{k,2n}^{\alpha}\left(\frac{s}{r}\right)^{2n}}\chi_{+}\\
  &\quad-{C_k^\alpha\left(\frac{r}{s}\right)^{\mu_k-{\lambda_0}} \sum \limits_{n=0}^\infty E_{k,2n}^{\alpha}\left(\frac{r}{s}\right)^{2n}\chi_{-}+C_k^\alpha\left(\frac{r}{s}\right)^{\nu_k-{\lambda_0}-\alpha}\sum\limits_{n=0}^\infty  E_{k,1n}^{\alpha}\left(\frac{r}{s}\right)^{2n}}\chi_{-}.
\end{align*}
      We denote the modified kernels
    \begin{align*}
       \widetilde K_{k, 2}^{\alpha, \text{main} }(r,s)& \coloneq  \left(\frac{s}{r}\right)^{-\frac{d}{p}} K_{k, 2}^{\alpha, \text{main} }(r,s), \\
%        \widetilde K_{k, 2}^{\alpha, \text{sec} }(r,s)&\coloneq  \left(\frac{s}{r}\right)^{-\frac{d}{p}} K_{k, 2}^{\alpha, \text{sec} }(r,s),\\
         \widetilde K_{k, 2}^{\alpha, \textup{rem} }(r,s)&\coloneq  \left(\frac{s}{r}\right)^{-\frac{d}{p}} K_{k, 2}^{\alpha, \textup{rem} }(r,s).
    \end{align*}
    and  the associated operators are
\begin{align*}
      \widetilde{\mathcal R}_2^{\alpha, \text{main}} f(r, \omega )&\coloneq \sum_{k=0}^\infty \int_0^\infty
   \widetilde{K}^{\alpha, \text{main}}_k(r,s) \mathbb {P}_k f(s, \omega) \frac{\dd s}{s}; \\
%   \widetilde{\mathcal R}_2^{\alpha, \text{sec}} f(r, \omega )&\coloneq \sum_{k=0}^\infty \int_0^\infty
%   \widetilde{K}^{\alpha, \text{sec}}_k(r,s) \mathbb {P}_k f(s, \omega) \frac{\dd s}{s};\\
   \widetilde{\mathcal R}_2^{\alpha, \textup{rem}} f(r, \omega )&\coloneq \sum_{k=0}^\infty \int_0^\infty
   \widetilde{K}^{\alpha, \textup{rem}}_k(r,s) \mathbb {P}_k f(s, \omega) \frac{\dd s}{s}.
\end{align*}

\subsubsection{The estimate of the term $\widetilde{\mathcal R}_2^{\alpha, \text{main}}$}
Let us define the approximated kernels
\begin{align*}
  \widetilde{K}^{\alpha, \text{main, ap}}_{k,2}(r,s)&= C_k^\alpha\frac{\left\{\left(\frac{s}{r}\right)^{\nu_0+\lambda_0+2-\frac{d}{p}}-\left(\frac{s}{r}\right)^{\mu_0+\lambda_0+\alpha+2-\frac{d}{p}}\right\}}{1-\left(\frac{s}{r}\right)^2}\chi_{+}\\
  &\quad -C_k^\alpha
  \frac{\left\{\left(\frac{r}{s}\right)^{\mu_0-\lambda+\frac{d}{p}}- \left(\frac{r}{s}\right)^{\nu_0-\lambda-\alpha+\frac{d}{p}}\right\}} {1-\left(\frac{r}{s}\right)^2}\chi_{-},
\end{align*}

and
\begin{align*}
      &\widetilde K_{\textup{err},k}^{\alpha, \text{main}}(r,s)=  \widetilde{K}^{\alpha, \text{main}}_{k,2}(r,s)-  \widetilde{K}^{\alpha, \text{main, ap}}_{k,2}(r,s)\\
      &=\left[C_k^\alpha\left(\frac{s}{r}\right)^{\lambda_0+2-\frac{d}{p}}\left\{\left(\frac{s}{r}\right)^{\nu_k}-
      \left(\frac{s}{r}\right)^{\nu_0}\right\}-C_k^\alpha\left(\frac{s}{r}\right)^{\lambda_0+\alpha+2-\frac{d}{p}} \left\{\left(\frac{s}{r}\right)^{\mu_k}-\left(\frac{s}{r}\right)^{\mu_0}\right\}\right] \frac{1}{1-\left(\frac{s}{r}\right)^2}\chi_{+}\\
 &\quad -\left[C_k^\alpha\left(\frac{r}{s}\right)^{\lambda_0+\frac{d}{p}}\left\{\left(\frac{r}{s}\right)^{\mu_k}
 -\left(\frac{r}{s}\right)^{\mu_0}\right\}-C_k^\alpha\left(\frac{s}{r}\right)^{\lambda-\alpha+\frac{d}{p}}\left\{ \left(\frac{s}{r}\right)^{\nu_k}-\left(\frac{s}{r}\right)^{\nu_0} \right\}\right]\frac{1}{1-\left(\frac{r}{s}\right)^2}\chi_{-},
\end{align*}
and the associated operators
\begin{align*}
  \widetilde{\mathcal R}_2^{\alpha, \text{main, ap}} f(r, \omega )&=\sum_{k=0}^\infty \int_0^\infty
   \widetilde{K}^{\alpha, \text{main, ap}}_k(r,s) \mathbb {P}_k f(s, \omega) \frac{\dd s}{s};\\
  \widetilde{\mathcal R}_{\textup{err}} f(r, \omega )&=\sum_{k=0}^\infty \int_0^\infty
   \widetilde K_{\textup{err},k}^{\alpha, \text{main}}(r,s)  \mathbb {P}_k f(s, \omega) \frac{\dd s}{s}.
\end{align*}
For $ \widetilde{\mathcal R}_2^{\alpha, \text{main, ap}}$, we can rewrite it as
\begin{align*}
   & \widetilde{\mathcal R}_2^{\alpha, \text{main, ap}} f(r, \omega )\\
    =&\int_0^\infty\!\!  \left [\tfrac{\left\{\left(\frac{s}{r}\right)^{\nu_0+\lambda_0+2-\frac{d}{p}}- \left(\frac{s}{r}\right)^{\mu_0+\lambda_0+\alpha+2-\frac{d}{p}}\right\}\chi_{+}}{1-\left(\frac{s}{r}\right)^2}
    -\tfrac{\left\{\left(\frac{r}{s}\right)^{\mu_0-\lambda+\frac{d}{p}}- \left(\frac{r}{s}\right)^{\nu_0-\lambda-\alpha+\frac{d}{p}}\right\}\chi_{-}} {1-\left(\frac{r}{s}\right)^2} \right]\\
    &\times
  \sum_{k=0}^\infty C_k^\alpha \mathbb {P}_k f(s, \omega) \frac{\dd s}{s}.
\end{align*}
Using the boundedness of \CZ{} singular operators, we have
\begin{align*}
    \|\widetilde{\mathcal R}_2^{\alpha, \text{main, ap}} f(\cdot, \omega )\|_{L^p(\frac{\dd {r}}{r})} \lesssim \left\| \sum_{k=0}^\infty C_k^\alpha \mathbb {P}_k f(\cdot, \omega) \right\|_{L^p(\frac{\dd {r}}{r})}.
\end{align*}
For the coefficients $C_k^\alpha$, by Lemma \ref{ub for Riesz transform}, we have
\begin{align*}
    |C_k^\alpha| &\lesssim 1,\\
    \sup_{j}2^{j(N-1)} \sum_{k=2^j}^{2^{j+1}}|\fd ^N C_{k}^\alpha| &\lesssim 1.
\end{align*}
That is, $C_k^\alpha$ satisfies the conditions of \eqref{uniform bound} and \eqref{uniform difference bound}, therefore,
\begin{align}
   \|\widetilde{\mathcal R}_2^{\alpha, \text{main, ap}} f(r, \omega )\|_{L^p(\frac{\dd {r}}{r}\cdot \dd \omega)} \lesssim \Big\| \sum_{k=0}^\infty\mathbb {P}_k f\Big\|_{L^p(\frac{\dd {r}}{r}\cdot \dd \omega)} =\|f\|_{L^p(\frac{\dd {r}}{r}\cdot \dd \omega)}.
\end{align}

For the error term $\widetilde K_{\textup{err},k}^{\alpha, \text{main}}(r,s)$, using Lemma \ref{difference estimates for $s<r$} and Lemma \ref{difference estimates for $r<s<2r$},  the following estimates are valid:
\begin{align*}
    |\widetilde K_{\textup{err},k}^{\alpha, \text{main}}(r,s)| \lesssim 1; \\
    \sup_{j} 2^{j(N-1)}\sum_{k=2^j}^{2^{j+1}} |\fd ^N  \widetilde K_{\textup{err},k}^{\alpha, \text{main}}(r,s)| \lesssim 1.
\end{align*}
Hence,  $ \widetilde{\mathcal R}_{\textup{err}}$ is bounded in $L^p(\frac{\dd r}{r} \cdot  \dd \omega)$.
In conclusion, we have proved that $ \widetilde{\mathcal R}_2^{\alpha, \text{main}}$ is also bounded in $L^p(\frac{\dd r}{r} \cdot \dd \omega)$ for $1<p<\infty$.

\subsubsection{The estimate of the term  $\widetilde{\mathcal R}_2^{\alpha, \textup{rem}}$.} Recall that
\begin{align*}
   &\quad \widetilde{K}_{k, 2}^{\alpha, \textup{rem}} (r, s)\\
&= C_k^\alpha\left(\frac{s}{r}\right)^{\nu_k+{\lambda_0}+2-\frac{d}{p}}\underbrace{ \sum \limits_{n=0}^\infty E_{k,1n}^{\alpha}\left(\frac{s}{r}\right)^{2n} \chi_{+}}_{E_{k, 1}^{\alpha, +}(r, s)}-C_k^\alpha\left(\frac{s}{r}\right)^{\mu_k+{\lambda_0}+\alpha+2-\frac{d}{p}}\underbrace{\sum\limits_{n=0}^\infty  E_{k,2n}^{\alpha}\left(\frac{s}{r}\right)^{2n}\chi_{+}}_{E_{k, 2}^{\alpha, +}(r, s)}\\
  &\quad-C_k^\alpha\left(\frac{r}{s}\right)^{\nu_k-{\lambda_0}-\alpha+\frac{d}{p}}\underbrace{\sum\limits_{n=0}^\infty  E_{k,2n}^{\alpha}\left(\frac{r}{s}\right)^{2n} \chi_{-}}_{E_{k, 2}^{\alpha, -}(r, s)}+C_k^\alpha\left(\frac{r}{s}\right)^{\mu_k-{\lambda_0}+\frac{d}{p}} \underbrace{\sum \limits_{n=0}^\infty E_{k,1n}^{\alpha}\left(\frac{r}{s}\right)^{2n} \chi_{-}}_{_{E_{k, 1}^{\alpha, -}(r, s)}},\
\end{align*}
with
\begin{align}
 E_{k, 1}^{\alpha, -}(r, s)\coloneq   \sum \limits_{n=0}^\infty E_{k,1n}^{\alpha}\left(\frac{s}{r}\right)^{2n} \chi_{+}, \quad  E_{k, 2}^{\alpha, +}(r, s)\coloneq \sum\limits_{n=0}^\infty  E_{k,2n}^{\alpha}\left(\frac{s}{r}\right)^{2n}\chi_{+}, \\
  E_{k, 2}^{\alpha, -}(r, s)\coloneq \sum\limits_{n=0}^\infty  E_{k,2n}^{\alpha}\left(\frac{r}{s}\right)^{2n} \chi_{-}, \quad E_{k, 1}^{\alpha, -}(r, s)\coloneq \sum \limits_{n=0}^\infty E_{k,1n}^{\alpha}\left(\frac{r}{s}\right)^{2n} \chi_{-}.
\end{align}

 \begin{lemma}\label{derivative estimated for RT}
For $E_{k, 1n}^\alpha$, $E_{k, 2n}^\alpha$, $A_{k,1 n}^\alpha$ and  $A_{k,2 n}^\alpha$, and $N\geq 1$,  we have
\begin{align*}
     \bigg|\frac{\dd^N E_{k, 1n}^\alpha}{\dd k^N}\bigg| \lesssim \frac{1}{k^N} \frac{1}{n+1},\quad  \bigg|\frac{\dd^N E_{k,2n}^{\alpha}}{\dd k^N}\bigg|\lesssim \frac{1}{k^N} \frac{1}{n+1}.
\end{align*}
\end{lemma}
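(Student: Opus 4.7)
The plan is to adapt the argument for Lemma \ref{derivative estimate for the reminder term} (whose proof is given in Appendix \ref{s:proof-of-lem-deriv-est-for-remainder-term}). First, since $E_{k,in}^{\alpha}=A_{k,in}^{\alpha}-1$, all $k$-derivatives of order $N\ge1$ of $E_{k,in}^{\alpha}$ and $A_{k,in}^{\alpha}$ coincide, so it suffices to prove the claimed bounds for the sequences $A_{k,1n}^{\alpha}$ and $A_{k,2n}^{\alpha}$.

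I would proceed by logarithmic differentiation. Writing $\log A_{k,1n}^{\alpha}$ as a signed sum of six $\log\Gamma$ terms (the factors $\Gamma(-\tfrac{\alpha}{2}+n+1)$ and $\Gamma(n+1)$ drop out of the $k$-derivative), I would first verify the balance identity that the sum of the numerator arguments equals the sum of the denominator arguments; a direct computation shows both equal $2\nu_k-\alpha+4(n+1)$. This is the structural fact responsible for all cancellation. Taking one derivative in $k$ produces a signed sum of terms $\psi(z_j)\,\partial_k z_j$ where each $z_j=c_jk+O(1)+(n+1)$ is large, and the balance identity forces the leading $\log$-type contributions to cancel pairwise, leaving remainders of order $(k+n)^{-1}$ per pair. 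Further smallness in $k$ comes from the coefficients: the $\psi$-terms are paired so that their multipliers are of the form $(1+\mu_k/\nu_k)/2$ and $\mu_k/\nu_k$ (whose difference is $O(1/k^2)$ by Lemma \ref{estimates for $a_k$ and $b_k$}), combined with differences of arguments whose gap is $O(1/k)$; combining, each pair is $O\big(k^{-1}(n+1)^{-1}\big)$.

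For the general $N$th-order estimate I would apply the \FDB{}, expressing $\partial_k^N \log A_{k,1n}^{\alpha}$ as a finite sum of products of higher polygamma values $\psi^{(m)}(z_j)$ (controlled by $|\psi^{(m)}(x)|\lesssim x^{-m}$ for $m\ge1$ and $|\psi(x)-\log x|\lesssim x^{-1}$) and derivatives of the arguments $z_j$; the key input is Lemma \ref{estimates for $a_k$ and $b_k$}, which gives $|\partial_k^M b_k|\lesssim k^{-(M+1)}$, providing the extra $k$-smallness needed. Inducting in $N$, exploiting balance to cancel leading pieces, then re-exponentiating (once more via \FDB{}) and invoking the uniform boundedness $|A_{k,1n}^{\alpha}|\lesssim 1$ from Lemma \ref{ub for Riesz transform}, yields $|\partial_k^N A_{k,1n}^{\alpha}|\lesssim k^{-N}(n+1)^{-1}$. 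The bound for $A_{k,2n}^{\alpha}$ follows from the symmetry $\mu_k\leftrightarrow\nu_k$, $\alpha\leftrightarrow-\alpha$ in its defining Gamma ratio.

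The main obstacle will be the combinatorial bookkeeping: organizing the $\psi^{(m)}$-terms into the correct cancelling pairs at every order, tracking which factors of $k^{-1}$ come from argument derivatives (via $b_k$) versus from coefficient differences (via $\mu_k/\nu_k-1$), and verifying uniformity in $n$ across the regimes $n\lesssim k$ and $n\gg k$. These issues are handled by exactly the same bookkeeping strategy as in Appendix \ref{s:proof-of-lem-deriv-est-for-remainder-term}, and the full technical details are deferred to Appendix \ref{s:proof-of-lem-deriv-est-for-RT}.
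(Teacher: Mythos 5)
Your proposal follows essentially the same route as the paper's proof in Appendix~\ref{s:proof-of-lem-deriv-est-for-RT}: both exploit $E^{\alpha}_{k,in}=A^{\alpha}_{k,in}-1$ to reduce to bounding derivatives of $A^{\alpha}_{k,in}$, use logarithmic differentiation to introduce digamma terms, invoke the Fa\`a di Bruno formula for the $N$th-order case, feed in the polygamma asymptotics and the derivative bounds on $a_k,b_k$ from Lemma~\ref{estimates for $a_k$ and $b_k$}, and finally re-exponentiate with the uniform bound $|A^{\alpha}_{k,in}|\lesssim 1$.

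One caution about the conceptual framing. The ``balance identity'' (equal sums of numerator and denominator Gamma-arguments) by itself does not force the leading $\log$-type pieces of $\sum_j\pm\psi(z_j)\,\partial_k z_j$ to cancel, because the coefficients $\partial_k z_j$ are not all equal and the arguments $z_j$ are not all comparable in size: some are of order $k+n$ while the $\Gamma(-b_k+n+1)$-type arguments are of order $n$. What actually drives the estimate in the paper's proof is a specific pairing into four $\psi$-differences, which you begin to describe in your second paragraph: two pairs have argument gap $\propto b_k=O(k^{-1})$ multiplied by $\partial_k a_k=O(1)$, while the other two have argument gap $\propto a_k=O(k)$ multiplied by the much smaller $\partial_k b_k=O(k^{-2})$. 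Only with this dichotomy in hand do both species of pair come out $O\bigl(k^{-1}(n+1)^{-1}\bigr)$, and the higher-order Fa\`a di Bruno bookkeeping then goes through exactly as in Appendix~\ref{s:proof-of-lem-deriv-est-for-remainder-term}. So your strategy is the right one, but I would drop the balance-identity argument as the explanation of the cancellation and instead lead with the explicit pairing, which is what both the first-order estimate and its inductive promotion to order $N$ genuinely rely on.
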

\begin{proof}
    See Appendix B.
\end{proof}
Using Lemma \ref{derivative estimated for RT}, a direct computation gives
\begin{align*}
     \bigg|\frac{\dd^N E_{k, 1}^{\alpha, +}}{\dd k^N} \bigg|,\  \bigg|\frac{\dd^N E_{k, 2}^{\alpha, +}}{\dd k^N} \bigg|&\lesssim \frac{1}{k^N} \left|\ln\!\Big({1-\Big(\frac{s}{r}\Big)^2}\Big)\right|\chi_+,\\
      \bigg|\frac{\dd^N E_{k, 1}^{\alpha, -}}{\dd k^N} \bigg|,\  \bigg|\frac{\dd^N E_{k, 2}^{\alpha, -}}{\dd k^N} \bigg| &\lesssim \frac{1}{k^N} \left|\ln\!\Big({1-\Big(\frac{s}{r}\Big)^2}\Big)\right|\chi_{-}.
\end{align*}

It follows from these estimates that $ \widetilde K_{k, 2}^{\alpha, \textup{rem}} (r, s)$ satisfies the condition \eqref{uniform bound} and \eqref{uniform difference bound} of Theorem \ref{multiplier theorem for spherical harmonic decomposition}. Therefore,   $\widetilde{\mathcal R}_2^{\alpha, \textup{rem}}$ is bounded in $L^p(\frac{\dd r}{r} \cdot \dd \omega)$. Thus we finished the proof of Proposition \ref{bdd for Riesz transform}.

In conclusion, if $\max \Big\{0, \frac{\sigma+\alpha}{d}\Big\}<\frac{1}{p}<\min \Big\{1, \frac{d-\sigma}{d},  \frac{d+\alpha}{d}\Big\}$, the operator $\mathcal R^{\alpha}$ is bounded in $L^p(\mathbb R^d)$.

%
%\begin{remark}
% From the previous proof and the discussion in Section \ref{s:proof-of-wsp-bddness}.1.2 and Section \ref{s:proof-of-wsp-bddness}.1.3, we have the following results:
%\begin{enumerate}
%    \item if $\alpha=2m$ is a positive even integer, then $\mathcal R^\alpha$ is bounded in $L^p(\mathbb R^d)$ provided
%    \begin{align*}
% \max\Big\{0, \frac{\sigma+2m}{d} \Big\} < \frac{1}{p} <\min \Big\{1, \frac{d-\sigma}{d}\Big\}.
%    \end{align*}
%    \item if $\alpha=-2m$ is a negative even integer, then $\mathcal R^\alpha$ is bounded in $L^p(\mathbb R^d)$ provided
%    \begin{align*}
%0 < \frac{1}{p} < \frac{d-2m}{d}.
%    \end{align*}
%\end{enumerate}
%\end{remark}

\appendix
\section{%
%Proof of Lemma \ref{derivative estimate for the reminder term}
}
\label{s:proof-of-lem-deriv-est-for-remainder-term}
\label{appendix:proof-of-lemma-derivative estimate for the reminder term}
\begin{proof}[Proof of Lemma \ref{derivative estimate for the reminder term}.]
We only give the details for $A_{k,n}^{+}$, as the estimate for $A_{k,n}^{-}$ is similar.

For $N=1$, using the definition of polygamma functions, we have
\begin{align*}
    \frac{d A_{k,n}^{+} }{\dd k }&=A_{k,n}^{+}\bigg[(\psi (a_k+n+1)-\psi(a_k+b_k+n+1)) \frac{da_k}{\dd k} \\
    & \qquad\qquad +(\psi(b_k+n+1)-\psi(a_k+b_k+n+1)\frac{db_k}{\dd k} \bigg]\\
    &\eqqcolon  A_{k,n}^{+} G_{n,k}.
\end{align*}
The Leibniz rule gives
\begin{align*}
    \frac{d^{N+1} A_{k,n}^{+} }{\dd k^{N+1} }&=\sum_{m=0}^N\binom{N}{m} \frac{d^{m} A_{k,n}^{+} }{\dd k^{m} }\frac{d^{N-m} G_{n,k} }{\dd k^{N-m} }
    =A_{k,n}^{+}\sum_{i=0}^{M(N)}C_{i, N} \prod_{j=0}^{N} (G_{n,k}^{(j)})^{\alpha_{i,j, N}},
\end{align*}
where $\alpha_{i,j,N}$ are integers depends on $i,j, N$ and satisfying $\sum \limits_{j=0}^N \alpha_{i,j,N}(1+j)=N+1$, and $C_{i, N}, M(N)$ are some constants which depends on $N$.

For $G_{n,k}$, we have the following estimate, with $z\in(a_k+n+1, a_k+b_k+n+1)$
and $\widetilde z\in (b_k+n+1, a_k+b_k+n+1)$,
\begin{align*}
   |G_{n,k}|\leq |\psi'(z) b_k|\cdot \big|\tfrac{da_k}{\dd k}\big|+| \psi'(\widetilde z) a_k|\cdot\big|\tfrac{db_k}{\dd k}\big|.
\end{align*}
To estimate the derivatives of $G_{n,k}$ with respect to $k$,  we denote
\begin{align*}
    \fd_b\psi(a_k+n+1)=\psi(a_k+n+1)-\psi(a_k+b_k+n+1),\\
    \fd_a\psi(b_k+n+1)=\psi(b_k+n+1)-\psi(a_k+b_k+n+1).
\end{align*}
Then
\begin{align}\label{formula for $G_{n,k}$}
    G_{n,k}=\fd_b \psi(a_k+n+1) \frac{da_k}{\dd k}+ \fd_a \psi(b_k+n+1) \frac{db_k}{\dd k}.
\end{align}
It is easy to check that
\begin{align*}
    | \fd_b\psi(a_k+n+1)|\bigg|\frac{da_k}{\dd k}\bigg|\leq \min \bigg\{\frac{1}{n+1}, \frac{1}{k}\bigg\}\frac{1}{k},\\
     | \fd_a\psi(b_k+n+1)|\bigg|\frac{db_k}{\dd k}\bigg|\leq  \frac{1}{n+1} \frac{1}{k}.
\end{align*}
By the \FDB{}, we have
\begin{align*}
    &\frac{\dd^N\fd_b\psi(a_k+n+1)}{\dd k^N}\\
    &=\sum_{\sum_{j=1}^N jm_j = N} C_{m_1,\dots, m_N} \psi^{(\sum_{j=1}^N m_j)} (a_k+n+1)\prod_{j=1}^{N}(a_k^{(j)})^{m_j}\\
    &\quad-\sum_{\sum_{j=1}^N jn_j = N}  C_{n_1,\dots, n_N} \psi^{(\sum_{j=1}^N n_j)} (a_k+b_k+n+1)\prod_{j=1}^{N}(a_k^{(j)}+b_k^{(j)})^{n_j}\\
    &= \hspace{-0.8em}\sum_{\quad {\sum_{j=1}^N jm_j = N}}\hspace{-1.5em} C_{m_1,\dots, m_N}\!\big[\psi^{(\sum_{j=1}^N m_j)}(a_k{+}n{+}1)-\psi^{(\sum_{j=1}^N m_j)} (a_k{+}b_k{+}n{+}1)\big]\!\prod_{j=1}^{N}(a_k^{(j)})^{m_j}\\
        &\quad-\hspace{-1.5em}\sum_{\quad {\sum_{j=1}^N jn_j = N}}\hspace{-1.3em}  C_{n_1,\dots, n_N} \psi^{(\sum_{j=1}^N n_j)} (a_k+b_k+n+1)\prod_{j=1}^{N}\sum_{\substack{\alpha_j=0}}^{n_{j-1}}\! C_{\alpha_j,n_j}(a_k^{(j)})^{\alpha_j}(b_k^{(j)})^{n_j-\alpha_j} .
    \end{align*}
    Thus, by \eqref{estimates for $a_k$} and \eqref{estimates for $b_k$},  we have the following estimate
  \begin{align}\label{estimate for difference of phi b}
     & \left|\frac{\dd^N\fd_b\psi(a_k+n+1)}{\dd k^N}\right|\nonumber\\
       \lesssim  & \sum_{\sum_{j=1}^N jm_j = N}\psi^{(\sum_{j=1}^N m_j)+1}(z)| b_k |\prod_{j=1}^{N}(a_k^{(j)})^{m_j}\nonumber\\
       & -\sum_{\sum_{j=1}^N jn_j = N}\frac{1}{n+1}\frac{1}{ k^{(\sum_{j=1}^N n_j)-1} }\sum_{\substack{\alpha_j=0}}^{n_{j-1}}\frac{1}{k^{\sum_{j=1}^N j \alpha_j +(j+1)(n_j-\alpha_j) -\alpha_1}}\nonumber\\
      \lesssim & \sum_{\sum_{j=1}^N jm_j = N} \frac{1}{n+1}\frac{1}{k^{(\sum_{j=1}^N m_j)+1}}\frac{1}{k^{\sum_{j=1}^N j m_j-m_1}}\nonumber\\
      & -\sum_{\substack{\sum_{j=1}^N jm_j = N},   \alpha_1< N}\frac{1}{n+1}\frac{1}{ k^{(\sum_{j=1}^N n_j)-1} }\frac{1}{k^{ N+N -\alpha_1}}\nonumber\\
    \lesssim & \frac{1}{n+1} \frac{1}{k^{N+1}}.
    \end{align}
%    Hence,
%        \begin{align} \label{estimate for difference of phi b}
% \left|\frac{\dd^N\fd_b\psi(a_k+n+1)}{\dd k^N}\right| \lesssim   \frac{1}{(n+1)k^{N+1}}.
%\end{align}

Similarly, for $  \fd_a\psi(b_k+n+1)$, we have
\begin{align*}
    & \frac{\dd^N\fd_a\psi(b_k+n+1)}{\dd k^N}\\
    =& \sum_{\sum_{j=1}^N jm_j = N}C_{m_1,\dots, m_N} \psi^{(\sum_{j=1}^N m_j)} (b_k+n+1)\prod_{j=1}^{N}(b_k^{(j)})^{m_j}\\
    &  -\sum_{\sum_{j=1}^N jn_j = N}  C_{n_1,\dots, n_N} \psi^{(\sum_{j=1}^N n_j)} (a_k+b_k+n+1)\prod_{j=1}^{N}(a_k^{(j)}+b_k^{(j)})^{n_j}\\
  =&  \sum_{\sum_{j=1}^N jm_j = N}C_{m_1,\dots, m_N} \psi^{(\sum_{j=1}^N m_j)} (b_k+n+1)\prod_{j=1}^{N}(b_k^{(j)})^{m_j}\\
  & -\sum_{\sum_{j=1}^N j n_j=N}\hspace{-1em} C_{n_1\dots n_N} \psi^{(\sum_{j=1}^N n_j)} (a_k+b_k+n+1)\prod_{j=1}^{N}(\sum_{\substack{\alpha_j=0\\ }}^{n_j} C_{\alpha_j,n_j}a_k^{(j)(n_j-\alpha_j)}b_k^{(j)\alpha_j}).
%    &= [\psi^{(N)}(b_k+n+1)-\psi^{(N)} (a_k+b_k+n+1)]( b_k')^N\\
%    &+\sum_{\substack{\sum_{j=1}^N jm_j=N\  \\ m_1\neq N}} C_{m_1,\dots, m_N} \psi^{(\sum_{j=1}^N m_j)} (b_k+n+1)\prod_{j=1}^{N}(b_k^{(j)})^{m_j}\\
%    &-\sum_{\sum_{j=1}^N j n_j=N}\hspace{-1em} C_{n_1\dots n_N} \psi^{(\sum_{j=1}^N n_j)} (a_k+b_k+n+1)\prod_{j=1}^{N}(\sum_{\substack{\alpha_j=0\\ \alpha_j\neq N}}^{n_j} C_{\alpha_j,n_j}a_k^{(j)(n-\alpha_j)}b_k^{(j)\alpha_j}).
    \end{align*}
Again, \eqref{estimates for $a_k$} and \eqref{estimates for $b_k$} yields
\begin{align} \label{estimate for difference of phi a}
    \left| \frac{\dd^N\fd_a\psi(b_k+n+1)}{\dd k^N}\right|&\lesssim \sum_{\substack{\sum_{j=1}^N jm_j=N}}\frac{1}{n+1} \frac{1}{k^{\sum_{j=1}^N (j+1)m_j}}\nonumber\\
   &\quad + \sum_{\substack{\sum_{j=1}^N jn_j=N\  \\ \alpha_1\leq  n_1 \leq N}}\hspace{-1em} \frac{1}{n+1} \frac{1}{k^{n_1+\dots +n_N-1}} \frac{1}{k^{\sum_{j=1}^N j n_j-(n_1-\alpha_1)}}\nonumber\\
   &\leq  \frac{1}{(n+1)k^{N-1}}.
    \end{align}
Using Leibniz rule for \eqref{formula for $G_{n,k}$}, we have
\begin{align*}
     G_{n,k}^{(N)}=\sum_{j=0}^{N}\!\binom{N}{j} \! \left( (\fd_b \psi(a_k+n+1))^{(j)} a_k^{(N-j+1)}\!+\!(\fd_a \psi(b_k+n+1))^{(j)} b_k^{(N-j+1)}\right).
\end{align*}
By \eqref{estimate for difference of phi b} and \eqref{estimate for difference of phi a}, we get
\begin{align}
   |  G_{n,k}^{(N)}|&\lesssim
\sum_{j=1}^{N}\frac{1}{n+1} \frac{1}{k^{j+1}}\frac{1}{k^{N-j+1}} +\sum_{j=0}^{N} \frac{1}{n+1} \frac{1}{k^{j-1}}\frac{1}{k^{N-j+2}}\nonumber\\
&\lesssim \frac{1}{n+1} \frac{1}{k^{N+1}}.
\end{align}

In view of the boundedness of $A_{k,n}^+$, we have
\[
    A_{k,n}^{+ (N+1)}  \lesssim \frac{1}{n+1} \frac{1}{k^{N+1}}.
\]
This finishes the proof.
\end{proof}

\section{%
%Proof of Lemma \ref{derivative estimated for RT}. 
}
\label{s:proof-of-lem-deriv-est-for-RT}
\begin{proof}[Proof of Lemma \ref{derivative estimated for RT}] Using the asymptotics of the Gamma function, we know that $E_{k, 1n}^\alpha, E_{k, 2n}^\alpha$ are uniformly bounded in $n$ and $k$. Hence,
\begin{align*}
    \frac{dE_{k, 1n}^\alpha}{dk}&=E_{k, 1n}^\alpha\Big[\left\{(\psi(a_k+n+1)-\psi(a_k-b_k+n+1))\right\}\frac{da_k}{dk}\\
    &+\left\{(\psi(a_k-b_k-\alpha/2+n+1)-\psi(a_k-\alpha/2+n+1))\right\}\frac{da_k}{dk}\\
    &+\left\{\psi(a_k-b_k+n+1)-\psi(-b_k+n+1) \right\}\frac{d b_k}{dk}\\
    &+\left\{\psi(-b_k-\alpha/2+n+1)-\psi(a_k-b_k-\alpha/2+n+1) \right\}\frac{d b_k}{dk}\Big]\\
    &\coloneq E_{k,1n}^\alpha G_{k,1n}^\alpha.
\end{align*}
By induction, we have
\begin{align}
    \frac{d^NE_{k, 1n}^\alpha}{dk^N} = E_{k, 1n}^\alpha \sum_{i=0}^{M(N)} C_{i, N} \prod_{j=0}^N \left(\frac{d^j G_{k, 1n}^\alpha}{dk^j}\right)^{\alpha_{i,j,N}}\end{align}
where $\alpha_{i,j,N}$ are integers that depend on $i,j, N$, satisfying $\sum \limits_{j=0}^N \alpha_{i,j,N}(1+j)=N+1$; and $C_{i, N}, M(N)$ are some constants which depend on $N$.
It is easy to check that
\begin{align}
    | G_{k, 1n}^\alpha| \lesssim \frac{1}{k} \frac{1}{n+1}.
\end{align}
Next, we compute the derivatives of $G_{k, 1n}^\alpha$.

  Using the \FDB, we have
 \begin{align*}
    &\quad\frac{\dd^N{\left\{(\psi(a_k+n+1)-\psi(a_k-b_k+n+1))\right\}}}{\dd k^N}\\
    &=\sum_{\sum_{j=1}^N jm_j = N} C_{m_1,\dots, m_N} \psi^{(\sum_{j=1}^N m_j)} (a_k+n+1)\prod_{j=1}^{N}(a_k^{(j)})^{m_j}\\
    &\quad-\sum_{\sum_{j=1}^N jn_j = N}  C_{n_1,\dots, n_N} \psi^{(\sum_{j=1}^N n_j)} (a_k-b_k+n+1)\prod_{j=1}^{N}(a_k^{(j)}-b_k^{(j)})^{n_j}\\
    &= \hspace{-0.8em}\sum_{\quad {\sum_{j=1}^N jm_j = N}}\hspace{-1.5em} C_{m_1,\dots, m_N}\!\big[\psi^{(\sum_{j=1}^N m_j)}(a_k{+}n{+}1)-\psi^{(\sum_{j=1}^N m_j)} (a_k{-}b_k{+}n{+}1)\big]\!\prod_{j=1}^{N}(a_k^{(j)})^{m_j}\\
        &\quad-\hspace{-1.5em}\sum_{\quad {\sum_{j=1}^N jn_j = N}}\hspace{-1.3em}  C_{n_1,\dots, n_N} \psi^{(\sum_{j=1}^N n_j)} (a_k-b_k+n+1)\prod_{j=1}^{N}\sum_{\substack{\alpha_j=0}}^{n_{j-1}}\! C_{\alpha_j,n_j}(a_k^{(j)})^{\alpha_j}(b_k^{(j)})^{n_j-\alpha_j} .
    \end{align*}
Hence,
   \begin{align*}
     & \frac{\dd^N{\left\{(\psi(a_k+n+1)-\psi(a_k-b_k+n+1))\right\}}}{\dd k^N}\\
       \lesssim  & \sum_{\sum_{j=1}^N jm_j = N}\psi^{(\sum_{j=1}^N m_j)+1}(z)| b_k |\prod_{j=1}^{N}(a_k^{(j)})^{m_j}\nonumber\\
       & -\sum_{\sum_{j=1}^N jn_j = N}\frac{1}{n+1}\frac{1}{ k^{(\sum_{j=1}^N n_j)-1} }\sum_{\substack{\alpha_j=0}}^{n_{j-1}}\frac{1}{k^{\sum_{j=1}^N j \alpha_j +(j+1)(n_j-\alpha_j) -\alpha_1}}\nonumber\\
      \lesssim & \sum_{\sum_{j=1}^N jm_j = N} \frac{1}{n+1}\frac{1}{k^{(\sum_{j=1}^N m_j)+1}}\frac{1}{k^{\sum_{j=1}^N j m_j-m_1}}\nonumber\\
      & -\sum_{\substack{\sum_{j=1}^N jn_j = N},   \alpha_1< N}\frac{1}{n+1}\frac{1}{ k^{(\sum_{j=1}^N n_j)-1} }\frac{1}{k^{ N+N -\alpha_1}}\nonumber\\
    \lesssim & \frac{1}{n+1} \frac{1}{k^{N+1}}.
    \end{align*}
An analogous calculation gives that
\begin{align*}
   \frac{\dd^N{\left\{(\psi(a_k-b_k-\alpha/2+n+1)-\psi(a_k-\alpha/2+n+1))\right\}}}{\dd k^N} \lesssim \frac{1}{n+1} \frac{1}{k^{ N+1}}.
\end{align*}
For $\psi(a_k-b_k+n+1)-\psi(-b_k+n+1)$, \FDB{} gives
\begin{align*}
   &\quad \frac{d^N\{\psi(a_k-b_k+n+1)-\psi(-b_k+n+1)\}}{dk^N} \\
  &=\sum_{\sum_{j=1}^N jm_j = N} C_{m_1,\dots, m_N} \psi^{(\sum_{j=1}^N m_j)} (a_k-b_k+n+1)\prod_{j=1}^{N}(a_k^{(j)}-b_k^{(j)})^{m_j}\\
    &\quad-\sum_{\sum_{j=1}^N jn_j = N}  C_{n_1,\dots, n_N} \psi^{(\sum_{j=1}^N n_j)} (-b_k+n+1)\prod_{j=1}^{N}(-b_k^{(j)})^{n_j}\\
    &= \hspace{-0.8em}\sum_{\quad {\sum_{j=1}^N jm_j = N}}\hspace{-1.5em} C_{m_1,\dots, m_N}\!\psi^{(\sum_{j=1}^N m_j)}(a_k{-}b_k+n{+}1)\!\prod_{j=1}^{N}\sum_{\alpha_j=0}^{m_j} C_{\alpha_j,m_j}(a_k^{(j)})^{\alpha_j}(b_k^{(j)})^{m_j-\alpha_j}\\
        &\quad-\hspace{-1.5em}\sum_{\quad {\sum_{j=1}^N jn_j = N}}\hspace{-1.3em}  C_{n_1,\dots, n_N} \psi^{(\sum_{j=1}^N n_j)} (-b_k+n+1)\prod_{j=1}^{N}(-b_k^{(j)})^{n_j} .
        \end{align*}
Hence,
\begin{align*}
    &\quad\left| \frac{d^N\{\psi(a_k-b_k+n+1)-\psi(-b_k+n+1)\}}{dk^N}\right|\\
     & \lesssim \sum_{\sum_{j=1}^N jm_j = N, \alpha_j\leq m_j} \frac{1}{n+1} \frac{1}{k^{\sum_{j=1}^N m_j-1}}\frac{1}{k^{\sum_{j=0}^Nj\alpha_j +(j+1)(m_j-\alpha_j)-\alpha_1}}\\
     &\quad +  \sum_{\sum_{j=1}^N jn_j = N}\frac{1}{n+1} \frac{1}{k^{\sum_{j=1}^N (j+1)n_j}}\\
     &\lesssim \frac{1}{n+1} \frac{1}{k^{N-1}}.
\end{align*}
Similarly, we have
\begin{align*}
 & \quad\left| \frac{d^N\{\psi(-b_k-\alpha/2+n+1)-\psi(a_k-b_k+n+1)\}}{dk^N}\right|\lesssim  \frac{1}{n+1} \frac{1}{k^{N-1}}.
\end{align*}
\begin{align}
    \left|\frac{d^N G_{k,1n}^\alpha}{dk^N}\right| \lesssim \frac{1}{n+1} \frac{1}{k^{N+1}}.
\end{align}
Hence, we have
\begin{align}
     \left|\frac{d^N E_{k,1n}^\alpha}{dk^N}\right| \lesssim \frac{1}{n+1} \frac{1}{k^{N+1}}.
\end{align}
The estimates for $E_{k,2n}^\alpha$ are similar, so we omit the details.
\end{proof}

\newpage
\begin{table}
\centering
\caption{Summary of notation used in this paper.}\label{table-notation}
\begin{tabular}{ll}
\toprule
Notation               & Description        \\
\hline
 $\fd ^N$                      &  $N$th order (forward) finite  difference                \\
 $\bessel_{\mu}$                      &  Bessel transform   of order $\mu$              \\
  $\hankel_{\nu}$                      &  Hankel transform   of order  $\nu$             \\
$\mathbb P_k$ &  Projection from $L^2(\mathbb S^{d-1})$ to the space of spherical harmonics of degree $k$\\
$Y_{k,l}(\omega)$ & The $l$th spherical harmonic of degree $k$\\
$\sh_{k,l}$ & $\sh_{k,l}=\{f(r)Y_{k,l}(\omega) : f(r) \in L^{2}(\mathbb  R_{+}; r^{ d-1}\dd{r}) \}$\\
$\mathbb H_{0; k,l}$ & $-\Delta $ restricted to $\sh_{k,l}$ \\
$\mathbb H_{ k,l}$ & $\la$ restricted to $\sh_{k,l}$\\
$\lambda_0$ & $\lambda_0=\frac{d-2}{2}$\\
$\mu_k$ & $\mu_k=\frac{d-2}{2}+k$\\[0.3em]
$\nu_k$ & $\nu_k=\sqrt{\mu_k^2+a}$\\
$e_k(r, \lambda)$&$e_k(r, \lambda)=(r\lambda)^{-\frac{d-2}{2}}J_{\mu_k}(r\lambda)$ \\
$\widetilde e_k(r, \lambda)$ &$\widetilde e_k(r, \lambda)=(r\lambda)^{-\frac{d-2}{2}}J_{\nu_k}(r\lambda)$ \\

$a_k$&$ a_k=\frac{\mu_k+\nu_k}{2}$ \\
 $b_k$ & $b_k=\frac{\mu_k-\nu_k}{2}$\\[0.3em]
$A_{k,n}^{+}$&$ A_{k,n}^{+}=\frac{\Gamma(a_{k}+n+1)\Gamma(b_{k}+n+1)}{\Gamma(a_{k}+b_{k}+n+1) \Gamma(n+1)}$\\[0.3em]
$A_{k,n}^{-}$ &$A_{k,n}^{-}= \frac{\Gamma(a_{k}+n+1)\Gamma(1-b_k+n)}{\Gamma(a_k-b_k+n+1) \Gamma(n+1)}$\\[0.3em]
$E_{k,n}^{+} $ & $E_{k,n}^{+}=A_{k,n}^{+}-1-\frac{a}{4(n+1)}$\\
$E_{k,n}^{-}$ & $E_{k,n}^{-}=A_{k,n}^{-}-1+\frac{a}{4(n+1)}$\\
$\chi_{\pm}$ &$\chi_{+}=\chi_{\{r/2<s<r\}}$ and $\chi_{-}=\chi_{\{r<s<2r\}}$ \\
$C_k^\alpha$ & $ C_k^\alpha = \frac{2 \sin( \pi \alpha/2) \sin( \pi (\mu_k-\nu_k)/2)}{\pi \sin( \pi (\mu_k-\nu_k+\alpha)/2)}(\alpha$ is not an even integer)\\
$ A_{k,1n}^{\alpha}$& $ A_{k,1n}^{\alpha}=  \frac{\Gamma(\frac{\mu_k+\nu_k}{2}+n+1)\Gamma(\nu_k-\frac{\alpha}{2}+n+1)\Gamma(\frac{\nu_k-\mu_k}{2}+n+1)\Gamma(-\frac{\alpha}{2}+n+1)}
 {\Gamma(\frac{\nu_k-\mu_k-\alpha}{2}+n+1)\Gamma(\nu_k+n+1)\Gamma(\frac{\nu_k+\mu_k-\alpha}{2}+n+1)\Gamma(n+1)}$\\
 $A_{k,2n}^\alpha$ & $ A_{k,2n}^\alpha=  \frac{\Gamma( \mu_k+\frac{\alpha}{2}+n+1)\Gamma(\frac{\nu_k+\mu_k}{2}+n+1)\Gamma(\frac{\alpha}{2}+n+1)\Gamma(\frac{\mu_k-\nu_k}{2}+n+1)}
 {\Gamma(\frac{\mu_k-\nu_k+\alpha}{2}+n+1)\Gamma(\frac{\nu_k+\mu_k+\alpha}{2}+n+1)\Gamma(\mu_k+n+1)\Gamma(n+1)}$\\
$E_{k, 1n}^\alpha$ &$E_{k, 1n}^\alpha=A_{k, 1n}^\alpha-1$\\
  $E_{k, 2n}^\alpha$  & $E_{k, 2n}^\alpha=A_{k, 2n}^\alpha-1$\\
\bottomrule
\end{tabular}
\end{table}
%\todo{Need to change all occurences in the body of $\lfloor s \rfloor$ to $\lfloor s \rfloor+1$, which is the smallest integer strictly greater than $s$. }

\begin{center}

\end{center}

\end{document}